\theoremstyle{plain}
\newtheorem{thm}{Theorem}[section]
\newtheorem{prop}[thm]{Proposition}
\newtheorem{lem}[thm]{Lemma}
\newtheorem{cor}[thm]{Corollary}
\newtheorem{conj}[thm]{Conjecture}
\theoremstyle{definition}
\newtheorem{defn}[thm]{Definition}
\newtheorem{rem}[thm]{Remark}
\newtheorem{ex}[thm]{Example}
\DeclareMathOperator\Diff{Diff}
\DeclareMathOperator\Isom{Isom}
\DeclareMathOperator\Aut{Aut}
\DeclareMathOperator\Aff{Aff}
\DeclareMathOperator\emb{Emb}
\DeclareMathOperator\Map{Map}
\DeclareMathOperator\im{im}
\DeclareMathOperator\Conf{Conf}
\DeclareMathOperator\SO{SO}
\DeclareMathOperator\Or{O}
\DeclareMathOperator\SL{SL}
\DeclareMathOperator\GL{GL}
\DeclareMathOperator\PSL{PSL}
\DeclareMathOperator\Nil{Nil}
\DeclareMathOperator\Sol{Sol}
\DeclareMathOperator\Fr{Fr}
\DeclareMathOperator{\umb}{Sub} 
\newcommand{\interior}[1]{\smash{\mathring{#1}}}
\DeclareMathOperator{\Out}{Out}
\DeclareMathOperator{\SF}{SF}
\DeclareMathOperator{\PI}{PI}
\DeclareMathOperator{\JSJ}{JSJ}
\DeclareMathOperator{\tr}{tr}
\DeclareMathOperator{\id}{id}
\DeclareMathOperator{\Mod}{Mod}
\DeclareMathOperator{\Sym}{Sym}
\DeclareMathOperator{\Inn}{Inn}
\DeclareMathOperator{\PL}{PL}
\newcommand{\scl}[1]{\smash{\widehat{#1}}} 
\DeclareMathOperator\cut{Cut} 
\newcommand{\cutNE}{\cut^{\neq \emptyset}} 
\DeclareMathOperator\sep{Sep} 
\newcommand{\qsep}[1]{\cut^{\mathrm{pf}\leq{#1}}} 
\newcommand{\sepNP}{\sep^{\nparallel}} 
\newcommand{\sepNPsemi}{\sep^{\nparallel,\subset}} 
\newcommand{\sepL}{\sep^{\mathrm{L}}} 
\newcommand{\sepLtwo}{\sep^{\mathrm{L},2}} 
\newcommand{\ca}{\smash{\mid}} 
\newcommand{\DiffB}{\Diff_\partial}
\newcommand{\hq}{/\!\!/} 
\renewcommand{\ul}[1]{\underline{#1}}
\newcommand{\BDiff}{B\!\Diff}
\newcommand{\calM}{\mathcal{M}}
\newcommand{\bbZ}{\mathbb{Z}}
\newcommand{\bbQ}{\mathbb{Q}}
\newcommand{\bbR}{\mathbb{R}}
\newcommand{\bbS}{\mathbb{S}}
\newcommand{\bbH}{\mathbb{H}}
\newcommand{\bbE}{\mathbb{E}}
\newcommand{\bbT}{\mathbb{T}}
\newcommand{\RP}{\mathbb{R}\mathrm{P}}
\newcommand{\Z}{\mathbb{Z}}
\newcommand{\bt}{\bullet}
\newcommand{\G}{\Gamma}
\newcommand{\Kx}{K\widetilde{\times}I}
\newsavebox\foobox
\newcommand\slbox[2]{%
  \FPdiv{\result}{#1}{57.296}
  \FPtan{\result}{\result}%
  \slantbox[\result]{#2}%
}%
\newcommand{\slantbox}[2][30]{%
        \mbox{%
        \sbox{\foobox}{#2}%
        \hskip\wd\foobox
        \pdfsave
        \pdfsetmatrix{1 0 #1 1}%
        \llap{\usebox{\foobox}}%
        \pdfrestore
}}
\newcommand\rotslant[3]{\rotatebox{#1}{\slbox{#2}{#3}}}
\newcommand{\fakeenv}{} 
\newenvironment{restate}[2]  
{
  \renewcommand{\fakeenv}{#2}   
  \theoremstyle{plain}
  \newtheorem*{\fakeenv}{#1~\ref{#2}} 
  \begin{\fakeenv}  
}
{ \end{\fakeenv} }
\newenvironment{rerestate}[3]  
{
  \renewcommand{\fakeenv}{#3}   
  \theoremstyle{plain}
  \newtheorem*{\fakeenv}{#1~\ref{#2}} 
  \begin{\fakeenv}  
}
{ \end{\fakeenv} }
\title{Moduli spaces of $3$-manifolds with boundary are finite}
\author{Rachael Boyd}
\address{School of Mathematics and Statistics, University of Glasgow, Glasgow G12 8QQ, UK}
\email{rachael.boyd@glasgow.ac.uk}
\urladdr{https://www.maths.gla.ac.uk/~rboyd/} 
\author{Corey Bregman}
\address{Department of Mathematics, Tufts University, Medford, MA 02155, USA}
\email{corey.bregman@tufts.edu}
\urladdr{https://sites.google.com/view/cbregman}
\author{Jan Steinebrunner}
\address{Institut for Matematiske Fag,
Københavns Universitet,
København, 
Denmark}
\email{js2675@cam.ac.uk}
\urladdr{https://www.jan-steinebrunner.com} 
 \subjclass[2020]{
        57T20, 
        58D29  
        (primary),
        57M50,  
        55R40, 
        57S05, 
        58D05  
        (secondary)}
\begin{document}

\newpage
\begin{abstract}
    We study the classifying space~$\BDiff(M)$ of the diffeomorphism group of a connected, compact, orientable 3-manifold $M$. In the case that~$M$ is reducible we build a contractible space parametrising the systems of reducing spheres. We use this to prove that if~$M$ has non-empty boundary, then~$\BDiff_\partial(M)$ has the homotopy type of a finite CW complex. This was conjectured by Kontsevich and appears on the Kirby problem list as Problem 3.48. As a consequence, we are able to show that for every compact, orientable $3$-manifold $M$, $\BDiff(M)$ has finite type.
\end{abstract}
\maketitle

\setcounter{tocdepth}{1} 
\tableofcontents

\section{Introduction}
For a $3$-manifold $M$, let $\Diff(M)$ denote group of diffeomorphisms, equipped with the $C^\infty$~topology.
Its classifying space $\BDiff(M)$ is the moduli space of $M$: homotopy classes of maps $X \to \BDiff(M)$ are in bijection with concordance classes of smooth $M$-bundles over $X$ \cite{GRW-users-guide}.
When $M$ has boundary, we let $\DiffB(M) < \Diff(M)$ denote the subgroup of those diffeomorphisms that fix the boundary pointwise and consider $\BDiff_\partial(M)$, which classifies smooth bundles with trivialised boundary.
We prove the following conjecture of Kontsevich \cite[Problem 3.48]{KirbyProblems}.

\begin{restate}{Theorem}{thm:Kontsevich-finiteness-conjecture}
    Let $M$ be a compact, connected, orientable $3$-manifold with non-empty boundary~$\partial M$.
    Then 
        $\BDiff_\partial(M)$
    is \emph{homotopy finite}, 
    i.e.~it has the homotopy type of a finite CW complex.
\end{restate}

When $M$ is irreducible, this was proven by Hatcher and McCullough \cite{HatcherMcCullough} (we restate this in \cref{thm: HatcherMcCullough finiteness}) using that in this setting~$\BDiff(M)$ is aspherical. 
In fact we prove a slightly stronger version of the conjecture, where we only fix a union of boundary components~$F$, such that $\partial M \setminus F$ consists of spheres and incompressible tori.
This theorem implies that the group cohomology of the topological group $\Diff_\partial(M)$ is finitely generated and that it has finite cohomological dimension.
(Under the simplifying assumption that each prime factor of $M$ has non-empty boundary, this (co)homological version of Kontsevich's conjecture was proven by Nariman \cite{Nariman}.)
Our results are effective in the sense that for any given $M$ one can read off a bound on the cohomological dimension by going through the steps of the proof, but we have not attempted to give a closed formula as the many case distinctions make this quite cumbersome.

Recall that every topological 3-manifold admits an essentially unique PL and smooth structure, by work of Cairns \cite{Cairns1940} and Whitehead \cite{Whitehead}, Moise \cite{Moise1952,Moise54} and Bing \cite{Bing54} (see also \cite{Bing59}), and Munkres \cite{Munkres-Announcement,Munkres, Munkres-Obstructions}  and Whitehead \cite{Whitehead61}. Moreover, a combination of work by Cerf \cite{Cerf59} and Hatcher \cite{Hatcher} shows that $\Diff(M) \simeq \PL(M)\simeq \operatorname{Homeo}(M)$ so our statements hold for all of these groups. (The equivalence of $\operatorname{PL}(M)$ with the other two relies on a theorem attributed to Morlet but first proven by Burghelea--Lashof \cite{BurgheleaLashof}, see Kirby--Siebenmann \cite[Essay V]{KirbySiebenmann} for more details.)

The analogue of \cref{thm:Kontsevich-finiteness-conjecture} for surfaces is true as a consequence of results of Earle--Schatz \cite{EarleSchatz} and Gramain \cite{Gramain}, but fails in dimensions $n\geq 6$: 
$\pi_0\Diff_\partial(S^1 \times D^{n-1})$ is not finitely generated by work of Hatcher--Wagoner \cite[$n\geq 7$: Part II, Corollary 5.5, $n=6$: p230]{HatcherWagoner}. 
Budney--Gabai \cite{BudneyGabai2021} prove that the mapping class group of $S^1\times D^{3}$ relative to the boundary is also not finitely generated in both the smooth and topological categories.
Thus for $n=4$ or $n\geq 6$, $\BDiff_\partial(S^1\times D^{n-1})$ is not even of finite type. (Recall that a space is said to be of \emph{finite type} if it is homotopy equivalent to a CW complex with finite $n$-skeleton for each $n$.)

When a 3-manifold $M$ is closed or when the boundary is not fixed, $\BDiff(M)$ rarely has the homotopy type of a finite CW complex, but we can ask when it is of finite type. Hatcher--McCullough showed mapping class groups of 3-manifolds are finitely presented \cite{HatcherMcCullough1990} so, unlike in higher dimensions, counterexamples arising from $M$ having infinitely generated mapping class groups do not exist. Using \cref{thm:Kontsevich-finiteness-conjecture}, we deduce the following for all 3-manifolds.

\begin{restate}{Theorem}{thm:finite-type}
    Let $M$ be a compact, orientable $3$-manifold.
    Then $\BDiff(M)$ is of finite type.
\end{restate}

This completes the picture for all manifolds of dimension $\leq 3$. We briefly survey for which cases we know that $\BDiff(M)$ is of finite type. In dimension 1, since $\Diff(S^1)\simeq \Or(2)$, one knows that $\BDiff(S^1)\simeq B\!\Or(2)$ is of finite type. The corresponding result for closed orientable surfaces combines work of Smale \cite{Smale59}, who showed $\BDiff(S^2)\simeq B\!\Or(3)$, and Earle--Eells \cite{EarleElls69} for surfaces of genus $g\geq 1$.
In fact, when $g\geq 2$, they prove that $\Diff_0(M)$ is contractible, whence $\BDiff(M)\simeq B\pi_0\Diff(M)=K(\Mod_g^\pm,1)$, where $\Mod^\pm_g$ is the (full) mapping class group of the surface of genus~$g$. 
In this case, finiteness fails because of torsion in $\Mod^\pm_g$, but $B\!\Mod^\pm_g$ is of finite type because $\Mod^\pm_g$ has a finite index subgroup whose classifying space is homotopy finite, see Ivanov \cite[\S6.4]{Ivanov-Teichmuller89}.
In higher dimensions $n \neq 4,5, 7$, Kupers showed that $\BDiff(S^n)$ and $\BDiff_\partial(D^n)$ are of finite type \cite{Kupers19}, and in even dimensions $2n \ge 6$, Bustamante--Krannich--Kupers \cite{BustamanteKrannichKupers} showed that if~$\pi_1(M)$ is finite, then $\BDiff(M)$ is of finite type.

\subsection{Overview of the proof}
We will prove the following generalisation of Kontesevich's conjecture.
Here, for a union of boundary components $F \subseteq \partial M$, we let $\Diff_F(M) \le \Diff(M)$ denote the subgroup of those diffeomorphisms that fix $F$ pointwise.
\begin{restate}{Theorem}{thm:more general Kontsevich-finiteness-conjecture}
    Let $M$ be a compact, connected, orientable $3$-manifold, and let $\emptyset \neq F \subseteq \partial M$ be a non-empty union of connected components.
    Assume that $\partial M \setminus F$ consists of spheres and incompressible tori.
    Then $\BDiff_F(M)$ is homotopy finite.
\end{restate}

Let $\scl{M}$ denote the \emph{spherical closure} of~$M$: the 3-manifold obtained by filling in spherical boundary components.
Kneser \cite{Kneser1929} showed that every oriented 3-manifold~$M$ without 2-spheres in its boundary admits a connected sum decomposition into oriented irreducible factors~$P_i$ that are not homeomorphic to~$S^3$ and some number of copies of $S^1\times S^2$. Milnor~\cite{Milnor62} then proved this decomposition is uniquely determined up to reordering. Therefore every oriented 3-manifold $M$ can be written as
\[
M\cong \left(P_1\#\cdots \#P_n \# (S^1\times S^2)^{\# g}\right)\setminus \{\amalg_m \interior{D}^3\}
\]
where the $P_i$ and $g$ copies of $S^1\times S^2$ are the prime factors of its spherical closure $\scl{M}$ and $m$ is the number of spherical boundary components.

The $S^1\times S^2$ factors arising in the prime decomposition  correspond to a maximal collection of non-separating spheres in $M$. By allowing cuts along both separating and non-separating spheres, one can obtain a decomposition of $\scl{M}$ into only irreducible components, at the expense of possibly introducing copies of $S^3$ (we make the notion of cutting along a sphere precise in \cref{section: preliminaries}). While the diffeomorphism classes (counted with multiplicity) of irreducible factors $P_i\neq S^3$ appearing in such a decomposition are uniquely determined, the spheres along which~$M$ is cut into irreducible pieces are not, even when considered up to isotopy. 
With this in mind, we build a space $\sep(M)$ that parameterises all such decompositions of a given 3-manifold~$M$. 
A point in this space corresponds to a collection of disjointly embedded spheres $\Sigma$ that cut $M$ into irreducible pieces, which we call a \emph{separating system} for $M$.  A schematic of a 3-manifold with two examples of separating systems are shown in Figure~\ref{fig: sep systems}. 
\begin{figure}[h!]
    \centering
     \resizebox{\linewidth}{!}{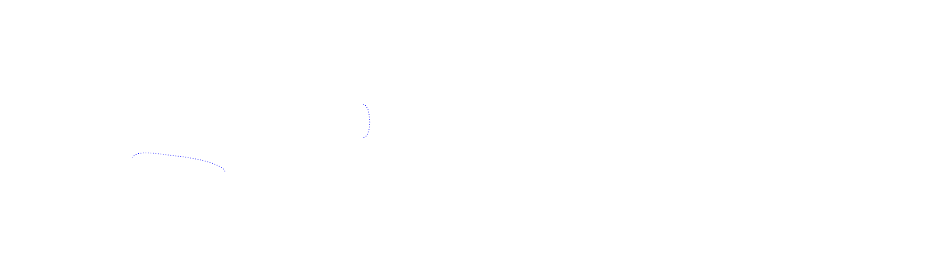}
    \caption{A schematic for the 3-manifold $M \cong \left(P_1\# P_2 \#P_3 \# (S^1\times S^2)^{\# 2}\right)\setminus \interior{D}^3$. $M$ has one spherical boundary component and one genus 2 surface boundary component. In this case $P_1$ and~$P_3$ are closed, and $P_2$ has boundary a genus 2 surface. Two separating systems~$\Sigma$ and~$\Sigma'$ are shown. Note that $\Sigma$ is obtained from $\Sigma'$ by adding two spheres, thus $\Sigma
'\subset \Sigma$.}
    \label{fig: sep systems}
\end{figure}

We then restrict to a subspace $\sepNP(M) \subset \sep(M)$ of separating systems $\Sigma \subset M$ such that no two spheres in $\Sigma$ are isotopic. This is a topological poset under inclusion of separating systems, e.g.~in Figure~\ref{fig: sep systems}, $\Sigma' \subset \Sigma$. A key step in the proof of Theorem~\ref{thm:more general Kontsevich-finiteness-conjecture} is showing that when~$M\not\cong S^1\times S^2$ the geometric realisation of the nerve of this poset is contractible.

\begin{restate}{Theorem}{thm: sepNP contractible}
    Suppose $M\not\cong S^1 \times S^2$. Then $\|\sepNP_\bullet(M)\|\simeq *$.
\end{restate}

Because $\Diff_F(M)$ acts on $ \sepNP(M)$, there is an induced action on~$\|\sepNP_\bullet(M)\|$, and thus we acquire a model for the classifying space:
\[
\BDiff_F(M)\simeq \|\sepNP_\bullet(M)\|\hq \Diff_F(M)
\]
where the notation $\hq$ denotes the homotopy orbit construction. Since no two spheres in $\Sigma \in \sepNP(M)$ are isotopic, the poset has finite depth, \emph{i.e.}~there is a bound on the longest non-degenerate descending chain.
We use this fact and some additional properties of $\sepNP_\bullet(M)$ to arrive at an inductive argument to prove Theorem \ref{thm:more general Kontsevich-finiteness-conjecture}. The induction is over the number of irreducible pieces into which a separating system~$\Sigma$ cuts $M$, the base case being when $\scl{M}$ is irreducible. 
When $M=\scl{M}$ is itself irreducible (\emph{i.e.}~$M$ has no spherical boundary components) this is due to Hatcher and McCullough  \cite{HatcherMcCullough}, see Theorem~\ref{thm: HatcherMcCullough finiteness}. In the case where
$M$ has spherical boundary, the base case further reduces to the following theorem, equivalent to the conjecture when $\scl{M}$ is irreducible and $M$ has exactly one spherical boundary component that is fixed.

\begin{restate}{Theorem}{thm:sect4main}
    Let $M$ be an irreducible 3-manifold with either empty or incompressible toroidal boundary, and let $D^3\subset \interior{M}$ be an embedded disk. 
    Then $\BDiff_{D^3}(M)$ has the homotopy type of a finite CW complex.
\end{restate}

The proof of this theorem comprises a large portion of our work and breaks down into many sub-cases. Via a fiber sequence argument, it is equivalent to showing that $\Fr(M)\hq \Diff(M)$ has the homotopy type of a finite CW complex, where $\Fr(M)$ is the frame bundle of $M$. In a similar vein to the proof of Theorem \ref{thm:more general Kontsevich-finiteness-conjecture}, we cut $M$ up along embedded tori and prove the result for the simpler pieces, via a combination of the JSJ  and geometric decompositions of our irreducible 3-manifold. 
We use this to reduce the proof to the case of hyperbolic manifolds, non-Haken Seifert-fibered manifolds,  Haken Seifert-fibered manifolds, and $\Sol$ manifolds that are torus bundles over the circle.  A schematic of the proof and its breakdown into cases is depicted in \cref{fig:flowchartirreducible}.

For hyperbolic or non-Haken Seifert-fibered manifolds, as well as  torus bundles admitting $\Sol$ geometry, the proof of \cref{thm:sect4main} relies on the fact that the (strong) generalised Smale conjecture holds: $\Isom(M)\hookrightarrow \Diff(M)$ is a homotopy equivalence. 
The case of Haken Seifert-fibered manifolds splits up further into several subcases, depending on the 
topology of the space of Seifert fiberings and the presence of singular fibers. The space of Seifert fiberings of such a manifold is completely determined by work of Hong--Kalliongis--McCullough--Rubinstein \cite{HKMR12}, and, in particular,  there are only six orientable Haken Seifert-fibered manifolds for which there does not exist a unique Seifert fibering up to isotopy. Apart from these exceptional cases, we  use the fibering to reduce the question of finiteness to that of diffeomorphisms of the base surface fixing a disk. 

For compact manifolds $M$ such that $\partial M$ is either empty or consists of spheres and incompressible tori, our proof that $\BDiff(M)$ is of finite type (\cref{thm:finite-type}) follows directly from \cref{thm:more general Kontsevich-finiteness-conjecture} by delooping the fiber sequence
\[
     \Diff_{D^3}(M)\longrightarrow
    \Diff(M) \longrightarrow
    \emb(D^3, \interior{M}).
\]
For the general case, when $M$ has higher genus or compressible boundary, we again use the contractibility of $\|\sepNP_\bullet(M)\|$ to reduced the claim to irreducible $M$ with $\partial M \neq \emptyset$.
For these, McCullough \cite{McCullough1991} showed that the mapping class group $\pi_0\Diff(M)$ is of finite type and Hatcher and Ivanov \cite{Hatcher76, Ivanov76} showed that $\Diff_0(M) \simeq (S^1)^{\times b}$ for $b\in \{0,1,2\}$, so $\BDiff(M)$ is of finite type.

\subsection{History and previous work on the conjecture}
 Kontsevich formulated his conjecture in the mid 1980s after reading notes of Thurston on the geometrisation conjecture~\cite{KontsevichCorrespondence2024}. 
 The original formulation of the conjecture was that $\BDiff_{D^3}(M)$ was homotopy finite for closed $M$ -- similar to the formulation that appears in our~\cref{thm:sect4main}. 
 In the 1990s, Kirby and Kontsevich adapted the conjecture to the more general setting of $M$ with non-empty boundary -- the form stated in the Kirby problem list \cite[Problem 3.48]{KirbyProblems}. This is the conjecture we call `Kontsevich's conjecture' and prove in \cref{thm:Kontsevich-finiteness-conjecture}.
 
 As we noted above, Hatcher and McCullough \cite{HatcherMcCullough} proved Kontsevich's conjecture when~$M$ is an irreducible 3-manifold. In fact, as is the case for our result, their result holds in a more general setting when only a subset of the boundary is required to be fixed.

\begin{thm}[{\cite[Main Theorem]{HatcherMcCullough}}]\label{thm: HatcherMcCullough finiteness}
    Let $M$ be an irreducible compact connected orientable 3-manifold and let $F$ be a non-empty union of components of $\partial M$, including all the compressible ones. Then~$\BDiff_F(M)$ has the homotopy type of an aspherical finite CW complex.
\end{thm}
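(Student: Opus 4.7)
The strategy is to split the classifying space via the fibration
\[
(\Diff_F(M))_0 \longrightarrow \Diff_F(M) \longrightarrow \Gamma := \pi_0 \Diff_F(M),
\]
reducing the theorem to (a) showing $(\Diff_F(M))_0 \simeq *$, which makes $\BDiff_F(M) \simeq B\Gamma$ aspherical, and (b) showing that $\Gamma$ admits a finite $K(\Gamma, 1)$.

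For (a), note that $M$ is Haken: being compact, irreducible, with non-empty boundary, it contains an incompressible surface. I would invoke the classical theorem of Hatcher, together with Ivanov's relative refinement, that $(\DiffB(M))_0 \simeq *$ for Haken $M$. The passage from fixing all of $\partial M$ to fixing only $F$ uses essentially the hypothesis that $F$ includes every compressible boundary component: this makes $\partial M \setminus F$ incompressible. I would run a parameterised Haken hierarchy, cutting $M$ along a properly embedded incompressible surface $S$ with $\partial S \subseteq \partial M \setminus F$, using contractibility of the space of embeddings of $S$ rel $F$ (Laudenbach, Hatcher), and iterating down to $3$-balls where $(\DiffB(D^3))_0 \simeq *$ by Cerf. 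Potential extra identity components coming from Seifert $S^1$-actions are ruled out because any such action is global, so would have to act nontrivially on any nonempty subset of fixed points in $F$.

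For (b), use the JSJ decomposition of $M$ along a canonical collection $\mathcal{T}$ of essential tori, cutting $M$ into Seifert-fibered and hyperbolic atoroidal pieces. After replacing $\Gamma$ by a finite-index subgroup $\Gamma_0$ that fixes each torus of $\mathcal{T}$ setwise and preserves orientations, $\Gamma_0$ is an extension of the product of relative mapping class groups of the JSJ pieces (fixing $F$ and their adjacent tori) by a free abelian group of Dehn twists along $\mathcal{T}$. Hyperbolic pieces contribute finite mapping class groups by Mostow rigidity; Seifert-fibered pieces contribute extensions of orbifold mapping class groups of the base surface with boundary (which are of type $F$ by arc-complex / Harer-spine arguments) by a free abelian group of fiber twists. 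Since type $F$ is stable under extensions by and of finitely generated free abelian groups and under finite-index extension, $\Gamma$ has a finite $K(\Gamma, 1)$.

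The main obstacle is (a): while the absolute theorem $(\DiffB(M))_0 \simeq *$ is classical, its promotion to fixing only $F$ requires genuinely parameterised control of incompressible surfaces and a careful use of the compressibility hypothesis through the hierarchy. In contrast, (b) is mostly combinatorial bookkeeping once the JSJ decomposition is in hand.
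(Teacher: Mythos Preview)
Your outline matches the paper's brief summary of how Hatcher--McCullough prove this: asphericality via Hatcher--Ivanov, then type $F$ for $\pi_0\Diff_F(M)$ via the JSJ decomposition.

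Two corrections are worth making. First, hyperbolic JSJ pieces with fixed toroidal boundary do \emph{not} have finite relative mapping class group; rather $\pi_0\Diff_F(N)$ is finitely generated free abelian, generated by Dehn twists along the boundary tori (the paper cites \cite[Proposition~3.2]{HatcherMcCullough} for exactly this in the proof of \cref{thm:HM-hereditary}). This does not break your argument, since free abelian groups are still of type $F$. Second, you have the difficulty reversed: part (a) was already known before \cite{HatcherMcCullough} and is the easy half. Once Hatcher--Ivanov gives $\Diff_0(M)\simeq (S^1)^b$ for Haken $M$, the inclusion $(\Diff_F(M))_0\subset\Diff_0(M)$ immediately forces $(\Diff_F(M))_0=\{1\}$, because every nontrivial element of the $(S^1)^b$ is a fiberwise rotation of a Seifert fibering and moves every boundary torus, hence cannot fix any nonempty $F$ pointwise. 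No parametrised hierarchy beyond Hatcher--Ivanov is needed. Part (b), establishing that $B\pi_0\Diff_F(M)$ is a finite complex, was the actual contribution of \cite{HatcherMcCullough}.
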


Any $M$ that is irreducible and has non-empty boundary is automatically Haken, hence in this case it was known from work of Hatcher \cite{Hatcher, Hatcher76} and Ivanov~\cite{Ivanov76} that the classifying spaces $\BDiff_\partial(M)$ are aspherical.  Thus, Hatcher--McCullough prove their theorem by showing that the classifying space of the mapping class group~$\pi_0\DiffB(M)$ has the homotopy type of a finite CW complex.
They remark that this is not true for closed irreducible 3-manifolds in general, but had been previously shown to be virtually true for Haken manifolds by McCullough \cite{McCullough1991}.

More recently, a homological version of Kontsevich's conjecture was shown to be true by Nariman \cite{Nariman}, who proved that $\BDiff_\partial(M)$ has finitely many non-zero homology groups which are all finitely generated, in the case of $M$ being a connected sum of irreducible 3-manifolds such that each irreducible piece has nontrivial boundary. 
In particular, this additional assumption means that the case of an irreducible manifold with a fixed disk, which we cover in \cref{thm:sect4main}, is not necessary in his paper.
Nariman builds semi-simplicial spaces of separating spheres similar to some of the intermediate spaces in our arguments, and we utilise one of his contractibility results in Section~\ref{section: systems of spheres}. 

\subsection{Background on 3-manifold diffeomorphisms}

For prime 3-manifolds, the homotopy type of $\Diff(M)$ has been studied extensively.  We provide a brief, by no means exhaustive, overview of this history prior to geometrisation here, and refer the reader to \cref{section:strongsmale} for more references and recent results. Early work focused on the Smale conjecture (which asserts that $\Diff(S^3)\simeq \Or(4)$) and mapping class groups of 3-manifolds. Initially, Cerf proved that $\pi_0\Diff(S^3)\cong \pi_0(\Or(4))\cong \Z/2$ \cite{Cerf},  before Hatcher confirmed the Smale conjecture in \cite{Hatcher}. Hatcher \cite{Hatcher1981} also  showed $\Diff(S^1\times S^2)\simeq \Omega \SO(3)\times \Or(3)\times \Or(2)$, and Ivanov \cite{Ivanov79,Ivanov82} calculated the homotopy type of $\Diff(M)$ for certain classes of lens spaces. For closed Haken 3-manifolds, Waldhausen \cite{Waldhausen68} proved that $\pi_0\Diff(M)\cong \Out(\pi_1(M))$, the outer automorphism group of $\pi_1(M)$ (when $\partial M\neq \emptyset$, one must also preserve the peripheral structure). The results of Hatcher \cite{Hatcher76} and Ivanov \cite{Ivanov76} mentioned above imply that for Haken 3-manifolds, $\Diff_0(M)$ is homotopy equivalent to a torus of dimension at most 3.  Combined with Waldhausen's theorem, this largely reduces questions of homotopy type of $\Diff(M)$ to those of $\pi_0\Diff(M)$.

Mapping class groups of Haken Seifert-fibered 3-manifolds can be computed in terms of their base orbifold surfaces, while Thurston's proof  of hyperbolisation  for Haken atoroidal 3-manifolds \cite{ThurstonAtoroidal} implies that in this case $\pi_0\Diff(M)\cong \Isom(M)$, a finite group. More generally, results of McCullough \cite{McCullough1991} imply that $\pi_0\Diff(M)$ has finite type for any Haken 3-manifold. For certain classes of non-Haken Seifert-fibered 3-manifolds such as lens spaces, analogous results to Waldhausen's theorem were obtained along with computations of mapping class groups (e.g. \cite{Asano1978,Rubinstein1979,Bonahon1983,BirmanRubinstein1984,Scott85, BoileauOtal1986}).

Since the Smale conjecture asserts that $\Diff(S^3)\simeq \Or(4)=\Isom(\bbS^3)$, following Perelman's proof of the geometrisation theorem \cite{Perelman:2002-1,Perelman:2003-1,Perelman:2003-2} it is natural to ask whether a similar statement holds for other geometric 3-manifolds.  
We refer to the statement that $\Isom(M)\hookrightarrow \Diff(M)$ is a homotopy equivalence as the \emph{strong generalised Smale conjecture}, and to the statement that $\Isom_0(M)\hookrightarrow \Diff_0(M)$ is a homotopy equivalence as the \emph{weak generalised Smale conjecture}.
The strong form is generally false, even among irreducible geometric 3-manifolds.
For example, the isometry group of any flat metric on the torus $T^3$ has finitely many connected components, while $\pi_0\Diff(T^3)\cong \GL_3(\Z)$ by Waldhausen's theorem. However, for irreducible geometric manifolds, the weak form always holds, and furthermore, the homotopy type of $\Diff_0(M)$ is known. We discuss this further, with a complete list of references, as part of our proof in Section~\ref{section:strongsmale}. 

For reducible 3-manifolds, less is known about the homotopy type of $\Diff(M)$. Hatcher showed that for~$M\cong P_1 \# P_2$ the connected sum of two irreducible 3-manifolds, one can reduce the calculation of the homotopy type of $\Diff(M)$ to the calculation for $\Diff(P_1)$ and $\Diff(P_2)$ \cite{Hatcher1981}. We give an alternative proof of this result at the end of Section~\ref{section: systems of spheres}. In general, however, $\Diff_0(M)$ need not be homotopy equivalent to a finite CW complex as in the irreducible case; indeed if $M$ has at least 3 prime factors, McCullough and Kalliongis show that $\pi_1\Diff_0(M)$ is infinitely generated \cite{KalliongisMcCullough}.

In foundational work, Laudenbach \cite{Laudenbach73} established that homotopy implies isotopy for 2-spheres in any 3-manifold $M$ (assuming the Poincar\'{e} conjecture). He used this to show that the mapping class group of a connected sum $(S^1\times S^2)^{\# g}$ is an extension of $\Out(F_g)$ by $(\Z/2)^g$, where $F_g$ is the free group of rank $g$.  More generally, mapping class groups of Haken 3-manifolds were shown to be finitely presented by Waldhausen \cite{Waldhausen68} and Grasse \cite{Grasse1989}, and mapping class groups of reducible 3-manifolds were shown to be finitely presented by Hatcher and McCullough \cite{HatcherMcCullough1990}. As part of this proof they build a complex whose vertices are given by isotopy classes of embedded spheres, and prove this complex is simply connected \cite[Proposition 2.2]{HatcherMcCullough1990}. Note that in our proof we do not define our complexes of spheres in terms of isotopy classes -- this is because the isotopy class of an embedded sphere in a 3-manifold is not contractible in general.

A program announced by C\'esar de S\'a and Rourke \cite{CesardeSaRourke}, then completed by Hendriks and Laudenbach \cite{HendriksLaudenbach1984}, and Hendriks and McCullough \cite{HendriksMcCullough},  aimed to study  $\Diff_\partial(M)$ for reducible $M$ in terms of the irreducible prime pieces $P_i$ appearing in the decomposition of $M$. They describe $\Diff_\partial(M)$ as fitting into a fiber sequence with base space an embedding space of punctured 3-disks in $M$ that give a decomposition into (once-punctured) irreducible prime pieces and 1-handles, and with fiber the product of boundary-fixing diffeomorphism groups of these complementary pieces. We do not utilise these fiber sequences during our proofs, but these papers and Hatcher's unfinished draft \cite{HatcherReducible}  provided inspiration at the outset of this project, and in particular for our upcoming work which we now outline.

\subsection{Motivation and upcoming work}\label{subsection - upcoming work}

One motivation for this work came from previous work of the first two authors \cite{BoydBregman22}, in which the homotopy type of the embedding space of a split link is studied. Dropping the assumption that $M$ be compact, when the 3-manifold $M$ is the complement of a link $\rho$ in $\mathbb{R}^3$, our space $\sepNP(M)$ agrees with the space $\sep(\rho)$ defined in \cite{BoydBregman22}, and the associated semisimplicial space $\|\sep(\rho)_\bullet\|$ is shown to be contractible in \cite[Theorem A]{BoydBregman22}. 

Another motivation came from the perspective of modular $\infty$-operads, based on the third author's joint work with Barkan on $\infty$-properads \cite{equifibered-properads}.
Operadic structures have been used effectively to organise collections of moduli spaces, for example in Budney's work on long knots~\cite{Budney}, Costello's work on Riemann surfaces with boundary \cite{costello2004ainfinity}, and Giansiracusa's work on diffeomorphisms of handlebodies~\cite{Giansiracusa11}.
In upcoming work, we will explain how the moduli spaces of connected $3$-manifolds assemble into a modular $\infty$-operad whose composition operation is given by connected sum, 
and how the contractibility of $\|\sepNP_\bt(M)\|$ implies that this modular $\infty$-operad is freely generated by irreducible manifolds.

Further to the above, inspired by the program of \cite{CesardeSaRourke}, \cite{HendriksLaudenbach1984}, \cite{HendriksMcCullough}, and \cite{HatcherReducible} outlined above, in our upcoming work we will also construct a map on the level of classifying spaces
from~$\BDiff(M)$ to $\BDiff(\amalg_i P_i)$ where the~$P_i$ are the irreducible prime factors of~$M$. 
We describe
the fiber of this map as a homotopy colimit of certain framed configuration spaces on the~$P_i$, enabling effective computations. 
We illustrate this by computing the rational cohomology ring of~$\BDiff\left((S^1\times S^2)^{\# 2}\right)$. We will discuss past work on this topic in greater detail in this upcoming paper.

\subsection{Outline}
In Section~\ref{section: preliminaries} we give background on several topics, set up notation and prove general results that we quote in some specific settings later. This includes subsections on 3-manifolds, fiber sequences, the homotopical orbit-stabiliser theorem, homotopy finite spaces, and topological posets. We then introduce sphere systems in Section~\ref{section: systems of spheres} and prove in Theorem~\ref{thm: sepNP contractible} that the space of non-parallel separating systems is contractible. We also reprove Hatcher's result on connected sums of irreducible manifolds (Theorem~\ref{thm:two-prime-pieces}).
In Sections~\ref{section:strongsmale} and~\ref{section:decomposing irreducible}, we prove the base case Theorem~\ref{thm:sect4main} of the conjecture for an irreducible manifold with a disk removed. 
Following this, in Section~\ref{section: finiteness of BDiff} we prove Kontsevich's conjecture to be true in Theorem~\ref{thm:more general Kontsevich-finiteness-conjecture}, and prove $\BDiff(M)$ is finite type in \cref{thm:finite-type}.

\subsection{Acknowledgements}
We would like to thanks Oscar Randal-Williams for many insightful conversations, in particular at the outset of this project. We also extend thanks to Henry Wilton, specifically for discussions and suggestions relating to \cref{section:strongsmale}.
We would also like to thank the participants of the Glasgow reading group on mapping class groups of 3-manifolds, and the Copenhagen question seminar for conversations related to this paper. Finally we would like to thank Mark Powell and Nathalie Wahl for helpful comments on a draft.

During this work the first author was supported by ERC grant No.~756444, and EPSRC Fellowship EP/V043323/1 and EP/V043323/2. 
The second author was supported by NSF grant DMS-2401403. 
The third author was supported by the Independent Research Fund Denmark (grant no.~10.46540/3103-00099B)
and the Danish National Research Foundation through the ‘Copenhagen Centre for Geometry and Topology’ (grant no.~CPH-GEOTOP-DNRF151).
\section{Background on 3-manifolds and homotopy theory}\label{section: preliminaries}
In this section we set up notation concerning $3$-manifolds and notions of finiteness, recall some results about fiber sequences, topological posets, and the homotopy orbit construction, and prove a homotopical orbit stabiliser lemma.

\subsection{3-manifolds}
    All manifolds we consider are smooth.
    Throughout,~$M$ will be a compact, orientable 3-manifold, possibly with boundary, which we denote~$\partial M$. In general, we will not assume that~$M$ is connected and instead specify when we do. We will use the notation $\scl{M}$ to denote the manifold obtained from~$M$ by filling all boundary 2-spheres with 3-balls, and we call this the \emph{spherical closure} of $M$.

We say $M$ is \emph{irreducible} if every embedded 2-sphere in~${M}$ bounds a 3-ball. $M$ is \emph{reducible} if it is not irreducible.
A connected 3-manifold~$M$ without 2-spheres in its boundary is \emph{prime} if whenever we decompose~${M}$ as a nontrivial connected sum~${M}=M_1\# M_2$, then at least one of~$M_1$ or~$M_2$ is diffeomorphic to~$S^3$.
A well-known theorem due to Kneser \cite{Kneser1929} states that every connected oriented 3-manifold without 2-spheres in its boundary admits a connected sum decomposition $M=P_1\#\cdots\# P_n$ where the \emph{prime factors} $P_i$ are prime and not diffeomorphic to~$S^3$.  Moreover Milnor showed that the oriented prime factors appearing in this decomposition are uniquely determined up to reordering~\cite{Milnor62}. 
By convention, $S^3$ has 0 prime factors. Note that $S^1\times S^2$ is the unique connected 3-manifold without boundary which is prime but not irreducible. We can therefore refine the decomposition to be $M\cong P_1\#\cdots \#P_n \# (S^1\times S^2)^{\# g}$ where the $P_i$ are the prime factors which are also irreducible. 
The reader is referred to Hempel \cite{Hempel} for proofs of these and other foundational results on 3-manifolds. 

In this paper we often work in the setting where $M$ has spherical boundary components. 
In this case we will never talk about prime factors of $M$ but only discuss prime factors of the spherical closure $\scl{M}$. We also extend this definition to disconnected manifolds: in this case a prime factor of $M$ is a prime factor of one of the connected components.

If $\Sigma\subset \interior{M}$ is a codimension $1$ submanifold, such as a disjoint union of embedded 2-spheres, then $M\setminus \Sigma$ is diffeomorphic to the interior of a compact 3-manifold with boundary, which we denote as $M\ca \Sigma$. Intuitively, $M\ca \Sigma$ is the manifold obtained from \emph{cutting} $M$ \emph{along} $\Sigma$. 
In particular, $(M \ca \Sigma)^\circ = \interior{M} \setminus \Sigma$.
Note that the boundary of $M \ca \Sigma$ contains two spheres for each sphere in $\Sigma$. 
We let $2\Sigma \subset \partial(M \ca \Sigma)$ denote the newly created boundary components.
There is a canonical map $2\Sigma \to \Sigma$ that is a double cover, and for us this will always be a trivial double-cover because $\Sigma$ will be $2$-sided/coorientable.
A schematic is shown in Figure~\ref{fig: M minus Sigma}. 

\begin{figure}[ht]
    \centering
     \resizebox{\linewidth}{!}{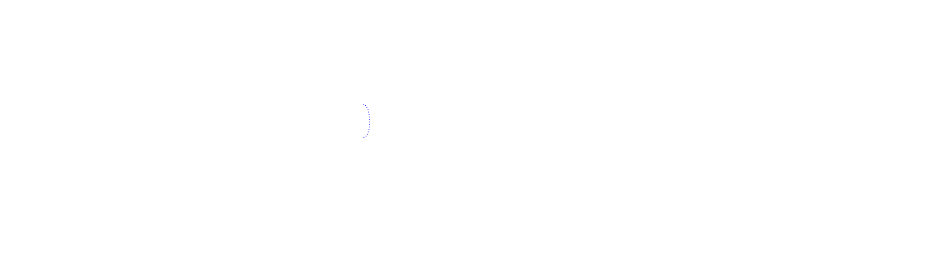}
    \caption{A disjoint union of embedded $2$-spheres~$\Sigma \subset M$ and the compact 3 manifold~$M\ca \Sigma$. We remark that the irreducible prime factors of both manifolds are the same.}
    \label{fig: M minus Sigma}
\end{figure}

\subsection{Fiber sequences of embedding spaces}
In this section we review results on fiber sequences of embedding spaces due to Palais \cite{Palais60} in the closed case and independently by Cerf \cite{Cerf} for manifolds with with corners and in greater generality (see also Lima \cite{Lima64}, who gave a simplified proof of Palais' result). The treatment we give here closely follows the exposition in \S 2 of Cantero Mor\'an--Randal-Williams \cite{CMRW17}.

For manifolds $M$ and $V$, we let $\emb(V,M)$ denote the space of embeddings of $V$ into $M$, equipped with the $C^\infty$-topology. 
The group of diffeomorphisms of $M$ is denoted $\Diff(M)$. If $V\subseteq M$ is a submanifold, we denote by $\Diff_V(M)$ the subgroup of $\Diff(M)$ consisting of diffeomorphisms which fix $V$ pointwise, and by $\Diff(M,V)$ the subgroup of $\Diff(M)$ consisting of diffeomorphisms which preserve $V$ setwise, \emph{i.e.}~$\varphi\in \Diff(M,V)$ if and only if $\varphi(V) = V$.
We denote the subgroup $\Diff_{\partial M} (M)$ by $\Diff_\partial(M)$.

Consider a compact manifold $M$ and a compact submanifold $V \subset \interior{M}$.
Both $M$ and $V$ may have boundary.
Palais and Cerf showed that there is a fiber sequence
\[\Diff_V(M)\hookrightarrow \Diff(M)\rightarrow \emb(V,\interior{M})\]
where the fibration is given by acting with $\Diff(M)$ on the standard embedding $V \subset M$ \cite{Palais60,Cerf}.
To prove this, it in fact suffices to construct a local section of the action map, which then can be used to obtain local trivializations.
This can be formalised using the concept of $G$-locally retractile spaces, as explained in \cite[\S2.3]{CMRW17}.
\begin{defn}\label{defn:locally-retractile}
    A space $X$ with an action of a topological group $G$ is \emph{$G$-locally retractile} 
    if for each $x \in X$ there is a neighbourhood $U \subset X$ and a map $\xi\colon U \to G$ such that $\xi(u).x = u$ holds for all $u \in U$.
\end{defn}

These $G$-locally retractile spaces will prove useful for showing that certain maps are locally trivial fiber bundles.
We will need the following elementary properties \cite[Lemmas 2.4, 2.5, 2.6]{CMRW17}.
\begin{lem}\label{lem:locally-retractile}
    Let $X$ be a $G$-locally retractile space.
    \begin{enumerate}[(1)]
        \item 
        Any $G$-equivariant map $f\colon Y \to X$ is a locally trivial fiber bundle.
        \item 
        If a $G$-equivariant map $g\colon X \to Z$ has local sections, then $Z$ is also $G$-locally retractile.
        \item If $X$ is locally path connected, then $X$ is also $G_0$-locally retractile for $G_0 \subset G$ the path component of the identity.
        \item If $G < H$ is a larger topological group such that the $G$-action on $X$ extends to an $H$-action, then $X$ is also $H$-locally retractile.
    \end{enumerate}
\end{lem}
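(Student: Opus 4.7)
All four parts follow by manipulating the local retractions $\xi\colon U \to G$ provided by Definition~\ref{defn:locally-retractile}, and I would prove them in order. For part (1), given a $G$-equivariant $f\colon Y \to X$ and $x_0 \in X$, I would pick a neighbourhood $U \ni x_0$ with retraction $\xi\colon U \to G$ and verify that the map
\[
U \times f^{-1}(x_0) \longrightarrow f^{-1}(U), \qquad (u,y) \longmapsto \xi(u)\cdot y,
\]
is a homeomorphism over $U$. Target well-definedness uses $G$-equivariance, since $f(\xi(u)\cdot y) = \xi(u)\cdot x_0 = u$, and the continuous inverse is $y' \mapsto \bigl(f(y'),\,\xi(f(y'))^{-1}\cdot y'\bigr)$. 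Running over a cover of $X$ by such neighbourhoods produces the local trivialisations.

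For part (2), given $z \in Z$ with a local section $s\colon V \to X$ of $g$, I would set $x_0 := s(z)$, choose a retraction $\xi\colon U \to G$ around $x_0$, and define $\xi'\colon s^{-1}(U) \cap V \to G$ by $\xi'(v) := \xi(s(v))$. The $G$-equivariance of $g$ then gives
\[
\xi'(v)\cdot z \;=\; \xi(s(v))\cdot g(x_0) \;=\; g\bigl(\xi(s(v))\cdot x_0\bigr) \;=\; g(s(v)) \;=\; v,
\]
exhibiting $z$'s new retraction neighbourhood. Part (4) is essentially formal: the inclusion $\iota\colon G \hookrightarrow H$ is continuous, so $\iota\circ\xi$ directly witnesses $H$-local retractility, with no further modification needed.

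The main obstacle is part (3), since naively restricting $\xi$ to a path-connected neighbourhood need not land $\xi$ in $G_0$. My plan is to correct $\xi$ at the basepoint. First I would shrink $U$, using local path-connectedness of $X$, so that $U$ is path-connected and still contains $x_0$. Then I would set
\[
\xi_0(u) \;:=\; \xi(u)\cdot \xi(x_0)^{-1}.
\]
Since $\xi(x_0)\cdot x_0 = x_0$, the element $\xi(x_0)$ stabilises $x_0$, so $\xi_0(u)\cdot x_0 = \xi(u)\cdot x_0 = u$, and in addition $\xi_0(x_0) = e$. Because $\xi_0$ is continuous on the path-connected set $U$, its image is a path-connected subset of $G$ containing $e$, and therefore lies entirely in $G_0$. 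This completes the verification of all four parts.
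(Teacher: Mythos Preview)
Your proof is correct in all four parts. The paper itself does not prove this lemma; it simply cites \cite[Lemmas 2.4, 2.5, 2.6]{CMRW17} and states the result, so your argument supplies exactly what the paper omits and matches the standard proofs found in that reference.
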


Now Palais' and Cerf's theorem can be stated as follows.
As before, $M$ and $V$ are compact manifolds, possibly with boundary.
\begin{thm}\label{thm:emb-retractile}
    The embedding space $\emb(V, \interior{M})$ is $\Diff_c(\interior{M})$-locally retractile.
\end{thm}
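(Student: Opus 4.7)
The plan is to construct, for each embedding $e_0 \in \emb(V, \interior{M})$, an explicit local section of the orbit map
\[
\Diff_c(\interior{M}) \longrightarrow \emb(V, \interior{M}), \qquad \phi \longmapsto \phi \circ e_0,
\]
defined on a neighborhood of $e_0$. This is the classical argument of Palais and Cerf, and the key tool is a tubular neighborhood of $e_0(V)$ together with a radial cutoff that allows a small normal perturbation of $e_0$ to be extended to a compactly supported ambient diffeomorphism.

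First, I would equip $M$ with an auxiliary Riemannian metric. Since $e_0(V) \subset \interior{M}$ is compact, the exponential map provides, for $\varepsilon > 0$ sufficiently small, a diffeomorphism $\tau\colon \nu_\varepsilon \xrightarrow{\cong} W \subset \interior{M}$ from the open $\varepsilon$-disk subbundle of the normal bundle $\nu$ of $e_0(V)$ onto an open neighborhood $W$ with $\overline{W}$ compact in $\interior{M}$. Let $U \subset \emb(V, \interior{M})$ be the open neighborhood of $e_0$ consisting of those $f$ with $f(v) \in \tau(\nu_\varepsilon|_{e_0(v)})$ for every $v \in V$; each such $f$ is described by a unique section $s_f$ of $\nu_\varepsilon$ via $f(v) = \tau(s_f(v))$, and $f \mapsto s_f$ is continuous for the $C^\infty$-topologies.

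Next, I would convert $s_f$ into a compactly supported diffeomorphism of $\interior{M}$. Fix a smooth cutoff $\rho\colon [0, \varepsilon) \to [0,1]$ with $\rho \equiv 1$ near $0$ and $\rho \equiv 0$ near $\varepsilon$, and, after possibly shrinking $U$ so that $\|s_f\|_\infty$ is small enough to control the $C^1$-size of the perturbation, define
\[
\xi(f)\bigl(\tau(q)\bigr) = \tau\bigl(q + \rho(|q|)\, s_f(\pi(q))\bigr) \quad \text{for } q \in \nu_\varepsilon,
\]
and $\xi(f) = \id$ on $\interior{M} \setminus W$, where $\pi\colon \nu \to e_0(V)$ is the bundle projection and $|\cdot|$ is the fiber norm. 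Because $\rho$ vanishes near the boundary of the disk bundle, the two formulas glue smoothly, and $\xi(f)$ is supported in $\overline{W}$. Evaluating at $q = 0$ gives $\xi(f)(e_0(v)) = \tau(s_f(v)) = f(v)$, i.e.\ $\xi(f) \circ e_0 = f$, as required by \cref{defn:locally-retractile}.

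The main obstacle is to verify that, for sufficiently small $U$, the formula above defines a genuine \emph{diffeomorphism} of $\interior{M}$ and that $\xi\colon U \to \Diff_c(\interior{M})$ is continuous into the $C^\infty$-topology. The first uses the inverse function theorem applied fiberwise to $q \mapsto q + \rho(|q|) s_f(\pi(q))$, which is $C^1$-close to the identity when $\|s_f\|_\infty$ is small, together with compact support to upgrade local bijectivity to a global diffeomorphism. The second is a routine check that $\xi$ is obtained by composing standard continuous operations on sections of vector bundles, cutoffs, and the exponential map. Once these are established, $\xi$ is the desired local retraction at $e_0$, and since $e_0 \in \emb(V, \interior{M})$ was arbitrary, the theorem follows.
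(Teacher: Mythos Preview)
The paper does not include a proof of this theorem; it is stated as a classical result of Palais and Cerf, so there is nothing in the paper to compare your argument against directly.

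That said, your proposal has a genuine gap: the set $U$ is \emph{not} an open neighborhood of $e_0$ when $\dim V > 0$. The condition $f(v) \in \tau(\nu_\varepsilon|_{e_0(v)})$ demands that $f(v)$ lie in the normal disk to $e_0(V)$ through the specific point $e_0(v)$, i.e.\ that the tubular projection of $f(v)$ be exactly $e_0(v)$. But a nearby embedding may move points tangentially along $e_0(V)$: for instance $f = e_0 \circ \psi$ with $\psi \in \Diff(V)$ close to the identity has $f(v) = e_0(\psi(v))$, which lies in the normal fiber over $e_0(\psi(v))$, not over $e_0(v)$. Such $f$ are arbitrarily $C^\infty$-close to $e_0$ yet do not belong to your $U$, so $s_f$ is undefined and your construction does not furnish a section on an open set.

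The standard repair is to let $s_f(v) = \exp_{e_0(v)}^{-1}(f(v))$ take values in the \emph{full} tangent space $T_{e_0(v)}M$ rather than the normal bundle, so that the set of $f$ for which $s_f$ is defined really is open, and then to build $\xi(f)$ as the time-$1$ flow of a compactly supported time-dependent vector field extending the velocity of the isotopy $t \mapsto \exp_{e_0(-)}(t\, s_f(-))$. This is the isotopy extension argument underlying the Palais--Cerf proof; your radial-translation formula works as written only when $\dim V = 0$.
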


Here $\Diff_c(\interior{M})$ is the group of diffeomorphisms of $\interior{M}$ with compact support, which we can equivalently think of as the subgroup of $\Diff_\partial(M)$ containing those diffeomorphisms that fix some (unspecified) neighbourhood of the boundary.
Using \cref{lem:locally-retractile}(4), we see that $\emb(V, \interior{M})$ is also $\Diff_\partial(M)$- and $\Diff(M)$-locally retractile.
Moreover, $\emb(V, \interior{M})$ is locally path connected, so we may restrict to the path component of the identity in these groups.
Cerf not only considers the case where $V$ is embedded in the interior of $M$, but also allows for more general ``face constraints'', see \cite[\S2.2 and Proposition 2.9]{CMRW17}.
The specific case we will need is the following:
\begin{thm}\label{thm:boundary-locally-retractile}
    The embedding space $\emb(V, \partial M)$ is $\Diff(M)$-locally retractile.
    In particular $\Diff(\partial M)$ is $\Diff(M)$-locally retractile.
\end{thm}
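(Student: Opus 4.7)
The plan is to obtain the desired $\Diff(M)$-equivariant local sections by combining two ingredients: (i) the $\Diff(\partial M)$-local retractility of $\emb(V, \partial M)$, which comes for free from \cref{thm:emb-retractile} applied to the closed manifold $\partial M$, and (ii) a continuous local section of the restriction homomorphism $r \colon \Diff(M) \to \Diff(\partial M)$ defined near $\id_{\partial M}$. Composing (ii) with (i) will give what is required, and I expect (ii) to be the main technical step.

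For (i), since $\partial M$ is a manifold without boundary, $V \subset \partial M$ already lies in its interior, so \cref{thm:emb-retractile} with ambient $\partial M$ shows that $\emb(V, \partial M)$ is $\Diff(\partial M)$-locally retractile (noting $\Diff_c(\partial M) = \Diff(\partial M)$ by compactness of $\partial M$). For any $f_0 \in \emb(V, \partial M)$ this produces a neighbourhood $U$ of $f_0$ and a continuous $\xi' \colon U \to \Diff(\partial M)$ with $\xi'(g) \circ f_0 = g$. Replacing $\xi'(g)$ by $\xi'(g) \circ \xi'(f_0)^{-1}$ and using that $\xi'(f_0)$ fixes $f_0(V)$ pointwise, I may arrange $\xi'(f_0) = \id_{\partial M}$.

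For (ii), I would fix a collar $c \colon \partial M \times [0, 1) \hookrightarrow M$, a smooth cut-off $\rho \colon [0, 1) \to [0, 1]$ with $\rho(0) = 1$ and $\rho \equiv 0$ on $[1/2, 1)$, and an auxiliary Riemannian metric on $\partial M$. For $\phi \in \Diff(\partial M)$ sufficiently close to the identity there is a unique small vector field $Y_\phi$ on $\partial M$ whose time-one flow equals $\phi$, depending continuously on $\phi$. Define $s(\phi) \in \Diff(M)$ by
\[
    s(\phi)\bigl(c(x, t)\bigr) = c\bigl(\Phi^{Y_\phi}_{\rho(t)}(x),\, t\bigr)
\]
on the collar, where $\Phi^Y_\tau$ is the time-$\tau$ flow of $Y$, and extend by the identity outside the collar. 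Each slice $\Phi^{Y_\phi}_{\rho(t)}$ is a diffeomorphism of $\partial M$, equal to $\phi$ at $t = 0$ and to $\id$ for $t \geq 1/2$, so the formula glues to a diffeomorphism of $M$ restricting to $\phi$ on $\partial M$, depending continuously on $\phi$. The main obstacle is making this rigorous: one needs $\phi \mapsto Y_\phi$ to be a local inverse of the Lie-theoretic exponential at the identity (standard for closed manifolds) and smoothness of $s(\phi)$ across $\rho^{-1}(0)$, both of which are classical.

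After shrinking $U$ so that $\xi'(U)$ lies in the domain of $s$, the composition $\xi := s \circ \xi' \colon U \to \Diff(M)$ satisfies $\xi(g)|_{\partial M} \circ f_0 = \xi'(g) \circ f_0 = g$, establishing the $\Diff(M)$-local retractility of $\emb(V, \partial M)$. The ``in particular'' clause follows since $\Diff(\partial M)$ is the open, $\Diff(M)$-invariant subspace of surjective maps in $\emb(\partial M, \partial M)$, and local retractility passes to open invariant subspaces.
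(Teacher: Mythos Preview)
The paper does not supply its own proof of this statement; it is recorded as a special case of Cerf's isotopy extension theorem with face constraints, with a pointer to \cite[\S2.2 and Proposition~2.9]{CMRW17}. So your proposal is an independent argument rather than a comparison target.

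Your overall strategy is correct and natural: first use \cref{thm:emb-retractile} on the closed manifold $\partial M$ to get $\Diff(\partial M)$-local retractility of $\emb(V,\partial M)$, then promote this to $\Diff(M)$-local retractility via a continuous local section of the restriction $r\colon \Diff(M)\to \Diff(\partial M)$ near the identity. The ``in particular'' deduction is also fine.

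There is, however, a genuine gap in your construction of the local section. You write that for $\phi$ close to the identity there is a unique small vector field $Y_\phi$ whose time-one flow is $\phi$, calling this ``a local inverse of the Lie-theoretic exponential at the identity (standard for closed manifolds)''. This is not standard; it is false. The exponential map $\mathrm{Vect}(\partial M)\to \Diff(\partial M)$ is in general \emph{not} a local diffeomorphism, and indeed not locally surjective---already for $\Diff(S^1)$ there are diffeomorphisms arbitrarily close to the identity that are not the time-one map of any vector field (see e.g.\ Milnor's remarks on infinite-dimensional Lie groups, or Kriegl--Michor). So the map $\phi\mapsto Y_\phi$ you invoke need not exist.

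The fix is easy and preserves your architecture: replace the Lie exponential by Riemannian geodesic interpolation. For $\phi$ sufficiently $C^1$-close to $\id_{\partial M}$, set $X_\phi(x):=\exp_x^{-1}(\phi(x))$ and $\phi_s(x):=\exp_x\!\bigl(s\,X_\phi(x)\bigr)$. Then $\phi_0=\id$, $\phi_1=\phi$, each $\phi_s$ is a diffeomorphism for $\phi$ close enough to $\id$ (its derivative is close to the identity uniformly in $s$), and $(\phi,s,x)\mapsto \phi_s(x)$ is smooth. Now your collar formula $s(\phi)(c(x,t))=c\bigl(\phi_{\rho(t)}(x),t\bigr)$ defines the desired local section, and the rest of your argument goes through unchanged.
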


There is a natural right action of $\Diff(V)$ on $\emb(V,\interior{M})$ by precomposition.
This action is free and has the effect of reparametrising an embedding, hence the quotient 
\[\umb(V,\interior{M}):=\emb(V,\interior{M})/\Diff(V)\]
is the space of \emph{unparametrised embeddings} of $V$ into $N$.
As a set we can identify this with the set of submanifolds of $\interior{M}$ that happen to be diffeomorphic to $V$.
By Binz--Fisher\cite{BinzFischer78} and Michor \cite[13.11]{Michor80}, the quotient map $\emb(V,\interior{M})\rightarrow \umb(V,\interior{M})$ is a locally trivial fiber bundles with fiber $\Diff(V)$.
(This is proved only when $V$ is closed but, as pointed out in \cite[Propositon 2.11]{CMRW17}, the proofs adapt to the setting of manifolds with boundary.)
Crucially for us, this implies that the unparametrised embedding space is $\Diff(V)$-, and hence $\Diff(M)$-, locally retractile.
\begin{cor}\label{cor:umb-retractile}
    The space of unparametrised embeddings $\umb(V, \interior{M})$ is $\Diff_\partial(M)$-locally retractile.
\end{cor}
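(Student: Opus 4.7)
The plan is to combine \cref{thm:emb-retractile} with the fact that $\emb(V, \interior{M}) \to \umb(V, \interior{M})$ is a principal $\Diff(V)$-bundle, and then feed this into \cref{lem:locally-retractile}(2), which upgrades local retractility from a source to a target along an equivariant map with local sections.

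First I would note that by \cref{thm:emb-retractile}, the space $\emb(V, \interior{M})$ is $\Diff_c(\interior{M})$-locally retractile. Since $\Diff_c(\interior{M}) < \Diff_\partial(M)$ and the left action by pre-composing with a diffeomorphism of $M$ extends the compactly supported action, \cref{lem:locally-retractile}(4) gives that $\emb(V, \interior{M})$ is also $\Diff_\partial(M)$-locally retractile.

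Next I would verify that the quotient map
\[
q \colon \emb(V, \interior{M}) \longrightarrow \umb(V, \interior{M}) = \emb(V,\interior{M})/\Diff(V)
\]
is $\Diff_\partial(M)$-equivariant: the left $\Diff_\partial(M)$-action by post-composition on embeddings commutes with the right $\Diff(V)$-action by re-parametrisation, so it descends to $\umb(V,\interior{M})$, and $q$ is tautologically equivariant for these actions. By the cited result of Binz--Fischer and Michor (with the extension to manifolds with boundary noted in \cite[Proposition 2.11]{CMRW17}), $q$ is a locally trivial principal $\Diff(V)$-bundle, hence in particular admits local sections around every point of $\umb(V,\interior{M})$.

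Finally, applying \cref{lem:locally-retractile}(2) to $q$ with $G = \Diff_\partial(M)$ yields that $\umb(V, \interior{M})$ is $\Diff_\partial(M)$-locally retractile, as claimed. There is no real obstacle here: the content is packaged in the inputs (\cref{thm:emb-retractile}, the Binz--Fischer/Michor bundle theorem, and \cref{lem:locally-retractile}), and the only thing to check by hand is the straightforward equivariance of $q$, which follows because the two actions are on opposite sides.
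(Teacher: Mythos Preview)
Your proof is correct and follows essentially the same route as the paper: the paper also deduces the corollary from the Binz--Fischer/Michor principal bundle result together with \cref{thm:emb-retractile}, using that a quotient map with local sections transports local retractility via \cref{lem:locally-retractile}(2). You have simply made explicit the steps (the passage from $\Diff_c(\interior{M})$ to $\Diff_\partial(M)$ via \cref{lem:locally-retractile}(4), and the equivariance of $q$) that the paper leaves implicit.
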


As before this implies that $\umb(V, \interior{M})$ is $\Diff(M)$ and also $\Diff_0(M)$-locally retractile.
The stabiliser groups of the action by $\Diff(M)$ are precisely $\Diff(M, V)$, so we have a fiber sequence
\[
    \Diff(M, V) \longrightarrow \Diff(M) \longrightarrow \umb(V, \interior{M}).
\]

We will also use the following standard results about the contractibility of collars.

\begin{thm}[{\cite[5.2.1, Corollaire 1]{Cerf}}]\label{thm:contractible-collars}
    The space of collars
    \[
        \emb_{\partial M \times \{0\}}(\partial M \times I, M) = \{ i \in \emb(\partial M \times [0,1], M) \;|\; i_{|\partial M \times \{0\}} = \mathrm{id}_{\partial M} \}
    \]
    is weakly contractible.
    It follows that the subgroup inclusion
    \[
        \Diff_U(M) \to \Diff_{\partial U}(M \setminus \interior{U})
    \]
    is a weak equivalence
    for any compact codimension $0$ submanifold $U \subset \interior{M}$.
\end{thm}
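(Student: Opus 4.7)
The plan is to first establish the weak contractibility of the space of collars $\mathrm{Coll} := \emb_{\partial M \times \{0\}}(\partial M \times I, M)$, and then deduce the weak equivalence $\Diff_U(M) \to \Diff_{\partial U}(M \setminus \interior{U})$ by a straightening argument for diffeomorphisms near $\partial U$.

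For the first part, I would follow the classical strategy of comparing an arbitrary collar with a fixed reference collar via rescaling. Fix $c_0 \in \mathrm{Coll}$. Given any other collar $c$, the rescaled collar $c^\epsilon(x,t) := c(x, \epsilon t)$ has image contained in the image of $c_0$ for $\epsilon$ sufficiently small, so it can be written as $c^\epsilon = c_0 \circ h^\epsilon$ for a self-embedding $h^\epsilon \colon \partial M \times I \hookrightarrow \partial M \times I$ fixing $\partial M \times \{0\}$ pointwise. The space of such self-embeddings is weakly contractible (via a convex-combination argument on the inward normal velocity, combined with a reparametrization of the $I$ factor to restore a full collar), so one obtains a canonical path from $c$ to $c_0$. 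Choosing $\epsilon$ continuously in $c$ over any compact family---which is possible by openness of the nesting condition---yields the required weak contraction of $\mathrm{Coll}$.

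For the second part, the key observation is that an element of $\Diff_U(M)$ is the same data as an element $\phi$ of $\Diff_{\partial U}(M \setminus \interior{U})$ whose $\infty$-jet along $\partial U$ in the inward normal direction matches that of the identity, so that extension by the identity on $U$ is smooth. To produce a homotopy inverse to the inclusion, I would use the contractibility of the space of collars of $\partial U$ in $M \setminus \interior{U}$ (supplied by part one). Given a continuous family $\{\phi_k\}_{k \in K}$ over a compact $K$, pick a continuous family of collars $c_k$ and construct an isotopy of each $\phi_k$ to a diffeomorphism $\phi_k'$ that equals the identity on the sub-collar $c_k(\partial U \times [0, \tfrac12])$ and equals $\phi_k$ outside $\mathrm{im}(c_k)$; concretely, this is done by conjugating $\phi_k$ with a shrinking-collar diffeomorphism supported in $\mathrm{im}(c_k)$. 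The endpoints $\phi_k'$ then have trivial normal jets along $\partial U$ and so extend smoothly by the identity on $U$ to yield the desired family in $\Diff_U(M)$.

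The main obstacle is organising the straightening so that it depends continuously on $\phi$ in the $C^\infty$ topology and genuinely produces a smooth diffeomorphism. Plain conjugation by a shrinking-collar diffeomorphism only brings $\phi_k$ close to the identity in $C^0$, and higher normal jets must also be trivialised; this is handled by iterating the conjugation with a Borel-type jet-killing argument that successively subtracts off the Taylor polynomials of $\phi_k$ along the collar coordinate, which is standard but technically delicate. Once the straightening is built, contractibility of the collar space from part one ensures that any two sets of auxiliary choices are related by a canonical homotopy, so the construction assembles into a genuine two-sided homotopy inverse to the inclusion $\Diff_U(M) \hookrightarrow \Diff_{\partial U}(M \setminus \interior{U})$.
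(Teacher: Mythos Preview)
The paper does not supply its own proof of this theorem; it is quoted as a result of Cerf. Your sketch for the contractibility of the collar space is the standard argument and is fine.

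For the second part, your strategy works but the jet-killing detour is avoidable. The clean argument is to act on a fixed collar $c_0$ of $\partial U$ in $M\setminus\interior{U}$: this gives locally trivial fibrations
\[
\Diff_U(M)\longrightarrow \mathrm{Coll}
\qquad\text{and}\qquad
\Diff_{\partial U}(M\setminus\interior{U})\longrightarrow \mathrm{Coll},
\]
both by $\phi\mapsto\phi\circ c_0$ (using Cerf's locally retractile statement for embeddings with face constraints, recalled just before this theorem in the paper). Part one says the base is contractible, so each total space is weakly equivalent to its fibre over $c_0$, namely $\Diff_{U\cup\,\mathrm{im}(c_0)}(M)$ and $\Diff_{\mathrm{im}(c_0)}(M\setminus\interior{U})$ respectively. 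These two groups are \emph{identified} by restriction, because a diffeomorphism of $M\setminus\interior{U}$ that is the identity on an entire collar extends by the identity over $U$ smoothly, with no condition on jets. The inclusion $\Diff_U(M)\to\Diff_{\partial U}(M\setminus\interior{U})$ is the composite of these equivalences.

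Your conjugation-by-shrinking idea, as you correctly observe, does not directly produce the identity near $\partial U$: it converges to the normal linearisation of $\phi$ along $\partial U$, not to $\id$. The Borel-type repair you propose is correct but substantially heavier than the fibration argument, which packages the same isotopy-extension input without ever needing to control jets by hand.
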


\subsection{The homotopy orbit construction}

Given a topological group $G$, we say that a $G$-space $X$ is principal if $X \to X/G$ is a $G$-principal bundle. 
For each $G$, we fix a weakly contractible $G$-principal space $EG$.
Then the \emph{homotopy orbit construction} of a $G$-space $X$ is defined as
\[
    X \hq G := (X \times EG)/G
\]
and we also refer to it as the \emph{homotopy quotient} of $X$ by $G$.
This fits into a fiber sequence
\[
    X \longrightarrow X\hq G \longrightarrow * \hq G = BG.
\]

\begin{defn}\label{defn: principal ses}
    A \emph{principal short exact sequence} is a short exact sequence of topological groups
    \[
        1 \to G_1 \to G_2 \to G_3 \to 1
    \]
    such that the map $G_2 \to G_3$ is a $G_1$-principal bundle.
    Equivalently, we can say that $G_3$ is $G_2$-locally retractile with stabiliser $G_1$.
\end{defn}

The following lemma allows us to combine principal short exact sequences of groups and equivariant fiber sequences to obtain a homotopy fiber sequence of homotopy quotients.
See Basualdo Bonatto \cite[Corollary 2.11]{bonatto2023} for a detailed proof.
\begin{lem}\label{lem:fiberseq/fiberseq}
    Suppose we have a principal short exact sequence of topological groups $G_i$ as above and let $S_i$ be a $G_i$-space such that there exists a fiber sequence of equivariant maps
    $S_1 \to S_2 \to S_3$.
    Then the maps on quotients form a homotopy fiber sequence
    \[
        S_1 \hq G_1 \longrightarrow S_2 \hq G_2 \longrightarrow S_3 \hq G_3.
    \]
\end{lem}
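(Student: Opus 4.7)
The plan is to factor the map $S_2\hq G_2 \to S_3\hq G_3$ through an intermediate homotopy quotient, and then identify the two resulting maps as fibrations with concrete fibers whose composition has fiber $S_1\hq G_1$.

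First I would pick compatible models for the universal bundles. Let $E_2$ and $E_3$ be contractible free right $G_2$- and $G_3$-spaces, respectively, and view $E_3$ as a $G_2$-space via the quotient $G_2\twoheadrightarrow G_3$. Then $E_2\times E_3$ is a contractible free $G_2$-space, hence a model for $EG_2$; restricting along $G_1\subset G_2$ it is also a model for $EG_1$, while I take $EG_3 = E_3$. Next, using the equivariant path-space construction, I would replace the equivariant map $f\colon S_2\to S_3$ by a $G_2$-equivariant Hurewicz fibration; by the assumption that $S_1\to S_2\to S_3$ is a fiber sequence, this does not alter any of the homotopy quotients up to equivalence, and $S_1\simeq f^{-1}(s_3^0)$ as a $G_1$-space.

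Now factor the map under consideration as
\[
    (S_2 \times E_2 \times E_3)/G_2 \xrightarrow{\;\alpha\;} (S_3 \times E_2 \times E_3)/G_2 \xrightarrow{\;\beta\;} (S_3 \times E_3)/G_3.
\]
Because $f$ is a $G_2$-equivariant fibration and $E_2\times E_3$ is a free $G_2$-space, the associated bundle construction shows that $\alpha$ is a fibration whose fiber is the fiber $S_1$ of $f$. For $\beta$, the crucial observation is that $G_1$ acts trivially on both $S_3$ and $E_3$; quotienting first by $G_1$ therefore yields
\[
    (S_3 \times E_2 \times E_3)/G_2 \;\cong\; (S_3 \times BG_1 \times E_3)/G_3,
\]
and $\beta$ is now the projection that forgets the $BG_1 = E_2/G_1$ factor, which is a fiber bundle with fiber $BG_1$ since $E_3$ is $G_3$-principal.

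Composing, $\beta\circ\alpha$ is a fibration over $S_3\hq G_3$, and the strict fiber over a basepoint $[s_3^0,e_3^0]$ can be computed directly: using $G_3$-freeness of $E_3$, the stabiliser in $G_2$ of $(s_3^0,e_3^0)$ is exactly $G_1$, and the fiber collapses to $(f^{-1}(s_3^0)\times E_2)/G_1 = (S_1\times E_2)/G_1 \simeq S_1\hq G_1$. Naturality then identifies the inclusion of this fiber, up to absorbing the contractible $E_3$-factor into the model of $EG_1$, with the canonical map $S_1\hq G_1 \to S_2\hq G_2$ induced by $G_1\hookrightarrow G_2$ and $S_1\hookrightarrow S_2$. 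I expect the main obstacle to be the careful bookkeeping of actions and models, including the equivariant fibration replacement for $f$; the key conceptual input is the identification of the fiber of $\beta$ as $BG_1$, which is precisely where the principal bundle hypothesis on $1\to G_1 \to G_2 \to G_3\to 1$ is used.
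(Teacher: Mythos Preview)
The paper does not actually prove this lemma; it simply cites \cite[Corollary 2.11]{bonatto2023} for a detailed argument. Your proposal therefore supplies a proof where the paper defers to a reference.

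Your argument is correct and follows what is essentially the standard route: choose compatible universal spaces so that $E_2\times E_3$ is a model for $EG_2$ and, restricted along $G_1\hookrightarrow G_2$, also for $EG_1$ (this is exactly where \cref{lem:principal-bundle-subgroup} is used), replace $f$ by a $G_2$-equivariant Hurewicz fibration via the path-space construction, and then analyse the two-step factorisation through $(S_3\times E_2\times E_3)/G_2$. The identification of the fiber of $\beta$ with $E_2/G_1\simeq BG_1$ and of the composite fiber with $(f^{-1}(s_3^0)\times E_2)/G_1$ are both sound. One small point worth making explicit: the statement ``$S_1\simeq f^{-1}(s_3^0)$ as a $G_1$-space'' really means there is a $G_1$-equivariant \emph{weak} equivalence between them (induced by the canonical comparison map into the path-space fiber), and you are then using that $(-)\hq G_1$ preserves weak equivalences (\cref{lem:homotopy-orbits-functor}(\ref{it:hq-weq})) to pass to $S_1\hq G_1 \simeq f^{-1}(s_3^0)\hq G_1$. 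With that clarification the proof goes through.
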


The following fact about principal bundles seems to be well-known, but we provide a proof as we could not find a reference.
\begin{lem}\label{lem:principal-bundle-subgroup}
    If $H \le G$ is a topological subgroup such that $G \to G/H$ is an $H$-principal bundle and $X$ is a principal $G$-space, then $X$ is also a principal $H$-space.
\end{lem}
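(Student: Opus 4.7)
The plan is to build a local trivialisation of the quotient map $X \to X/H$ near an arbitrary point by combining a local trivialisation of $X \to X/G$ with one of $G \to G/H$. First, I fix a point $[x]_H \in X/H$ and denote its image in $X/G$ under the natural projection by $[x]_G$. Since $X$ is a principal $G$-space, there is an open neighbourhood $U \subset X/G$ of $[x]_G$ together with a $G$-equivariant homeomorphism $\phi \colon p^{-1}(U) \xrightarrow{\cong} U \times G$, where $p \colon X \to X/G$ is the projection and $G$ acts on the target by right multiplication on the $G$-factor. In particular, the restricted $H$-action on $p^{-1}(U)$ corresponds under $\phi$ to the action $(u,g)\cdot h = (u,gh)$.

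Next, I would identify the preimage of $U$ under the natural map $X/H \to X/G$ with $p^{-1}(U)/H$: an $H$-orbit in $X$ maps to a point of $U$ if and only if its representatives lie in $p^{-1}(U)$, so taking $H$-orbits commutes with restriction to $U$. Via $\phi$ this becomes $(U \times G)/H \cong U \times (G/H)$. Thus, after this identification, the restriction of $X \to X/H$ over this neighbourhood of $[x]_H$ is the map
\[
    p^{-1}(U) \xrightarrow{\phi} U \times G \xrightarrow{\id_U \times q} U \times (G/H),
\]
where $q \colon G \to G/H$ is the quotient. The second map here is the pullback of $q$ along the projection $U \times (G/H) \to G/H$, and pullbacks of $H$-principal bundles are $H$-principal bundles; combined with the homeomorphism $\phi$, this shows $X \to X/H$ is an $H$-principal bundle on the chosen neighbourhood of $[x]_H$. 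Since $[x]_H$ was arbitrary, being an $H$-principal bundle is a local property, so we conclude.

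I do not anticipate a serious obstacle: once the hypothesis that $G \to G/H$ is an $H$-principal bundle is invoked, the argument is a straightforward diagram chase. The only subtleties to keep track of are that $(X|_U)/H$ genuinely agrees with the preimage of $U$ in $X/H$, and that consistent left/right conventions for the $G$-, $H$-, and $G/H$-actions are used so that the identification $(U \times G)/H \cong U \times (G/H)$ is valid.
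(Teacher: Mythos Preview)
Your argument is correct and follows essentially the same idea as the paper's proof: localise using a trivialisation of $X \to X/G$ and then feed in the local triviality of $G \to G/H$. The packaging differs slightly: the paper first reformulates ``$X$ is a principal $G$-space'' as the existence of subsets $U_i \subset X$ for which the action maps $U_i \times G \hookrightarrow X$ are open embeddings jointly covering $X$, and then verifies this criterion for $H$ by taking products $U_i \times V_j$ with the analogous subsets $V_j \subset G$. This phrasing lives entirely on the total space and so sidesteps the quotient-topology identifications (such as $(p^{-1}(U))/H \cong U \times (G/H)$) that you flag as subtleties; your pullback formulation is a little more conceptual but requires checking those identifications are homeomorphisms. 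Either route works.
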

\begin{proof}
    First we prove that $X$ is a principal $G$-space if and only if we can find a family of subsets $\{U_i \subset X\}_{i \in I}$ such that the action maps $U_i \times G \hookrightarrow X$ are open embeddings and jointly cover $X$.
    If $p\colon X \to X/G$ is a $G$-principal bundle, then there is an open covering $\{U_i'\}_{i \in I}$ of $X/G$ such that there are equivariant homeomorphisms $\alpha_i\colon U_i' \times G \cong p^{-1}(U_i')$ over $U_i'$. We can then set $U_i = \alpha_i(U_i' \times \{1\})$.
    Conversely, if we have $\{U_i \subset X\}_{i\in I}$, then $U_i' := p(U_i)$ is open because it is the image of the open map $U_i \times G \hookrightarrow X \to X/G$, and $p$ will be trivial over these $U_i'$.

    Applying this to $G \to G/H$ we find $\{V_j \subset G\}_{j \in J}$ such that $V_j \times H \hookrightarrow G$ are open embeddings covering $G$.
    Now we can set $W_{i,j}$ to be the image of the embedding $U_i \times V_j \subset U_i \times G \hookrightarrow X$ and these will be such that 
    \[
        W_{i,j} \times H \cong U_i \times V_j \times H
        \hookrightarrow U_i \times G \hookrightarrow X
    \]
    are open embeddings covering $X$.
    Therefore $X \to X/H$ is an $H$-principal bundle.
\end{proof}

Another useful source of homotopy fiber sequences will be the \emph{homotopical orbit stabiliser lemma}.
First we consider the case of a transitive action.
\begin{lem}\label{lem:orbit-stabiliser-transitive}
    If $X$ is a $G$-locally retractile space and the $G$ action on $X$ is transitive, 
    then 
    \[
        X\hq G \simeq B\mathrm{Stab}_G(x)
    \]
    for $\mathrm{Stab}_G(x) < G$ the stabiliser group of some $x \in X$.
    Moreover, there is a homotopy fiber sequence
    \[
        X \longrightarrow B\mathrm{Stab}_G(x) \longrightarrow BG.
    \]
\end{lem}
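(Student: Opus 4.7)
The plan is to fix a basepoint $x \in X$, let $H = \mathrm{Stab}_G(x)$, and exhibit the homotopy orbit $X \hq G$ as a model for $BH$ by identifying it with $EG/H$. The key intermediate step is showing that the orbit map $\mu_x \colon G \to X$, $g \mapsto g\cdot x$, is an $H$-principal bundle (with $H$ acting on $G$ by right multiplication). Since $\mu_x$ is $G$-equivariant for the left-multiplication action of $G$ on itself, \cref{lem:locally-retractile}(1) shows it is a locally trivial fiber bundle with fiber $\mu_x^{-1}(x) = H$. The local trivialisations constructed from a local retraction $\xi\colon U \to G$ have the form $U \times H \to \mu_x^{-1}(U)$, $(u, h) \mapsto \xi(u)h$, and these are $H$-equivariant for the right $H$-action because left and right multiplication commute. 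Hence $G \to X$ is an $H$-principal bundle, which in particular says that $X$ is homeomorphic to the coset space $G/H$ as a $G$-space.

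Next I would invoke \cref{lem:principal-bundle-subgroup} with the roles of $G$ and $H$ as above to conclude that the $G$-principal space $EG$ is also $H$-principal. Because $EG$ is weakly contractible, the quotient $EG/H$ is a model for $BH$, and the quotient map $EG/H \to EG/G = BG$ is a model for the map induced by the inclusion $H \hookrightarrow G$.

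It remains to produce a weak equivalence $X \hq G \simeq EG/H$. I would define
\[
    \Phi \colon (X \times EG)/G \longrightarrow EG/H, \qquad [g \cdot x, e] \mapsto [g^{-1} e],
\]
with candidate inverse $\Psi \colon EG/H \to (X \times EG)/G$, $[e] \mapsto [x, e]$. For well-definedness one checks that replacing $g$ by $gh$ with $h \in H$ leaves the image unchanged modulo $H$, and that both sides are $G$-invariant, respectively $H$-invariant. Continuity and bijectivity of $\Phi$ follow from the $H$-principal bundle structure on $G \to X$ (which locally trivialises the equivalence relation). Combining this with the standard fiber sequence
\[
    X \longrightarrow (X \times EG)/G \longrightarrow EG/G = BG
\]
arising from the $G$-principal bundle $EG \to BG$, and transporting it along $\Phi$, yields the asserted homotopy fiber sequence $X \to B\mathrm{Stab}_G(x) \to BG$.

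The only real obstacle is verifying the $H$-principality of $\mu_x \colon G \to X$, since that is what lets us apply \cref{lem:principal-bundle-subgroup} and also what makes the map $\Phi$ a homeomorphism rather than merely a continuous bijection. Once this is in hand, the remainder reduces to formal manipulations of the homotopy orbit construction.
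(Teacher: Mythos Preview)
Your proposal is correct and follows essentially the same route as the paper: identify $X$ with $G/H$ via the orbit map, use \cref{lem:principal-bundle-subgroup} to see that $EG$ is $H$-principal so that $EG/H$ models $BH$, and then manipulate the homotopy orbit to get $X\hq G \cong EG/H$, from which the fiber sequence $X \to X\hq G \to BG$ yields the claim. If anything, you are more careful than the paper about why $G \to X$ is an $H$-principal bundle (the paper simply asserts that the orbit map is a quotient map and that $G \to H\backslash G$ is $H$-principal), and your explicit $\Phi$ is exactly the composite the paper writes as $((H\backslash G)\times EG)/G \cong H\backslash((G\times EG)/G) \cong H\backslash EG$.
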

\begin{proof}
    For each orbit the map $G \to X$ defined by acting on any point $x \in X$ is a quotient map, so since the action is transitive we can identify $X$ with the left quotient $H\setminus G$ by the stabiliser group $H = \mathrm{Stab}_G(x)$.
    We now compute
    \[
        X \hq G = ((H \setminus G) \times EG)/G
        \cong H \setminus ((G \times EG)/G)
        \cong H \setminus EG
    \]
    Because $H = \mathrm{Stab}_G(x) < G$ is such that $G \to H \setminus G$ is a principal $H$-bundle and $EG \to G \setminus EG = BG$ is also a principal bundle, \cref{lem:principal-bundle-subgroup} says that $EG \to H \setminus EG$ is a principal bundle.
    Hence we may define $EH := EG$ such that $X \simeq H \setminus EG = BH =  B\mathrm{Stab}_G(x)$.
    The homotopy fiber sequence is obtained from $X \to X\hq G \to BG$ by rewriting the middle term.
\end{proof}

\begin{lem}[Homotopical orbit-stabiliser lemma]\label{lem:orbit-stabiliser-v2}
    Let $X$ be a $G$-locally retractile space 
    and assume that $X$ is locally path-connected.
    Then there is an equivalence
    \[
        X\hq G \simeq \coprod_{[x] \in X/G} B\mathrm{Stab}_G(x)
    \]
    where the coproduct runs over orbit representatives $[x]\in X/G$.
\end{lem}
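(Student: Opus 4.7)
The plan is to reduce to the transitive case of \cref{lem:orbit-stabiliser-transitive} by decomposing $X$ into its $G$-orbits. The crucial observation is that $G$-local retractility forces every orbit to be open in $X$: given any $y \in G\cdot x$ and a local retraction $\xi\colon U \to G$ near $y$ with $\xi(u).y = u$ (as in \cref{defn:locally-retractile}), one has $U \subseteq G\cdot y = G\cdot x$. Since the orbits partition $X$ and each is open, each orbit is in fact clopen, and $X$ splits topologically as a disjoint union
\[
    X \;\cong\; \coprod_{[x] \in X/G} G\cdot x
\]
of $G$-invariant subspaces. The local path-connectedness hypothesis, together with openness of orbits, ensures that the indexing set $X/G$ is discrete, so this is genuinely a topological coproduct.

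Since the product functor $(-)\times EG$ and the quotient functor $(-)/G$ both commute with disjoint unions of $G$-invariant subspaces, the decomposition passes to homotopy orbits:
\[
    X \hq G \;=\; (X \times EG)/G \;\cong\; \coprod_{[x] \in X/G} (G\cdot x) \hq G.
\]
Each orbit $G\cdot x$, viewed on its own as a $G$-space, inherits $G$-local retractility from $X$ (the retraction $\xi$ constructed above already takes values in the orbit), and the $G$-action on $G\cdot x$ is transitive by construction. Applying \cref{lem:orbit-stabiliser-transitive} to each orbit gives $(G\cdot x)\hq G \simeq B\mathrm{Stab}_G(x)$, and assembling these equivalences across the decomposition yields the claimed formula.

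Beyond invoking the transitive case, the only substantive content is the openness of orbits, which is immediate from the definition of $G$-locally retractile. I do not anticipate a genuine obstacle: the local path-connectedness plays a purely bookkeeping role ensuring that the disjoint union is literal and that $X/G$ is a discrete set, rather than a deeper property of the space.
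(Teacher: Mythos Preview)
Your proof is correct and follows essentially the same approach as the paper: both argue that $G$-local retractility makes each orbit open (the paper phrases this as ``each orbit is a union of path components''), decompose $X$ as the disjoint union of its orbits, and then apply \cref{lem:orbit-stabiliser-transitive} termwise. Your write-up is in fact more explicit than the paper's about why orbits are open and why each orbit inherits $G$-local retractility.
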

\begin{proof}
    Because $X$ is $G$-locally retractile and locally path-connected, each orbit of the $G$-action is a union of path components.
    We further have $X/G = \pi_0(X)/\pi_0(G) = \pi_0(X\hq G)$.
    Writing $X = \coprod_i X_i$ as the disjoint union of its orbits we have $X \hq G = \coprod_i X_i \hq G$
    and the claim follows from \cref{lem:orbit-stabiliser-transitive}.
\end{proof}

\begin{lem}\label{lem:homotopy-orbits-functor}
    The homotopy orbits functor $
        (-)\hq G\colon G\mathrm{-Top} \longrightarrow \mathrm{Top}$
    \begin{enumerate}
        \item\label{it:hq-colim}
        commutes with all colimits,
        \item\label{it:hq-prod}
        commutes with products with trivial $G$-spaces:
        if $X$ is any $G$-space and $Y$ has the trivial $G$-action then $(X \times Y) \hq G \cong X \hq G \times Y$,
        \item\label{it:hq-weq}
        preserves weak equivalences,
        \item\label{it:htp-po}
        preserves homotopy pushout squares, and
        \item\label{it:geo-real}
        preserves (fat) geometric realisations.
    \end{enumerate}
\end{lem}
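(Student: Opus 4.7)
The plan is to derive (4) and (5) as formal consequences of (1), (2), (3), so the substance lies in establishing the first three statements. Throughout I would work in a convenient category of topological spaces (e.g., compactly generated weak Hausdorff spaces), so that products with a fixed space commute with colimits.

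For (1), I would factor $(-)\hq G$ as $X \mapsto X \times EG$ followed by the quotient by the (free) $G$-action. The first functor is left adjoint to $\Map(EG, -)$ equipped with the conjugation $G$-action, and the second is left adjoint to the trivial-action inclusion $\mathrm{Top} \to G\mathrm{-Top}$, so the composite preserves all colimits. For (2), I would observe that when $G$ acts trivially on $Y$ the $G$-action on $(X \times Y) \times EG$ factors through the first and third factors, giving
\[
    (X \times Y) \hq G = \bigl((X \times EG) \times Y\bigr)/G \cong \bigl((X \times EG)/G\bigr) \times Y = (X \hq G) \times Y.
\]
For (3), I would use that the fiber sequence $X \to X \hq G \to BG$ is natural in $X \in G\mathrm{-Top}$. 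Given a $G$-equivariant weak equivalence $f\colon X \to X'$, the induced map of fiber sequences is the identity on $BG$ and the given weak equivalence on fibers, so the five lemma applied to the long exact sequences of homotopy groups implies that $X \hq G \to X' \hq G$ is a weak equivalence.

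Then (4) would follow formally: any homotopy pushout square in $G\mathrm{-Top}$ can be replaced up to weak equivalence by a pushout along a $G$-cofibration, and by (1), $(-)\hq G$ preserves pushouts. Combined with the observation that $(-)\hq G$ sends $G$-cofibrations to cofibrations (since $-\times EG$ preserves cofibrations and quotienting by a free $G$-action sends free $G$-cofibrations to cofibrations) and with (3), the resulting square is itself a homotopy pushout. For (5), I would note that the (fat) geometric realisation of a (semi)simplicial $G$-space is a specific colimit built from products of the simplicial levels with standard simplices $\Delta^n$ carrying the trivial $G$-action, hence preserved by (1) and (2).

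The main obstacle will be the cofibrancy statement invoked in (4), namely that $(-) \hq G$ sends $G$-cofibrations to cofibrations. The cleanest route is to invoke a model-categorical setup where $EG$ is a cofibrant $G$-space so that $- \times EG$ is left Quillen; alternatively, one can verify this directly by writing $G$-cofibrations as retracts of relative free $G$-cell complexes and observing that the orbit functor sends free $G$-cells to ordinary cells.
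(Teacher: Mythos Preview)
Your argument is correct and matches the paper almost verbatim for (1), (2), (3), and (5). The only divergence is in (4): rather than replacing by a pushout along a $G$-cofibration and then checking that $(-)\hq G$ sends $G$-cofibrations to cofibrations, the paper simply observes that, once (3) is known, it suffices to preserve the explicit double mapping cylinder
\[
    X \cup^h_Y Z \;=\; X \cup_Y (Y \times [0,1]) \cup_Y Z,
\]
which follows immediately from (1) and (2). This sidesteps entirely the cofibrancy issue you flag as the main obstacle, so your model-categorical detour, while correct, is unnecessary here.
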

\begin{proof}
    All of these properties are standard, but we recall them here for the convenience of the reader.
    The homotopy orbits functor is the composite of the functor $(-) \times EG$ and the functor $(-)/G$.
    In a convenient category of topological spaces the cartesian product preserves all colimits in each variable, and the second functor preserves colimits because colimits commute, hence (\ref{it:hq-colim}) follows.
    For (\ref{it:hq-prod}) we note that $(-)\times Y$ certainly commutes with the first functor and it also commutes with the second functor because $(-) \times Y$ preserves all colimits and hence in particular preserves taking orbits by $G$.
    Item (\ref{it:hq-weq}) follows form the long exact sequence for the fiber sequences $X \to X \hq G \to BG$.
    Because we now know that $(-)\hq G$ preserves weak equivalences, to show (\ref{it:htp-po}) it will suffice to prove that the functor preserves the standard construction of the homotopy pushout as
    \[
        X \cup^h_Y Z = X \cup_Y (Y \times [0,1]) \cup_Y Z.
    \]
    This indeed is preserved because we already noted that $(-)\hq G$ preserves pushouts and products with trivial $G$-spaces.
    Finally, for (\ref{it:geo-real}), let $X_\bullet$ be semi-simplicial $G$-space.
    Then there is a homeomorphism $\|X_\bullet \hq G\| \cong \|X_\bullet\|\hq G$ because the fat geometric realisation is a colimit of products with the topological $n$-simplices, which are trivial $G$-spaces.
\end{proof}

\subsection{Homotopy finite spaces}\label{section: homotopy-finite-spaces}
We make precise the notion of finiteness that are used throughout the paper and prove the fiber sequence lemma that is our most-used tool for showing finiteness.

\begin{defn}
    We say that a topological space $X$ is \emph{homotopy finite} if there is a CW complex $C$ with finitely many cells and a weak equivalence $C \simeq X$.
\end{defn}

\begin{rem}
    If $X$ is homotopy finite, then $\pi_1(X)$ is finitely presented, $H_n(X)$ is finitely generated for all $n\ge 0$, and there is a $d \in \mathbb{N}$ such that $H_n(X) = 0$ for $n \ge d$.
    A discrete group is $G$ called \emph{type} $F$  if $BG$ is homotopy finite.
    In this case $G$ must be finitely presented and have finite cohomological dimension.
\end{rem}

Homotopy finite spaces are closed under finite disjoint union and  homotopy pushouts.
They are also closed under extensions in the following sense.

\begin{lem}\label{lem:fiber-sequence-finite}
    Let $p\colon E \to B$ be a map such that $B$ is homotopy finite and the homotopy fiber of $p$ at any $b \in B$ is homotopy finite.
    Then $E$ is homotopy finite.
\end{lem}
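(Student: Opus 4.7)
My plan is to replace $p$ by a Hurewicz fibration $\tilde p\colon \tilde E \to B$ with $\tilde E \simeq E$, and then to perform an induction on the cell structure of $B$. After this replacement, the genuine fibers $\tilde p^{-1}(b)$ coincide with the homotopy fibers of $p$ up to weak equivalence, so each is homotopy finite by assumption. Since $B$ is homotopy finite, I may assume without loss of generality that $B$ is itself a finite CW complex of some dimension $n$. I can then reduce the problem to showing that such a fibration over a finite CW complex with homotopy finite fibers has homotopy finite total space.

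I would set up a double induction: outer induction on $n = \dim B$ and inner induction on the number of top-dimensional cells. When $\dim B = 0$, $B$ is a finite discrete set and $\tilde E$ is a finite disjoint union of homotopy finite fibers, hence homotopy finite (using closure under finite disjoint unions). For the inductive step, I would pick a top cell of $B$ to write $B = B' \cup_\varphi D^n$, where $B'$ has one fewer $n$-cell than $B$ and $\varphi\colon S^{n-1} \to B'$ is the attaching map. Because $\tilde p$ is a Hurewicz fibration and $S^{n-1} \hookrightarrow D^n$ is a cofibration, the preimages assemble into a homotopy pushout
\[
    \tilde E \simeq \tilde p^{-1}(B') \cup^h_{\varphi^*\tilde E}\, \tilde p^{-1}(D^n).
\]
Now $\tilde p^{-1}(B')$ is homotopy finite by the inner induction hypothesis (one fewer top cell); the pullback $\varphi^*\tilde E \to S^{n-1}$ is a fibration with homotopy finite fibers over $S^{n-1}$, which has dimension $n-1$, so it is homotopy finite by the outer induction hypothesis; and $\tilde p^{-1}(D^n) \simeq F$ is homotopy finite since $D^n$ is contractible. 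Since homotopy finite spaces are closed under homotopy pushouts, $\tilde E$ is homotopy finite, and hence so is $E$.

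The main technical obstacle is justifying the homotopy pushout decomposition of $\tilde E$. This relies on the fact that, for a Hurewicz fibration, pulling back along a closed cofibration preserves the (homotopy) pushout square describing a cell attachment, and that $\varphi^*\tilde E \hookrightarrow \tilde p^{-1}(D^n)$ is a cofibration so that the strict pushout computes the homotopy pushout. Both facts are standard consequences of the homotopy lifting property together with the gluing lemma. Once these are in place, the double induction runs routinely and the result follows.
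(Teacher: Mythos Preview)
Your proposal is correct and follows essentially the same approach as the paper's proof: replace $p$ by a fibration, induct on the dimension and number of cells of $B$, and decompose $E$ as a homotopy pushout over a decomposition of $B$. The only cosmetic difference is that the paper phrases the decomposition as an open cover $B = U \cup V$ with $U$ an open top cell and $V = B \setminus \{x\}$, whereas you use the closed cell attachment $B = B' \cup_\varphi D^n$; the three pieces match up (up to homotopy) and the induction runs identically.
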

\begin{proof}
    Without loss of generality $B$ itself is a finite CW complex and $p$ is a fibration.
    We proceed by induction on the dimension of $B$ and the number of cells in $B$.
    If $B$ is a disk, then $E$ is equivalent to the homotopy fiber and the claim follows.
    Otherwise, we can find an open decomposition $B = U \cup V$ where $U$, $V$, and $U \cap V$ are all equivalent to finite CW complexes that have either fewer cells than $B$ or a lower dimension.
    (Let, for instance, $U \subset B$ be an open cell, $x \in U$ and $V = B \setminus \{x\}$.)
    By induction hypothesis, we see that $p^{-1}(U)$, $p^{-1}(V)$ and $p^{-1}(U \cap V)$ must all be equivalent to finite CW complexes, as they are fibrations over a homotopy finite space and have homotopy finite fibers.
    Now $E = p^{-1}(U) \cup_{p^{-1}(U \cap V)} p^{-1}(V)$ is a homotopy pushout of homotopy finite spaces and as such it is homotopy finite as claimed.
\end{proof}

\subsection{Topological posets}\label{section - discretization}

Recall that a topological poset $(P, \leq)$ is a poset together with a topology on the underlying set~$P$.
A map of topological posets $(P, \leq) \to (Q,\leq)$ is a continuous map $f\colon P \to Q$ such that $x \le y \Rightarrow f(x) \le f(y)$.
We let $N(P)$ denote the nerve, \emph{i.e.}~the simplicial space defined by 
    \[
        N_n(P) = \{p_0 \le \dots \le p_n\} \subset P^{n+1}
    \]
and we define the classifying space $BP := \|N_\bullet(P)\|$ to be the fat geometric realisation of the nerve.
We use the fat geometric realisation as opposed to the thin one as we will apply tools from Galatius--Randal-Williams \cite{GalatiusRandalWilliams} and Ebert--Randal-Williams \cite{EbertRandalWilliams}, who work with semi-simplicial spaces.
We will often use the following elementary technique for constructing equivalences of posets, see \cite[Lemma 3.4]{EbertRandalWilliams} for a proof.

\begin{lem}\label{lem:poset-natural-transformation=homotopy}
    If $f,g: P \to Q$ are maps of topological posets such that $f(x) \ge g(x)$ for all $x \in P$, then the induced maps on classifying spaces 
    $B(f), B(g): BP \to BQ$
    are homotopic.
\end{lem}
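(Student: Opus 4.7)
The plan is to construct an explicit homotopy by combining $f$ and $g$ into a single poset map out of the product $P \times [1]$, where $[1] = \{0 < 1\}$, and then take the nerve. Specifically, define $H \colon P \times [1] \to Q$ by
\[
    H(x, 0) = g(x), \qquad H(x, 1) = f(x).
\]
The first step is to check that $H$ is a map of topological posets. Continuity is immediate, since $H$ agrees with the continuous maps $g$ and $f$ on the two clopen slices $P \times \{0\}$ and $P \times \{1\}$. For monotonicity, suppose $(x, i) \le (y, j)$ in $P \times [1]$, so $x \le y$ and $i \le j$. The two cases $i = j$ reduce to monotonicity of $g$ or of $f$; in the remaining case $i = 0, j = 1$ we compute
\[
    H(x, 0) = g(x) \le f(x) \le f(y) = H(y, 1),
\]
using the hypothesis $g(x) \le f(x)$ together with monotonicity of $f$.

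The second step is to pass to classifying spaces. Taking the nerve and fat realisation gives a continuous map $BH \colon B(P \times [1]) \to BQ$. The nerve functor for posets commutes with products, so $N_\bullet(P \times [1]) \cong N_\bullet(P) \times N_\bullet([1])$ as (semi-)simplicial spaces. Since $N_\bullet([1])$ has finitely many non-degenerate simplices and is in particular a ``good'' (or proper) simplicial space, its fat realisation coincides with $\Delta^1 = [0,1]$, and the natural comparison map yields a weak equivalence $B(P \times [1]) \simeq BP \times [0,1]$. The two inclusions $\iota_i \colon P \hookrightarrow P \times [1]$, $x \mapsto (x, i)$, realise to the inclusions $BP \times \{i\} \hookrightarrow BP \times [0,1]$ for $i = 0, 1$.

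Composing, we obtain a continuous map $BP \times [0,1] \to BQ$ that restricts to $Bg$ at $t = 0$ and to $Bf$ at $t = 1$, providing the required homotopy. The only non-trivial technical point is the identification $B(P \times [1]) \simeq BP \times [0,1]$ for the fat realisation; this is where one needs to invoke that the nerve is proper, and it is the content of the referenced \cite[Lemma 3.4]{EbertRandalWilliams}. Everything else is a routine verification once the auxiliary poset map $H$ is written down.
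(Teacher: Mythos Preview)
Your proof is correct and is exactly the standard argument: the paper does not give its own proof but defers to \cite[Lemma~3.4]{EbertRandalWilliams}, and what you wrote is essentially the content of that reference. One minor inaccuracy: the fat realisation of $N_\bullet([1])$ does not literally \emph{coincide} with $\Delta^1$ (it is larger), but it is weakly equivalent to it, which is all you need and is what you in fact use in the next sentence.
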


The topological posets we encounter usually arise from a specific construction.
Fix $X$ a Hausdorff topological space and let $\perp$ be a symmetric and anti-reflexive relation on $X$ such that the subspace
\[
    \{(x,y) \in X^2 \mid x \perp y\} \subset X^2
\]
defining $\perp$ is an open subset.
We say that $x$ and $y$ are \emph{orthogonal} when $x \perp y$ and  that an $n$-tuple of points $(x_1, \dots, x_n) \in X^n$ is \emph{orthogonal} if and only if $x_i \perp x_j \,\forall i\neq j$.
As an example, we will mainly be interested in the setting where $X = \umb(S^2, \interior{M})$ and $S_0 \perp S_1$ if and only if $S_0 \cap S_1 = \emptyset$.
In particular, $\perp$ will usually not be transitive.

\begin{defn}\label{defn - C/S(X, perp)}
    Let $S(X, \perp)$ denote the (discrete) simplicial complex where vertices are points in $X$ and $n+1$ vertices span an $n$-simplex if and only if they are orthogonal.
    Let $C(X, \perp)$ denote the space of non-empty unordered orthogonal configurations, 
    \emph{i.e.}~the subspace of $\coprod_{k\ge 0} X^k/\Sym_k$ of those $\ul{x} = [x_1,\dots,x_k]$ such that $x_i \perp x_j$ holds for all $i \neq j$.
    We make $C(X, \perp)$ into a topological poset with respect to the subset ordering $\subseteq$.
    For any orthogonal $\ul{y} = (y_1,\dots,y_n)$ we define $S(X, \perp)^{\perp \ul{y}} \subset S(X, \perp)$ as the full subcomplex on those vertices $x$ that are orthogonal to all $y_i$.
    Similarly, we define $C(X, \perp)^{\perp \ul{y}} \subset C(X,\perp)$ as the topological subposet containing those configurations that are orthogonal to $\ul{y}$. 
\end{defn}

The goal of this section is to prove the following useful tool, which is a variation of a technique by Galatius--Randal-Williams \cite{GalatiusRandalWilliams}.
\begin{prop} \label{prop - poset discretization}
    Suppose that $X$ is a Hausdorff space with an open, symmetric, and anti-reflexive relation $\perp$ as above.
    Suppose further that 
    \begin{enumerate}
        \item the simplicial complex $S(X, \perp)$ is $n$-connected, and
        \item for all orthogonal tuples $\ul{y} = (y_1, \dots, y_q)$ the simplicial complex $S(X, \perp)^{\perp \ul{y}}$ is $(n-1)$-connected. 
    \end{enumerate}
    Then the classifying space $B(C(X,\perp))$ of the poset $C(X,\perp)$ is $n$-connected.
\end{prop}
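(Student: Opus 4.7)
The plan is a discretisation argument in the style of Galatius--Randal-Williams, by induction on~$n$. There is a natural continuous map $\phi\colon |S(X, \perp)| \to B(C(X, \perp))$ defined as follows. The barycentric subdivision of any simplicial complex is the classifying space of its poset of simplices, so $|S(X, \perp)| \cong B(C(X^\delta, \perp))$, where $X^\delta$ denotes $X$ with the discrete topology. The continuous identity $X^\delta \to X$ induces a poset map $C(X^\delta, \perp) \to C(X, \perp)$ whose realisation is~$\phi$. Since $|S(X, \perp)|$ is $n$-connected by hypothesis~(1), it suffices to show that any $f\colon S^k \to B(C(X, \perp))$ with $k \le n$ is homotopic to a map factoring through~$\phi$: the factor then extends to $D^{k+1}$ in $|S(X, \perp)|$, and composing with~$\phi$ provides a nullhomotopy.

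To discretise $f$, I would triangulate $S^k$ by a complex~$T$ so fine that each simplex $\tau \in T$ is mapped into the open star of a single chain $\ul{x}_0^\tau \subsetneq \cdots \subsetneq \ul{x}_{p_\tau}^\tau$ in the nerve $N_\bullet C(X, \perp)$. For each vertex $v \in T$, pick a point $x_v \in \ul{x}_0^{\sigma(v)}$ for some top simplex $\sigma(v) \ni v$. Openness of $\perp$ combined with compactness of $S^k$ allows, after a further refinement of $T$, the tuple $(x_{v_0}, \ldots, x_{v_r})$ to be orthogonal whenever $\{v_0, \ldots, v_r\}$ spans a simplex of $T$. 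This produces a simplicial map $f^\delta\colon T \to S(X, \perp)$.

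The core step is then to construct a homotopy $H$ from $f$ to $\phi \circ |f^\delta|$ by induction over the skeleta of~$T$. Extending $H$ across an $r$-simplex $\tau = \{v_0, \ldots, v_r\}$ requires filling an $r$-disk inside the topological subposet of chains whose minimal element contains the orthogonal tuple $\ul{y} = (x_{v_0}, \ldots, x_{v_r})$, which is (naturally identified with) $B(C(X, \perp)^{\perp \ul{y}})$. By inductive application of the proposition to the subspace of points orthogonal to~$\ul{y}$---whose hypotheses at level $n - 1$ are precisely the conditional hypothesis~(2) at level $n$---this space is $(n - 1)$-connected, which suffices for the fill since $r \le k \le n$.

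The main obstacle is the bookkeeping for the nested induction. At each restriction step one must verify that both (1) and (2) continue to hold; this follows from the identity $S(X, \perp)^{\perp (\ul{y}, \ul{z})} = (S(X, \perp)^{\perp \ul{y}})^{\perp \ul{z}}$ for orthogonal tuples $\ul{y}, \ul{z}$, so a single application of hypothesis~(2) in the original setting supplies all the conditional connectivities needed at every subsequent level. The base case $n = -1$ reduces to nonemptiness of $X$, which is immediate, and care must be taken in the discretisation step so that the combinatorial choices of $x_v$ and refinements of $T$ can be made simultaneously across the compact sphere, which is where the Hausdorff assumption on $X$ and openness of $\perp$ are essential.
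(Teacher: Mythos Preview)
Your overall strategy---discretise a sphere map and homotope it to factor through $|S(X,\perp)|$---has the right shape, but the core homotopy step contains a genuine gap. You assert that extending $H$ over an $r$-simplex $\tau$ amounts to filling a disk in the subposet of configurations containing $\ul{y}=(x_{v_0},\dots,x_{v_r})$, and you identify this with $B(C(X,\perp)^{\perp\ul{y}})$. Neither claim holds. First, nothing you have arranged forces $f|_\tau$ (the bottom of the cylinder you are filling) to land in configurations containing, or even orthogonal to, $\ul{y}$: you chose each $x_{v_i}$ to lie \emph{inside} a configuration $\ul{x}_0^{\sigma(v_i)}$ appearing in $f$, not perpendicular to the configurations near $\tau$. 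Second, the subposet $C(X,\perp)_{\ul{y}\le}$ has $\ul{y}$ as an initial object, so its classifying space is contractible---it is the \emph{cone} on $B(C(X,\perp)^{\perp\ul{y}})$, not homeomorphic to it. If the filling problem genuinely lived there, neither hypothesis~(2) nor your induction on $n$ would be needed, which already signals that something has gone wrong. What hypothesis~(2) actually provides is a supply of discrete points \emph{orthogonal} to the topological configurations in the image of $f$, together with enough connectivity to make coherent choices across the skeleton; your sketch has the discrete points sitting inside those configurations instead.

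The paper avoids both the induction on $n$ and the cell-by-cell homotopy. It introduces the bisimplicial space
\[
D_{p,q}=\bigl\{(\alpha,\beta)\in N_pC^\delta\times N_qC:\ \max(\alpha)\perp\max(\beta)\bigr\}
\]
with projections $\gamma$ to $\|N_\bullet C^\delta\|\cong|S(X,\perp)|$ and $\varepsilon$ to $\|N_\bullet C\|$. Restricted to the diagonal these become homotopic via the ``union'' poset map $(\ul{x},\ul{y})\mapsto\ul{x}\cup\ul{y}$ and \cref{lem:poset-natural-transformation=homotopy}. The map $\varepsilon$ is a levelwise microfibration with fiber $|S^{\perp\ul{y}}|$ over a chain with maximum $\ul{y}$, so hypothesis~(2) and \cite[Propositions~2.6--2.8]{GalatiusRandalWilliams} make it $n$-connected; since $\gamma$ factors through the $n$-connected $|S(X,\perp)|$, every $\pi_k\|N_\bullet C\|$ with $k\le n$ receives a surjection from the trivial group. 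The orthogonality that your argument never secures is here built into the definition of $D_{\bullet,\bullet}$ from the outset.
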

\begin{proof}
    Write $C := C(X, \perp)$ and $S := S(X, \perp)$,
    and let $C^\delta$ be $C$ equipped with the discrete topology.
    First, note that $C$ is the poset of simplices of $S$ and hence $B(C^\delta) = |N_\bullet C^\delta|$ is the barycentric subdivision of $|S|$.
    In particular they are homeomorphic, and similarly $|N_\bullet(C^{\perp \ul{y}})^\delta| \cong |S^{\perp \ul{y}}|$.
    From now on all geometric realisations are ``fat'', \emph{i.e.}~we do not quotient by the relations imposed by the degeneracy maps.
    For simplicial sets the fat and thin geometric realisations are always equivalent \cite[Lemma 1.7]{EbertRandalWilliams}.
    
    We define the following bisimplicial subspace of $(N_\bullet C)^\delta \times N_\bullet C$:
    \[
        D_{p,q} := \{ ((\ul{x}^0 \subseteq \dots \subseteq \ul{x}^p),(\ul{y}^0 \subseteq \dots \subseteq \ul{y}^q)) \in N_p C^\delta \times N_q C \;|\; \ul{x}^p \perp \ul{y}^q \}.
    \]
    Note that by $\ul{x}^p \perp \ul{y}^q$ we mean that $\ul{x}^i \perp \ul{y}^j$ for all $i$ and $j$.
    We will also need the diagonal of $D_{\bullet, \bullet}$, which is the simplicial space given by 
    $(\Delta D)_p := D_{p,p}$.
    Consider the diagram
    \[
        \begin{tikzcd}[row sep = small]
            && {\|N_\bullet C^\delta\|} \ar[dd, "\iota"] \\
            {\|(\Delta D)_\bullet\|} \ar[r, "\alpha"', "\simeq"] \ar[rru, bend left] \ar[rrd, bend right] & {\|\| D_{\bullet, \bullet} \|\|} \ar[dr, "\varepsilon"'] \ar[ur, "\gamma"] & \\
            && {\|N_\bullet C\|} 
        \end{tikzcd}
    \]
    where the map $\iota$ is the canonical map from the discretisation,
    $\gamma$ projects to the $\ul{x}^i$ and $\varepsilon$ projects to the $\ul{y}^j$.
    The map $\alpha$ is the inclusion of the diagonal and it is a weak equivalence by \cite[Theorem 7.1]{EbertRandalWilliams}.
    
    \noindent
    \textit{Claim:}
    The outside diagram commutes up to homotopy, \emph{i.e.}~$\iota \circ \gamma \circ \alpha \simeq \varepsilon \circ \alpha$.
    
    \textit{Proof of claim:}
    Consider the topological subposet $Q \subset C^\delta \times C$ containing those tuples $(\ul{x}, \ul{y})$ satsifying $\ul{x} \perp \ul{y}$.
    The nerve of $Q$ is isomorphic to the diagonal $\Delta D$:
    the homeomorphism $N_p Q \cong (\Delta D)_p = D_{p,p}$ is given by the tautological map
    \[
        ((\ul{x}^0, \ul{y}^0) \subseteq \dots \subseteq (\ul{x}^p,\ul{y}^p))
        \longmapsto
        ((\ul{x}^0 \subseteq \dots \subseteq \ul{x}^p), (\ul{y}^0 \subseteq \dots \subseteq \ul{y}^p)).
    \]
    The two maps $\iota \circ \gamma \circ \alpha$ and $\varepsilon \circ \alpha$ are obtained as the realisation of the two poset-maps 
    $\mathrm{pr}_1, \mathrm{pr}_2: Q \to C$ defined by $\mathrm{pr}_1(\ul{x}, \ul{y}) := \ul{x}$ and $\mathrm{pr}_2 (\ul{x}, \ul{y}) := \ul{y}$, respectively.
    We can also define a third map $q:Q \to C$ by $q(\ul{x}, \ul{y}) = \ul{x} \cup \ul{y}$, which is well-defined
    since $\ul{x} \perp \ul{y}$ in $Q$, and therefore it follows that $\ul{x}$ and $\ul{y}$ are disjoint as subsets of $X$ ($\perp$ is anti-reflexive).
    Now, since $\ul{x} \subseteq (\ul{x} \cup \ul{y}) \supseteq \ul{y}$, Lemma \ref{lem:poset-natural-transformation=homotopy} implies that
    \[
        \iota \circ \gamma \circ \alpha = B(\mathrm{pr}_1) \simeq B(q) \simeq B(\mathrm{pr}_2) = \varepsilon \circ \alpha, 
    \]
    which proves the claim.
    
    For fixed $q$ the projection map $\varepsilon_q: \|D_{\bullet,q}\| \to N_q C$ is a micro Serre fibration by \cite[Proposition 2.8]{GalatiusRandalWilliams}.
    (To apply this proposition observe that $N_q C^{\perp \ul{x}} \subseteq N_q C$ is open.)
    The fiber $\varepsilon^{-1}(\beta)$ of this at some $\beta = (\ul{y}^0 \subseteq \dots \subseteq \ul{y}^q) \in N_q C$ is isomorphic to the simplicial set $N_\bullet ( C^{\perp \ul{y}^q} )^\delta$.
    The realisation of this is homeomorphic to $B( (C^{\perp \ul{y}^q})^\delta ) \cong \|S^{\perp \ul{y}^q}\|$, which we assumed to be $(n-1)$-connected.
    Hence, by \cite[Proposition 2.6]{GalatiusRandalWilliams} the map $\varepsilon_q: \|D_{\bullet,q}\| \to N_q C$ is $n$-connected.
    Moreover, combining this with \cite[Proposition 2.7]{GalatiusRandalWilliams} we see that $\varepsilon: \|\|D_{\bullet,\bullet}\|\| \to \|N_\bullet C\|$ is $n$-connected.
    
    We have hence shown that the map $\varepsilon \circ \alpha: \|(\Delta D)_\bullet\| \to \|N_\bullet C\|$ induces a surjection on $\pi_k$ for $k \le n$.
    But this map is homotopic to $\iota \circ \gamma \circ \alpha$, 
    which factors through $\|N_\bullet C^\delta\| \cong \|S^\delta\|$, which we assumed to be $n$-connected.
    It follows that $\pi_k \|N_\bullet C\|$ is $0$ for $k \le n$,
    \emph{i.e.}~$\|N_\bullet C\|$ is $n$-connected.
\end{proof}

The following is a special case of Quillen's Theorem A for topological categories \cite[Theorem 4.8]{EbertRandalWilliams}.
\begin{cor}\label{cor:QuillenA}
    Let $(D, \le)$ be a topological poset and 
    $C \subset D$ a sub-poset such that:
    \begin{enumerate}
        \item For all $d \in D$ the sub-poset 
        $C_{d \le} := \{c \in C \;|\; d \le c\}$
        has a contractible classifying space.
        \item $D$ is left-fibrant, {i.e.}~the map
        \[
            \{(d_0, d_1) \in D^2 \;|\; d_0 \le d_1\} 
            \longrightarrow D, 
            \qquad
            (d_0, d_1) \longmapsto d_0
        \]
        is a Serre fibration.
        \item $C \subset D$ is a union of path components.
    \end{enumerate}
    Then the inclusion $BC \to BD$ is a weak equivalence.
\end{cor}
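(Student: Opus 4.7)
The plan is to deduce this from Quillen's Theorem~A for topological categories, as stated in \cite[Theorem~4.8]{EbertRandalWilliams}. Any topological poset $(P,\le)$ may be regarded as a thin topological category with object space $P$ and morphism space $\{(p_0,p_1) \in P^2 \mid p_0 \le p_1\}$, source and target being the two projections. The classifying space of the poset as defined here agrees with that of the associated topological category, and sub-poset inclusions induce functors of topological categories.

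Under this dictionary, hypothesis~(2) (left-fibrancy) is precisely the fibrancy hypothesis required to apply Quillen's Theorem~A in the topological setting: the source map of the morphism space of $D$ is a Serre fibration. For each $d\in D$, the undercategory $d\downarrow C$ has objects $(c, d\le c)$ with $c\in C$, and because our categories are thin it is isomorphic as a topological category to the sub-poset $C_{d\le}$. Hypothesis~(1) thus provides exactly the contractibility of undercategories that Quillen's Theorem~A demands.

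The main subtlety, and the step I expect to be the primary obstacle to spell out, is that the topological version of Theorem~A, unlike its purely combinatorial counterpart, requires some globalisation of the pointwise undercategory data. Hypothesis~(3), that $C$ is a union of path components of $D$, is the standard way of ensuring this: it prevents a point of $D\setminus C$ from being a limit of points in $C$, where the homotopy type of $C_{d\le}$ could jump discontinuously, and it lets the statement be checked on each path component of $D$ separately. With these three verifications in place, \cite[Theorem~4.8]{EbertRandalWilliams} applied to the inclusion $C\hookrightarrow D$ immediately yields that $BC \to BD$ is a weak equivalence.
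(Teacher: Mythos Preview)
Your approach is the same as the paper's: both deduce the corollary directly from \cite[Theorem~4.8]{EbertRandalWilliams}, identifying hypothesis~(1) with the contractibility of the under-categories $(d/F)$ and hypothesis~(2) with left-fibrancy of $D$.

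The only place your write-up is imprecise is the role of hypothesis~(3). Your description (``prevents discontinuous jumps of $C_{d\le}$'', ``lets the statement be checked on each path component separately'') is heuristic rather than what the cited theorem actually demands. In the paper's proof, condition~(v) of \cite[Theorem~4.8]{EbertRandalWilliams} is checked via \cite[Lemma~4.6(vi)]{EbertRandalWilliams}, which requires that $D$ is unital (automatic for posets), that both $C$ and $D$ are left-fibrant, and that the functor $C\to D$ is a Serre fibration on object spaces. Hypothesis~(3) is what makes the last two work: the inclusion of a union of path components is trivially a Serre fibration, and restricting the Serre fibration in~(2) to the path components forming $C$ shows $C$ is itself left-fibrant. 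Spelling this out would close the gap; as written, you have asserted that~(3) suffices without saying which conditions of Theorem~4.8 it is discharging.
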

\begin{proof}
    We need to verify conditions (i) and (v) of \cite[Theorem 4.8]{EbertRandalWilliams}.
    Our condition (1) is exactly condition (i), since $N_\bullet(C_{d\le}) = (d/F)_\bullet$ for $F:C \to D$ the inclusion.
    To check condition (v) we may use \cite[Lemma 4.6.(vi)]{EbertRandalWilliams}.
    We need to show that $C$ and $D$ are left-fibrant, $C \to D$ is a Serre fibration and $D$ is unital.
    All posets are unital as $c \le c$ provides identity morphisms. 
    $C \to D$ is a Serre fibration since it is the inclusion of a union of path components.
    Condition (2) exactly tells us that $D$ is left-fibrant and it follows that $C$ is left-fibrant by restricting to path components.
\end{proof}

\section{Systems of spheres} \label{section: systems of spheres}
 In this section we introduce complexes and semi-simplicial spaces of embedded unparametrised spheres which cut a 3-manifold into irreducible pieces. In each subsection we introduce a variant with more constraints, and show it is contractible, using contractibility of the previously considered complexes. As our starting point we sketch the proof of a contractibility result due to Nariman~\cite{Nariman}, adjusted to our requirements. These contractibility results are a key input to the proof of our main theorem. We end the section by reproving a theorem of Hatcher, in the setting where $M$ is a connected sum of two irreducible manifolds.
 
\subsection{Single spheres} 
First we consider a complex where vertices are single spheres.

\begin{defn}\label{defn - essential sphere}
An embedded unparametrised sphere~$S\subset M$ is \emph{essential} if it represents a non-trivial class in~$\pi_2({M},p)$ for some $p\in M$. We call an essential sphere~$S \subset M$ \emph{reducing} if each connected component of the spherical closure $\scl{M \ca S}$ of $M \ca S$ has fewer prime factors than the spherical closure of $M$.
\end{defn}

\begin{rem}\label{rem - essential does not bound}
It follows from the Poincar\'e conjecture that an embedded sphere~$S\subset M$ is essential if and only if it does not bound~$B^3\subset M$. Therefore, a sphere~$S \subset M$ is reducing if and only if~$S$ is essential in the spherical closure~$\scl{M}$.
\end{rem}

\begin{defn}\label{defn: sphere complex S(M)}
    The \emph{sphere complex} $\mathcal{S}(M)$ of a compact $3$-manifold $M$  is the (discrete) simplicial complex where vertices are reducing spheres in $M$ and $p$-simplices are spanned by finite sets of vertices $\{S_0, \dots, S_p\}$ such that the $S_i$ are pairwise disjoint.
\end{defn}

Hatcher was the first to consider such a complex when he introduced and studied the complex of isotopy classes of essential spheres in $S^1\times S^2$ in \cite{Hatcher95}. He proved this complex was contractible \cite[Theorem 2.1]{Hatcher95} using a surgery argument that has since been adapted by Mann--Nariman \cite{MannNariman}, Nariman \cite[Proposition 2.7]{Nariman}, and the first and second authors~\cite[Theorem 3.7]{BoydBregman22} among others, in various settings where embedded spheres satisfying certain additional properties are no longer considered up to isotopy. Our next result is another such adaptation.

\begin{prop}\label{thm: S(M)-contractible}
    For every compact $3$-manifold $M$, the complex $\mathcal{S}(M)$ is either empty or contractible. 
\end{prop}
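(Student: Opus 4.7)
The plan is to follow Hatcher's surgery technique in the form used by Nariman~\cite{Nariman}, adapted to the notion of a reducing sphere. If $\scl{M}$ has at most one prime factor (i.e.~is $S^3$ or irreducible), then every $2$-sphere in $\scl{M}$ bounds a $3$-ball so no reducing sphere exists and $\calS(M) = \emptyset$; from now on I assume $\calS(M)$ is nonempty.

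I reduce contractibility to the following claim: for any finite collection $T_1, \dots, T_n$ of reducing spheres in $M$ (not necessarily pairwise disjoint), there is a reducing sphere $S$ disjoint from each $T_i$. Granting this, let $K \subset \calS(M)$ be any finite subcomplex with vertex set $\{T_1, \dots, T_N\}$ and apply the claim to produce $S$ (which, after a small isotopy, we may assume is distinct from each $T_i$). For every simplex $\{T_{i_1}, \dots, T_{i_k}\}$ of $K$ the $T_{i_j}$ are pairwise disjoint by the simplex condition and each is disjoint from $S$, so $\{S, T_{i_1}, \dots, T_{i_k}\}$ spans a simplex of $\calS(M)$. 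Hence $K$ lies in the closed star of $S$, which is the cone $S \ast \mathrm{link}(S)$ and so contractible, giving a null-homotopy of $K$ inside $\calS(M)$.

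To prove the claim, start with any reducing sphere $S$ in transverse general position with $\bigcup_i T_i$ and measure complexity by $\sum_i |S \cap T_i|$. If this is zero we are done; otherwise, pick an intersection circle $c \subset S \cap T_i$ innermost on $S$, so the disk $D \subset S$ bounded by $c$ satisfies $D \cap \bigcup_j T_j = c$. On $T_i$ the circle $c$ bounds two sub-disks $T_i^+, T_i^-$; cutting $S$ along $c$ and capping the two resulting disks with parallel push-offs of $T_i^+$ (respectively $T_i^-$) yields two new embedded spheres $\Sigma^\pm$ satisfying $[S] = [\Sigma^+] + [\Sigma^-]$ in $\pi_2(\scl{M})$ up to signs and basepoint choices. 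Since $S$ is reducing, $[S] \ne 0$, so at least one of $\Sigma^\pm$ is essential in $\scl{M}$ and hence reducing. To ensure that $\sum_i |S \cap T_i|$ strictly decreases, within $T_i^\pm$ one further selects a sub-disk innermost among the preexisting curves $T_i \cap T_j$ with $j \ne i$, so that capping introduces no new intersections with the other $T_j$. Iterating terminates at a reducing sphere disjoint from every $T_i$.

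The main technical obstacle is this intersection bookkeeping when the $T_i$ are not pairwise disjoint: Hatcher's classical innermost-disk argument, applied in the direct way, would introduce new intersections with other $T_j$'s coming from the preexisting curves on $T_i$, and the nested innermost choice just sketched is what is needed to circumvent this. Once the bookkeeping is set up, preservation of reducing-ness is automatic from $\pi_2$-additivity in $\scl{M}$, and the rest of the argument follows Nariman's template with ``essential'' replaced by ``reducing'' throughout.
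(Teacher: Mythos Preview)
Your reduction to the claim that every finite set of reducing spheres has a common disjoint reducing sphere is a valid route to contractibility, but the surgery argument you give for it has two genuine gaps. First, the $\pi_2$-identity is wrong as written: capping the two halves of $S$ with the two \emph{different} halves $T_i^+, T_i^-$ of $T_i$ gives $[\Sigma^+] + [\Sigma^-] = [S] \pm [T_i]$, not $[S]$, so if $S$ and $T_i$ happen to be homotopic in $\scl{M}$ both $\Sigma^\pm$ can be inessential. Hatcher's additivity $[A] = [A_0] + [A_1]$ comes from capping both halves of a sphere $A$ with the \emph{same} innermost disk on the other sphere; with $D \subset S$ the natural surgery therefore modifies $T_i$, not $S$, which is the wrong direction for your claim. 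Second, even if you reverse roles (take an innermost disk $D \subset T_i$ among the curves $S \cap T_i$ and surger $S$), the cap $D$ may meet the other $T_j$, so the complexity $\sum_j |S \cap T_j|$ need not decrease; your proposed fix---passing to a sub-disk of $T_i^\pm$ innermost among the curves $T_i \cap T_j$---is bounded by a curve of $T_i \cap T_j$ rather than of $S \cap T_i$, so it cannot be used to cap a piece of $S$.

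The paper sidesteps both issues by not attempting your stronger claim. It fixes one vertex $f(v)$ and surgers each \emph{other} sphere $f(w)$, using an innermost disk on $f(v)$, until every $f(w)$ is disjoint from $f(v)$; then the simplicial map $f$ lands in the star of $f(v)$. In this direction the additivity $[f(w)] = [S_0] + [S_1]$ is immediate, and one only needs to control intersections with the single fixed sphere $f(v)$ (and with the already-disjoint neighbours of $w$ in $K$), rather than with an arbitrary family of mutually intersecting $T_j$.
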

\begin{proof}
    The argument from the proof of Proposition 2.7 in \cite{Nariman} works  with some minor changes, since we consider reducing spheres but Nariman considers the larger class of essential spheres that are not boundary parallel. We sketch the proof with modifications here.

    Suppose~$\mathcal{S}(M)$ is nonempty and let~$\kappa \colon M\rightarrow \scl{M}$ be the natural inclusion which caps all 2-sphere boundary components of $M$. 
    We will show that any continuous map~$f\colon S^k\rightarrow \mathcal{S}(M)$ is nulhomotopic.  We may assume that~$f$ is simplicial with respect to some triangulation~$K$ of~$S^k$. As transverse embeddings are residual \cite[Theorem 2.1]{Hirsch}, after a simplicial homotopy we may assume that the images of the vertices of~$K$ under~$f$ are pairwise transverse. 
    Choose a vertex~$v\in K$.
    Then $f(v)$ is a reducing sphere in~$M$ which is transverse to all other spheres in~$f(K^{(0)})$.
    
    We now perform a sequence of surgeries on the spheres in $f(K^{(0)})$ to homotope the image of~$f$ into the star of~$f(v)$. 
    For every other vertex $w \in K$ the intersection $f(v) \cap f(w)$ is a union of circles.
    Let~$N$ be total number of intersection circles in $f(v)$, i.e.~the number of circles in $f(v)\cap f(w)$, summed over all $w \neq v$.
    Consider all such circles, take a maximal disjoint family, and consider an innermost circle $C$ in this family. (Here, innermost means that~$C$ bounds a disk in~$f(v)$ containing no other circle in the maximal collection.) 
    Then $C$ is an intersection between $f(v)$ and a reducing sphere~$f(w)$ for $w \in K$.
    For the surgery step, we need to show that if we surger~$f(w)$ at $C$ with respect to the transverse sphere~$f(v)$ to obtain two spheres~$S_0$ and~$S_1$ (see \cite[Figure 2]{Nariman}) then at least one of~$S_0$ or~$S_1$ is still reducing.
    Since~$f(w)$ and~$f(v)$ lie in the interior of~$M$, we may perform the surgery entirely in~$\interior{M}$. Passing to~$\scl{M}$ via~$\kappa$, we may perform the same surgery away from the added 3-balls. By assumption,~$f(w)$ and~$f(v)$ are reducing, hence essential in~$\scl{M}$ by Remark \ref{rem - essential does not bound}. 
    In $\pi_2(\scl{M},p)$ we have $[S_0]+[S_1]=[f(w)]$. As $[f(w)]$ is nontrivial, at least one of $[S_0]$ or $[S_1]$ is nontrivial. 
    Thus we can pick $i \in \{0,1\}$ such that the sphere $S_i$ is still essential in~$\scl{M}$ and therefore reducing in $M$.
    
    The map $f$ is now replaced with a homotopic map $f'$
    that agrees with $f$ on all vertices except $w$, but sends $w$ to $S_i$ instead of $f(w)$.
    (The original map $f$ sends all vertices in $\mathrm{link}_K(w)$ to spheres disjoint from $S_i$ since $C$ was chosen to be innermost,
    and after a small translation $S_i$ is also disjoint from $f(w)$, so $f'$ sends all of $\mathrm{star}_K(w)$ to the star of $f(w)$ in $\mathcal{S}(M)$.
    This is what allows us to find this homotopy --
    see \cite[Figure 4]{BoydBregman22} for a schematic.)
    Since $f'(w) = S_i$ no longer intersects $f(v)$ at $C$, for this new map $f'$ the number of intersections $N$ has been reduced by one, so by induction we can reduce $N$ to zero i.e.~until $f$ sends all of $K$ into the star of $f(v)$. 
    Hence, $f$ is nullhomotopic and~$\mathcal{S}(M)$ is contractible as required.
\end{proof}

\subsection{Systems of spheres}
We now consider posets of \emph{systems} of spheres which satisfy certain properties.
\begin{defn}
    A \emph{cut system} for a $3$-manifold $M$ is a (possibly empty) collection of spheres $\Sigma \in \umb(\amalg_k S^2, M)$, for~$k\in \mathbb{N}$,  such that every sphere in~$\Sigma$ is reducing.

    We let $\cut(M)$ denote the space of cut systems, topologised as a subspace
    \[
        \cut(M) \subset \coprod_{k \ge 0} \umb(\amalg_k S^2, M).
    \]
    This is a topological poset with respect to the partial order
    $\subseteq$ defined by inclusion.
\end{defn}

Note that $\Sigma$ is allowed to be empty, which occurs when~$k=0$. The empty cut system is minimal with respect to the partial order. If $\Sigma\neq \emptyset \in \cut(M)$ then each component of $\scl{M \ca \Sigma}$ has strictly fewer prime factors than $\scl{M}$. Let $\cutNE(M)$ be the subspace of~$\cut(M)$ consisting of non-empty cut systems.

\begin{defn}
    Let $\cut_\bullet(M)$ and $\cutNE_\bullet(M)$ denote the simplicial spaces obtained as the nerve of the poset $(\cut(M), \subseteq)$ and $(\cutNE(M), \subseteq)$ respectively. 
    Concretely, the space of $p$-simplices in each case is
    \[
        \cut_p(M) = \{\Sigma_0 \subseteq \dots \subseteq \Sigma_p \in \cut(M) \} \subset \cut(M)^{\times p+1}, \text{ and }
    \]\[
        \cutNE_p(M) = \{\emptyset\neq\Sigma_0 \subseteq \dots \subseteq \Sigma_p \in \cut(M) \} \subset \cutNE(M)^{\times p+1}.
    \]
\end{defn}

Since~$\Sigma=\emptyset$ is minimal with respect to the partial order, it acts as a cone point for the realisation of~$\cut(M)$, and therefore~$\|\cut_\bullet(M)\|\simeq *$. 
The set of reducing spheres in $M$ is empty if and only if $\scl{M}$ is irreducible, so in this case $\cut(M)=\{\emptyset\}$, and $\cutNE(M)$ is empty.

\begin{prop}\label{prop:ess-red-contratible}
    Suppose~$\scl{M}$ is not irreducible. 
    Then the classifying space of the poset of non-empty cut systems is contractible, i.e.~$\|\cutNE_\bullet(M)\| \simeq *$.
\end{prop}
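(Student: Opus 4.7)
The plan is to apply Proposition \ref{prop - poset discretization} with $X$ the space of reducing spheres in $\interior{M}$, topologised as a subspace of $\umb(S^2,\interior{M})$, and with $\perp$ the (open, symmetric, anti-reflexive) disjointness relation. Under these choices $S(X,\perp)$ coincides with $\calS(M)$ and the topological poset $C(X,\perp)$ is exactly $\cutNE(M)$, so if I verify both hypotheses of that proposition for every $n$, I will conclude that $\|\cutNE_\bullet(M)\|$ is $n$-connected for each $n\ge 0$, and hence contractible as a CW complex.

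Hypothesis (1) will be immediate: the assumption that $\scl{M}$ is not irreducible guarantees that reducing spheres exist, so $\calS(M)$ is non-empty and Proposition \ref{thm: S(M)-contractible} gives contractibility.

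The real content is hypothesis (2): for every orthogonal tuple $\Sigma\in\cutNE(M)$, I must show that the link $\calS(M)^{\perp\Sigma}$ is contractible. Writing $C_1,\dots,C_k$ for the components of $M\ca\Sigma$, every reducing sphere of $M$ disjoint from $\Sigma$ lies in the interior of exactly one $C_i$, and spheres in distinct components are automatically disjoint. This yields a simplicial join decomposition
\[
    \calS(M)^{\perp \Sigma} \;=\; \calS'(C_1)*\cdots*\calS'(C_k),
\]
where I define $\calS'(C_i)$ to have as vertices the spheres in $\interior{C_i}$ that are essential when viewed in $\scl{M}$ (rather than in $\scl{C_i}$), with disjoint tuples as higher simplices.

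The main obstacle is that one cannot simply identify $\calS'(C_i)$ with $\calS(C_i)$: when $\scl{C_i}$ is irreducible the latter is empty, whereas $\calS'(C_i)$ must be both non-empty and contractible for the join argument to deliver hypothesis (2). I plan to prove non-emptiness by using connectedness of $M$ together with $\Sigma\ne\emptyset$ to ensure each $C_i$ contains at least one boundary sphere inherited from $2\Sigma$; a slight push-in of such a sphere into $\interior{C_i}$ is parallel in $M$ to some reducing $S\in\Sigma$, and so remains essential in $\scl{M}$. Contractibility of $\calS'(C_i)$ should then follow by rerunning the surgery argument from the proof of Proposition \ref{thm: S(M)-contractible} inside $\interior{C_i}$: given an innermost intersection circle $C\subset f(v)\cap f(w)$, the two spheres $S_0,S_1\subset\interior{C_i}$ obtained by surgering $f(w)$ along the disk in $f(v)$ bounded by $C$ satisfy $[S_0]+[S_1]=[f(w)]\ne 0$ in $\pi_2(\scl{M})$, so at least one remains essential in $\scl{M}$ and hence belongs to $\calS'(C_i)$. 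Since a simplicial join of finitely many non-empty contractible complexes is contractible, hypothesis (2) will hold uniformly in $n$, and Proposition \ref{prop - poset discretization} then delivers the claim.
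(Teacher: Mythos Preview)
Your proof is correct and follows the same overall strategy as the paper: set up $X$ as the space of reducing spheres with the disjointness relation, identify $C(X,\perp)$ with $\cutNE(M)$, and apply Proposition~\ref{prop - poset discretization}. The verification of hypothesis~(1) is identical.

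For hypothesis~(2), however, the paper uses a considerably shorter argument than your join decomposition. Given any simplicial map $f\colon S^k \to \calS(M)^{\perp\Sigma}$, the image contains only finitely many spheres, all disjoint from $\Sigma$. One can then take $S$ to be a parallel push-off of any sphere in $\Sigma$, chosen close enough to $\Sigma$ so that it is also disjoint from every sphere in $\im(f)$. This $S$ is reducing (being isotopic to a sphere in $\Sigma$) and disjoint from $\Sigma$, so it is a vertex of $\calS(M)^{\perp\Sigma}$ that cones off $\im(f)$. Contractibility follows immediately. Your approach---splitting into a join over components and rerunning the surgery argument of Proposition~\ref{thm: S(M)-contractible} inside each $\calS'(C_i)$---is valid but does more work than necessary; the parallel-copy trick already hidden in your non-emptiness step is by itself enough to produce the cone point, so the surgery step can be dropped entirely.
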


\begin{proof}
    $\cutNE_\bullet(M)$ is non-empty, since we assumed that~$\scl{M}$ was not irreducible.
    Following the framework of Section \ref{section - discretization}, let~$X$ be the subspace of $\umb(S^2, M)$ consisting of reducing spheres, and~$\perp$ be the symmetric and anti-reflexive relation on~$X$ given by~$S_0\perp S_1$ iff~$S_0\cap S_1=\emptyset$. 
    Then $\cutNE_\bullet(M)$ is the nerve of the associated topological poset $C(X,\perp)$ from Definition~\ref{defn - C/S(X, perp)}. 
    We now apply the adaptation of the discretisation technique of Galatius and Randal-Williams \cite{GalatiusRandalWilliams} given in~Proposition \ref{prop - poset discretization}. First note that $S(X,\perp)$ (Definition \ref{defn - C/S(X, perp)}) is contractible since it is precisely the complex $\mathcal{S}(M)$, which is contractible by Theorem~\ref{thm: S(M)-contractible}. Hence condition~$(1)$ of Proposition~\ref{prop - poset discretization} is satisfied for all~$n$. 
    For condition~$(2)$ we fix a cut system $\Sigma \in \cutNE_\bullet(M)$ (which will function as~$\ul{y}$ in the proposition) and consider the simplicial complex~$S(X,\perp)^{\perp \Sigma}$. This complex is contractible -- for any map~$f:S^k \to S(X,\perp)^{\perp \Sigma}$, there exists a sphere~$S$ disjoint from all vertices in $\im (f)$ (for example, take $S$ to be a parallel copy of a sphere in~$\Sigma$). $S$ is therefore a vertex in $S(X,\perp)^{\perp \Sigma}$ which acts as a cone point for~$\im (f)$ and thus $S(X,\perp)^{\perp \Sigma}\simeq *$. 
    Therefore $(2)$ is also satisfied for all $n$. Applying Proposition~\ref{prop - poset discretization} gives that~$\|\cutNE_\bullet(M)\|$ is $n$-connected for all $n$, and thus contractible.
\end{proof}

\subsection{Separating systems}
Now we are ready to restrict to cut systems that are \emph{separating}.

\begin{defn}
    A cut system $\Sigma$ on a $3$-manifold $M$ is called \emph{separating}
    if each component of $\scl{M \ca \Sigma}$ is irreducible. Let $\sep(M) \subseteq \cut(M)$ denote the subspace of separating cut systems.
    
    For $q \ge 1$ we say that a cut system $\Sigma$ is \emph{$q$-separating}
    if each component of $\scl{M \ca \Sigma}$ has at most $q$ prime factors. 
    Let $\qsep{q}(M) \subseteq \cut(M)$ denote the subspace of $q$-separating cut systems.
    
    Then $\sep(M)$ and $\qsep{q}(M)$ are posets with respect to $\subseteq$ and we denote their nerves by $\sep_\bullet(M) \subset \cut_\bullet(M)$ and $\qsep{q}_\bullet(M) \subset \cut_\bullet(M)$.
\end{defn}

    There are inclusions 
    \[
        \sep(M) \subseteq \qsep{1}(M) \subset \dots 
        \subset \qsep{q-1}(M) \subset \qsep{q}(M) = \cut(M)
    \]
    where $q$ is the number of prime factors of $M$.
    Note that the first inclusion is strict exactly when~$\scl{M}$ has~$S^1\times S^2$ as a prime factor, since~$S^1\times S^2$ is the only closed 3-manifold which is prime but not irreducible. Since every reducing sphere cuts~$M$ into components with strictly fewer prime factors than~$M$, it follows that $\qsep{q-1}(M)=\cutNE(M)$.

\begin{ex}
    When~$\scl{M}=S^1\times S^2$,~$\sep(M)=\cutNE(M)$ and $\qsep{1}(M) = \cut(M)$.
\end{ex}

We record the following for later use.
\begin{lem}\label{lem:face-map-is-finite-covering}
    The map
    \[
        \varphi_n\colon\sep_n(M) \longrightarrow \sep_0(M) = \sep(M),
        \qquad
        (\Sigma_0 \subseteq \dots \subseteq \Sigma_n) \mapsto \Sigma_n
    \]
    is a finite covering.
\end{lem}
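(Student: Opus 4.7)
The plan is to establish finiteness of the fibers and then produce local trivialisations from the $\Diff(M)$-local retractility of the ambient embedding space.

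First, I would identify the fiber. Given $\Sigma \in \sep(M)$, the set $\varphi_n^{-1}(\Sigma)$ consists of chains $\Sigma_0 \subseteq \cdots \subseteq \Sigma_n = \Sigma$ of separating systems with top element $\Sigma$. Since $\Sigma$ lies in $\umb(\amalg_k S^2, \interior{M})$ for a fixed finite $k$, its underlying collection of spheres is finite, so the set $P_\Sigma := \{\Sigma' \subseteq \Sigma : \Sigma' \in \sep(M)\}$ of separating sub-systems is a finite poset under inclusion. The fiber is therefore in bijection with the set of non-decreasing length-$(n{+}1)$ sequences in $P_\Sigma$, which is finite. Moreover, because distinct sub-collections of the finite set $\Sigma$ are isolated from one another in $\umb$, the fiber is discrete as a subspace of $\sep_n(M) \subset \sep(M)^{n+1}$.

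Next, I would upgrade this to a covering using $\Diff(M)$-local retractility. By \cref{cor:umb-retractile} together with \cref{lem:locally-retractile}(4), the space $\coprod_k \umb(\amalg_k S^2, \interior{M})$ is $\Diff(M)$-locally retractile. The key preliminary is that $\sep(M)$ is a $\Diff(M)$-invariant open subspace of this disjoint union: invariance is immediate, since diffeomorphisms of $M$ carry reducing spheres to reducing spheres and preserve the diffeomorphism type of the cut pieces $M \ca \Sigma$; openness follows from isotopy extension, as for any $\Sigma'$ sufficiently close to $\Sigma$ in $\umb$ one can find a diffeomorphism close to the identity sending $\Sigma$ to $\Sigma'$, inducing $M \ca \Sigma \cong M \ca \Sigma'$ and hence transporting both the reducing and separating properties. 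Local sections of the $\Diff(M)$-action on $\umb$ then restrict to local sections on $\sep(M)$.

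Now fix $\Sigma \in \sep(M)$, choose a neighbourhood $U \subset \sep(M)$ of $\Sigma$, and a local section $\xi \colon U \to \Diff(M)$ satisfying $\xi(\Sigma') \cdot \Sigma = \Sigma'$ for all $\Sigma' \in U$. Define
\[
    \Psi \colon U \times \varphi_n^{-1}(\Sigma) \longrightarrow \varphi_n^{-1}(U),
    \qquad
    \bigl(\Sigma',\; \Sigma_0 \subseteq \cdots \subseteq \Sigma_n\bigr) \longmapsto \bigl(\xi(\Sigma') \cdot \Sigma_0 \subseteq \cdots \subseteq \xi(\Sigma') \cdot \Sigma_n\bigr).
\]
This lands in $\varphi_n^{-1}(U)$ because $\xi(\Sigma') \cdot \Sigma_n = \Sigma'$ and diffeomorphisms preserve the separating property; a continuous inverse is given by $(\Sigma_0' \subseteq \cdots \subseteq \Sigma_n') \mapsto (\Sigma_n',\; \xi(\Sigma_n')^{-1} \cdot \Sigma_0' \subseteq \cdots \subseteq \xi(\Sigma_n')^{-1} \cdot \Sigma_n')$. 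Since $\varphi_n^{-1}(\Sigma)$ is finite and discrete, $\Psi$ exhibits $\varphi_n$ locally as a projection onto $U$ with finite discrete fiber, which is exactly the definition of a finite covering. The only non-routine step is the openness of the separating condition, handled by isotopy extension as above; everything else is a direct application of tools already set up in \cref{section: preliminaries}.
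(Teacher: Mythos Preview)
Your proof is correct and follows essentially the same approach as the paper. The only difference is packaging: the paper invokes \cref{lem:locally-retractile}(1) directly (any $\Diff(M)$-equivariant map into the locally retractile space $\sep(M)$ is a fiber bundle), then checks the fiber is finite and observes that discreteness is automatic from Hausdorffness of $\sep_n(M)$, whereas you unpack the local trivialisation $\Psi$ by hand and argue discreteness of the fiber separately; your explicit justification that $\sep(M)$ inherits local retractility from $\umb$ is a detail the paper leaves implicit.
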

\begin{proof}
    The map is a locally trivial fiber bundle because it is $\Diff(M)$-equivariant and $\sep(M)$ is locally $\Diff(M)$-retractile, so apply \cref{lem:locally-retractile}(1).
    Therefore it will suffice to check that the fiber $\varphi_n^{-1}(\Sigma)$ is a finite set. (It then automatically has the discrete topology because $\sep_n(M)$ is Hausdorff.)
    Indeed, this fiber is the set of sequences of cut systems
    $\Upsilon_0 \subseteq \dots \subseteq \Upsilon_n = \Sigma$
    such that each $\Upsilon_i$ is a separating system.
    As there are only finitely cut systems contained in $\Sigma$, this set is finite.
\end{proof}

\begin{prop}\label{prop:sep-p-contractible}
    The spaces $\|\qsep{p}_\bullet(M)\|$ are contractible for all $p \ge 1$ and all $M$.
\end{prop}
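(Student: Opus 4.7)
The plan is to proceed by induction on the number of prime factors $q = q(M)$ of the spherical closure $\scl{M}$. If $q \le p$, then $\emptyset$ itself lies in $\qsep{p}(M)$ as the unique minimum of the poset, so $\|\qsep{p}_\bullet(M)\|$ is contractible by \cref{lem:poset-natural-transformation=homotopy}. For disconnected $M$ whose connected components each have strictly fewer than $q$ prime factors, one applies the inductive hypothesis componentwise via the product decomposition $\qsep{p}(M) \cong \prod_j \qsep{p}(M_j)$. It remains to handle connected $M$ with $q > p$.

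In this case $q \ge 2$, so $\scl{M}$ is reducible and \cref{prop:ess-red-contratible} gives $\|\cutNE_\bullet(M)\| \simeq *$. I will show that the inclusion $\qsep{p}(M) \hookrightarrow \cutNE(M)$ induces an equivalence on nerves, by verifying the three conditions of \cref{cor:QuillenA}. Conditions~(2) and~(3) are routine: $\cutNE(M)$ is left-fibrant because the first face map is $\Diff(M)$-equivariant and $\cutNE(M)$ is $\Diff(M)$-locally retractile (so \cref{lem:locally-retractile} makes it a fiber bundle), while $\qsep{p}(M)$ is a union of path components of $\cutNE(M)$ because the $p$-separating condition depends only on the isotopy type of the pair $(M,\Sigma)$.

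The main step is condition~(1): for each $\Sigma \in \cutNE(M)$, the over-poset $\qsep{p}(M)_{\Sigma \le}$ must be contractible. Writing $M\ca\Sigma = \sqcup_i N_i$, every $N_i$ has $q(N_i) < q$ since the spheres in $\Sigma$ are reducing. An element of $\qsep{p}(M)_{\Sigma \le}$ is an independent choice of added sphere system in each $N_i$, yielding a product decomposition $\qsep{p}(M)_{\Sigma \le} \cong \prod_i Q_i$, where $Q_i$ consists of sphere systems $\Sigma_i \subset \interior{N_i}$ whose spheres are essential in $\scl{M}$ and for which every component of $N_i\ca\Sigma_i$ has at most $p$ prime factors. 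If $q(N_i)\le p$, then $\emptyset \in Q_i$ is minimal and $Q_i$ is contractible.

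The subtle case, which I expect to be the main obstacle, is $q(N_i) > p$. Here $Q_i$ may strictly contain $\qsep{p}(N_i)$ because of \emph{anomalous} spheres $S \subset \interior{N_i}$ that are essential in $\scl{M}$ but inessential in $\scl{N_i}$; an Alexander-type analysis shows that any ball bounded by such an $S$ in $\scl{N_i}$ must engulf a cap-ball over a $\Sigma$-boundary component of $N_i$, so $S$ is parallel to one of these $\Sigma$-boundary spheres. Removing all anomalous parallels from a system $\Sigma_i \in Q_i$ merely splits off collar regions $S^2 \times I$ whose spherical closures are $S^3$ (hence contribute no prime factors), so this defines a poset map $r\colon Q_i \to Q_i$ with image in $\qsep{p}(N_i)$ that preserves both the reducing property of the remaining spheres and the $p$-separating condition. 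Since $r(\Sigma_i) \subseteq \Sigma_i$, \cref{lem:poset-natural-transformation=homotopy} gives $B(r) \simeq \id_{BQ_i}$, exhibiting $\qsep{p}(N_i)$ as a deformation retract of $Q_i$. The inductive hypothesis applies to $N_i$ since $q(N_i) < q$, giving $B\qsep{p}(N_i) \simeq *$, whence $BQ_i \simeq *$ and $\prod_i Q_i$ is contractible, completing the verification and the induction.
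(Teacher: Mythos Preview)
Your overall strategy is sound and closely mirrors the paper's, but with a different inductive scheme and one imprecision that needs repairing.

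The paper compares $\qsep{p}(M)$ to $\qsep{p+1}(M)$ rather than directly to $\cutNE(M)$. Both arguments invoke \cref{cor:QuillenA}, decompose the slice poset as a product over the pieces of the cut, and retract off the spheres that fail to be reducing in the piece. The advantage of the paper's one-step comparison is that for $\Upsilon \in \qsep{p+1}(M)$ each piece $U$ of $M\ca\Upsilon$ has at most $p+1$ prime factors, so in the non-trivial case the retraction lands in $\cutNE(U)$, already known to be contractible by \cref{prop:ess-red-contratible}; no induction on the prime factor count is required. Your route --- jumping straight to $\cutNE(M)$ and appealing to an inductive hypothesis on $\qsep{p}(N_i)$ --- also works, at the cost of carrying the induction on $q$.

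The imprecision is in your description of anomalous spheres. Such a sphere $S$ need not be parallel to a single $\Sigma$-boundary sphere of $N_i$: the ball $D$ it bounds in $\scl{N_i}$ can engulf several cap-balls (over $\Sigma$-boundary spheres, and possibly over $\partial M$-spheres too), so the region cut off is a multiply punctured ball rather than a collar $S^2\times I$. What you actually need --- and what is true --- is that these regions still have spherical closure $S^3$. The clean way to see that $r(\Sigma_i)=\Sigma_i^{\mathrm{red}}$ remains $p$-separating is this: a ball in $\scl{N_i}$ cannot contain any sphere reducing in $N_i$, so every anomalous sphere lies in a single component $K$ of $N_i\ca\Sigma_i^{\mathrm{red}}$ and bounds a ball in $\scl{K}$. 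Cutting $\scl{K}$ along the anomalous spheres therefore produces one piece diffeomorphic to $\scl{K}$ and the rest diffeomorphic to $S^3$; since every piece of $N_i\ca\Sigma_i$ has at most $p$ prime factors, so does $\scl{K}$. With this correction your argument goes through.
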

\begin{proof} 
    Let $q$ be the number of prime factors in $M$. 
    For $p \ge q$ the poset $\qsep{p}(M)$ contains the empty cut system, which is initial and hence acts as a cone point for $\|\qsep{p}_\bullet(M)\|$.
    We show that $\|\qsep{p}_\bullet(M)\| \to \|\qsep{p+1}_\bullet(M)\|$ is a homotopy equivalence for all $p\ge 1$.
    
    We invoke Quillen's Theorem A for topological categories \cite[Theorem 4.8]{EbertRandalWilliams} as recalled in Corollary \ref{cor:QuillenA},  noting that \ref{cor:QuillenA}(3) is true since connected components correspond to isotopy classes of cut systems, and \ref{cor:QuillenA}(2) is true
    because the map
    \[
        \cut_1(M) = \{\Sigma_0 \subseteq \Sigma_1 \in \cut(M)\} \to \cut(M),
        \qquad
        (\Sigma_0 \subseteq \Sigma_1) \mapsto \Sigma_0
    \]
    is a $\Diff(M)$-equivariant map into a $\Diff(M)$-locally retractile space and hence a fiber bundle by \cref{lem:locally-retractile}(1).
    
    It therefore suffices to show \ref{cor:QuillenA}(1): the slice category of the inclusion functor at every object $\Upsilon \in \qsep{p+1}(M)$ has a contractible classifying space.
    In the case at hand this slice category is the poset
    \[
        \qsep{p}(M)_{\Upsilon \subseteq} = \{ \Sigma \in \qsep{p}(M) \;|\; \Upsilon \subseteq \Sigma\}.
    \]
    Any separating system $\Sigma$ containing $\Upsilon$ decomposes as the disjoint union $\Sigma = \Upsilon \sqcup \Upsilon^c$ and it will suffice to keep track of $\Upsilon^c$.
    Note that while $\Upsilon^c$ is $p$-separating in $M\ca \Upsilon$, it may contain spheres which are not reducing in $M \ca \Upsilon$ (for example, they may be boundary parallel, if they were parallel to a sphere in~$\Upsilon$).  
    
    We introduce the following modification of~$\qsep{p}(M)$ to account for this.  Let $N$ be a 3-manifold and let $B\subset \partial N$ be a union of 2-sphere boundary components.  Let $N_{\scl{B}}$ denote be the manifold obtained by filling each 2-sphere of $B$ with a disk. Define $C^{\mathrm{pf}\leq p}(N,B)$ to be collections of disjointly embedded spheres in $N$ that are essential in~$N_{\scl{B}}$ and~$p$-separating, and let $C_\bullet^{\mathrm{pf}\leq p}(N,B)$ be the nerve of the associated poset. 
    In particular, $\qsep{p}(N) = C^{\mathrm{pf} \le p}(N, \partial N) \subseteq C^{\mathrm{pf}\leq p}(N,B)$ but the essential spheres appearing in collections in $C^{\mathrm{pf}\leq p}(N, B)$ are not necessarily reducing.

    Let $U$ be a connected component of $M\ca \Upsilon$. Each boundary sphere of $M$ lies in a unique such component $U$. Let $B_U$ be the union of all 2-spheres in $\partial U$ that come from $\partial M$. Then there is a homeomorphism
    \[
        \qsep{p}(M)_{\Upsilon \subseteq}
        \cong \prod_{U \in \pi_0(M\ca \Upsilon)} C^{\mathrm{pf}\leq p}(U,B_U).
    \]
    
    Since~$\Upsilon \in \qsep{p+1}(M)$, the components of $\scl{M \ca \Upsilon}$ have at most~$p+1$ prime factors. Therefore, for each $U$ there are two possibilities:
    \begin{itemize}
        \item if $\scl{U}$ has at most $p$ prime factors, then $C^{\mathrm{pf}\leq p}({U},B_U)$ contains the empty sphere system, which is an initial object and hence $\|C_\bullet^{\mathrm{pf}\leq p}({U}, B_U)\|$ is contractible.
        \item If $\scl{U}$ has exactly $p+1$ prime factors, then every~$\Sigma \in C^{\mathrm{pf}\leq p}({U},B_U)$ must contain at least one reducing sphere. 
        Therefore the inclusion $\cutNE({U})\subseteq C^{\mathrm{pf}\leq p}({U},B_U)$ admits a retraction $F: C^{\mathrm{pf}\leq p}(U,B_U)\to \cutNE({U})$ that forgets spheres which are not reducing. 
        By Lemma~\ref{lem:poset-natural-transformation=homotopy} this is homotopic to the identity map, so $\cutNE_\bullet({U})$ is a deformation retract of $C^{\mathrm{pf}\leq p}({U},B_U)$. Since $\|\cutNE_\bullet({U})\|$ is contractible by Proposition~\ref{prop:ess-red-contratible}, it follows that $\|C_\bullet^{\mathrm{pf}\leq p}({U},B_U)\|$ is contractible.
    \end{itemize}
    Therefore, $\|C_\bullet^{\mathrm{pf}\leq p}({U},B_U)\|$ is always contractible.
    The fat geometric realisation of a product of simplicial spaces is equivalent to the product of the fat geometric realisations \cite[Theorem 7.1]{EbertRandalWilliams}, so $\|\qsep{p}_{\bullet}(M)_{\Upsilon\subseteq}\|$ is contractible, and applying \cref{cor:QuillenA} completes the proof.
\end{proof}

We now extend this one step further.

\begin{prop}\label{prop:sep-contractible}
    The space $\|\sep_\bullet(M)\|$ is contractible for all $M$.
\end{prop}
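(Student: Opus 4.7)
The plan is to apply Quillen's Theorem A (\cref{cor:QuillenA}) to the inclusion of topological posets $\sep(M) \hookrightarrow \qsep{1}(M)$. Since $\|\qsep{1}_\bullet(M)\|$ is contractible by \cref{prop:sep-p-contractible}, once the three hypotheses of the corollary are verified this yields $\|\sep_\bullet(M)\|\simeq *$. Hypothesis (2) (left-fibrancy of $\qsep{1}(M)$) is verified exactly as in the proof of \cref{prop:sep-p-contractible}, using \cref{lem:locally-retractile}(1). For hypothesis (3), path components of $\qsep{1}(M)$ correspond to isotopy classes of cut systems, and within each such class the diffeomorphism type of the components of $\scl{M\ca\Sigma}$ is constant, so $\sep(M) \subseteq \qsep{1}(M)$ is a union of path components.

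The main content is hypothesis (1): for each $\Upsilon \in \qsep{1}(M)$, the slice $\sep(M)_{\Upsilon \subseteq}$ must have contractible classifying space. As in the proof of \cref{prop:sep-p-contractible}, writing each $\Sigma \supseteq \Upsilon$ as the disjoint union of $\Upsilon$ with a further refinement and sorting the refinement according to the components of $M \ca \Upsilon$ in which its spheres lie, I obtain a homeomorphism of topological posets
\[
    \sep(M)_{\Upsilon \subseteq} \;\cong\; \prod_{U \in \pi_0(M \ca \Upsilon)} Y_U,
\]
where $Y_U$ parametrises the refinement contained in $U$. Since $\Upsilon$ is $1$-separating, each $\scl{U}$ is prime, hence either irreducible or diffeomorphic to $S^1 \times S^2$.

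When $\scl{U}$ is irreducible, the empty cut system is initial in $Y_U$, so $\|N_\bullet Y_U\|\simeq *$. The only remaining case is $\scl{U}\cong S^1\times S^2$, and here every $\Sigma_U \in Y_U$ must contain at least one sphere non-separating in $\scl{U}$: no collection of spheres separating in $S^1\times S^2$ can decompose it into irreducible pieces, since cutting $S^1\times S^2$ along a separating sphere leaves an $S^1\times S^2$ summand behind. In this case I would apply \cref{lem:poset-natural-transformation=homotopy} to the forgetful map $Y_U \to C(X,\perp)$, $\Sigma_U \mapsto \Sigma_U^{\mathrm{nsep}}$, that discards spheres separating in $\scl{U}$; here $X\subset \umb(S^2, \interior{U})$ is the subspace of spheres non-separating in $\scl{U}$ and $\perp$ is disjointness, as in \cref{defn - C/S(X, perp)}. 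This exhibits $C(X,\perp)$ as a deformation retract of $Y_U$.

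Contractibility of $BC(X,\perp)$ then follows from \cref{prop - poset discretization} applied to $(X,\perp)$. The simplicial complex $S(X,\perp)$ is contractible by an adaptation of the surgery argument in \cref{thm: S(M)-contractible}: when a non-separating sphere $f(w)$ is surgered against $f(v)$, the resulting spheres $S_0, S_1$ satisfy $[S_0]+[S_1]=[f(w)]\in H_2(\scl{U};\bbZ/2)=\bbZ/2$, so at least one of them remains non-separating and the induction proceeds as before. The restricted complexes $S(X,\perp)^{\perp \underline{y}}$ are each cone-contractible via a small parallel copy of any sphere in $\underline{y}$. The main obstacle is the careful treatment of the $S^1\times S^2$ case: one must verify that non-separating spheres in $\scl{U}$ are genuinely reducing in $M$, that the forgetful retraction $Y_U\to C(X,\perp)$ is well-defined, and that the adapted surgery argument runs cleanly in the presence of the boundary spheres of $U$ arising from $\Upsilon$ and $\partial M$.
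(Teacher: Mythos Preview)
Your proposal is correct and follows essentially the same route as the paper: apply \cref{cor:QuillenA} to the inclusion $\sep(M)\hookrightarrow\qsep{1}(M)$, factor the slice as a product over components $U$ of $M\ca\Upsilon$, and treat the $\scl{U}\cong S^1\times S^2$ case by a forgetful deformation retraction. The only substantive difference is that you do more work than necessary in that last case. In $S^1\times S^2$ every separating embedded sphere bounds a ball (primeness), so ``non-separating in $\scl{U}$'' coincides with ``essential in $\scl{U}$'', i.e.\ with ``reducing in $U$''. Hence your space $C(X,\perp)$ is literally $\cutNE(U)$, and your retraction $Y_U\to C(X,\perp)$ is the paper's retraction $C^{\sep}(U,B_U)\to\sep(U)=\cutNE(U)$. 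Contractibility of the target is then immediate from \cref{prop:ess-red-contratible}, so the adapted surgery argument and the second application of \cref{prop - poset discretization} that you outline are unnecessary. The concerns you flag at the end (that the retraction is well-defined and nonempty-valued) are genuine but routine: discarding a sphere that bounds a ball in $\scl{U}$ only deletes an $S^3$ component from $\scl{U\ca\Sigma}$, and at least one reducing sphere must remain since otherwise $\scl{U}$ would be irreducible.
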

\begin{proof} 
    Paralleling \cref{prop:sep-p-contractible}, we will show that $\|\sep_\bullet(M)\| \to \|\qsep{1}_\bullet(M)\|$ is weak equivalence by 
    proving that for every $\Upsilon \in \qsep{1}(M)$ the poset
    \[
        \sep(M)_{\Upsilon \subseteq} = \{ \Sigma \in \sep(M) \;|\; \Upsilon \subseteq \Sigma\}.
    \]
    has a contractible classifying space.

    We define $C_\bullet^{\sep}(N, B) \subset C_\bullet^{\mathrm{pf}\le 1}(N, B)$ as the subspace of those $\Sigma \subset N$ such that every component of $\scl{N\ca \Sigma}$ is irreducible.
    In particular $\sep(N)\subseteq C^{\sep}(N,B)$ but the latter contains systems with essential spheres that are not necessarily reducing.
    As before we have a product
    \[
        \sep(M)_{\Upsilon \subseteq} 
        \cong \prod_{U \in \pi_0(M\ca \Upsilon)} C^{\sep}({U},B_U).
    \]
    Now, since~$\Upsilon \in \qsep{1}(M)$, each ${U}$ has at most one prime factor.
    There are two cases:
    \begin{itemize}
        \item if $U$ admits no reducing spheres then $\scl{U}$ must be irreducible and $C^{\sep}({U},B_U)$ contains the empty sphere system, which is an initial object and hence $\|C^{\sep}_\bullet({U},B_U)\|$ is contractible.
        \item If $U$ admits a reducing sphere, then because $\scl{U}$ is prime, $U$ must be $S^1 \times S^2 \setminus \amalg_k \interior{D}^3$.
        As in the proof of  Proposition \ref{prop:sep-p-contractible}, $C^{\sep}({U},B_U)$ has a deformation retraction to $\sep(U)$.
        But in this case $\sep(U)=\cutNE(U)$, and thus $\|C^{\sep}_\bullet({U},B_U)\|$ is contractible by Proposition \ref{prop:ess-red-contratible}. 
    \end{itemize}
    As in Proposition \ref{prop:sep-p-contractible}, assembling these results together completes the proof.
\end{proof}

\subsection{Removing parallel spheres}\label{subsection:removing parallel spheres}

We now turn our attention to the presence of unnecessary `parallel' spheres in our separating systems, and show these can be removed.
\begin{defn}\label{defn:parallel}
    We say that two spheres $S_0, S_1 \subset M$ are \emph{parallel} if they bound an embedded $S^2 \times I$,
    \emph{i.e.}~if there is an embedding $e: S^2 \times [0,1] \hookrightarrow M$ with $e(S^2 \times \{i\}) = S_i$.

    Let $\sepNP(M) \subset \sep(M)$ denote the sub-poset consisting of those separating systems $\Sigma \in \sep(M)$ such that no two spheres in $\Sigma$ are parallel.
\end{defn}

Note that there is a maximum possible number $N$ of spheres in any given separating system $\Sigma \in \sepNP(M)$. (In \cref{lem:bounding-graphs} we give an upper bound for $N$ in terms of the prime decomposition of $M$.)
Therefore all chains of length~$> N$ in the poset must include an equality, \emph{i.e.}~when~$i>N$, all simplices in~$\sepNP_i(M)$ are degenerate. This is our motivation for passing to the space $\sepNP(M)$.

It is useful to have the following alternative characterisation of parallel spheres.

\begin{lem}\label{lem:parallel-spheres-bound-cylinder}
    Two essential spheres $S_0, S_1 \subset M$ are parallel if and only if they are disjoint and (unparametrised) isotopic.
\end{lem}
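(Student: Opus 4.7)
The ``only if'' direction is immediate: an embedding $e\colon S^2 \times [0,1] \hookrightarrow M$ with $e(S^2 \times \{i\}) = S_i$ renders $S_0, S_1$ disjoint, and the family $t \mapsto e(S^2 \times \{t\})$ is a continuous path in $\umb(S^2, M)$ from $S_0$ to $S_1$.

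For the ``if'' direction, fix a smooth isotopy $\phi_t \in \emb(S^2, M)$ with $\phi_0(S^2) = S_0$ and $\phi_1(S^2) = S_1$, and cut $M$ along $S_1$ to obtain $N := M \ca S_1$, with new boundary spheres $S_1^+, S_1^-$. Since $S_0 \cap S_1 = \emptyset$ we have $S_0 \subset \interior{N}$, and $S_0$ remains essential there: if $S_0$ bounded a ball $B \subset N$, then because $\partial B = S_0$ is connected and disjoint from $\partial N$, the ball $B$ would embed into $M$, contradicting essentiality of $S_0$ in $M$.

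The central step is to show that $S_0$ is (unparametrised) isotopic inside $\interior{N}$ to a sphere $\psi$ obtained by pushing $S_1^+$ slightly into $\interior{N}$. For this I would truncate the given isotopy at $t = 1-\varepsilon$, arranging $\phi_{1-\varepsilon} = \psi$, and perturb $\Phi\colon S^2 \times [0, 1-\varepsilon] \to M$ to be transverse to $S_1$. Then $\Phi^{-1}(S_1)$ is a disjoint union of circles in $\interior{(S^2 \times [0, 1-\varepsilon])}$, which I would eliminate one at a time by an innermost-disk surgery argument in close analogy with the proof of \cref{thm: S(M)-contractible}, using essentiality of $S_0$ and $S_1$ to guarantee that each surgered sphere in the family remains essential. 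The result is an isotopy from $S_0$ to $\psi$ lying entirely in $\interior{N}$.

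With this isotopy in hand, the isotopy extension theorem produces an ambient isotopy of $N$ fixing $\partial N$ pointwise and carrying $\psi$ to $S_0$; applied to the product collar $C \cong S^2 \times [0,1]$ between $\psi$ and $S_1^+$ inside $N$, it yields an embedded cylinder $C' \cong S^2 \times [0,1]$ in $N$ with $\partial C' = S_0 \sqcup S_1^+$. Regluing $N$ along $S_1 = S_1^+ \sim S_1^-$ realises $C'$ as the sought embedded cylinder in $M$ between $S_0$ and $S_1$, showing them to be parallel. The main obstacle is the parametrised innermost-disk surgery step: carrying out the surgery on the whole family $\Phi$ rather than on a single sphere, while ensuring essentiality of every sphere in the family is preserved and no new intersection circles with $S_1$ appear, is the only non-routine point.
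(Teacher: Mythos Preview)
The ``only if'' direction is fine. For the converse, the paper's argument is a two-line citation: by Laudenbach \cite[Lemma~2.1]{Laudenbach73}, two disjoint homotopic essential spheres cobound an $h$-cobordism in $M$, and the Poincar\'e conjecture forces that $h$-cobordism to be $S^2\times[0,1]$.

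Your proposal tries to build the cylinder by hand, and the gap is exactly where you flag it. The surgery in \cref{thm: S(M)-contractible} acts on a \emph{single} sphere: it replaces $f(w)$ by one of two pieces, and one only needs \emph{some} piece to remain essential. For a continuous family $\Phi$ there is no analogous move --- there is no canonical choice of surgered piece varying continuously in $t$. If instead you interpret the surgery as compressing an innermost disk $D\subset S^2\times[0,1-\varepsilon]$ and homotoping $\Phi(D)$ across $S_1$, you need the sphere $\Phi(D)\cup D'$ (with $D'\subset S_1$ a cap) to bound a ball in $M$; but $M$ is not irreducible, and essentiality of $S_0,S_1$ gives no such control. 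Even granting that, the homotopy of $\Phi$ has no reason to keep each slice $\phi_t$ embedded. Making this step rigorous is essentially the content of Laudenbach's theorem itself, so the proposal does not reduce the difficulty --- it relocates all of it into the step you label ``non-routine'' and leave unproved.
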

\begin{proof} 
    By \cite[Lemma 2.1]{Laudenbach73}, if two disjoint essential spheres are homotopic, they bound an $h$-cobordism in $M$.
    As a consequence of the Poincar\'e conjecture any such $h$-cobordism is trivial, \emph{i.e.}~an embedded $S^2 \times [0,1]$, so the spheres are parallel.
\end{proof}

The goal of this section is to prove the following result.

\begin{prop}\label{prop - removing parallel spheres is an equivalence}
    For $M \not\cong S^1 \times S^2$ the inclusion $\sepNP(M) \subset \sep(M)$ induces an equivalence 
    \[
    \|\sepNP_\bullet(M)\| \xrightarrow{\ \simeq\ } \|\sep_\bullet(M)\|.
    \]
\end{prop}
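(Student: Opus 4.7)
The plan is to apply Quillen's Theorem A (\cref{cor:QuillenA}) to the inclusion $\sepNP(M) \hookrightarrow \sep(M)$. Two of the three hypotheses of the corollary follow from the same type of arguments used in \cref{prop:sep-p-contractible} and \cref{prop:sep-contractible}: left-fibrancy of $\sep(M)$ comes from the $\Diff(M)$-equivariance of the face map $\sep_1 \to \sep_0$ together with $\Diff(M)$-local retractility of $\sep(M)$ (so that the face map is a fiber bundle by \cref{lem:locally-retractile}(1)), and $\sepNP(M)$ is a union of path components of $\sep(M)$ because by \cref{lem:parallel-spheres-bound-cylinder} parallelism of disjoint spheres coincides with isotopy, which is preserved by paths in $\sep(M)$.

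The main task is the slice condition: for each $\Upsilon \in \sep(M)$, the upward slice $\{T \in \sepNP(M) \mid \Upsilon \subseteq T\}$ must have contractible classifying space. When $\Upsilon \in \sepNP(M)$ the slice has $\Upsilon$ as an initial element and is automatically contractible. When $\Upsilon$ contains parallel spheres, however, no element $T \in \sepNP$ can contain $\Upsilon$, so the naive slice is empty and a detour is required. My proposal is to factor the inclusion through an intermediate topological poset $\mathcal{Q}(M)$ of pairs $(T, \Upsilon)$ with $T \in \sepNP$, $\Upsilon \in \sep$, $T \subseteq \Upsilon$, and every sphere of $\Upsilon \setminus T$ parallel to some sphere of $T$, ordered by coordinate-wise inclusion. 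The diagonal $T \mapsto (T,T)$ and the forgetful projection $(T, \Upsilon) \mapsto \Upsilon$ compose to the original inclusion, so it suffices to show that both projections $\pi_T\colon \mathcal{Q} \to \sepNP$ and $\pi_\Upsilon\colon \mathcal{Q} \to \sep$ induce weak equivalences on classifying spaces.

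For $\pi_T$, I would apply \cref{cor:QuillenA} to its slice over each $T \in \sepNP$, which decomposes as a product indexed by spheres $S \in T$ of posets of finite collections of spheres parallel to $S$ in $M$ (each such collection containing $S$ itself). Each factor is contractible because it admits $\{S\}$ as an initial element. For $\pi_\Upsilon$, the slice over $\Upsilon \in \sep$ parametrises pairs $(T', \Upsilon')$ with $\Upsilon \subseteq \Upsilon'$ together with a consistent NP-choice $T'$ of representatives in each parallel class of $\Upsilon'$. Its contractibility would follow by analysing the parallel-class structure of $\Upsilon'$ and showing that the corresponding poset deformation retracts onto a contractible sub-poset via \cref{lem:poset-natural-transformation=homotopy}.

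The main obstacle will be the slice analysis for $\pi_\Upsilon$, which requires carefully tracking the parallel-class structures of nested separating systems and ensuring the NP-choice posets assemble into a contractible space. The hypothesis $M \not\cong S^1 \times S^2$ is essential here: in $S^1 \times S^2$ every two disjoint essential spheres are parallel, so $\sepNP(S^1 \times S^2)$ consists only of single-sphere systems and cannot have the same homotopy type as $\sep(S^1 \times S^2)$. Excluding this case ensures that the global topology of $M$ provides enough non-parallel separating spheres for the slice arguments to go through uniformly.
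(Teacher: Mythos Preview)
Your identification of the obstruction is correct: the direct application of \cref{cor:QuillenA} fails because the upward slice over any $\Upsilon$ with parallel spheres is empty. However, your proposed detour through the poset $\mathcal{Q}$ of pairs $(T,\Upsilon)$ with coordinatewise ordering does not work, and the gap is precisely at the step you flag as ``the main obstacle''.

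Take $M = P_1 \# P_2$ with $P_i$ irreducible and $\neq S^3$, and let $\Upsilon = \{S_1, S_2\}$ consist of two parallel reducing spheres. The slice $\Upsilon/\pi_\Upsilon = \{(T',\Upsilon') \in \mathcal{Q} : \Upsilon \subseteq \Upsilon'\}$ contains the two elements $(\{S_1\}, \Upsilon)$ and $(\{S_2\}, \Upsilon)$, and I claim these lie in different components of the classifying space. Since the ordering on $\mathcal{Q}$ is coordinatewise, any poset relation $(T_1,\Upsilon_1) \le (T_2,\Upsilon_2)$ forces $T_1 \subseteq T_2$; because each $T_i$ contains exactly one sphere in the parallel class of $S_1$, that sphere is the same for $T_1$ and $T_2$. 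The same rigidity holds along continuous paths: if $(T'_t, \Upsilon'_t)$ is a path in the slice, then $\Upsilon'_t$ always contains the fixed spheres $S_1, S_2$, and $T'_t$ tracks one sphere of $\Upsilon'_t$ continuously, which (being always disjoint from the others) can never switch. Hence every zigzag of paths and relations starting at $(\{S_1\},\Upsilon)$ keeps $S_1$ as the representative of its class, and the slice is disconnected. No deformation retraction via \cref{lem:poset-natural-transformation=homotopy} can repair this.

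The paper's argument is genuinely different and confronts exactly this symmetry problem. One first makes a non-canonical global choice of coorientation on each isotopy class of essential spheres (well-defined by \cref{lem:reversing-coorientation}), so that parallel spheres acquire a linear ``left-to-right'' order; this is where $M \not\cong S^1 \times S^2$ enters, since otherwise a sphere can be simultaneously left and right of another. One then builds auxiliary posets $\sepL(M)$ and $\sepLtwo(M)$ and maps $L$ (keep the left-most sphere in each class) and $R$ (discard the left-most). The key step is not Quillen~A but a direct proof that $R\colon \sepL(M) \to \sep(M)$ and $L\colon \sepLtwo(M) \to \sepNP(M)$ are trivial Serre fibrations, using that the space of spheres parallel to a fixed boundary sphere is contractible (\cref{lem:boundary-parallel-spheres}). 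Finally, the three maps $L, F, R\colon \sepL(M) \to \sep(M)$ satisfy $L(\Sigma) \subseteq F(\Sigma) \supseteq R(\Sigma)$, so \cref{lem:poset-natural-transformation=homotopy} makes them homotopic after realisation, and a $2$-out-of-$6$ argument finishes. The essential idea you are missing is that one must \emph{break} the symmetry between parallel spheres by an auxiliary choice; any construction that treats all representatives on equal footing will run into the disconnectedness above.
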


In order to show that $\|\sep_\bullet^\nparallel(M)\| \to \|\sep_\bullet(M)\|$ is an equivalence we will need to choose a way of ordering parallel spheres. 
To do so we make a non-canonical choice and use this to define some auxiliary posets encoding the ordering. To show this choice is well-defined we include the following lemma.

We would like to thank the Copenhagen ``homotopy theory problem seminar'' who worked out the following proof and showed more generally that if $M$ is a closed oriented $n$-manifold ($n\neq 4$) such that there is an $S^{n-1} \subset M$ that admits an orientation-reversing isotopy, then $M$ is an exotic $n$-sphere.

\begin{lem}\label{lem:reversing-coorientation}
    Essential spheres do not admit orientation-reversing self-isotopies:
    if $i,j\colon S^2 \hookrightarrow M$ are isotopic embeddings of an essential sphere such that $j(S^2) = i(S^2)$, then $j^{-1} \circ i \in \Diff(S^2)$ is orientation preserving.
\end{lem}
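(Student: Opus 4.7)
I assume for contradiction that $\sigma := j^{-1}\circ i$ reverses orientation on $S^2$. By the isotopy extension theorem I extend the isotopy $i_t$ from $i$ to $j$ to an ambient isotopy $\Phi_t\colon M\to M$ with $\Phi_0 = \id_M$ and $\Phi_1\circ i = j$. Setting $\Phi := \Phi_1$, the diffeomorphism $\Phi$ is orientation-preserving (as it is isotopic to $\id_M$), satisfies $\Phi(S) = S$, and its restriction $\Phi|_S$ is orientation-reversing on $S \cong S^2$.

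The first step is a homology argument in $M$ itself. Orienting $S$ via $i$ gives a class $[S]\in H_2(M;\mathbb{Z})$. Since $\Phi\simeq\id_M$ we have $\Phi_*[S] = [S]$; on the other hand $\Phi_*[S] = -[S]$ because $\Phi|_S$ reverses orientation. Hence $2[S]=0$, and as $H_2(M;\mathbb{Z}) \cong H^1(M,\partial M;\mathbb{Z})$ is torsion-free (by Poincar\'e--Lefschetz duality together with the universal coefficient theorem, using that $H_0(M,\partial M;\mathbb{Z})$ is free), we deduce $[S]=0$. Thus $S$ separates $M$ as $M = A\cup_S B$; essentiality forces neither $A$ nor $B$ to be a $3$-ball, and because $\Phi$ reverses the co-orientation of $S$ it must swap $A$ and $B$.

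To rule out this remaining separating case I would pass to the universal cover $\pi\colon\tilde M \to M$. Lifting $\Phi_t$ uniquely to $\tilde \Phi_t$ with $\tilde \Phi_0 = \id_{\tilde M}$, the endpoint $\tilde \Phi := \tilde \Phi_1$ is orientation-preserving, $\pi_1(M)$-equivariant, and isotopic to $\id_{\tilde M}$. Since $S^2$ is simply connected, distinct lifts of $S$ in $\tilde M$ are disjoint and each maps diffeomorphically to $S$. Pick a lift $\tilde S$; then $\tilde \Phi(\tilde S) = g\tilde S$ for a unique $g\in\pi_1(M)$, and $\tilde \Phi|_{\tilde S}$ covers the orientation-reversing map $\Phi|_S$. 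By Hurewicz applied to the simply-connected $\tilde M$ we have $\pi_2(M) = H_2(\tilde M;\mathbb{Z})$, and essentiality of $S$ gives $[\tilde S]\neq 0$. Repeating the Step~2 computation in $\tilde M$ produces the algebraic relation
\[
    (1 + g_*)\cdot[\tilde S] = 0 \qquad \text{in } \pi_2(M).
\]

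The main obstacle is closing out this relation for arbitrary $g$. When $g = 1$, one obtains $2[\tilde S]=0$, which contradicts torsion-freeness of $\pi_2$ of an orientable $3$-manifold (via Poincar\'e duality on the simply-connected $\tilde M$). For $g\neq 1$, $\tilde S$ and $g\tilde S$ are disjoint essential embedded spheres in $\tilde M$ with $[\tilde S] + [g\tilde S] = 0$, so they cobound a compact $3$-submanifold $W\subset\tilde M$; a sphere-theorem analysis of the prime decomposition of $W$, combined with essentiality of $S$ and the $\pi_1(M)$-equivariance of $\tilde \Phi$, should force $W\cong S^2\times I$, making $\tilde S$ parallel to $g\tilde S$ and yielding a geometric configuration incompatible with $\tilde\Phi\simeq\id_{\tilde M}$ interchanging the two lifts via an orientation-reversing map. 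Alternatively, one might first reduce to the closed case by carefully capping off (or doubling) $\partial M$ in a way that preserves essentiality of $S$ and the isotopy, and then invoke the result alluded to in the excerpt (established by the Copenhagen problem seminar) that any oriented closed $3$-manifold containing an $S^2$ admitting an orientation-reversing self-isotopy must be $S^3$, in which no embedded sphere is essential.
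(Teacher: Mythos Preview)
Your setup and the non-separating case are correct and close to the paper's argument; your torsion-freeness argument for $H_2(M;\bbZ)$ is a slightly slicker way to conclude $[S]=0$ than the paper's intersection-pairing argument, but both reach the same place.

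The genuine gap is the separating case. You correctly obtain $(1+g_*)[\tilde S]=0$ in $H_2(\tilde M)$, but then the argument trails off into conditional language (``should force $W\cong S^2\times I$'', ``one might first reduce to the closed case''). Neither suggestion is carried out, and in fact neither is easy: the cobounding region $W$ between $\tilde S$ and $g\tilde S$ need not be a product in general, and passing to the closed case requires care to preserve essentiality of $S$. So as written this is not a proof.

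The paper avoids the universal cover entirely and argues directly with $\pi_1(M)$. Once $S$ separates and $\Phi$ swaps the two sides $M_0\cong M_1$, there are two subcases. If $G:=\pi_1(M_0)\neq 1$, then $\pi_1(M)\cong G*G$ and $\Phi_*$ swaps the two free factors; passing to the abelianised quotient $G\times G$ shows this automorphism is not inner, contradicting $\Phi\simeq\id_M$. If $G=1$, then each $M_i$ is a punctured $S^3$, and essentiality of $S$ forces each to have at least two boundary spheres; but then $\Phi$ would have to permute the boundary components of $M$ nontrivially, again contradicting $\Phi\simeq\id_M$. This $\pi_1$/boundary argument is short and elementary, and you could substitute it directly for your incomplete universal-cover step.
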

\begin{proof}
    Suppose such an orientation-reversing isotopy exists. 
    By isotopy extension, we can find an isotopy from $\textrm{Id}_M$ to a diffeomorphism $h$ reversing the coorientation on $S$. Since $M$ is orientable, $h$ must reverse the orientation on $S$ as well. 
    We first claim $S$ separates $M$. If not, there is a simple closed curve $\gamma\subset M$ that intersects $S$ in exactly one point. Hence $\gamma$ and $S$ are duals for the intersection pairing on $M$, so $[S]\in H_2(M)$ generates a nontrivial free factor. But then $h_*[S]=-[S]$, contradicting $h\simeq\textrm{Id}_M$.
 
    So we may assume $S$ separates $M$. Since $h$ reverses the coorientation on $S$, it exchanges the two (necessarily homeomorphic) components $M_0$ and $M_1$ of $M\ca S$.  If neither  $M_0$ nor $M_1$ are simply connected, then $\pi_1(M_0)\cong \pi_1(M_1)\cong G$ for some nontrivial $G$. Moreover, $\pi_1(M)\cong G*G$ and $h_*$ exchanges the two factors. Since $h_*$ preserves the kernel of the natural homomorphism $G*G\rightarrow G\times G$, it induces an automorphism of $G\times G$, which cannot be not inner, since no nontrivial element of $G\times\{1\}$ is conjugate to an element of $\{1\}\times G$. Thus, $h_*\colon \pi_1(M) \to \pi_1(M)$ is not inner, again contradicting $h\simeq \textrm{Id}_M$. 
    Finally, if $M_0$ and $M_1$ are simply connected, they are each homeomorphic to $S^3\setminus (\coprod_k\interior{D}^3)$, and since $S$ is essential, we must have $k\geq 2$. 
    But then $h$ would have to permute the boundary spheres, and it cannot because it is isotopic to the identity.
\end{proof}

    For each essential sphere $S \subset M$ we now choose a coorientation and use it to define (locally around $S$) a left side and a right side of $S$.
    To be precise, we make this choice once for one representative in each isotopy class, and then define the coorientation on any essential sphere by transporting it along an isotopy from the representative in its isotopy class.
    By \cref{lem:reversing-coorientation} self-isotopies of essential spheres cannot reverse coorientation, so this procedure is well-defined.

    For two parallel spheres $S_0, S_1 \subset M$ we say that $S_0$ is \emph{to the left of $S_1$} if the part of $M \ca (S_0 \cup S_1)$ that is right of $S_0$ and left of $S_1$ is diffeomorphic to $S^2 \times [0,1]$.
    Conversely we define \emph{to the right of}.
Note that if $S_0$ is both left of and right of $S_1$, then $M \cong S^2 \times S^1$. From now on we will assume that $M \not\cong S^2 \times S^1$. 

Within an isotopy class of reducing spheres, we will need to understand the subspace of all spheres to the left of a fixed sphere $S_0$. Equivalently, we can regard this as a space of spheres parallel to a fixed boundary component of $M\ca S_0$. For this, we have the following lemma. 

\begin{lem}\label{lem:boundary-parallel-spheres}
    Let $M$ be a compact $3$-manifold and $S_0 \subset \partial M$ a preferred boundary sphere.
    Then the space of unparametrised embeddings 
    \[
         P(S_0; M) := \{ S \subset \interior{M} \;|\; S \text{ is parallel to } S_0 \}
         \subset 
         \umb(S^2, \interior{M})
    \]
    is contractible.
\end{lem}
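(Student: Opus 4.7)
The plan is to exhibit $P(S_0;M)$ as the base of a principal $G$-bundle whose total space and structure group are both weakly contractible. Let $\calE := \emb_{S_0 \times \{0\}}(S_0 \times [0,1], M)$ denote the space of collars of $S_0$ in $M$; this is weakly contractible by \cref{thm:contractible-collars}. The natural evaluation map $q\colon \calE \to P(S_0; M)$ sending $e$ to the unparametrised sphere $e(S_0 \times \{1\})$ is surjective by the very definition of ``parallel''.

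Next I would set $G := \Diff_{S_0 \times \{0\}}(S_0 \times [0,1])$ and show that $G$ acts freely on $\calE$ by precomposition with orbits equal to the fibers of $q$. The key input here is that the cobordism region $R_S$ bounded by $S_0$ and a parallel sphere $S$ is uniquely determined: the component of $M \setminus (S_0 \cup S)$ adjacent to $S_0$ is both open and closed inside the interior of any $S^2 \times I$ guaranteed by parallelism, and so must coincide with $\interior{R_S}$. Hence any two collars $e_1, e_2$ in the fiber $q^{-1}(S)$ parametrise the same region, and $\psi := e_2^{-1} \circ e_1$ defines an element of $G$. Together with the local retractility of $\umb(S^2, \interior{M})$ from \cref{cor:umb-retractile}, this promotes $q$ to a principal $G$-bundle, so $P(S_0; M) \simeq \calE\hq G \simeq BG$.

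The main obstacle is then to show that $G$ is weakly contractible, which is where the Smale conjecture enters. I would identify $S_0 \times [0,1]$ with $D^3 \setminus \interior{D^3_\varepsilon}$ for a small inner ball $D^3_\varepsilon \subset \interior{D^3}$ and consider the restriction fibration
\[
    \Diff_\partial(D^3_\varepsilon) \longrightarrow \Diff_\partial\bigl(D^3,\, D^3_\varepsilon\ \text{setwise}\bigr) \longrightarrow G.
\]
The fiber $\Diff_\partial(D^3_\varepsilon)$ is weakly contractible by the Smale conjecture for $D^3$ \cite{Hatcher}. For the total space, I would use the auxiliary fibration $\Diff_\partial(D^3,\, D^3_\varepsilon\ \text{setwise}) \to \Diff_\partial(D^3) \to \umb(D^3_\varepsilon, \interior{D^3})$: since $\Diff_\partial(D^3) \simeq \ast$ by Smale and $\umb(D^3_\varepsilon, \interior{D^3}) \simeq \ast$ (the space of unparametrised embedded $3$-disks in an open $3$-ball is contractible via a shrinking isotopy combined with Cerf's tubular neighbourhood theorem), the total space is also weakly contractible. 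Hence $G \simeq \ast$, and consequently $P(S_0; M) \simeq BG \simeq \ast$.
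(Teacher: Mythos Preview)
Your proof is correct and follows the same strategy as the paper: both set up the fiber sequence $\Diff_{S^2 \times \{0\}}(S^2 \times I) \to \emb_{S_0 \times \{0\}}(S_0 \times I, M) \to P(S_0; M)$, invoke Cerf for the contractibility of the total space of collars, and Hatcher's Smale conjecture for the contractibility of the fiber $G = \PI(S^2)$. The paper simply cites the last fact directly from \cite{Hatcher}, whereas you re-derive it through $\Diff_\partial(D^3)$; be aware that your stated justification ``shrinking isotopy combined with Cerf's tubular neighbourhood theorem'' for $\umb(D^3_\varepsilon, \interior{D^3}) \simeq *$ is not sufficient on its own---that contractibility is in fact equivalent to the Smale conjecture (via the principal $\Diff(D^3)$-bundle $\emb(D^3,\interior{D^3}) \to \umb(D^3,\interior{D^3})$ with total space $\simeq \Or(3)$), so you should cite \cite{Hatcher} there as well.
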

\begin{proof}
    The $\Diff_\partial(M)$-equivariant map
    \[
        p\colon \emb_{S_0\times\{0\}}( S_0 \times [0,1], M)  \to
        \umb(S_0, \interior{M}), \qquad
        i \mapsto i(S_0 \times \{1\})
    \]
    is a fiber bundle because $\umb(S_0, M)$ is $\Diff_\partial(M)$-locally retractile by \cref{cor:umb-retractile}.
    Moreover, by the definition of parallel, the image of this map is exactly the connected component $P(S_0; M) \subset \umb(S_0, \interior{M})$.
    Let $S_1 \in P(S_0; M)$ be some unparametrised sphere parallel to $S_0$.
    The fiber $p$ at some unparametrised sphere $S_1 \in P(S_0; M)$, which is parallel to $S_0$,
    is the space of those embeddings
    $\iota\colon S_0 \times [0,1] \hookrightarrow M$
    that are the identity on $S_0 \times \{0\}$ and send $S_0 \times \{1\}$ to $S_1$.
    The image of such an embedding is always exactly the connected component $U \subset M \ca S_1$ that lies between $S_0$ and $S_1$. $U$ is necessarily diffeomorphic to $S^2 \times [0,1]$.
    Specifying another embedding $\iota' \in p^{-1}(S_1)$ is hence equivalent to specifying a diffeomorphism $S_0 \times [0,1] \cong U$ that is the identity on $S_0 \times \{0\}$.
    Therefore we have a fiber sequence
    \[
        \Diff_{S_0\times\{0\}}(S_0 \times [0,1]) \to \emb_{S_0 \times \{0\}}(S_0 \times [0, 1], M) \to P(S_0; M).
    \]
    The left term is the group of pseudo-isotopies of $S^2$, which is contractible by Hatcher's proof of the Smale conjecture \cite{Hatcher}.
    The middle term is the space of collars of $S_0$ in $M$, which is also contractible by \cite{Cerf}, as recalled in \cref{thm:contractible-collars}.
    Therefore $P(S_0; M)$ is contractible.
\end{proof}

\begin{defn}\label{defn:L}
    For a sphere system $\Sigma$ we let $L(\Sigma) \subseteq \Sigma$ denote the sphere system that selects the left-most sphere from each isotopy class represented in $\Sigma$.
    Then the topological poset $\sepL(M)$ is defined as follows.
    Its underlying space is the subspace 
    $\sepL(M) \subseteq \sep(M)$
    of those separating systems where each isotopy class contains either no sphere or at least two spheres.
    The relation is defined as $\Sigma \le \Sigma'$ if and only if $\Sigma \subseteq \Sigma'$ and $L(\Sigma) \subseteq L(\Sigma')$.
    Define $\sepLtwo(M) \subset \sepL(M)$ as the full subposet on those separating systems where each isotopy class has either no sphere or exactly two spheres.
\end{defn}

By construction, $L$ defines a map of posets $
    L\colon \sepL(M) \longrightarrow \sepNP(M),
$
and we can also define 
$
    R\colon \sepL(M) \longrightarrow \sep(M),$ via $
    R(\Sigma) := \Sigma \setminus L(\Sigma).
$
Thus the map $R$ removes the left-most sphere from each isotopy class.
This is still a separating system as we assumed that every non-empty isotopy class of $\Sigma \in \sepL(M)$ contains at least two spheres. The following shows that forgetting the left-most sphere does not change the homotopy type of the space of separating systems.
\begin{lem}\label{lem:forget-left-most-sphere}
    Assume $M \not\cong S^2 \times S^1$.
    Then the map
    \[
        R\colon \sepL(M) \longrightarrow \sep(M)
    \]
    that discards the left-most sphere from each isotopy class is a trivial Serre fibration.
    As a consequence, the maps
    $R\colon \|\sepL_\bullet(M)\| \to \|\sep_\bullet(M)\|$ and
    $L\colon \|\sep^{\rm L,2}_\bullet(M)\| \to \|\sepNP_\bullet(M)\|$
    are weak equivalences.
\end{lem}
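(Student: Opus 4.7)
The plan is to show that $R\colon \sepL(M) \to \sep(M)$ is a locally trivial fiber bundle with contractible fibers, from which the trivial Serre fibration claim follows. I will then argue the same way at each simplicial level $R_n\colon \sepL_n(M) \to \sep_n(M)$, and deduce the weak equivalence of realizations by the fact that fat realization preserves levelwise weak equivalences of semi-simplicial spaces. The statement for $L$ will follow by the mirror version of the same argument.

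To see that $R$ is a fiber bundle, observe that $R$ is $\Diff_\partial(M)$-equivariant and that $\sep(M) \subset \cut(M) \subset \umb(\amalg_k S^2, \interior M)$ is a union of path components of a $\Diff_\partial(M)$-locally retractile space (\cref{cor:umb-retractile}), so \cref{lem:locally-retractile}(1) applies. For the fiber, fix $\Sigma \in \sep(M)$ and consider $\Sigma' \in R^{-1}(\Sigma)$. Since every non-empty isotopy class of $\Sigma'$ has at least two spheres and $R(\Sigma') = \Sigma' \setminus L(\Sigma')$ discards one sphere per non-empty class, the data of $\Sigma'$ amounts to choosing, for each isotopy class $[S]$ of $\Sigma$, a sphere parallel to and to the left of the leftmost sphere of $[S]$ in $\Sigma$. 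Viewing such a choice as a sphere in the adjacent component of $M \ca \Sigma$ parallel to the relevant boundary sphere (using $M \not\cong S^1 \times S^2$ so that left is well-defined), \cref{lem:boundary-parallel-spheres} gives that each factor of this product is contractible. As $\Sigma$ has finitely many isotopy classes, the fiber is contractible and $R$ is a trivial Serre fibration.

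To extend this to simplicial level $n$, I first note that $\sep_n(M)$ is $\Diff_\partial(M)$-locally retractile: the face map $\varphi_n\colon \sep_n(M) \to \sep(M)$ is a finite cover by \cref{lem:face-map-is-finite-covering}, so a local retraction of $\sep(M)$ at $\Sigma_n$ lifts (after restricting to the sheet of $\varphi_n$ containing the basepoint) to a local retraction of $\sep_n(M)$. Hence $R_n$ is a fiber bundle by \cref{lem:locally-retractile}(1). The fiber over a chain $(\Sigma_0 \subseteq \dots \subseteq \Sigma_n)$ is parametrized by a choice, for each isotopy class $[S]$ appearing in $\Sigma_n$, of a single sphere $\tilde S$ parallel to and to the left of every sphere of $[S]$ in $\Sigma_n$: the order condition $L(\Sigma_i') \subseteq L(\Sigma_{i+1}')$ forces $\tilde S$ to be the leftmost of $[S]$ in $\Sigma_i'$ simultaneously for all $i$ in which $[S]$ appears, while $\Sigma_i' \subseteq \Sigma_{i+1}'$ forces it to persist. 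This is again a finite product of contractible spaces by \cref{lem:boundary-parallel-spheres}, so $R_n$ is a weak equivalence for all $n$, and the realization map $R\colon \|\sepL_\bullet(M)\| \to \|\sep_\bullet(M)\|$ is a weak equivalence.

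The argument for $L\colon \|\sepLtwo_\bullet(M)\| \to \|\sepNP_\bullet(M)\|$ is the mirror image. An element of the fiber of $L$ over $\Sigma \in \sepNP(M)$ consists of a choice, for each sphere $S \in \Sigma$, of a sphere parallel to and to the right of $S$; these are again parametrized by contractible spaces of boundary-parallel spheres on the right side. The chain condition in $\sepLtwo$ forces these rightmost choices to be made coherently as before, so $L_n$ is a fiber bundle with contractible fibers for each $n$, and taking fat realizations finishes the proof. The main bookkeeping obstacle is checking that the chain constraints in $\sepL$ and $\sepLtwo$ really do force a single coherent choice of added leftmost (respectively rightmost) sphere per isotopy class across all levels of the chain; once this is clear, the rest is a direct application of the locally retractile machinery together with \cref{lem:boundary-parallel-spheres}.
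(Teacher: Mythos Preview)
Your overall strategy matches the paper's, but there is a genuine gap in your identification of the fiber of $R$ over $\Sigma$ as a \emph{product} of spaces $P(S^L_j;N_j)$ of boundary-parallel spheres. The added leftmost spheres $\tilde S_j$ must be pairwise disjoint, and when the leftmost spheres $S^L_j,S^L_l$ of two distinct isotopy classes lie on the boundary of the \emph{same} component $N$ of $M\ca\Sigma$ (with their left sides both pointing into $N$), the choices of $\tilde S_j$ and $\tilde S_l$ are coupled: for instance in $N\cong S^3\setminus\amalg_r\interior D^3$, spheres parallel to two different boundary components can certainly intersect. So the fiber is only the open subspace of the product where all added spheres are disjoint, and \cref{lem:boundary-parallel-spheres} applied factor by factor does not directly give its contractibility. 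The same issue recurs in your level-$n$ fiber description.

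The paper circumvents this by factoring $R$, on each path component of $\sepL(M)$, as a finite composite of maps $R_a\colon \sep^a(M)\to\sep(M)$, each of which forgets the leftmost sphere from a \emph{single} isotopy class $a$. Each $R_a$ is $\Diff(M)$-equivariant into a locally retractile space, hence a fiber bundle, and its fiber over $\Sigma$ is exactly one space $P(S_0;M\ca\Sigma)$, contractible by \cref{lem:boundary-parallel-spheres}. A finite composite of trivial Serre fibrations is again one. For the simplicial levels the paper also avoids redoing any fiber analysis: it observes that the commuting square with horizontal maps $R_n,R_0$ and vertical last-vertex maps $\varphi_n$ from \cref{lem:face-map-is-finite-covering} is a pullback (since $R_n$ is a bijection on $\varphi_n$-fibers), so $R_n$ inherits being a trivial Serre fibration from $R_0$. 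Your lifting-of-local-retractions argument for $\sep_n(M)$ is fine, but this pullback shortcut is cleaner.
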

\begin{proof}
    By \cref{cor:umb-retractile}, $\sep(M)$ is a locally $\Diff(M)$-retractile space and so by \cref{lem:locally-retractile}(1) any $\Diff(M)$-equivariant map into it is a locally trivial fibration.

    For an isotopy class $a \in \pi_0\umb(S^2, M)$, let $\sep^a(M) \subset \sep(M)$ denote the subspace of those separating systems $\Sigma$ such that the number of spheres of $\Sigma$ that are in the isotopy class $a$ is at least $2$. This only depends on the isotopy class of $\Sigma$, hence $\sep^a(M)$ is a union of path components of $\sep(M)$.
    Then we can define a map
    \[
        R_a\colon \sep^a(M) \to \sep(M) 
    \]
    that forgets the left-most sphere from the isotopy class $a$. 
    This map is again a fiber bundle because $\sep(M)$ is locally $\Diff(M)$-retractile. Each path component of $\sepL(M)$ belongs to $\sep^a(M)$ for only finitely many $a$, hence restricted to each path component the map $R$ is equal to a finite composite of maps of the form $R_a$. Thus, it will suffice to show that each $R_a$ is a trivial Serre fibration.

    We therefore have to show that the fiber $R_a^{-1}(\Sigma)$ is contractible at any $\Sigma \in \sep(M)$ in the image of $R_a$.
    A point in this fiber is of the form $\Sigma \sqcup S$ for some $S \in \umb(S^2, M)$ such that $S$ is disjoint from $\Sigma$, $S$ is in the isotopy class $a$, and (within $\Sigma \sqcup S$) $S$ is the left-most sphere in its isotopy class.
    Let $S_0$ be the left-most sphere in $\Sigma$ that is in the isotopy class $a$.
    Then we can equivalently write the fiber as the subspace  of those unparametrised spheres that are parallel to $S_0 \subset \partial (M\ca \Sigma)$.
    (Here there are two ways of thinking of $S_0$ as a sphere in the boundary of $M\ca \Sigma$. We choose the one that corresponds to the left side of $S_0 \subset M$.)
    \[
        R_a^{-1}(\Sigma) \cong 
        \{S \in \umb(S^2, (M \ca \Sigma)^\circ) \;|\; S \text{ parallel to } S_0\}
    \]
    Contractibility now follows from \cref{lem:boundary-parallel-spheres}.

    To prove the final claim, we first consider the commutative square
    \[\begin{tikzcd}
        \sepL_n(M) \ar[r, "R_n"] \ar[d] & \sep_n(M) \ar[d] \\
        \sepL_0(M) \ar[r, "R_0"] & \sep_0(M)
    \end{tikzcd}\]
    where the vertical maps send $(\Sigma_0 \subseteq \dots \subseteq \Sigma_n)$ to $\Sigma_n$.
    These vertical maps are finite coverings by \cref{lem:face-map-is-finite-covering} and the horizontal map induces a bijection on fibers, so the square is a pullback and hence $R_n$ is also a trivial Serre fibration. 
    Fat geometric realisation preserves weak equivalences \cite[Theorem 2.2]{EbertRandalWilliams} so $\|R\|$ is a weak equivalence.
    The proof for $L$ is entirely analogous.
\end{proof}

Using these intermediate spaces we can now prove \cref{prop - removing parallel spheres is an equivalence}.

\begin{proof}[Proof of \cref{prop - removing parallel spheres is an equivalence}]
Consider the following (non-commutative!) diagram of maps of topological posets.
\[
\begin{tikzcd}[column sep = large]
    \sepLtwo(M) \ar[r, "I"] \ar[d, "L"'] & \sepL(M) \ar[dl, "L"'] \ar[d, "R"] \\
    \sepNP(M) \ar[r, "I"] & \sep(M) 
\end{tikzcd}
\]
Here the maps labelled $I$ are inclusions and $L$ and $R$ are as defined above.
The top triangle in this diagram commutes.
Consider the three maps
\[
    L, F, R \colon \sepL(M) \longrightarrow \sep(M)
\]
where $F$ is the forgetful map (\emph{i.e.}~the inclusion on underlying spaces).
For every $\Sigma$ we have inclusions 
$L(\Sigma) \subset F(\Sigma) = \Sigma \supset R(\Sigma)$.
Therefore Lemma \ref{lem:poset-natural-transformation=homotopy} provides us with homotopies $|L| \simeq |F| \simeq |R|$, so the following diagram commutes up to homotopy.
\[
\begin{tikzcd}[column sep = large]
    {\|\sepLtwo_\bullet(M)\|} \ar[r, "{\|I\|}"] \ar[d, "{\|L\|}"'] & 
    {\|\sepL_\bullet(M)\|} \ar[dl, "{\|L\|}"'] \ar[d, "{\|R\|}"] \\
    {\|\sepNP_\bullet(M)\|} \ar[r, "{\|I\|}"] & 
    {\|\sep_\bullet(M)\|}
\end{tikzcd}
\]
By \cref{lem:forget-left-most-sphere} the two vertical maps in the above square are weak equivalences.
Now it follows from the $2$-out-of-$6$-property that all maps in the diagram are weak equivalences.
(Alternatively, one can apply the functor $\pi_k(-)$ to the diagram and then show that the diagonal map is both injective and surjective.)
\end{proof}

By combining \cref{prop - removing parallel spheres is an equivalence} and \cref{prop:sep-contractible} we obtain the following theorem, which we will use in the proof of our main theorem in Section~\ref{section: finiteness of BDiff}.

\begin{thm}\label{thm: sepNP contractible}
    Suppose $M\not\cong S^1 \times S^2$. Then $\|\sepNP_\bullet(M)\|\simeq *$.
\end{thm}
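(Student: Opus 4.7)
The plan is to assemble this theorem directly from the two preceding results, which have been set up precisely for this purpose. The key observation is that we already know $\|\sep_\bullet(M)\|$ is contractible for every compact orientable $3$-manifold $M$ by \cref{prop:sep-contractible}, and for $M \not\cong S^1 \times S^2$ we have a weak equivalence $\|\sepNP_\bullet(M)\| \xrightarrow{\simeq} \|\sep_\bullet(M)\|$ induced by the inclusion of topological posets $\sepNP(M) \subset \sep(M)$ by \cref{prop - removing parallel spheres is an equivalence}.

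Concretely, I would proceed in two sentences. First, invoke \cref{prop - removing parallel spheres is an equivalence} under the hypothesis $M \not\cong S^1 \times S^2$ to replace $\|\sepNP_\bullet(M)\|$ by $\|\sep_\bullet(M)\|$ up to weak equivalence. Second, invoke \cref{prop:sep-contractible} to conclude that $\|\sep_\bullet(M)\|\simeq *$. Composing these gives $\|\sepNP_\bullet(M)\|\simeq *$, as desired.

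There is no genuine obstacle remaining at this stage, since the real content lies in the preceding propositions. The hypothesis $M\not\cong S^1\times S^2$ is essential only in the second step, because in $S^1\times S^2$ a sphere can be isotoped to one of its parallel copies without those being locally separated into distinct ``left'' and ``right'' sides, so the non-canonical coorientation-based ordering used to define $\sepL(M)$ in \cref{defn:L} and applied in \cref{lem:forget-left-most-sphere} breaks down. Outside that exceptional case, the passage from $\sep$ to $\sepNP$ is a homotopy equivalence and the theorem follows immediately from the two inputs.
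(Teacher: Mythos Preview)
Your proof is correct and matches the paper's approach exactly: the theorem is stated there as an immediate consequence of \cref{prop - removing parallel spheres is an equivalence} and \cref{prop:sep-contractible}, with no further argument. One small slip: in your final paragraph you say the hypothesis $M\not\cong S^1\times S^2$ is essential ``only in the second step'', but the explanation you give (about coorientations and \cref{defn:L}) is about the first step, \cref{prop - removing parallel spheres is an equivalence}; \cref{prop:sep-contractible} holds for all $M$.
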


\subsection{The connected sum of two irreducible manifolds}\label{sec:M connected sum of two irreducible}

Theorem~\ref{thm: sepNP contractible} allows us to reprove the following result of Hatcher's, which is given in the final remark of~\cite{Hatcher1981}, and also discussed after the main theorem in a revision of that same article \cite{Hatcher1981revised}.

\begin{thm}[{\cite{Hatcher1981}}]\label{thm:two-prime-pieces}
    Let $M\cong M_1 \# M_2$, where the $M_i$ are irreducible and not~$S^3$. Let~$S\subset M$ be a reducing 2-sphere. Then the inclusion $\Diff(M,S)\hookrightarrow \Diff(M)$ is an equivalence.
\end{thm}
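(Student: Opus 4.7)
The approach will be to derive the statement as a direct consequence of \cref{thm: sepNP contractible} together with the homotopical orbit-stabiliser lemma. Since $M_1$ and $M_2$ are both irreducible and not $S^3$, the manifold $M$ has exactly two prime factors; in particular $M\not\cong S^1\times S^2$, so \cref{thm: sepNP contractible} applies to give $\|\sepNP_\bullet(M)\|\simeq *$.

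First I would pin down what $\sepNP(M)$ looks like in this special case, and in particular show that every element is a singleton consisting of a reducing sphere. The empty system is not separating since $M$ is not irreducible, so every element of $\sep(M)$ contains at least one sphere. If some $\Sigma\in\sep(M)$ contained two disjoint spheres $S'$ and $S''$, then $S''$ would lie in one component of $M\ca S'$, which is a once-punctured copy of some irreducible $M_i$. By irreducibility of $M_i$, any sphere in $M_i\setminus\interior{D}^3$ either bounds a ball (hence is inessential in the spherical closure $M_i$ and so not reducing in $M$) or is parallel to the boundary sphere (hence parallel to $S'$). Neither possibility is compatible with $\Sigma\in\sepNP(M)$. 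Consequently $\sepNP(M)$ is a discrete topological poset, its nerve is essentially constant, and so the underlying space of unparametrised reducing spheres $X := \sepNP_0(M)$ is weakly equivalent to $\|\sepNP_\bullet(M)\|\simeq *$.

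To conclude, I would run the homotopical orbit-stabiliser argument. The space $X$ is a union of path components of $\umb(S^2,\interior{M})$ (being reducing depends only on the isotopy class), hence $\Diff(M)$-locally retractile by \cref{cor:umb-retractile}. By \cref{lem:orbit-stabiliser-v2}, orbits in $X$ are unions of path components; since $X$ is contractible and therefore connected, the $\Diff(M)$-action on $X$ is transitive with stabiliser $\Diff(M,S)$. Applying \cref{lem:orbit-stabiliser-transitive} then produces a homotopy fiber sequence
\[
X \longrightarrow B\Diff(M,S) \longrightarrow B\Diff(M),
\]
and the contractibility of $X$ forces $B\Diff(M,S)\to B\Diff(M)$ to be a weak equivalence. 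Looping down gives the claimed equivalence $\Diff(M,S)\simeq\Diff(M)$.

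The main obstacle, to the extent there is one, is the identification of $\sepNP(M)$ with the space of single reducing spheres. This hinges on the classical fact that essential $2$-spheres in irreducible once-punctured $3$-manifolds are boundary-parallel, which I would either cite from \cite{Hempel} or derive directly from irreducibility. Beyond this step, the argument is a clean combination of \cref{thm: sepNP contractible} with the locally retractile framework of \cref{section: preliminaries}.
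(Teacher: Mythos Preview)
Your proof is correct and follows essentially the same route as the paper's: identify $\sepNP(M)$ with the space of single reducing spheres, invoke \cref{thm: sepNP contractible} to get contractibility, and then deduce the equivalence from a fiber sequence. The paper argues that two disjoint reducing spheres are parallel via the three-piece decomposition $M\cong N_1\#_{S_0}N_2\#_{S_1}N_3$ and uniqueness of prime factors (forcing $N_2\cong S^3$), whereas you cut along $S'$ and argue directly from irreducibility of the piece containing $S''$; these are equivalent. For the conclusion the paper uses the fiber sequence $\Diff(M,S)\to\Diff(M)\to\umb(S^2,\interior{M})$ directly rather than passing through classifying spaces via \cref{lem:orbit-stabiliser-transitive} and looping, but this is a cosmetic difference. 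One small terminological slip: when you say $\sepNP(M)$ is a ``discrete topological poset'' you mean the partial order is trivial (no non-identity relations), not that the underlying space is discrete---but your next clause makes your meaning clear.
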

    \begin{proof}
        Note that since the $M_i$ are irreducible, the prime decomposition of $M$ contains no $S^1 \times S^2$ factors and therefore any essential sphere is separating and induces a connected sum decomposition of $M$ into two pieces.
        We claim that any two disjoint reducing spheres in $M$ must be parallel.
        Indeed if $S_0$ and $S_1$ are disjoint embedded spheres, then they induce a connected sum decomposition of $M$ into three pieces: $M\cong N_1 \#_{S_0} N_2 \#_{S_1} N_3$.
        Since $M$ has two prime factors, it follows that one of the $N_i$ is homeomorphic to $S^3$, and it cannot be $N_1$ or $N_3$, since the $S_i$ are reducing.
        Therefore $N_2$ is homeomorphic to $S^3$, and $N_2\setminus (S_0\sqcup S_1)\cong S^2 \times I$ determines an embedding with boundary components $S_0$ and $S_1$, so $S_0$ and $S_1$ are parallel.
        Consequently, any $\Sigma \in \sepNP(M)$ consists of a single sphere, in other words $\sepNP(M) \subset \umb(S^2, \interior{M})$ is the space of essential spheres. 
        In particular, there are no non-degenerate chains in~$(\sepNP(M), \subseteq)$ and it follows that~$\|\sepNP_\bullet(M)\|\simeq \sepNP(M)$, so Theorem~\ref{thm: sepNP contractible} implies that $\sepNP(M)\simeq *$. 

        By the discussion following \cref{cor:umb-retractile} we have the fibration sequence
        \[
            \Diff(M,S)\hookrightarrow \Diff(M) \rightarrow \umb(S^2,\interior{M}).
        \]
        The fibration lands in the subspace $\sepNP(M) \subset \umb(S^2, \interior{M})$ of essential spheres (because the action of $\Diff(M)$ preserves these) which we have shown is contractible.
        Therefore the inclusion of $\Diff(M, S) \to \Diff(M)$ must be an equivalence as claimed.
    \end{proof}

 The above theorem is a powerful computational tool, because when the $M_i$ are distinct we can describe $\Diff(M, S)$ using the homotopy fiber sequence
       \[
            \Diff_\partial(M_1 \setminus \interior{D}^3) \longrightarrow \Diff(M, S) \longrightarrow \Diff(M_2\setminus \interior{D}^3),
       \]
which can be used to describe $\Diff(M)$ in terms of $\Diff(M_i)$ for the irreducible pieces $M_i$.

\section{Homotopy finiteness via the (strong) generalised Smale conjecture} \label{section:strongsmale}

In Section \ref{section: finiteness of BDiff} we will use contractibility of $\|\sepNP_\bullet(M)\|$ to prove Theorem \ref{thm:Kontsevich-finiteness-conjecture} via an inductive argument. 
The base case of our argument is homotopy finiteness for $\BDiff_\partial(M)$ where $\scl{M}$ is an irreducible manifold. 
In the case that $M$ itself is irreducible (\emph{i.e.}~has no spherical boundary components) this is a theorem of Hatcher--McCullough \cite{HatcherMcCullough}, as recalled in \cref{thm: HatcherMcCullough finiteness}. We will show in \cref{thm:prime-finiteness} that the general case can be reduced to either Hatcher--McCullough's result or the following theorem, which is equivalent to the case in which $M$ has one fixed spherical boundary component. 

\begin{thm}\label{thm:sect4main}
    Let $M$ be an irreducible 3-manifold with either empty or incompressible toroidal boundary, and let $D^3\subset \interior{M}$ be an embedded disk. 
    Then $\BDiff_{D^3}(M)$ has the homotopy type of a finite CW complex.
\end{thm}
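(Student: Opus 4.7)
The plan is to first use the fiber sequence
\[
    \Diff_{D^3}(M) \longrightarrow \Diff(M) \longrightarrow \emb(D^3, \interior{M})
\]
together with the equivalence $\emb(D^3, \interior{M}) \simeq \Fr(\interior{M})$ between the embedding space of a disk and the oriented frame bundle of $\interior{M}$. Delooping and applying the homotopical orbit--stabiliser framework of Section~\ref{section: preliminaries}, this reduces the theorem to showing that $\Fr(\interior{M}) \hq \Diff(M)$ is homotopy finite. This reformulation is advantageous because an action of a compact Lie group such as $\Isom(M)$ on $\Fr(\interior{M})$ is free whenever $M$ has a reasonable geometric structure, making orbit spaces easier to analyse.

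Next, I would decompose $M$ along its JSJ tori: pick a maximal collection $\mathcal{T}$ of disjoint, non-parallel, incompressible tori cutting $M$ into geometric pieces, each of which is either Seifert-fibered, hyperbolic of finite volume (possibly with cusps/torus boundary), or a $\Sol$ torus bundle over $S^1$. Paralleling the sphere--system arguments of Section~\ref{section: systems of spheres}, I would build a suitable poset of incompressible torus systems and use Lemma~\ref{lem:fiber-sequence-finite} together with the homotopical orbit--stabiliser lemma to reduce homotopy finiteness of $\Fr(\interior{M}) \hq \Diff(M)$ to the corresponding statement for each geometric piece, with an appropriate disk removed from one piece and with the toroidal boundary either fixed or tracked setwise.

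The geometric cases then split as follows:
\begin{enumerate}
    \item Closed hyperbolic and finite-volume cusped hyperbolic manifolds: here the strong generalised Smale conjecture gives $\Isom(M) \xrightarrow{\simeq} \Diff(M)$, and Mostow rigidity makes $\Isom(M)$ finite, so $\Fr(\interior{M}) \hq \Isom(M)$ is a finite quotient of the frame bundle of a finite-volume hyperbolic manifold, hence homotopy finite.
    \item Non-Haken Seifert-fibered manifolds and $\Sol$ torus bundles over $S^1$: again the strong generalised Smale conjecture applies, $\Isom(M)$ is a compact Lie group, and the $\Isom(M)$-action on $\Fr(\interior{M})$ is free, so the quotient is a finite-dimensional closed manifold (or finite quotient thereof), hence homotopy finite.
    \item Haken Seifert-fibered manifolds: apply the classification of the space of Seifert fiberings from Hong--Kalliongis--McCullough--Rubinstein~\cite{HKMR12}. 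Outside six exceptional manifolds the fibering is unique up to isotopy, and the fibering together with the fixed disk reduces the diffeomorphism computation to the mapping class group of the base orbifold rel a marked point, which is of finite type by classical surface theory combined with Theorem~\ref{thm: HatcherMcCullough finiteness}. The six exceptional examples are handled individually.
\end{enumerate}

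The hard part will be case (3): the Haken Seifert-fibered setting requires careful bookkeeping, first because fiber-preserving and generic diffeomorphisms must be compared (via a fiber sequence whose base is the space of Seifert fiberings), and second because the exceptional fiberings each demand a bespoke argument. A secondary obstacle is the gluing step: to reassemble the JSJ pieces one must control the combinatorics of diffeomorphisms permuting the pieces and twisting along the JSJ tori, and verify that the resulting extension of homotopy-finite pieces by a finite group acting on the JSJ graph remains homotopy finite, which ultimately relies on homotopy finiteness of $\BDiff$ of a torus rel a boundary circle together with the closure of homotopy-finite spaces under the extensions provided by Lemma~\ref{lem:fiber-sequence-finite}.
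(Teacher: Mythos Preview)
Your outline matches the paper's proof closely in its overall shape: reduce to $\Fr(M)\hq\Diff(M)$, cut along the JSJ tori, and then handle geometric pieces case by case using the generalised Smale conjecture where it holds and the structure of Seifert fiberings (via \cite{HKMR12}) otherwise.

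Two points where the paper differs from, or is sharper than, your sketch. First, the paper does not build a poset of torus systems. Instead it exploits uniqueness of the JSJ decomposition to show the JSJ tori $T$ form a \emph{canonical submanifold}, meaning $\Diff(M,T)\hookrightarrow\Diff(M)$ is an equivalence, so one may work directly with the stabiliser. This fails precisely when $M$ is a $T^2$-bundle over $S^1$ (the isotopy class of a fiber is an $S^1$, not a point); that is why Anosov bundles are treated separately \emph{before} the JSJ step, not as JSJ pieces---so your placement of $\Sol$ manifolds among the pieces of $M\ca T_{\JSJ}$ is not quite right.

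Second, and this is the substantive gap, the gluing step needs more than homotopy finiteness of $\BDiff$ of each piece. When reassembling along tori one must control $\BDiff^\Gamma_F(N)$ for all subgroups $\Gamma\le\pi_0\Diff(\partial N)$ that can arise as the image of the restriction map from the ambient diffeomorphism group; the paper packages this as \emph{hereditary finiteness} (\cref{defn:hereditarily-finite}) and verifies it for hyperbolic and fiber-rigid Haken Seifert-fibered pieces. That notion, together with the (nontrivial) check that the image of $\Diff(M,T)\to\pi_0\Diff(T)$ is finite, is exactly what makes the cutting proposition (\cref{prop:cut-along-tori}) go through. Your proposal gestures at this (``boundary either fixed or tracked setwise'') but does not isolate the condition; without it the inductive reassembly does not close. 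Note in particular that $(T^2\times I\setminus\interior{D}^3, S^2)$ is \emph{not} hereditarily finite (\cref{ex:not-hereditarily-finite}), so one genuinely has to control which $\Gamma$ arise, and this is where the finiteness of the image in $\pi_0\Diff(T)$ is used.
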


The proof of this theorem will occupy Sections \ref{section:strongsmale} and \ref{section:decomposing irreducible}.  For its proof, we exploit the  JSJ and geometric decompositions of $M$, which we recall now. Let $M$ be a prime orientable 3-manifold with empty or incompressible toroidal boundary.
Then there exists a minimal collection $T_{\JSJ}$ of embedded, incompressible tori and annuli such that each of the components of $M\setminus T_{\JSJ}$ are either Seifert-fibered or atoroidal, and this collection is unique up to isotopy. (Recall that an irreducible manifold is called \emph{atoroidal} if every essential immersed torus is homotopic to the boundary.)
This decomposition of $M$ is known as the \emph{JSJ decomposition}, having been discovered independently by Jaco--Shalen \cite{JacoShalen1976} and Johannson \cite{Johannson1979b}. Moreover, since $\partial M$ consists only of tori,  no annuli appear in this decomposition (see e.g.~\cite{Jaco1980}). 
 
\begin{defn}\label{defn:TrivialJSJ} A prime 3-manifold with empty or incompressible toroidal boundary has \emph{trivial JSJ decomposition} if it is either Seifert fibered or atoroidal (\emph{i.e.}~if $T_{\JSJ}$ is empty).  
\end{defn}

On the other hand, Thurston's geometrisation conjecture states that  there exists a collection $T_G$ of incompressible 2-sided tori and 1-sided Klein bottles embedded in $M$ such that each component of $M\setminus T_G$ admits a finite-volume Riemannian metric locally isometric to one of 8 model geometries: $\bbS^3$, $\bbE^3$, $\bbH^3$, $\bbS^2\times \bbR$, $\bbH^2\times \bbR$, $\Nil$, $\Sol$, or $\widetilde{\PSL}_2(\bbR)$, unless $M=T^2\times I$ or $\Kx$, the twisted $I$-bundle over the Klein bottle. 
This collection $T_G$ is unique up to isotopy.
Except for $\bbH^3$ and $\Sol$, the remaining geometries are all Seifert-fibered. Thurston proved his conjecture in the case of Haken 3-manifolds, showing in particular that every closed  atoroidal Haken 3-manifold admits a finite-volume metric based on $\bbH^3$ \cite{ThurstonAtoroidal}. (Recall an orientable manifold is \emph{Haken} if it is irreducible and contains an orientable incompressible surface.) The full conjecture was proven by Perelman \cite{Perelman:2002-1, Perelman:2003-1, Perelman:2003-2} (see also \cite{CaoZhu,BessieresEtAl, MorganTian}).

\begin{defn}\label{defn:Geometric}
   A compact, connected 3-manifold $M$ is called \emph{geometric} if $\interior{M}$ admits a complete, finite-volume metric locally modeled on one of the 8 Thurston geometries. 
\end{defn}

Geometric 3-manifolds necessarily have incompressible boundary, which is diffeomorphic to a (possibly empty) disjoint union of tori (see, \emph{e.g.}~\cite[\S 2]{Bonahon-Geometric-Structures}). The decomposition of $M$ along $T_G$ is called the \emph{geometric decomposition} of $M$, and is nearly the same as the JSJ decomposition, except that $T_G$ may contain 1-sided Klein bottles, and that closed manifolds supporting $\Sol$-geometry have a trivial geometric decomposition but a nontrivial JSJ decomposition. 

If $M$ is an orientable $T^2$-bundle over $S^1$ that is not Seifert-fibered, then $M$ is called an \emph{Anosov bundle} and admits $\Sol$-geometry. Although not all $\Sol$-manifolds are of this form, such bundles will be a special case of Theorem \ref{thm:sect4main} that we will not be able to treat in the same way we treat other manifolds with nontrivial JSJ decomposition. Conversely, if $M$ is orientable, irreducible and has trivial JSJ-decomposition then $M$ is geometric unless $M=T^2\times I$ or $M=K\tilde{\times}I$. 
Thus when decomposing manifolds, we cannot completely exclude the case of manifolds with non-trivial JSJ decomposition nor the case of non-geometric manifolds.  However, if $M$ has nontrivial JSJ decomposition, and is not an Anosov bundle, the components of $M|T_{\JSJ}$ all have nonempty toroidal boundary and thus are either Haken Seifert-fibered (and not $T^2\times I$) or hyperbolic.

The flowchart in \cref{fig:flowchartirreducible} explains how the proof of \cref{thm:sect4main} breaks up into cases depending on the geometry of $M$. Each leaf contains a reference to the result in which we prove $\BDiff_{D^3}(M)$ is homotopy finite for such $M$. Arrows correspond to straightforward case distinctions, except in one case where a proposition is required to reduce to the case where $M$ has trivial JSJ decomposition. In order to prove \cref{thm:sect4main} when $M$ has nontrivial JSJ decomposition and is not an Anosov bundle, we will need a stronger finiteness condition on $\BDiff$ for components of $M\ca T_{\JSJ}$. We introduce this condition, called \emph{hereditary finiteness} (\cref{defn:hereditarily-finite}) in the next section and prove it holds in the (fiber-rigid) Haken Seifert-fibered and hyperbolic cases, as only these can arise in $M\ca T_{\JSJ}$. The leaves for which we prove hereditary finiteness are coloured in blue in \cref{fig:flowchartirreducible}.

\begin{figure}
\resizebox{\textwidth}{!}{
\begin{tikzpicture}[node distance=7mm and 7mm]
	\tikzstyle{box} = [rectangle, draw, text width=9em, text centered, rounded corners, minimum height=3em]
    \tikzstyle{leafbox} = [rectangle, draw, text width=9em, text centered, rounded corners, minimum height=3em, fill=orange!10]
    \tikzstyle{leafboxHF} = [rectangle, draw, text width=9em, text centered, rounded corners, minimum height=3em, fill=teal!10]
	\tikzstyle{thmbox} = [text width=8em, text centered, minimum height=2em]
	\tikzstyle{decoration}=[draw=none, anchor=west, font=\small]
	\tikzstyle{arrow}=[->]
	 
	\node (m minus ball) [rectangle, draw, text width=13em, text centered, rounded corners, minimum height=3em] {\cref{thm:sect4main}:\\ $\BDiff_{D^3}(M)$ homotopy finite} ;
	
	\node (anasov) [leafbox, below right = and -10mm of m minus ball] { Anosov $T^2$ \\bundle over $S^1$ \\ \cref{thm:Anosov-T2-bundles}};
	
	\node (thm jsj) [thmbox,below left = and -10mm of m minus ball] {\cref{prop:cut-along-tori}} ;
	\node (trivial jsj) [box,below left= and -10mm of thm jsj] {Trivial JSJ \\decomposition};
        \node (sf) [box, below right = and -10mm of trivial jsj] { Seifert-fibered};
	\node (sf non haken) [leafbox,below left = and -10mm of sf] {Seifert-fibered, non-Haken \\ \cref{cor:NonHaken-and-Hyperbolic}} ;
	\node (hyperbolic) [leafboxHF, below left= and -10mm of trivial jsj] { Hyperbolic \\ \cref{cor:NonHaken-and-Hyperbolic}} ;
	
	\node (sf haken) [box, below right= and -10mm of sf] {Seifert-fibered Haken} ;
	\node (flexible) [box, below left= 10mm and 0 mm  of sf haken] {Flexible};
	\node (t3) [leafbox, below  = 10mm and 0 mm of flexible] { $T^3$ \\ \cref{prop:T3-Finiteness}};
	\node (mhw) [leafbox, below right = 10mm and 5 mm  of flexible] { $M_{HW}$\\ \cref{prop:Hantzsche-Wendt}};
 \node (M is t2 x i) [leafbox, below left = 10mm and 5 mm of flexible] {$T^2 \times I$ \\ \cref{prop: finiteness sphere or torus x I}};
	\node (fiber rigid) [box, below right = 10mm and 10mm of sf haken] {Fiber-rigid};
        \node (singular) [leafboxHF,below right = 10 mm and 5mm of fiber rigid ] {Singular \\ \cref{prop:Haken-SF-Finiteness}};
        \node (non singular) [leafboxHF,  below = 10mm of fiber rigid ] {Non-singular\\ \cref{lem:Haken-SF-Nonsingular}};
        
	\draw[]  (m minus ball) -- (thm jsj);
	\draw[arrow]  (m minus ball) -- (anasov);
	\draw [arrow] (flexible) -- (M is t2 x i);
	\draw[arrow]  (thm jsj) -- (trivial jsj);
	\draw[arrow]  (trivial jsj) -- (hyperbolic);
	\draw[arrow]  (trivial jsj) -- (sf);
	\draw[arrow]  (sf) -- (sf non haken);
	\draw[arrow]  (sf) -- (sf haken);
	\draw[arrow] (sf haken) -- (flexible);	
	\draw[arrow] (sf haken) -- (fiber rigid);	
	\draw[arrow] (flexible) -- (t3);	
	\draw[arrow] (flexible) -- (mhw);	
	\draw[arrow] (fiber rigid) -- (singular);
	\draw[arrow] (non singular) -- (singular);
	\draw[arrow] (fiber rigid) -- (non singular);
\end{tikzpicture}
}
    \caption{Flowchart of the proof of \cref{thm:sect4main}}
    \label{fig:flowchartirreducible}
\end{figure}
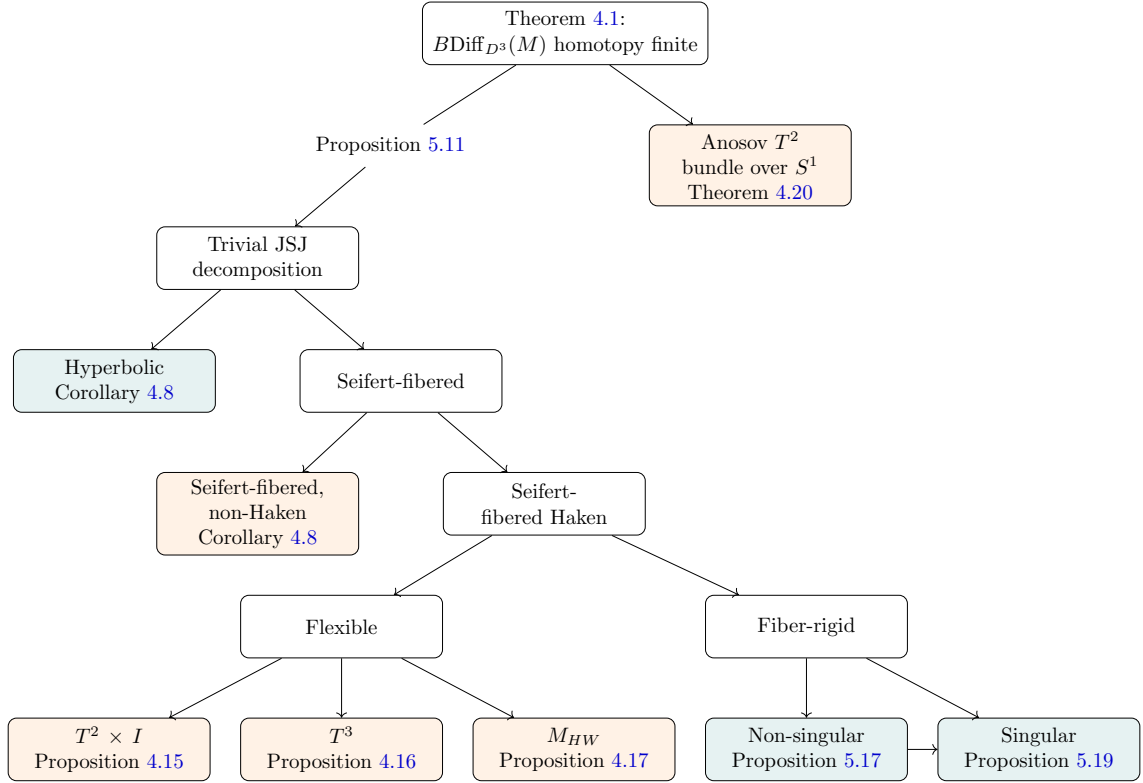

Recall that we have a fiber sequence 
\begin{equation*}
    \Diff_{D^3}(M)\longrightarrow \Diff(M)\longrightarrow \emb(D^3,\interior{M})
\end{equation*}
whence $\BDiff_{D^3}(M)$ is equivalent to a connected component of $\emb(D^3,\interior{M})\hq \Diff(M)$.
The quotient $\emb(D^3,\interior{M})\hq \Diff(M)$ is connected if $M$ admits an orientation reversing diffeomorphism, and otherwise it has two connected components.

Let $\Fr(M)$ denote the frame bundle of $M$. 
Since $M$ is parallelisable, $\Fr(M)\cong M\times\GL_3(\bbR)$. 
We will also consider the oriented orthonormal frame bundle $\Fr^\perp(M)$, which is homotopy equivalent to $\Fr(M)$.  
Again by parallelisability, $\Fr^\perp(M)\cong M\times \Or(3)$. 
Since $\emb(D^3,\interior{M})\simeq \Fr(M)$, we obtain
that $\BDiff_{D^3}(M)$ is equivalent to a connected component of $\Fr(M) \hq \Diff(M)$.
The overall strategy for proving \cref{thm:sect4main} when $M$ has trivial JSJ decomposition will be to exploit the geometry on $M$ to find a suitable subgroup $G< \Diff(M)$ such that $G\hookrightarrow \Diff(M)$ is a homotopy equivalence and $\Fr(M)\hq G$ is homotopy finite.
In this case the basepoint component of $\Fr(M) \hq G$, which we can also write as $\Fr^+(M)\hq G^+$ by restricting to oriented frames and oriented diffeomorphisms, 
will be a model for $\BDiff_{D^3}(M)$. 
The simplest case will be when $G$ is a Lie group acting freely on $\Fr(M)$. For example, if $G=\Isom(M)$ for some Riemannian metric on $M$, we learnt the idea of the following proof from Sam Nariman.

\begin{lem}\label{lem:Riemannian-Finite-Quotient}
    Suppose $(M,g)$ is a Riemannian 3-manifold such that the inclusion $\Isom(M)\rightarrow \Diff(M) $ is a homotopy equivalence.  Then $\BDiff_{D^3}(M)$ is homotopy finite.  
\end{lem}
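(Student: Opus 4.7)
The plan is to use the equivalence, recalled just before the lemma, between $\BDiff_{D^3}(M)$ and a path component of $\Fr(M) \hq \Diff(M)$, to replace $\Diff(M)$ by $\Isom(M)$ using the hypothesis, and then to realise the resulting homotopy orbit as a compact manifold by exploiting that $\Isom(M)$ acts freely on the orthonormal frame bundle.

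First, I would pass to the oriented orthonormal frame bundle $\Fr^\perp(M) \cong M \times \Or(3)$, which is $\Diff(M)$-equivariantly homotopy equivalent to $\Fr(M) \cong M \times \GL_3(\bbR)$ via Gram--Schmidt applied to the Riemannian metric $g$. Combining this with the hypothesised equivalence of topological groups $\Isom(M) \xrightarrow{\simeq} \Diff(M)$ and the standard properties of the homotopy orbit construction (\cref{lem:homotopy-orbits-functor}) yields an equivalence
\[
    \Fr^\perp(M) \hq \Isom(M) \;\simeq\; \Fr(M) \hq \Diff(M).
\]

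Next I would observe that the $\Isom(M)$-action on $\Fr^\perp(M)$ is free: any isometry fixing a point $p \in M$ together with an orthonormal frame at $p$ has derivative equal to the identity at $p$, so via the Riemannian exponential map it agrees with the identity on every geodesic emanating from $p$, and is therefore the identity on the connected manifold $M$. Since $M$ is a compact Riemannian manifold, the Myers--Steenrod theorem further ensures that $\Isom(M)$ is a compact Lie group. Its free action on the compact manifold $\Fr^\perp(M)$ realises the homotopy orbit space as the ordinary orbit space:
\[
    \Fr^\perp(M) \hq \Isom(M) \;\simeq\; \Fr^\perp(M)/\Isom(M).
\]

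The space on the right is a compact smooth manifold, hence admits a finite triangulation, and is therefore homotopy finite. Since a path component of a finite CW complex is itself a finite CW complex, $\BDiff_{D^3}(M)$ is homotopy finite. The only mild obstacle is the change-of-groups step; I expect this to be handled by comparing the fiber sequences $\Fr^\perp(M) \to \Fr^\perp(M)\hq \Isom(M) \to B\Isom(M)$ and $\Fr(M) \to \Fr(M)\hq \Diff(M) \to B\Diff(M)$ and applying the long exact sequence of homotopy groups, using that the induced maps on fiber and base are equivalences.
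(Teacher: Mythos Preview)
Your proposal is correct and follows essentially the same route as the paper's proof: replace $\Diff(M)$ by $\Isom(M)$, pass to the orthonormal frame bundle, and use that a compact Lie group acting freely on a compact manifold has a compact manifold quotient. You have simply supplied a few details (freeness via the exponential map, compactness of $\Isom(M)$ via Myers--Steenrod, the change-of-groups comparison) that the paper leaves implicit.
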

\begin{proof} 
    As argued above $\BDiff_{D^3}(M)$ is a connected component of 
\[\Fr(M)\hq \Diff(M)\simeq\Fr(M)\hq \Isom(M)\simeq \Fr^\perp(M)\hq\Isom(M).
\]
The isometry group $\Isom(M)$ is a compact Lie group acting freely on the compact manifold $\Fr^\perp(M)$, so $\Fr^\perp(M)\hq\Isom(M) \simeq \Fr^\perp(M)/\Isom(M)$ and this quotient is a compact manifold \cite[Theorem 7.10]{Lee}, and hence is homotopy finite.
\end{proof}

Recall from the introduction that we refer to the statement that $\Isom(M)\hookrightarrow \Diff(M)$ is a homotopy equivalence as the \emph{strong} generalised Smale conjecture, and to the statement that $\Isom_0(M)\hookrightarrow \Diff_0(M)$ is a homotopy equivalence as the \emph{weak} generalised Smale conjecture. 
For the rest of \cref{section:strongsmale}, we will prove all cases of \cref{thm:sect4main} (corresponding to leaves in the flowchart in \cref{fig:flowchartirreducible}) where there exists a Lie group $G< \Diff(M)$ acting freely on $\Fr(M)$ and such that $G\hookrightarrow \Diff(M)$ is a homotopy equivalence. Except for $T^3$ and $T^2\times I$, the strong generalised Smale conjecture will hold, allowing us to take $G=\Isom(M)$ and apply \cref{lem:Riemannian-Finite-Quotient}.

\subsection{The generalised Smale conjecture}\label{sec:GenSmaleFiniteness}

The strong form of the generalised Smale conjecture is false, even among irreducible geometric 3-manifolds. However, for irreducible geometric manifolds, the weak form always holds.

\begin{thm}[Weak generalised Smale conjecture]\label{thm:Gen-Smale} Let $(M,g)$ be an irreducible geometric 3-manifold.
Then the inclusion $\Isom_0(M)\hookrightarrow \Diff_0(M)$ is a homotopy equivalence.
\end{thm}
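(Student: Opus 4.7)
The plan is to prove this by case analysis on the eight Thurston geometries that $M$ can admit. In each case, $\Isom_0(M)$ is a compact connected Lie group acting effectively and almost freely on the $3$-manifold $M$, hence is a torus of dimension $0$, $1$, $2$, or $3$, and the task is to match $\Diff_0(M)$ to this torus via the subgroup inclusion.

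I would group the geometries by the structure of $\Isom_0(M)$. When $\Isom_0(M)$ is trivial --- which happens for $\bbS^3$, $\bbH^3$, and $\Sol$ --- the conjecture reduces to showing $\Diff_0(M)$ is contractible. For $\bbS^3$ this is Hatcher's original Smale conjecture \cite{Hatcher}, extended to other spherical space forms by Ivanov, McCullough--Rubinstein, Hong, and finally Bamler--Kleiner. For closed $\bbH^3$-manifolds this is Gabai's theorem, later reproved via Ricci flow by Bamler--Kleiner; the cusped finite-volume case follows by combining Mostow--Prasad rigidity with the Hatcher--Ivanov theorem for Haken manifolds. For closed $\Sol$-manifolds (which are Haken torus bundles over $S^1$), Hatcher--Ivanov gives $\Diff_0(M)\simeq T^k$, and one rules out $k>0$ using that $\Sol$-manifolds are not Seifert fibered together with the theorem of Meeks--Scott that any continuous $S^1$-action on a closed orientable irreducible $3$-manifold is isotopic to a Seifert-fibering action. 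When $\Isom_0(M)$ is a positive-dimensional torus, arising from the five Seifert-fibered geometries $\bbS^2\times\bbR$, $\bbE^3$, $\Nil$, $\bbH^2\times\bbR$, and $\widetilde{\PSL}_2(\bbR)$, isometric rotation of the fibers gives a distinguished subtorus of $\Diff_0(M)$. In the Haken Seifert-fibered situation one again uses Hatcher--Ivanov to bound $\Diff_0(M)\simeq T^k$ and verifies that this distinguished subtorus accounts for the entire equivalence. The non-Haken Seifert-fibered cases (lens spaces, $S^1\times S^2$, and small Seifert fibered manifolds over $S^2$ with three exceptional fibers) are handled by Hatcher's calculation for $S^1\times S^2$ \cite{Hatcher1981}, Ivanov's for particular lens spaces \cite{Ivanov79,Ivanov82}, and the completion by Hong--Kalliongis--McCullough--Rubinstein \cite{HKMR12} and Bamler--Kleiner.

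The hard part is the closed hyperbolic case, where contractibility of $\Diff_0(M)$ is substantially deeper than Mostow--Prasad rigidity (the latter only gives $\pi_0\Diff(M)\cong\Isom(M)$ finite, and hence $\Diff_0(M)$ connected). Gabai's original argument for $\bbH^3$ controls the embedding space of a spine built from shortest geodesics, while the more recent Bamler--Kleiner approach produces a canonical continuous path from any diffeomorphism to an isometry by evolving a hyperbolic reference metric under Ricci flow. Once that case is in hand, combining it with the Seifert-fibered and $\Sol$ cases and noting that the inclusion $\Isom_0(M)\hookrightarrow\Diff_0(M)$ is by construction a map of compact tori matching the identified equivalence class completes the proof.
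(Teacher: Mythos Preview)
Your overall strategy --- a case analysis over the Thurston geometries, reducing each case to the relevant theorem in the literature --- is exactly what the paper does; its ``proof'' is likewise a survey paragraph citing Hatcher--Ivanov for Haken manifolds, Gabai and Bamler--Kleiner for $\bbH^3$, McCullough--Soma and Bamler--Kleiner for the non-Haken Seifert-fibered geometries, and so on. So the architecture is fine.

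However, your treatment of the spherical case is wrong in a way that matters. You assert that $\Isom_0(M)$ acts almost freely and is therefore a torus, and you then put $\bbS^3$-geometry in the ``$\Isom_0(M)$ trivial, $\Diff_0(M)$ contractible'' bin. Neither claim holds: for $M=S^3$ one has $\Isom_0(S^3)=\SO(4)$, which is not a torus and does not act almost freely, and Hatcher's Smale conjecture says $\Diff_0(S^3)\simeq\SO(4)$, not that it is contractible. More generally, for spherical space forms $\Isom_0(M)$ can be $\SO(3)$, $\SO(3)\times\SO(2)$, $\U(2)$, etc.\ (this is why the paper refers to the tables in \cite{HKMR12} rather than giving a uniform description). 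The torus description you give is valid only in the aspherical (non-$\bbS^3$, non-$\bbS^2\times\bbR$) cases, which is exactly the content of \cref{cor:Diff0-list}; note that corollary explicitly excludes manifolds covered by $S^3$. A minor related slip: $S^1\times S^2$ is not irreducible and so does not fall under the theorem at all. To fix your outline, separate the spherical case as its own branch (as the paper does) and simply cite Hatcher, Ivanov, \cite{HKMR12}, and Bamler--Kleiner for the various subfamilies, without trying to force it into the torus framework.
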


The proof of Theorem \ref{thm:Gen-Smale} combines the results of several authors spanning decades. For Haken geometric 3-manifolds, it follows from results of Hatcher \cite{Hatcher76} and, independently, Ivanov \cite{Ivanov76} on spaces of incompressible surfaces. Hatcher and Ivanov's work covers all Haken cases and thus all cases when $M$ has non-empty boundary, all cases with $\Sol$-geometry, and many of the hyperbolic and Seifert-fibered cases except possibly when the base-orbifold is homeomorphic to $S^2$. 

For $M=S^3$, Theorem \ref{thm:Gen-Smale} follows from Hatcher \cite{Hatcher}. 
It was proven for quotients of $S^3$ containing one-sided incompressible Klein bottles (prismatic and quaternionic) and lens spaces other than $\RP^3$ using topological techniques \cite{Ivanov79,Ivanov82,HKMR12}. Bamler--Kleiner \cite{BK23, BK19} used Ricci flow techniques to prove the remaining cases (tetrahedral, octahedral, icosahedral, and $\RP^3$) and gave a unified proof for all spherical 3-manifolds. In particular, their approach offers a new proof of the Smale conjecture.

In the $\bbH^3$-case, Gabai \cite{Gabai01} proved that $\Diff_0(M)$ is contractible, and the techniques of \cite{BK23} also reprove this case. Since $(M,g)$ is negatively curved, $\Isom(M)$ is discrete (see {e.g.}~\cite[Corollary 5.4]{KobayashiNomizu}) so Theorem \ref{thm:Gen-Smale} follows. The non-Haken cases of $\bbH^2\times\bbR$- and $\widetilde{\PSL}_2(\bbR)$-geometry were proven by McCullough and Soma \cite{McCulloughSoma13}. Recently, Bamler--Kleiner \cite{BamlerKleiner2023} proved the last remaining non-spherical case, namely closed manifolds admitting $\Nil$-geometry and whose base-orbifold is homeomorphic to $S^2$.

We note that in each case, the actual homotopy type of $\Diff_0$ is known. For irreducible 3-manifolds that are spherical, see Tables 1 and 2 in \cite{HKMR12}. For non-spherical geometric 3-manifolds, the homotopy type of $\Diff_0$ is simpler to describe, and since we will refer to it later we state it here. 
\begin{cor}[Haken \cite{Hatcher76,Ivanov76,Ivanov79}, Non-Haken \cite{McCulloughSoma13,BamlerKleiner2023}]\label{cor:Diff0-list}
    Let $M$ be an irreducible geometric 3-manifold that is not covered by $S^3$.
    \begin{enumerate}
        \item If $M$ is not Seifert-fibered, then $\Diff_0(M)\simeq *$.
        \item If $M$ is Seifert-fibered  then $\displaystyle\Diff_0(M)\simeq \prod_{b}\SO(2)$, where $b$ is the rank of the center $Z(\pi_1(M))$.
    \end{enumerate}
\end{cor}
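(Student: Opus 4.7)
The plan is to apply the weak generalised Smale conjecture (\cref{thm:Gen-Smale}) to identify $\Diff_0(M)\simeq \Isom_0(M)$, thereby reducing the corollary to a determination of the identity component of the isometry group for each model geometry. In every case the homotopy type of $\Isom_0(M)$ is a classical fact, so the work is to assemble the right references and check that the rank matches $b := \mathrm{rank}(Z(\pi_1 M))$.

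For (1), the non-Seifert-fibered geometries not covered by $S^3$ are $\bbH^3$ and $\Sol$. For hyperbolic $M$, the isometry group is discrete since $\bbH^3$ is negatively curved (as the excerpt already recalls via \cite{KobayashiNomizu}), so $\Isom_0(M)\simeq *$. For closed $\Sol$-manifolds, $\pi_1(M)$ has trivial center. A classical theorem of Conner--Raymond asserts that any compact connected Lie group acting effectively on a closed aspherical manifold $N$ is a torus that injects into the quotient $Z(\pi_1 N)\otimes_\bbZ \bbR/Z(\pi_1 N)$; applying this to $M$ yields $\Isom_0(M)\simeq *$.

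For (2), the Seifert-fibered geometries not covered by $S^3$ are $\bbE^3$, $\Nil$, $\bbH^2\times\bbR$, and $\widetilde{\PSL}_2(\bbR)$ (the $\bbS^2\times\bbR$ case is excluded, as there are no closed irreducible examples with this geometry). Each such $M$ has contractible universal cover, so is aspherical. The Seifert fibration induces a canonical $\SO(2)$-action rotating the fibers, so $\SO(2)\subseteq \Isom_0(M)$, and Conner--Raymond again bounds $\Isom_0(M)$ by a torus of rank at most $b$. The matching lower bound follows by examining each model geometry in turn: $M=T^3$ gives $\Isom_0(M)=T^3$ with $b=3$; more generally a flat $M$ with $\mathrm{rank}(Z(\pi_1 M))=r$ yields $\Isom_0(M)\cong (\SO(2))^r$ (the translations that descend to $M$); and for manifolds modelled on $\Nil$, $\bbH^2\times\bbR$, or $\widetilde{\PSL}_2(\bbR)$ one has $\Isom_0(M)=\SO(2)$ with $b=1$, generated by the fiber rotation.

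The main obstacle is the last step, the case-by-case verification of the exact dimension of $\Isom_0(M)$ in each Seifert-fibered geometry, but this is routine given explicit descriptions of the isometry groups of the model spaces and their cocompact lattices.
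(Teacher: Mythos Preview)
The paper does not prove this corollary; it is stated with citations to the primary sources (Hatcher--Ivanov for the Haken case, McCullough--Soma and Bamler--Kleiner for the non-Haken Seifert-fibered case). Your strategy of invoking \cref{thm:Gen-Smale} and then computing $\Isom_0(M)$ is consistent with what the surrounding text suggests, and the Conner--Raymond upper bound is a natural tool for the closed aspherical case.

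However, your execution has real gaps. The assertion that ``the Seifert fibration induces a canonical $\SO(2)$-action rotating the fibers, so $\SO(2)\subseteq \Isom_0(M)$'' is false in general: an orientable Seifert-fibered manifold admits such an $S^1$-action only when the fibres can be coherently oriented, which fails when the base orbifold is non-orientable. The paper itself supplies a counterexample: the Hantzsche--Wendt manifold $M_{HW}$ is flat and Seifert-fibered with trivial centre, so $b=0$ and $\Isom_0(M_{HW})$ is trivial (see the proof of \cref{prop:Hantzsche-Wendt}). For the same reason your blanket claim that $b=1$ for every manifold modelled on $\Nil$, $\bbH^2\times\bbR$, or $\widetilde{\PSL}_2(\bbR)$ is unjustified and can fail when the base orbifold is non-orientable. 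Finally, the corollary is invoked in the paper for manifolds with non-empty toroidal boundary (e.g.\ in the proof of \cref{thm:finite-type-irreducible}), but Conner--Raymond applies only to closed aspherical manifolds; for the bounded case one must appeal directly to the cited Hatcher--Ivanov results.
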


Theorem \ref{thm:Gen-Smale} tells us that $\Isom(M)$ and $\Diff(M)$ can only differ with respect to $\pi_0$, the group of path components. The group $\pi_0\Diff(M)$ is known as the \emph{mapping class group} of $M$. We briefly survey some results on mapping class groups of geometric manifolds. The computation of the mapping class group for $S^3$ is a consequence of work of Cerf \cite{Cerf-Gamma4}, for $S^1\times S^2$ it is due to Gluck \cite{Gluck1962}, and for lens spaces it is due to Bonahon \cite{Bonahon1983}, although some cases were known \cite{Rubinstein1979,Ivanov79,Ivanov82,BirmanRubinstein1984}. More generally, the results for all spherical 3-manifolds are surveyed in \cite{HKMR12}, and include work of Asano \cite{Asano1978}, Rubinstein \cite{Rubinstein1978, Rubinstein1979}, Boileau--Otal \cite{BoileauOtal1986}, and Birman--Rubinstein \cite{BirmanRubinstein1984}.  Johannson \cite{Johannson1979} showed that Haken 3-manifolds with incompressible boundary that are atoroidal and anannular have finite mapping class groups, and McCullough \cite{McCullough1991} showed more generally that mapping class groups of Haken 3-manifolds are finitely presented, and investigated other finiteness properties.

In many cases, the strong form of the generalised Smale conjecture is also known to hold.
\begin{thm}\label{thm:Isom-Equivalence}
    Suppose $M$ is an irreducible geometric 3-manifold that is either hyperbolic or non-Haken Seifert-fibered.  Then the inclusion $\Isom(M)\hookrightarrow \Diff(M)$ is a homotopy equivalence. 
\end{thm}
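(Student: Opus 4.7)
The plan is to reduce, via the weak generalised Smale conjecture (Theorem \ref{thm:Gen-Smale}), to an identification of mapping class groups, and then to invoke case-by-case geometric rigidity results. Since $\Isom(M)$ and $\Diff(M)$ are topological groups whose path components are all homeomorphic by left translation, Theorem \ref{thm:Gen-Smale} implies that $\Isom(M)\hookrightarrow \Diff(M)$ is a homotopy equivalence if and only if the induced map $\pi_0\Isom(M)\to \pi_0\Diff(M)$ is a bijection.

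For the hyperbolic case, $\Isom(M)$ is finite. The action on the fundamental group yields a homomorphism $\rho\colon \pi_0\Diff(M)\to \Out(\pi_1(M))$, and by Mostow rigidity the composition $\Isom(M)\to \pi_0\Diff(M)\xrightarrow{\rho}\Out(\pi_1(M))$ is an isomorphism. Consequently $\pi_0\Isom(M)\to \pi_0\Diff(M)$ is split injective and $\rho$ is surjective, so bijectivity of $\pi_0\Isom(M)\to \pi_0\Diff(M)$ is equivalent to injectivity of $\rho$, i.e.~to the statement that a diffeomorphism of $M$ homotopic to the identity is isotopic to the identity. In the Haken case this is Waldhausen's theorem \cite{Waldhausen68}, and in the non-Haken case it is part of Gabai's proof \cite{Gabai01} of the full Smale conjecture for closed hyperbolic $3$-manifolds.

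For the non-Haken Seifert-fibered case I would enumerate the possible geometries: such an $M$ is closed and either has finite fundamental group (spherical geometry) or carries $\widetilde{\PSL}_2(\bbR)$-, $\bbH^2\times\bbR$-, or $\Nil$-geometry. In each case the strong form of the generalised Smale conjecture has been independently established. The case $M=S^3$ is the original Smale conjecture, proven by Hatcher \cite{Hatcher}; the remaining spherical cases combine results of Ivanov \cite{Ivanov79, Ivanov82}, Hong--Kalliongis--McCullough--Rubinstein \cite{HKMR12}, and the Ricci flow arguments of Bamler--Kleiner \cite{BK23, BK19}. The non-Haken $\widetilde{\PSL}_2(\bbR)$- and $\bbH^2\times\bbR$-cases are due to McCullough--Soma \cite{McCulloughSoma13}, and the non-Haken $\Nil$-cases to Bamler--Kleiner \cite{BamlerKleiner2023}.

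The main difficulty of the theorem lies not in the reduction to $\pi_0$, which is immediate from the weak Smale conjecture, but in the underlying geometric rigidity: each invoked result depends on delicate techniques (Mostow rigidity, equivariant surgery, Ricci flow) specific to its geometry. The role of this proposition is to assemble these known theorems into a single convenient statement for the finiteness arguments of \cref{section:decomposing irreducible}.
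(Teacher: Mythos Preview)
Your proposal is correct and matches the paper's approach: both treat this theorem as an assembly of results from the literature, splitting into the hyperbolic case (Mostow--Prasad rigidity plus homotopy-implies-isotopy) and the non-Haken Seifert-fibered case (enumerating the spherical, $\bbH^2\times\bbR$, $\widetilde{\PSL}_2(\bbR)$, and $\Nil$ geometries and citing \cite{Hatcher,Ivanov79,Ivanov82,HKMR12,BK19,BK23,McCulloughSoma13,BamlerKleiner2023}). One small attribution correction: in the non-Haken hyperbolic case, the statement that a diffeomorphism homotopic to the identity is isotopic to the identity is due to Gabai--Meyerhoff--Thurston \cite{GMT03} rather than \cite{Gabai01} alone; the paper cites both, and you should also say Mostow--Prasad rather than just Mostow to cover the cusped case.
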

The non-Haken Seifert-fibered manifolds in the above theorem include all those with $\bbS^3$-geometry completed by \cite{HKMR12,BK19,BK23}, see references therein for a comprehensive history.  All other irreducible non-Haken Seifert-fibered cases carry either the geometry of  $\bbH^2\times \bbR$, $\widetilde{\PSL}_2(\bbR)$, or $\Nil$, and fiber over the 2-sphere with 3 cone points. Theorem \ref{thm:Isom-Equivalence} was proven for the $\bbH^2\times \bbR$ and $\widetilde{\PSL}_2(\bbR)$ cases in \cite{McCulloughSoma13}, and for the $\Nil$ case in \cite{BamlerKleiner2023}.
(In fact, Theorem \ref{thm:Isom-Equivalence} also holds when $M$ fibers over $S^2$ with 3 cone points and carries $\bbE^3$-geometry \cite{McCulloughSoma13}, although these are Haken.)
The hyperbolic case follows from Mostow--Prasad rigidity (see e.g.~\cite{Gromov1979}) and the work of Gabai \cite{Gabai01} and Gabai--Meyerhoff--Thurston \cite{GMT03}.
Combining Theorem \ref{thm:Isom-Equivalence} with Lemma \ref{lem:Riemannian-Finite-Quotient} we obtain the following corollary.

\begin{cor}\label{cor:NonHaken-and-Hyperbolic}
    Let $M$ be an irreducible geometric 3-manifold that is either hyperbolic or non-Haken Seifert-fibered. Then $\BDiff_{D^3}(M)$ is homotopy finite.
\end{cor}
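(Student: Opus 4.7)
The plan is to apply Lemma \ref{lem:Riemannian-Finite-Quotient} directly, with its hypothesis supplied by Theorem \ref{thm:Isom-Equivalence}. Since $M$ is geometric, it carries a finite-volume Riemannian metric $g$ modelled on one of the Thurston geometries; as $M$ is compact, $\Isom(M,g)$ is a compact Lie group. Theorem \ref{thm:Isom-Equivalence} asserts precisely that the inclusion $\Isom(M,g)\hookrightarrow \Diff(M)$ is a homotopy equivalence in both the hyperbolic case and the non-Haken Seifert-fibered case, which is exactly the hypothesis required by Lemma \ref{lem:Riemannian-Finite-Quotient}.

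Invoking that lemma, $\BDiff_{D^3}(M)$ is homotopy equivalent to a connected component of $\Fr^\perp(M)\hq \Isom(M)$. Since $\Isom(M)$ is a compact Lie group acting freely on the compact manifold $\Fr^\perp(M)\cong M\times \Or(3)$ (an isometry fixing a point and a frame at that point is the identity), the homotopy quotient is equivalent to the ordinary quotient $\Fr^\perp(M)/\Isom(M)$, which is itself a compact manifold and hence homotopy finite. Any connected component of a homotopy finite space is homotopy finite, so the conclusion follows.

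There is essentially no substantive obstacle here: the deep content is entirely carried by Theorem \ref{thm:Isom-Equivalence}, which packages the various proofs of the strong generalised Smale conjecture in the relevant cases (Mostow--Prasad rigidity combined with the work of Gabai and Gabai--Meyerhoff--Thurston in the hyperbolic case; and the results of Hong--Kalliongis--McCullough--Rubinstein, Bamler--Kleiner, and McCullough--Soma in the non-Haken Seifert-fibered cases). The corollary is then immediate.
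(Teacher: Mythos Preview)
Your proof is correct and matches the paper's approach exactly: the paper states this corollary as an immediate consequence of combining Theorem~\ref{thm:Isom-Equivalence} with Lemma~\ref{lem:Riemannian-Finite-Quotient}, and you have simply unpacked that combination in slightly more detail.
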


In Proposition \ref{prop:Hantzsche-Wendt} and Theorem \ref{thm:Anosov-T2-bundles} we show that the generalised Smale conjecture also holds for the Hantzsche-Wendt manifold $M_{\rm HW}$ (see \cref{sec:Flexible} below) and $T^2$-bundles over the circle that have Anosov monodromy.

\subsection{Haken Seifert-fibered manifolds -- overview}\label{sec:HakenSF}

The strong form of the generalised Smale conjecture often does not hold for Haken Seifert-fibered 3-manifolds, essentially because the mapping class group of the base surface may be infinite. Our proof of Theorem \ref{thm:sect4main} for these manifolds will rely on understanding the space of all Seifert fiberings.  

We say two Seifert fiberings of $M$ are  \emph{equivalent} if there is a diffeomorphism from $M$ to itself taking the fibers of one to the fibers of the other. 
Given a fixed Seifert fibering $\varphi$ of $M$, let $\Diff^f(M;[\varphi])$ denote the subgroup of \emph{fiber-preserving diffeomorphisms} of $\varphi$, \emph{i.e.}~those diffeomorphisms which map fibers of $\varphi$ diffeomorphically to fibers. On the other hand, in \cref{section:decomposing irreducible} we also study the subgroup of \emph{vertical diffeomorphisms} $\Diff^v(M;[\varphi])$, consisting of diffeomorphisms $f \in \Diff(M)$ that satisfy $\varphi \circ f = \varphi$, \emph{i.e.}~they preserve each fiber setwise.

If $\varphi$ and $\varphi'$ are equivalent, then $\Diff^f(M;[\varphi])$ and $\Diff^f(M;[\varphi'])$ are conjugate subgroups of $\Diff(M)$. Following \cite[Definition 3.13]{HKMR12}, we define the \emph{space of Seifert fiberings isomorphic to $\varphi$} to be the coset space $\SF(M;[\varphi])=\Diff(M)/\Diff^f(M,[\varphi])$. Theorems 3.12 and 3.14 of \cite{HKMR12} state that the quotient map $\Diff(M)\rightarrow \SF(M;[\varphi])$ is a locally trivial fiber bundle and that the components of $\SF(M;[\varphi])$ are contractible.

For many Haken Seifert-fibered 3-manifolds, $\SF(M;[\varphi])$ is also connected, which is to say that any diffeomorphism of $M$ is isotopic to a fiber-preserving diffeomorphism. 
Indeed, by results of Waldhausen \cite[Theorem 10.1]{Waldhausen67} (see also Jaco \cite[Theorem VI.18]{Jaco1980} for the case where $M$ has boundary), there are only finitely many orientable Haken Seifert-fibered 3-manifolds for which there does not exist a unique Seifert fibering up to isotopy. These six manifolds, which we call \emph{exceptional}, are:
\begin{itemize}
\item $S^1\times D^2$,
\item $T^3$, the 3-torus,
\item $T^2\times I$, the trivial $I$-bundle over the 2-torus,
\item $M_{HW}$, the Hantzsche-Wendt manifold (first studied in \cite{HantzscheWendt}),
\item $\Kx$, the orientable $I$-bundle over the Klein bottle $K$, and 
\item $D(\Kx)$ the double of $\Kx$.
\end{itemize}
We remark that although $S^1\times D^2$ and its fiberings will play a major role in \cref{section:decomposing irreducible}, we do not need to prove \cref{thm:sect4main} for $S^1\times D^2$, as it has compressible boundary. Indeed, $S^1\times D^2$ is the only Haken Seifert-fibered 3-manifold with compressible boundary.

\begin{defn}\label{def:fiberrigid-flexible}
    A Haken Seifert-fibered manifold $M$ is \emph{fiber-rigid} if it admits a Seifert fibering $\varphi\colon M\rightarrow S$ such that the inclusion $\Diff^f(M;[\varphi])\hookrightarrow \Diff(M)$ is a homotopy equivalence.
    (Or equivalently such that $\SF(M; [\varphi])$ is contractible.)
    Otherwise we say $M$ is \emph{flexible}.
\end{defn}

\begin{thm}\label{thm:Unique-fibering}
    Let $M$ be orientable and Haken Seifert-fibered. If $M$ is not $S^1 \times D^2$, $T^3$, $T^2\times I$, or $M_{HW}$, then $M$ is fiber-rigid.
\end{thm}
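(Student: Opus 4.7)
The plan is to translate fiber-rigidity into connectedness of the space of Seifert fiberings and then handle non-exceptional and exceptional Haken Seifert-fibered manifolds separately. By \cite[Theorems 3.12 and 3.14]{HKMR12}, the quotient map $\Diff(M)\to\SF(M;[\varphi])$ is a locally trivial fiber bundle with fiber $\Diff^f(M;[\varphi])$, and every connected component of $\SF(M;[\varphi])$ is contractible. Reading off the long exact sequence of this fibration, the inclusion $\Diff^f(M;[\varphi])\hookrightarrow\Diff(M)$ is a (weak, and hence homotopy) equivalence precisely when $\SF(M;[\varphi])$ is path-connected, that is, when every Seifert fibering of $M$ equivalent to $\varphi$ is isotopic to $\varphi$ through Seifert fiberings. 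Fiber-rigidity thus reduces to exhibiting a single Seifert fibering $\varphi$ whose equivalence class $[\varphi]$ contains a unique isotopy class of Seifert fiberings.

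For non-exceptional $M$, meaning $M\notin\{S^1\times D^2,\, T^3,\, T^2\times I,\, M_{HW},\, \Kx,\, D(\Kx)\}$, I would quote Waldhausen's uniqueness theorem \cite[Theorem 10.1]{Waldhausen67} and its extension to manifolds with boundary by Jaco \cite[Theorem VI.18]{Jaco1980}: any such $M$ admits a Seifert fibering that is unique up to isotopy. Any choice of $\varphi$ then automatically has the property that $[\varphi]$ consists of a single isotopy class, so $\SF(M;[\varphi])$ is connected and thus contractible. This settles the non-exceptional cases.

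For the remaining two exceptional manifolds $\Kx$ and $D(\Kx)$, uniqueness of the Seifert fibering fails, since each admits multiple inequivalent Seifert fiberings. The strategy is to select a preferred Seifert fibering on each whose equivalence class is rigid up to isotopy, and to appeal to the complete description of Seifert fiberings of these manifolds given in \cite{HKMR12}. For $\Kx$, I would take the $S^1$-fibration over the Möbius band with no exceptional fibers, induced by writing the Klein bottle $K$ as an $S^1$-bundle over $S^1$; for $D(\Kx)$, I would take the Seifert fibering obtained by doubling this fibering across the torus boundary. The goal is then to verify that any fiber-preservingly equivalent Seifert fibering of $\Kx$ (respectively $D(\Kx)$) is actually isotopic to the chosen one. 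The main obstacle I anticipate is precisely this verification: the inequivalent Seifert fiberings of these manifolds are typically distinguished by the isotopy types and configurations of their vertical essential annuli and tori, so one needs to exploit this structure — together with the classification in \cite{HKMR12} — to conclude that within the selected equivalence class every fibering has the same pattern of vertical surfaces and is therefore isotopic to the preferred $\varphi$.
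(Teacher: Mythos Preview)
Your reduction of fiber-rigidity to connectedness of $\SF(M;[\varphi])$, and hence to surjectivity of $\pi_0\Diff^f(M;[\varphi])\to\pi_0\Diff(M)$, is exactly right, and your treatment of the non-exceptional cases via Waldhausen/Jaco (equivalently, \cite[Theorem~3.15]{HKMR12}) matches the paper.

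The gap is in the two exceptional cases $\Kx$ and $D(\Kx)$. You choose a fibering, state what needs to be checked, and then explicitly flag the verification as ``the main obstacle'' without carrying it out. The proposed mechanism---distinguishing fiberings by their patterns of vertical essential annuli/tori---is vague and does not obviously yield the surjectivity statement you need; it is not clear how knowing that two fiberings have ``the same pattern of vertical surfaces'' promotes to an isotopy between them. As written, the argument for these two manifolds is incomplete.

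The paper's route for $\Kx$ and $D(\Kx)$ is more direct and avoids this obstacle entirely: it computes the mapping class group explicitly (which is small in both cases) and then checks by hand that each generator is represented by a fiber-preserving diffeomorphism. For $\Kx$ one has $\pi_0\Diff(M)\cong\Out(\pi_1 K)\cong(\Z/2)^2$, and both generators are visibly realised by fiber-preserving maps for either of the two fiberings (over the M\"obius band or over the disk with two order-$2$ cone points). For $D(\Kx)$, the paper uses not the doubled M\"obius-band fibering you suggest but the fibering $\varphi_0\colon M\to M/\Aff_0(M)$ over $S^2$ with four order-$2$ cone points; the point is that $\Aff_0(M)$ is \emph{normal} in $\Aff(M)$, so every affine diffeomorphism automatically preserves $\varphi_0$, and $\Aff(M)\hookrightarrow\Diff(M)$ is already an equivalence. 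It is not evident that your doubled fibering enjoys this normality property, so even with a complete argument your choice of $\varphi$ for $D(\Kx)$ may make the verification harder than necessary.
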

\begin{proof} 
    By Theorem 3.15 of \cite{HKMR12} all orientable Haken Seifert-fibered manifolds that are not exceptional are fiber-rigid.
    In fact, their theorem says that for non-exceptional cases $\SF(M;[\varphi])$ is contractible for any choice of $\varphi$.
    Thus it remains to prove that for $\Kx$ and $D(\Kx)$ there exists a fibering $\varphi$ for which $\Diff^f(M; [\varphi]) \to \Diff(M)$ is surjective on $\pi_0$.
    
If $M=\Kx$, then $M\simeq K$ so $\pi_1(M)\cong \pi :=\langle a,t\mid tat^{-1}=a^{-1}\rangle$. 
By Waldhausen's theorem \cite{Waldhausen68}, $\pi_0\Diff(M)$ is isomorphic to the subgroup of $\Out(\pi)$ preserving $\pi_1(\partial M)$.
In the case at hand $\pi_1(\partial M) \cong \bbZ^2$ 
is the subgroup $\langle a, t^2\rangle \leq \pi$, which is characteristic, so $\pi_0\Diff(M)\cong \Out(\pi)\cong \Z/2\times\Z/2$. 
The generators of $\Out(\pi)$ are $\sigma_1\colon t\mapsto t^{-1}$, $a\mapsto a$ and $\sigma_2\colon t\mapsto ta$, $a\mapsto a$.
There are two distinct Seifert fiberings $\varphi_1$, $\varphi_2$ of $M$ corresponding to the two maximal normal infinite cyclic subgroups of $\pi$, namely $\langle a\rangle$ and $\langle t^2\rangle$, respectively. 
The base surface of $\varphi_1$ is M\"obius strip and there are no singular fibers. 
The base orbifold of $\varphi_2$ is a disk with two cone points of order 2. 
These two cone points correspond to the conjugacy classes of $t$ and $ta$ respectively. 
Both $\sigma_1$ and $\sigma_2$ can be realised by diffeomorphisms of $\Kx$ that preserve both fiberings: $\sigma_1$ is a reflection that reverses the orientation of each fiber of $\varphi_2$, preserving each setwise, while $\sigma_2$ rotates each fibers of $\varphi_1$ by $180^\circ$ and permutes the fibers of $\varphi_2$, exchanging the two singular fibers (see Figure \ref{fig:Twisted Klein}). Thus $\pi_0\Diff^f(M;[\varphi_i])$ surjects onto $\pi_0\Diff(M)$ for $i=1,2$. Using either $\varphi_1$ or $\varphi_2$ proves the theorem in this case.

Finally, consider $M=D(\Kx)$, the double of $\Kx$.  Then $M$ supports a flat metric, and by \cite[Theorem 1]{CharlapVasquez}, the group of affine automorphisms $\Aff(M)$ fits into a short exact sequence\[1\rightarrow \Aff_0(M)\rightarrow \Aff(M)\rightarrow \Aff(M)/\Aff_0(M)\rightarrow 1,\] where $\Aff_0(M)= \Isom_0(M)$, and $\Aff(M)/\Aff_0(M)\cong \Out(\pi_1(M))$.  Therefore by \cref{thm:Gen-Smale} and Waldhausen's theorem \cite{Waldhausen68}, the inclusion $\Aff(M)\rightarrow \Diff(M)$ is a homotopy equivalence.
On the other hand, the quotient map  $\varphi_0\colon M\rightarrow M/\Aff_0(M)$ is a Seifert fibering with base surface a sphere with 4 cone points of order 2. Since $\Aff_0(M)$ is normal in $\Aff(M)$, every element of $\Out(\pi_1(M))\cong \Aff(M)/\Aff_0(M)$ is realised by a fiber-preserving diffeomorphism of $M$. Thus, $\pi_0\Diff(M;[\varphi_0])\rightarrow \Out(\pi_1(M))\cong \pi_0\Diff(M)$ is surjective. \end{proof}
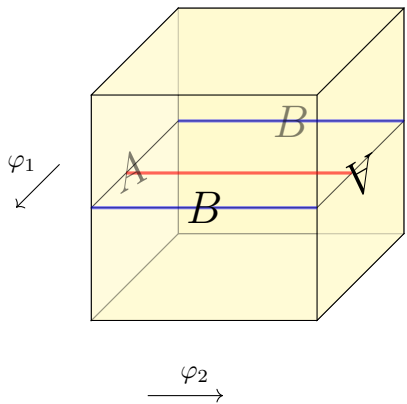
\begin{figure}[ht]
    \centering
    \begin{tikzpicture}
        \coordinate (O) at (0,0,0);
\coordinate (A) at (0,3,0);
\coordinate (B) at (0,3,3);
\coordinate (C) at (0,0,3);
\coordinate (D) at (3,0,0);
\coordinate (E) at (3,3,0);
\coordinate (F) at (3,3,3);
\coordinate (G) at (3,0,3);

\draw[fill=yellow!20] (O) -- (C) -- (G) -- (D) -- cycle;
\draw[fill=yellow!30] (O) -- (A) -- (E) -- (D) -- cycle;
\draw[fill=yellow!10] (O) -- (A) -- (B) -- (C) -- cycle;
\draw[fill=yellow!20,opacity=0.8] (D) -- (E) -- (F) -- (G) -- cycle;
\draw[fill=yellow!20,opacity=0.6] (C) -- (B) -- (F) -- (G) -- cycle;
\draw[fill=yellow!20,opacity=0.8] (A) -- (B) -- (F) -- (E) -- cycle;

\draw[opacity=.7] (0,1.5,0)--(3,1.5,0)--(3,1.5,3)--(0,1.5,3)--(0,1.5,0);
\draw[very thick,red,opacity=.6](0,1.5,1.8)--(3,1.5,1.8);
\draw[blue,very thick,opacity=.5](0,1.5,0)--(3,1.5,0);

\draw[blue, very thick,opacity=.5](0,1.5,3)--(3,1.5,3);

\draw[->] (-1,1.5,1.5)--(-1,1.5,3);
\node at(-1,2,2.8){$\varphi_1$};

\draw[->] (.75,-1,3)--(1.75,-1,3);
\node at(1.3,-.8,2.8){$\varphi_2$};

\draw[opacity=.5](0,1.5,1.8) node{\rotslant{30}{10}{\huge $A$}};
\draw(3.2,1.5,1.8) node{\rotslant{210}{10}{\huge $A$}};

\draw(1.5,1.5,3) node{\rotslant{0}{0}{\huge $B$}};
\draw[ opacity=.5](1.5,1.5,0) node{\rotslant{0}{0}{\huge $B$}};

    \end{tikzpicture}
    \caption{$\Kx$ is formed by identifying sides of a cube as indicated above.  The vertical axis is the $I$-direction while the central square is the core  $K$. Projecting onto the $B$-face is the nonsingular fibering $\varphi_1$ over the M\"obius band, and the projection $\varphi_2$ onto the $A$-face has two singular fibers of order 2, shaded in blue and red respectively. Reflection in the plane perpendicular to the red line induces $\sigma_1$, while the rotation of $K$ that exchanges the blue and red curves extends to $\Kx$ as $\sigma_2$. The top and bottom faces of the cube glue up to form a torus $T^2$, and the projection onto the core $K$ is the orientation double cover $T^2\to K$.}
    \label{fig:Twisted Klein}
\end{figure}

\begin{rem}\label{rem:most-non-singular-are-rigid}
    For a Seifert fibering $\varphi\colon M \to S$ where $M$ has non-trivial boundary and $M \not\cong T^2 \times I$ and $M \not\cong S^1 \times D^2$, we have $\Diff^f(M; [\varphi]) \simeq \Diff(M)$.
    This is because any irreducible $M$ with boundary is Haken in which case the only exceptional cases we need to consider are $T^2 \times I$ and $K \tilde{\times} I$, but for the latter we just checked that both the non-singular fibering and singular fibering is rigid.
\end{rem}

For the sake of completeness, in the next subsection we will show 
that the converse to Theorem \ref{thm:Unique-fibering} is also true.  That is, the four excluded manifolds $S^1\times D^2$, $T^3$, $T^2\times I$ and $M_{HW}$ are the only flexible Haken Seifert-fibered 3-manifolds. As alluded to above, the proof of Theorem \ref{thm:sect4main} for $T^3$, $T^2\times I$ and $M_{HW}$ fits into the same schema as those addressed in \cref{sec:GenSmaleFiniteness}, while we deal with the fiber-rigid case in \cref{section:decomposing irreducible}.

\subsection{Flexible Seifert-fibered manifolds}\label{sec:Flexible} 

We now deal with $T^3$, $T^2\times I$,  and $M_{HW}$. First, we recall basic facts about their fiberings and show that all three of these are flexible. For $M=T^2\times I$ or $M=T^3$, there infinitely many Seifert fiberings, each one corresponding to an infinite cyclic free factor of $\pi_1(M).$ For $T^3$, the base surface is always $T^2$ and there are no singular fibers, while for $T^2\times I$ the base surface is always $S^1\times I$, again without singular fibers. In each case, any two fiberings are equivalent, since the mapping class group acts transitively on free factors of $\pi_1(M)$.  
Thus, for any fibering $\varphi$ the components of $\SF(T^3;[\varphi])$ (respectively $\SF(T^2\times I;[\varphi])$) are in one-to-one correspondence with the cosets $\GL_3(\Z)/\GL_3^{\langle e_1\rangle}(\Z)$ (respectively $\GL_2(\Z)/\GL_2^{\langle e_1\rangle}(\Z)$
), where $\GL_n^{\langle e_1\rangle}(\Z)$ denotes the $\GL_n(\Z)$-stabiliser of the subgroup $\langle e_1:=(1,0,\ldots,0)\rangle\leq\Z^n$. In particular, both $T^3$ and $T^2\times I$ are flexible.

To see that $M_{HW}$ is flexible, we first recall its construction (see \cite[Section 3.9]{HKMR12} for more details). The $I$-bundle $K\tilde{\times} I$ can be thought of as the mapping cylinder of the oriented double cover $T^2\rightarrow K$, and as such its boundary is $T^2$ (see \cref{fig:Twisted Klein}). Take two copies of $\Kx$ and let $T_1$ and $T_2$ be the two boundary tori. While the double $D(K\tilde{\times} I)$  identifies longitude to longitude and meridian to meridian,  $M_{HW}$ is formed by identifying the meridian of $T_1$ with the longitude of $T_2$, and vice versa.  There are two Seifert fiberings of $M_{HW}$, one where the meridians of $T_1$ are fibers, and the other where the longitudes of $T_1$ are fibers.  However, there is a diffeomorphism of $M_{HW}$ which exchanges the two copies of $\Kx$ and simultaneously swaps the meridian and longitude of $T_1$.  This is not fiber-preserving, so $M_{HW}$ is flexible.

To prove $\BDiff_{D^3}(M)$ is homotopy finite for these three manifolds, we exhibit a subgroup $G\leq \Diff(M)$ such that the inclusion $G \to \Diff(M)$ is a homotopy equivalence and $G$ acts freely on $\Fr(M)$ with homotopy finite quotient. 
When $M$ is $M_{HW}$, we will exhibit a metric on $M$ such that $G= \Isom(M)$, and apply Lemma \ref{lem:Riemannian-Finite-Quotient}. When $M$ is $T^2\times I$ or $T^3$, $\pi_0\Diff(M)$ is infinite, so no such metric exists; nevertheless, $\pi_0\Diff(M)$ does lift to a group of diffeomorphisms.

Let us first proceed with the proof for $T^2\times I$. Note that the results in this case will also be needed in \cref{section:decomposing irreducible}. We begin with some finiteness results for diffeomorphisms of surfaces.

\begin{lem}\label{lem:Surface-Fix-Disk-finite}
    Let $S$ be a surface (not necessarily orientable).  Then $\BDiff_{D^2}(S)$ is homotopy finite.
\end{lem}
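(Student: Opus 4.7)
The plan is to first apply \cref{thm:contractible-collars} to the codimension-$0$ submanifold $D^2 \subset S$ in order to replace $\Diff_{D^2}(S)$ by $\Diff_\partial(\Sigma)$ for $\Sigma := S \setminus \interior{D^2}$, a compact surface with non-empty boundary (containing an extra boundary circle coming from $\partial D^2$). This reduces the lemma to the statement that $\BDiff_\partial(\Sigma)$ is homotopy finite for every compact surface $\Sigma$ with $\partial \Sigma \neq \emptyset$.

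Next I would split into cases according to $\chi(\Sigma)$. The case $\chi(\Sigma) \geq 0$ occurs only when $\Sigma$ is a disk, an annulus, or a M\"obius band. For each of these low-complexity surfaces $\BDiff_\partial(\Sigma)$ can be computed directly: for the disk it is contractible by Smale's theorem \cite{Smale59}; for the annulus its components are contractible with $\pi_0 \Diff_\partial = \mathbb{Z}$ generated by a boundary Dehn twist, so $\BDiff_\partial(S^1 \times I) \simeq S^1$; and the M\"obius band is handled by an analogous direct argument. In each sub-case $\BDiff_\partial(\Sigma)$ is manifestly homotopy finite.

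For $\chi(\Sigma) < 0$, I would invoke the surface analogue of the generalised Smale conjecture: by Earle--Schatz \cite{EarleSchatz} in the orientable case and Gramain \cite{Gramain} more generally, the identity component $\Diff_\partial^0(\Sigma)$ is contractible. Consequently
\[
\BDiff_\partial(\Sigma) \;\simeq\; B\!\Mod(\Sigma,\partial), \qquad \Mod(\Sigma,\partial) := \pi_0 \Diff_\partial(\Sigma),
\]
so it remains to prove that the mapping class group $\Mod(\Sigma,\partial)$ admits a finite $K(\pi,1)$. This is classical: for orientable $\Sigma$ with non-empty boundary, Harer constructed a contractible simplicial complex of arc systems on which $\Mod(\Sigma,\partial)$ acts freely and cocompactly, yielding a finite classifying space; for non-orientable $\Sigma$ the analogous construction works (Ivanov, Korkmaz). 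The torsion-freeness of $\Mod(\Sigma,\partial)$ (fixing the boundary pointwise rules out finite-order symmetries) is used here to guarantee freeness of the action.

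The main potential obstacle is purely bibliographic rather than mathematical: one needs references for the surface Smale conjecture and for the existence of a finite $K(\pi,1)$ for $\Mod(\Sigma,\partial)$ that cover both the orientable and non-orientable cases uniformly, but in all cases the underlying arguments are standard and well documented.
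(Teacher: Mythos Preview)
Your strategy coincides with the paper's: pass to $\Sigma=S\setminus\interior{D}^2$, observe that the relevant diffeomorphism group has contractible components, and then argue that the resulting mapping class group has a finite $K(\pi,1)$. The paper compresses the latter two steps into citations of \cite[Proposition~6]{Gramain} and \cite[Lemma~1.2]{HatcherMcCullough}, bypassing your low-$\chi$ case split and arc-complex discussion.

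There is, however, a gap when $S$ itself has non-empty boundary, a case the lemma allows and which is actually needed in \cref{lem:Haken-SF-Nonsingular} (where $S$ is the base of a Seifert fibering of a manifold that may have toral boundary). In that situation \cref{thm:contractible-collars} gives only $\Diff_{D^2}(S)\simeq\Diff_b(\Sigma)$ with $b=\partial D^2$ the single \emph{new} boundary circle; it does not give $\Diff_\partial(\Sigma)$, which would additionally fix the old $\partial S$ pointwise. Your subsequent references (Earle--Schatz, Harer's arc complex, torsion-freeness of $\Mod(\Sigma,\partial)$) are all tailored to the group fixing the full boundary, and the passage from $\BDiff_\partial(\Sigma)$ to $\BDiff_b(\Sigma)$ is not formal: the obvious fibre sequence has base a subgroup of $\Diff(\partial S)$, whose classifying space is not homotopy finite, and $\pi_0\Diff_b(\Sigma)$ need not be torsion-free. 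The paper sidesteps this by citing Gramain and Hatcher--McCullough in versions stated directly for the group $\Diff_b(S')$ fixing only the one boundary component.
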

\begin{proof}
    Let $S'=S\setminus \interior{D}^2$, and let $b$ be the boundary component of $S'$ corresponding to $\partial D^2$. Then  $\BDiff_{D^2}(S)\simeq \BDiff_b(S')$. By \cite[Proposition 6]{Gramain}, the connected components of $\BDiff_b(S')$ are contractible, so it suffices to show that $\pi_0\Diff_b(S')$ has finite classifying space.  This is proved in  \cite[Lemma 1.2]{HatcherMcCullough}.
\end{proof}

Given a closed, compact manifold $M$,  the \emph{pseudo-isotopy group} is defined to be \[\PI(M) := \Diff_{M\times \{0\}}(M\times I).\] For surfaces, Hatcher proved the following.

\begin{thm}[$S\neq S^2$ \cite{Hatcher76}, $S=S^2$ \cite{Hatcher}]\label{thm:Pseudo-Isotopy-Surfaces}Let $S$ be an orientable surface.  Then $\PI(S)$ is contractible.
\end{thm}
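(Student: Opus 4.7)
The plan is to analyze $\PI(S) = \Diff_{S \times \{0\}}(S \times I)$ through the restriction fiber sequence
\[
\Diff_\partial(S \times I) \longrightarrow \PI(S) \longrightarrow \Diff_0(S),
\]
where the last map sends a pseudo-isotopy $\Phi$ to $\Phi|_{S \times \{1\}}$. The image lies in the identity component $\Diff_0(S)$, because $\Phi$ supplies a path of embeddings $\Phi|_{S \times \{t\}}$ from the inclusion $S\times\{0\}\hookrightarrow S\times I$ to $\Phi|_{S\times\{1\}}$, which (after a small generic perturbation) projects to an isotopy in $\Diff(S)$; conversely, every $\phi \in \Diff_0(S)$ is the top-face restriction of the product pseudo-isotopy $(x,t) \mapsto (\phi_t(x), t)$ coming from any isotopy $\phi_t$ from $\mathrm{id}$ to $\phi$. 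Contractibility of $\PI(S)$ thus reduces to controlling the connecting map between $\Diff_0(S)$ and $B\Diff_\partial(S \times I)$.

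When $\chi(S) < 0$ (with boundary conditions adjusted in the definition of $\PI$ when $\partial S \neq \emptyset$), both endpoints of the fiber sequence are contractible. For the base, Earle--Eells (closed case) and Gramain (bounded case) give $\Diff_0(S) \simeq *$. For the fiber, one applies Hatcher--Ivanov's theorem on diffeomorphisms of Haken 3-manifolds to $S \times I$: since $\pi_1(S)$ has trivial center in this range, $\Diff_\partial(S \times I)_0 \simeq *$, and the mapping class group of $S\times I$ rel boundary is trivial as well. The long exact sequence of the fibration then forces $\PI(S) \simeq *$.

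The remaining cases are $S = T^2$ and $S = S^2$. When $S = T^2$, both endpoints of the fiber sequence are homotopy equivalent to $T^2$ (for the base by Earle--Eells, and for the fiber by Hatcher--Ivanov applied to the Haken manifold $T^2 \times I$, whose fundamental group $\mathbb{Z}^2$ equals its own center), and the strategy is to verify that the connecting homomorphism $\pi_1 \Diff_0(T^2) \cong \mathbb{Z}^2 \to \pi_0 \Diff_\partial(T^2 \times I) \cong \mathbb{Z}^2$ is an isomorphism by checking explicitly that the rotation along a simple closed curve $\gamma \subset T^2$ lifts, under the product pseudo-isotopy construction, to a Dehn twist about the vertical annulus $\gamma \times I \subset T^2 \times I$. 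The case $S = S^2$ is essentially the Smale conjecture: capping off $S^2 \times \{0\}$ with a 3-ball identifies $\PI(S^2)$ with $\Diff_{S^2}(D^3)$, which deformation retracts onto $\Diff_\partial(D^3)$, shown contractible by Hatcher.

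The main obstacle is the case $S = S^2$, which requires the full strength of the Smale conjecture. The torus case requires an explicit $\pi_1$-computation of the connecting map but is otherwise routine; the higher-genus and bounded-surface cases are the most direct, reducing immediately to the cited results of Earle--Eells, Gramain, and Hatcher--Ivanov.
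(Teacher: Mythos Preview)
The paper does not prove this theorem; it is quoted as a result of Hatcher (with citations for the two cases) and used as a black box in the subsequent corollary. So there is no proof in the paper to compare against.

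Your outline is essentially how the result is established in the cited sources, and the strategy is sound, but two points need correction. First, your justification that the restriction $\PI(S)\to\Diff(S)$ lands in $\Diff_0(S)$ is not quite right: the slices $\Phi|_{S\times\{t\}}$ are embeddings of $S$ into $S\times I$, not diffeomorphisms of $S$, and ``projecting'' them does not produce an isotopy in general. The clean argument is that $\Phi$ fixes $S\times\{0\}$ pointwise and $S\times I$ deformation retracts onto $S\times\{0\}$, so $\Phi$ is homotopic to the identity as a self-map; hence $\Phi|_{S\times\{1\}}$ is homotopic to $\id_S$, and for surfaces homotopic diffeomorphisms are isotopic. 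Second, in the torus case you assert $\Diff_\partial(T^2\times I)\simeq T^2$, but Hatcher--Ivanov only says the \emph{components} of $\Diff_\partial$ of a Haken manifold are contractible; here $\Diff_\partial(T^2\times I)$ is homotopy discrete with $\pi_0\cong\bbZ^2$, not homotopy equivalent to a $2$-torus. Your subsequent claim---that the connecting map $\pi_1\Diff_0(T^2)\cong\bbZ^2\to\pi_0\Diff_\partial(T^2\times I)\cong\bbZ^2$ is an isomorphism---is nonetheless the correct statement to verify, and it is true. The $S^2$ case is fine once phrased as $\PI(S^2)\simeq\Diff_{D^3}(D^3)$, which is contractible by the Smale conjecture.
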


\begin{cor}\label{cor:I-bundle-Diff}
    Let $S$ be an orientable surface.  Then $\Diff(S\times I)\simeq \Diff(S)\times \Z/2$.
\end{cor}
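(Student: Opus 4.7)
The plan is to exhibit a continuous group homomorphism $\iota \colon \Diff(S) \times \Z/2 \to \Diff(S \times I)$ and show it is a weak equivalence. Let $\tau \in \Diff(S \times I)$ be the flip $\tau(x,t) := (x, 1-t)$, and define $\iota(f, 0) := f \times \id_I$ and $\iota(f, 1) := \tau \circ (f \times \id_I)$. Checking the homomorphism property is routine and uses only that $\tau$ commutes with every $f \times \id_I$, since $\tau$ acts trivially in the $S$-direction.

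Assuming $S$ is connected, every diffeomorphism of $S \times I$ either preserves or exchanges the two boundary components $S \times \{0\}$ and $S \times \{1\}$, yielding a continuous homomorphism $q \colon \Diff(S \times I) \to \Z/2$; with $H := \ker(q)$, one has $\Diff(S \times I) = H \sqcup \tau \cdot H$. The composite $q \circ \iota$ is the projection $\Diff(S) \times \Z/2 \to \Z/2$, and left multiplication by $\tau$ on either side identifies the non-identity component with the identity component in a manner compatible with $\iota$. Thus it suffices to show that the map $\Diff(S) \to H$, $f \mapsto f \times \id_I$, is a weak equivalence.

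For this final step, consider the restriction map $r \colon H \to \Diff(S)$ given by $\phi \mapsto \phi|_{S \times \{0\}}$. By \cref{thm:boundary-locally-retractile} this is a locally trivial fiber bundle, and it is surjective because $f \mapsto f \times \id_I$ provides a section. The fiber over $\id_S$ consists of those $\phi \in H$ that fix $S \times \{0\}$ pointwise; any such $\phi$ automatically preserves $S \times \{1\}$ setwise, so the fiber equals $\PI(S)$. By \cref{thm:Pseudo-Isotopy-Surfaces} this is contractible, hence $r$ is a weak equivalence, and by the two-out-of-three property its section $f \mapsto f \times \id_I$ is also a weak equivalence. The only non-formal ingredient in this argument is Hatcher's contractibility theorem for $\PI(S)$; every other step is bookkeeping, so I anticipate no serious obstacle.
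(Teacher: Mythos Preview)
The proposal is correct and follows essentially the same route as the paper: split off the $\Z/2$ via the flip $\tau$, then show $\Diff(S)\hookrightarrow \Diff(S\times I,S\times\{0\})$ is an equivalence using the restriction fibration with fiber $\PI(S)$ and Hatcher's contractibility theorem. Your explicit mention of the connectedness hypothesis and your citation of \cref{thm:boundary-locally-retractile} for local triviality are minor clarifications, but the structure and key input are identical.
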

\begin{proof} 
Let $S_0=S\times \{0\}$ and $S_1=S\times \{1\}$. $\Diff(S\times I)$ may permute the boundary components, so we have principal short exact sequence 
\[1\rightarrow \Diff(S\times I,S_0)\rightarrow \Diff(S\times I)\rightarrow \Z/2\rightarrow 1.\] 
Letting $r\colon S\times I\rightarrow S\times I$ be the reflection $r(x,t)=(x,1-t)$ we see this sequence splits. On the other hand, by restricting to $S_0$ we get another (split) principal short exact sequence 
\[1\rightarrow \PI(S\times I)\rightarrow \Diff(S\times I, S_0)\rightarrow \Diff(S_0)\rightarrow 1.\]  
with splitting given by 
$f\mapsto f \times \id_I$.
By Theorem \ref{thm:Pseudo-Isotopy-Surfaces}, the fiber is contractible, so $\Diff(S\times I,S_0)\simeq \Diff(S_0)=\Diff(S)$. 
Finally, $r$ commutes with $f\times \id_I$, so the inclusion $\Diff(S)\times \Z/2\rightarrow \Diff(S\times I)$ is a homotopy equivalence. 
\end{proof}

We apply the previous corollary to the case where $S=T^2$.

\begin{prop}\label{prop: finiteness sphere or torus x I}
    Let $D^3\subset (T^2\times I)^\circ$. Then $\BDiff_{D^3}(T^2\times I)$ is homotopy finite.
\end{prop}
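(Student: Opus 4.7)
The plan is to follow the strategy of \cref{lem:Riemannian-Finite-Quotient}, but because $\Diff(T^2\times I)$ is not homotopy equivalent to any compact isometry group -- its mapping class group contains the infinite group $GL_2(\bbZ)$ -- I will instead work with a larger non-compact Lie group and invoke arithmetic finiteness for the final cocompactness step. By \cref{cor:I-bundle-Diff} combined with the classical Earle--Eells computation of $\Diff(T^2)$ in genus $1$, the inclusion of the affine group $G := \Aff(T^2) \times \bbZ/2 = (T^2 \rtimes GL_2(\bbZ)) \times \bbZ/2$ into $\Diff(T^2\times I)$ is a homotopy equivalence. Combining the fiber sequence $\Diff_{D^3}(T^2\times I) \to \Diff(T^2\times I) \to \emb(D^3, \interior{T^2\times I}) \simeq \Fr(T^2\times I)$ with the observation that the orientation-reversing reflection of the $I$-factor acts non-trivially on $\pi_0 \Fr(T^2\times I) \cong \bbZ/2$, I obtain $\BDiff_{D^3}(T^2\times I) \simeq \Fr(T^2\times I) \hq G$ as a connected space.

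Next, using the parallelization $\Fr(T^2\times I) \cong T^2 \times I \times GL_3(\bbR)$, the translation subgroup $T^2 < G$ acts freely on the $T^2$-factor, and the contractible $I$-factor can be equivariantly retracted to the $\bbZ/2$-fixed point $\{1/2\}$. This reduces the calculation to $\BDiff_{D^3}(T^2\times I) \simeq GL_3(\bbR) \hq G_0$, where $G_0 := GL_2(\bbZ) \times \bbZ/2$ acts by left multiplication via the embedding $(A,\epsilon) \mapsto \diag(A, \epsilon)$. Because left multiplication by the resulting discrete image subgroup $H \le GL_3(\bbR)$ is free and properly discontinuous, this homotopy orbit space coincides with the smooth manifold orbit space $H\backslash GL_3(\bbR)$.

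It remains to show $H \backslash GL_3(\bbR)$ is homotopy finite. I would pick a torsion-free finite-index normal subgroup $\Gamma \trianglelefteq GL_2(\bbZ)$ (for example a principal congruence subgroup $\Gamma(N)$ with $N \ge 3$) and set $\Gamma' := \Gamma \times \{1\} \trianglelefteq G_0$. Then the smooth manifold $\Gamma' \backslash GL_3(\bbR)$ fits into the fiber sequence $GL_3(\bbR) \to \Gamma' \backslash GL_3(\bbR) \to B\Gamma'$, whose fiber is homotopy equivalent to $O(3)$ and whose base $B\Gamma' \simeq \Gamma\backslash \bbH^2$ is a finite-type Riemann surface with cusps (homotopy equivalent to a finite graph). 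Hence $\Gamma' \backslash GL_3(\bbR)$ is homotopy finite by \cref{lem:fiber-sequence-finite}. The main obstacle lies in the final step: $H \backslash GL_3(\bbR)$ is the quotient of $\Gamma' \backslash GL_3(\bbR)$ by the free action of the finite group $H/\Gamma'$, and homotopy finiteness does not descend automatically under such quotients. I would resolve this by upgrading $\Gamma' \backslash GL_3(\bbR)$ to an $(H/\Gamma')$-equivariant finite CW complex, using that the Borel--Serre compactification of $\Gamma\backslash \bbH^2$ is a compact surface with boundary on which the natural $GL_2(\bbZ)/\Gamma$-action extends; the genuine quotient of such an equivariant finite CW complex by a free finite group action is again a finite CW complex, yielding the desired finite model for $H\backslash GL_3(\bbR)$ and completing the proof.
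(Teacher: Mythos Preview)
Your reduction to the free quotient $H\backslash GL_3(\bbR)$ with $H = GL_2(\bbZ)\times\bbZ/2$ embedded block-diagonally is correct, as is the argument that the finite cover $\Gamma'\backslash GL_3(\bbR)$ is homotopy finite.  The genuine gap is in the final step.  Borel--Serre compactification gives you an $(H/\Gamma')$-equivariant compact model for the \emph{base} $B\Gamma'\simeq\Gamma\backslash\bbH^2$ of the Borel fibration $GL_3(\bbR)\to\Gamma'\backslash GL_3(\bbR)\to B\Gamma'$, but it says nothing about the non-compact fibre $GL_3(\bbR)$.  You still have a $GL_3(\bbR)$-bundle over a compact base, and there is no $H$-equivariant deformation retraction of $GL_3(\bbR)$ onto $O(3)$ because $H$ does not act through $O(3)$.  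To actually produce an $(H/\Gamma')$-equivariant finite CW model for the total space you would need a further ingredient---for instance, an $(H/\Gamma')$-equivariant section of the associated $GL_3(\bbR)/O(3)$-bundle, obtained from the Cartan fixed-point theorem on the CAT(0) symmetric space $P_3$---and then argue that the pulled-back $O(3)$-bundle is the desired compact model.  This can be made to work but is considerably more than what you wrote.

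The paper's proof bypasses all of this with a single observation: the map $\Fr(T^2\times I)\to\RP^2$ recording the line spanned by the last frame vector is $(\Diff(T^2)\times\bbZ/2)$-invariant, with fibre $\Fr(T^2)\times\bbZ/2$.  Passing to homotopy orbits (via \cref{lem:fiberseq/fiberseq}) yields a fibre sequence over $\RP^2$ with fibre $\Fr(T^2)\hq\Diff(T^2)\simeq\BDiff_{D^2}(T^2)$, and the latter is homotopy finite by the surface case \cref{lem:Surface-Fix-Disk-finite}.  In your coordinates this is the $H$-invariant map $GL_3(\bbR)\to\RP^2$ given by the line spanned by the third row; its fibre is $GL_2(\bbR)\times\bbR^2\times\bbR^\times$ with $H$ acting so that the homotopy quotient is exactly $GL_2(\bbR)\hq GL_2(\bbZ)\simeq\BDiff_{D^2}(T^2)$.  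This reduces dimension by one and avoids the equivariant-finiteness subtlety entirely; it is also the template the paper reuses for the fiber-rigid Seifert-fibered case in \cref{lem:Haken-SF-Nonsingular}.
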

\begin{proof}
    We know that $\Diff(T^2 \times [0,1]) \simeq \bbZ/2 \times \Diff(T^2)$, so we can write
    \[
        \BDiff_{D^3}(T^2 \times [0,1]) \simeq \Fr(T^2 \times [0,1]) \hq \Diff(T^2 \times [0,1])
        \simeq \Fr(T^2 \times [0,1]) \hq (\bbZ/2 \times \Diff(T^2)).
    \]
    There also is a homotopy fiber sequence
    \[  
        \Fr(T^2) \times \bbZ/2 \longrightarrow
        \Fr(T^2 \times [0,1]) \longrightarrow
        \RP^2
    \]
    where the fibration is defined by recording the line spanned by last vector in the frame.
    The first map is $\Diff(T^2) \times \bbZ/2$-equivariant and the second map $\Diff(T^2) \times \bbZ/2$-invariant, so by \cref{lem:fiberseq/fiberseq} we get a homotopy fiber sequence
    \[  
        (\Fr(T^2) \times \bbZ/2)\hq (\Diff(T^2) \times \bbZ/2) \longrightarrow
        \Fr(T^2 \times [0,1])\hq (\Diff(T^2) \times \bbZ/2) \longrightarrow
        \RP^2.
    \]
    The base is a finite CW complex and the fiber is homotopy equivalent to $\Fr(T^2) \hq \Diff(T^2) \simeq \BDiff_{D^2}(T^2)$, which is homotopy finite by Lemma \ref{lem:Surface-Fix-Disk-finite}.
\end{proof}

To prove Theorem \ref{thm:sect4main} for $T^3$, we will use the fact that $\pi_0\Diff(T^3)\cong \GL_3(\Z)$ acts on $T^3$ by diffeomorphisms.

\begin{prop}\label{prop:T3-Finiteness}
    Let $D^3\subset \interior{T}^3$ be an embedded disk.  Then $\BDiff_{D^3}(T^3)$ is homotopy finite.
\end{prop}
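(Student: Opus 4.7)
The plan is to adapt the template of \cref{lem:Riemannian-Finite-Quotient}, but replacing the isometry group (which is too small for $T^3$) by a larger subgroup that realises the full mapping class group. Since $T^3$ admits orientation-reversing diffeomorphisms,
\[
    \BDiff_{D^3}(T^3) \simeq \Fr(T^3) \hq \Diff(T^3).
\]
The mapping class group $\pi_0 \Diff(T^3) \cong \GL_3(\bbZ)$ (by Waldhausen) lifts to $\Diff(T^3)$ via the linear action of $\GL_3(\bbZ)$ on $T^3 = \bbR^3/\bbZ^3$; together with translations this gives the affine group $\Aff(T^3) = T^3 \rtimes \GL_3(\bbZ) \hookrightarrow \Diff(T^3)$, and this inclusion is a homotopy equivalence because it is a bijection on $\pi_0$ and restricts to the inclusion $\Isom_0(T^3) = T^3 \hookrightarrow \Diff_0(T^3)$, which is an equivalence by Theorem~\ref{thm:Gen-Smale}.

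Next I would exploit the free translation action to reduce to a problem purely about $\GL_3(\bbZ)$. Parallelising $\Fr(T^3) \cong T^3 \times \GL_3(\bbR)$, the affine action is $(v, A) \cdot (x, F) = (v + Ax, AF)$, so $T^3 \subset \Aff(T^3)$ acts freely on the $T^3$ factor. Applying \cref{lem:fiberseq/fiberseq} to the principal short exact sequence $1 \to T^3 \to \Aff(T^3) \to \GL_3(\bbZ) \to 1$ and to the equivariant fiber sequence $T^3 \to \Fr(T^3) \to \GL_3(\bbR)$ (projection onto the frame factor), and using $T^3 \hq T^3 \simeq *$, yields
\[
    \Fr(T^3) \hq \Aff(T^3) \ \simeq\ \GL_3(\bbR) \hq \GL_3(\bbZ),
\]
where $\GL_3(\bbZ)$ acts on $\GL_3(\bbR)$ by left multiplication. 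Because this action is free and properly discontinuous, the homotopy quotient agrees with the ordinary quotient $\GL_3(\bbZ) \backslash \GL_3(\bbR)$.

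It remains to show $\GL_3(\bbZ) \backslash \GL_3(\bbR)$ is homotopy finite. Fibering it by $\Or(3)$ over the locally symmetric orbifold $\GL_3(\bbZ) \backslash \GL_3(\bbR)/\Or(3)$, it suffices by \cref{lem:fiber-sequence-finite} to show that the base is homotopy finite. This is classical reduction theory for arithmetic groups: the Borel--Serre compactification (or Ash's well-rounded retract, or Soul\'e's spine for $\SL_3(\bbZ)$) provides a cocompact, finite-dimensional $\GL_3(\bbZ)$-equivariant deformation retract of the symmetric space $\GL_3(\bbR)/\Or(3)$, whose quotient by $\GL_3(\bbZ)$ is a finite CW complex. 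The main obstacle is this last step, since it imports input from arithmetic group theory external to the rest of the paper; the earlier reductions are formal manipulations using the tools of Section~\ref{section: preliminaries}.
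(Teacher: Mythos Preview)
Your argument is essentially the paper's: identify $\Diff(T^3)\simeq \bbT^3\rtimes\GL_3(\bbZ)$ via Waldhausen and \cref{thm:Gen-Smale}, reduce $\Fr(T^3)\hq\Diff(T^3)$ to $\GL_3(\bbR)/\GL_3(\bbZ)$ using the free translation action, and then appeal to Soul\'e's spine for the arithmetic finiteness.

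The one imprecision is in your final paragraph. The map $\GL_3(\bbZ)\backslash\GL_3(\bbR)\to\GL_3(\bbZ)\backslash\GL_3(\bbR)/\Or(3)$ is \emph{not} an $\Or(3)$-bundle: torsion in $\GL_3(\bbZ)$ (for instance $-I$, which is central) gives the right $\Or(3)$-action on $\GL_3(\bbZ)\backslash\GL_3(\bbR)$ nontrivial isotropy at every point, so the fibers are quotients of $\Or(3)$ by varying finite groups and \cref{lem:fiber-sequence-finite} does not apply as stated. The paper sidesteps this by staying upstairs: it lifts Soul\'e's $\GL_3(\bbZ)$-equivariant deformation retraction of $\mathcal{Q}_3$ onto the cocompact spine $X_3$ along the principal $\Or(3)$-bundle $\GL_3(\bbR)\to\mathcal{Q}_3$, obtaining a $\GL_3(\bbZ)$-equivariant retraction of $\GL_3(\bbR)$ onto a subspace $\widetilde{X}_3$ on which $\GL_3(\bbZ)$ acts freely, properly discontinuously, and cocompactly. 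Then $\GL_3(\bbZ)\backslash\widetilde{X}_3$ is compact and a deformation retract of $\GL_3(\bbZ)\backslash\GL_3(\bbR)$.
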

\begin{proof}
    Since $T^3$ is closed and Haken, Waldhausen's theorem \cite{Waldhausen68} implies that the natural map $\pi_0\Diff(T^3)\rightarrow \Out(\pi_1(T^3))\cong \GL_3(\Z)$ is an isomorphism. The standard action of $\GL_3(\bbZ)$ on $\bbR^3$ preserves the $\bbZ^3$ lattice, hence descends to a group of diffeomorphisms of $T^3=\bbR^3/\bbZ^3$.
    By  \cref{thm:Gen-Smale}, the group of translations of $T^3$, which is equal to $\Isom_0(T^3)$ and which we denote by $\bbT^3$, is homotopy equivalent to $\Diff_0(T^3)$. Hence the inclusion $\bbT^3\rtimes\GL_3(\bbZ)\hookrightarrow \Diff(T^3)$ is a homotopy equivalence.
    
    Let $\Fr(T^3)$ be the space of frames on $T^3$. There is a diffeomorphism $\Fr(T^3)\cong T^3\times \GL_3(\bbR)$, and using the standard flat metric on $T^3$, this diffeomorphism is compatible with the action of $\bbT^3\rtimes \GL_3(\Z)$: the translation action is trivial on the $\GL_3(\bbR)$ factor and freely transitive on $T^3$, while $\GL_3(\Z)$ acts on $\GL_3(\bbR)$ via the standard right action. In particular, the action is free with quotient $\GL_3(\bbR)/\GL_3(\Z)$. 
    By the preceding discussion, we have 
    \begin{align*}\BDiff_{D^3}(T^3)\simeq \Fr(T^3)\hq \Diff(T^3)&\simeq (T^3\times \GL_3(\bbR))\hq (\bbT^3\rtimes \GL_3(\Z))\\ &\cong\GL_3(\bbR)/\GL_3(\Z).\end{align*}
    To see that the latter is homotopy finite, recall that there is a fiber bundle $\Or(3)\hookrightarrow\GL_3(\bbR)\rightarrow \mathcal{Q}_3$, where $\mathcal{Q}_3$ is the space of positive definite $3\times3$ matrices over $\bbR$. By a result of Soul\'e \cite{Soule78}, $\mathcal{Q}_3$ admits a $\GL_3(\Z)$-equivariant deformation retraction onto a cell complex $X_3\subset \mathcal{Q}_3$ on which the action of $\GL_3(\Z)$ is cocompact.  Lifting this deformation retraction to $\GL_3(\bbR)$, we find a subspace $\widetilde{X}_3\subset \GL_3(\bbR)$ on which the action of $\GL_3(\Z)$ is free, properly discontinuous and cocompact.    
\end{proof}

To complete the flexible cases of \cref{thm:sect4main}, for $M_{HW}$ we will show there exists a metric satisfying the strong generalised Smale conjecture.
\begin{prop}\label{prop:Hantzsche-Wendt} Let $D^3\subset M_{HW}$ be an embedded disk.  Then $\BDiff_{D^3}(M)$ is homotopy finite.
\end{prop}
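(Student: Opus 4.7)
The plan is to exhibit a flat metric $g$ on $M_{HW}$ for which $\Isom(M_{HW},g) \hookrightarrow \Diff(M_{HW})$ is a homotopy equivalence, and then invoke Lemma~\ref{lem:Riemannian-Finite-Quotient}. The approach is modelled on the argument for $D(\Kx)$ that appears in the proof of Theorem~\ref{thm:Unique-fibering}, combined with the averaging technique used in Proposition~\ref{prop:T3-Finiteness}.

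First, I would identify the homotopy type of $\Diff(M_{HW})$. Since $M_{HW}$ is a closed flat Bieberbach $3$-manifold with holonomy $(\bbZ/2)^2 \leq \Or(3)$ fixing no nonzero vector, the centre of $\pi_1(M_{HW})$ is trivial, so Corollary~\ref{cor:Diff0-list} gives $\Diff_0(M_{HW}) \simeq *$. Because $M_{HW}$ is Haken, Waldhausen's theorem \cite{Waldhausen68} identifies $\pi_0\Diff(M_{HW})$ with the finite group $\Out(\pi_1(M_{HW}))$. As for $D(\Kx)$, Charlap--Vasquez \cite{CharlapVasquez} then provides a short exact sequence
\[
    1 \to \Aff_0(M_{HW}) \to \Aff(M_{HW}) \to \Out(\pi_1(M_{HW})) \to 1
\]
with $\Aff_0(M_{HW}) = \Isom_0(M_{HW})$. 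The condition $b_1(M_{HW})=0$ (equivalently, the holonomy has no nonzero fixed vector) forces $\Aff_0(M_{HW}) = \{e\}$, so $\Aff(M_{HW}) \cong \Out(\pi_1(M_{HW}))$ realizes the full mapping class group as a finite subgroup of $\Diff(M_{HW})$.

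Next, I would average a positive-definite inner product on $\bbR^3$ over the finite image $H \leq \GL_3(\bbR)$ of the linear parts of $\Aff(M_{HW})$. Since $H$ contains the holonomy of $\pi_1(M_{HW})$, this $H$-invariant inner product descends to a flat metric $g$ on $M_{HW}$, and by construction every element of $\Aff(M_{HW})$ acts as an isometry, so $\Aff(M_{HW}) \subseteq \Isom(M_{HW},g)$. The chain of inclusions
\[
    \Aff(M_{HW}) \hookrightarrow \Isom(M_{HW},g) \hookrightarrow \Diff(M_{HW})
\]
then consists of homotopy equivalences: the outer terms are both equivalent to the discrete group $\Out(\pi_1(M_{HW}))$, Theorem~\ref{thm:Gen-Smale} gives $\Isom_0(M_{HW},g) \simeq \Diff_0(M_{HW}) \simeq *$, and a $\pi_0$-squeeze supplies bijectivity on components. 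Lemma~\ref{lem:Riemannian-Finite-Quotient} then completes the argument.

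The only delicate point I anticipate is checking that the inclusion $\pi_0\Isom(M_{HW},g) \hookrightarrow \pi_0\Diff(M_{HW})$ is injective; this follows from the contractibility of $\Diff_0(M_{HW})$, since two isometries lying in the same path component of $\Diff(M_{HW})$ then differ by an element of a contractible, hence trivial-on-$\pi_0$, identity component.
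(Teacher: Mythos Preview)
Your approach matches the paper's: both identify $\Aff(M_{HW})\cong\Out(\pi_1(M_{HW}))$ as a finite group realising the full mapping class group, produce a flat metric $g$ for which $\Aff(M_{HW})\subseteq\Isom(M_{HW},g)$, and finish with Lemma~\ref{lem:Riemannian-Finite-Quotient}. The only difference is that the paper cites Zieschang--Zimmermann \cite{ZieschangZimmermann} for the existence of such a metric, while you construct it directly by averaging an inner product over the (finite) group of linear parts.

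There is one loose end. Your ``$\pi_0$-squeeze'' for the chain $\Aff\hookrightarrow\Isom\hookrightarrow\Diff$ only yields that the first map is injective on $\pi_0$ and the second surjective; contractibility of $\Diff_0(M_{HW})$ does not by itself rule out a nontrivial isometry lying in $\Diff_0(M_{HW})$, so the stated justification for injectivity of $\pi_0\Isom\to\pi_0\Diff$ does not go through as written. The clean fix---used implicitly in the paper when it writes $\Isom(M)\cong\Aff(M)$---is that every isometry of a flat metric preserves the (flat) Levi-Civita connection and is therefore affine, so $\Isom(M_{HW},g)\subseteq\Aff(M_{HW})$ automatically for any flat $g$. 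Combined with your averaging inclusion this forces $\Isom(M_{HW},g)=\Aff(M_{HW})$, after which the equivalence $\Isom\hookrightarrow\Diff$ is immediate.
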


\begin{proof}
    Let $M=M_{HW}$ and $\pi=\pi_1(M_{HW})$. Recall that $M$ admits a flat metric, and let $\Aff(M)$ denote the group of affine diffeomorphisms of $M$.  By \cite[Theorem 1,  p474]{CharlapVasquez}, the identity component $\Aff_0(M)$ equals $\Isom_0(M)\cong \prod_b \SO(2)$ for any flat metric on $M$, where $b$ is the rank of the center of $\pi$. Moreover, $\Aff(M)/\Aff_0(M)\cong \Out(\pi)$. The center of $\pi$ is trivial, hence $\Aff(M)\cong \Out(\pi)$, which is a finite group \cite[Example 1]{CharlapVasquez} (see \cite{ZimmermannHW} for a correct calculation of $\Out(\pi)$). Moreover, by Corollary \ref{cor:Diff0-list} and Waldhausen's theorem, this implies that $\Aff(M)\hookrightarrow \Diff(M)$ is a homotopy equivalence. 
    
    By a result of Zieschang--Zimmermann \cite[Satz 3.17]{ZieschangZimmermann}, for any flat manifold $M$ and any finite subgroup $F\leq\Aff(M)$ there exists a flat metric on $M$ realising $F$ by isometries.  Applying this to $\Out(\pi)$ itself, we find a flat metric on $M$ such that $\Isom(M)\cong \Aff(M)\simeq \Diff(M)$. Applying Lemma \ref{lem:Riemannian-Finite-Quotient} completes the proof.
\end{proof}

 \subsection{Anosov torus bundles over the circle}
 As noted above, some $T^2$-bundles over $S^1$ are not Seifert-fibered, and hence have non-trivial JSJ decomposition. Such manifolds are geometric and admit $\Sol$-geometry. 
 We will prove that these manifolds satisfy the generalised Smale conjecture. Although this result seems to be known to experts (see the remarks following Problem 3.47 in \cite{KirbyProblems})  we could not find it written down explicitly, so we included it for the sake of completeness.  

 If $M$ is any orientable $T^2$-bundle over $S^1$ then, by the  clutching construction, the diffeomorphism type of $M$ is completely determined by its monodromy $A\in  \SL_2(\Z)\cong \pi_0\Diff^+(T^2)$. In this case, $\SL_2(\Z)$ acts on $T^2$ by diffeomorphisms, and we let $M_A$ denote the $T^2$-bundle with monodromy $A$. Then $M_A$ is aspherical and $\pi_1(M_A)$ is the semi-direct product $\pi_1(M_A):=G_A=\Z^2\langle x,y\rangle\rtimes_A\Z\langle t\rangle,$ where conjugating $\Z^2$ by $t$ acts via $A$.  We obtain a presentation
 \begin{equation}\label{eqn:Torus-bundle}
     G_A=\langle x,y,t\mid [x,y]=1,~txt^{-1}=Ax,~tyt^{-1}=Ay\rangle.
 \end{equation}

Recall that a matrix $A\in \SL_2(\Z)$ is \emph{Anosov} if $|\tr(A)|>2$.
When $A$ is Anosov, $A$ has two real irrational roots $\lambda, \frac{1}{\lambda}$, and thus no eigenvectors in $\bbQ^2$. In particular, $(I-A)$ is invertible over $\bbQ$ and $(I-A)\cdot \Z^2$ has finite index in $\Z^2$. It follows that the group of co-invariants $F_A:=\Z^2/((I-A)\cdot \Z^2)$ is finite.

\begin{lem}\label{lem:Characteristic-Abelian}Let $G_A=\Z^2\rtimes_A \Z$ with $A\in \SL_2(\Z)$ an Anosov matrix. Then $\Z^2$ is characteristic in $G_A.$  
\end{lem}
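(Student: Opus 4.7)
The plan is to realize $\mathbb{Z}^2 \trianglelefteq G_A$ as the preimage of a canonically defined subgroup of the abelianization $G_A^{\mathrm{ab}}$; since both abelianization and the torsion subgroup are functorial, this will automatically make $\mathbb{Z}^2$ characteristic.

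First I would compute $G_A^{\mathrm{ab}}$ directly from the presentation \eqref{eqn:Torus-bundle}. Abelianizing the conjugation relations forces $(A - I)\begin{pmatrix} x \\ y \end{pmatrix} = 0$, so
\[
    G_A^{\mathrm{ab}} \;\cong\; \mathrm{coker}\bigl(A - I\colon \mathbb{Z}^2 \to \mathbb{Z}^2\bigr) \,\oplus\, \mathbb{Z}\langle t\rangle \;=\; F_A \oplus \mathbb{Z}.
\]
The Anosov hypothesis enters exactly here: because $|\tr A| > 2$, the eigenvalues of $A$ are irrational, so $1$ is not an eigenvalue and $A - I$ is invertible over $\mathbb{Q}$, which forces $F_A = \mathbb{Z}^2/(A-I)\mathbb{Z}^2$ to be finite. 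Consequently the torsion subgroup of $G_A^{\mathrm{ab}}$ is precisely $F_A$.

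Next I would identify $\mathbb{Z}^2$ with the preimage of $T(G_A^{\mathrm{ab}})$ under the abelianization map $q\colon G_A \twoheadrightarrow G_A^{\mathrm{ab}}$. Writing a general element as $(v, n) \in \mathbb{Z}^2 \rtimes_A \mathbb{Z}$, we have $q(v,n) = ([v], n) \in F_A \oplus \mathbb{Z}$, and this is torsion if and only if $n = 0$. Hence $q^{-1}(T(G_A^{\mathrm{ab}})) = \mathbb{Z}^2$. Since any $\varphi \in \mathrm{Aut}(G_A)$ descends to an automorphism of $G_A^{\mathrm{ab}}$ that preserves its torsion subgroup, we conclude that $\varphi(\mathbb{Z}^2) = \mathbb{Z}^2$, i.e.\ $\mathbb{Z}^2$ is characteristic.

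There is no serious obstacle in this argument; the Anosov condition is invoked exactly once to guarantee finiteness of $F_A$, and the rest is a short diagram-chase through the functoriality of abelianization.
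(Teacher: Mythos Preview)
Your proof is correct and follows essentially the same approach as the paper: both compute the abelianization as $F_A \oplus \mathbb{Z}$ with $F_A$ finite, and then recognize $\mathbb{Z}^2$ via a canonical characterization in terms of this abelianization. The paper phrases this as ``$\mathbb{Z}^2$ is the kernel of any nontrivial homomorphism $G_A \to \mathbb{Z}$'', which is equivalent to your formulation as the preimage of the torsion subgroup.
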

\begin{proof}
    From the presentation in Equation (\ref{eqn:Torus-bundle}), the abelianisation of $G_A$ is isomorphic to $F_A\oplus \Z$. Thus, since~$F_A$ is finite, $\Z^2$ is the kernel of any nontrivial homomorphism $G_A\rightarrow \Z$, and as such is characteristic.
\end{proof}

Let $f=I-A$ be regarded as an endomorphism $f_{\bbT}\colon \bbT^2 \to \bbT^2$ of the group of translations of $T^2$.  The kernel $\ker(f_\bbT)$ is the subgroup of translations that commute with $A$, or equivalently the group of fixed points for the action of $A$.
Applying the snake lemma to
\[\begin{tikzcd}
	1 & {\bbZ^2} & {\bbR^2} & {\mathbb{T}^2} & 1 \\
	1 & {\bbZ^2} & {\bbR^2} & {\mathbb{T}^2} & 1 
	\arrow["f", hook, from=1-2, to=2-2]
	\arrow["f", "\cong"', from=1-3, to=2-3]
	\arrow["f_\bbT", two heads, from=1-4, to=2-4]
	\arrow[from=2-1, to=2-2]
	\arrow[from=2-2, to=2-3]
	\arrow[from=2-3, to=2-4]
	\arrow[from=2-4, to=2-5]
	\arrow[from=1-1, to=1-2]
	\arrow[from=1-2, to=1-3]
	\arrow[from=1-3, to=1-4]
	\arrow[from=1-4, to=1-5]
\end{tikzcd}\]
we see that $F_A \cong \ker(f_\bbT)$.

Let $N_A$ be the normaliser of $A$ in $\GL_2(\Z)$. Since $\langle A\rangle$ is infinite cyclic, $BAB^{-1}=A^{\pm 1}$ for every $B\in N_A$. Each element of $N_A$ also acts on $T^2$ and preserves $F_A$, hence we get a semi-direct product $F_A\rtimes N_A.$ Since $\langle A\rangle$ is normal in $N_A$ and acts trivially on $F_A$, it is also normal in $F_A\rtimes N_A$.  Define $\overline{N}_A:=N_A/\langle A\rangle$ to be the quotient and $I_A:=F_A\rtimes \overline{N}_A=(F_A\rtimes N_A)/\langle A\rangle$.

We claim that the groups $\overline{N}_A$ and $I_A$ defined above are both finite. This follows from the fact that $\GL_2(\Z)$ is virtually free and $A$ generates a $\Z$ subgroup and so has finite index in its normaliser, since the same is true for free groups.

\begin{lem}\label{lem:Sol-Outer-Automorphism}
    Let $A\in \SL_2(\Z)$ be Anosov, and let $G_A$ and $I_A$ be as above. Then $\Out(G_A)\cong I_A.$
\end{lem}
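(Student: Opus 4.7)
The approach is to describe $\Aut(G_A)$ explicitly using the semidirect product presentation $G_A = \Z^2\rtimes_A\Z$, and then to identify $\Inn(G_A)$ and pass to the quotient.

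By \cref{lem:Characteristic-Abelian} the fiber subgroup $\Z^2\subset G_A$ is characteristic, so every $\varphi\in\Aut(G_A)$ restricts to some $B = \varphi|_{\Z^2}\in\GL_2(\Z)$ and descends to an automorphism $\varepsilon\in\{\pm 1\}$ of $G_A/\Z^2\cong \Z$. Writing $\varphi(t)=t^\varepsilon v$ with $v\in\Z^2$ and applying $\varphi$ to the defining relation $tXt^{-1}=AX$ yields $BAB^{-1}=A^\varepsilon$, so $B\in N_A$. Since $A$ is Anosov we have $A\neq A^{-1}$, so $\varepsilon=\varepsilon(B)$ is determined by $B$ and defines the sign character $\varepsilon\colon N_A\to\{\pm 1\}$. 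Conversely, every pair $(B,v)\in N_A\times\Z^2$ extends to an automorphism $\varphi_{B,v}$ by $X\mapsto BX$, $t\mapsto t^{\varepsilon(B)}v$, and this gives a bijection $\Aut(G_A)\cong N_A\ltimes\Z^2$, which under composition becomes a semidirect product.

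Next, $Z(G_A)=1$ since $1$ is not an eigenvalue of $A$ and $A$ has infinite order; hence $\Inn(G_A)\cong G_A$. A direct calculation identifies the inner automorphism $\varphi_{t^kX}$ with $\varphi_{A^k,\,(A^{k-1}-A^k)X}$, so the image of $\Inn(G_A)$ under the projection $\Aut(G_A)\to N_A$ is exactly $\langle A\rangle$, while its intersection with the kernel $\Z^2$ is $(I-A)\Z^2$. Passing to the quotient gives a short exact sequence
\[
    1\to F_A\to \Out(G_A)\to \overline{N}_A\to 1
\]
that is split by the section $\bar B\mapsto [\varphi_{B,0}]$: this is well-defined because any two lifts $B$ and $A^kB$ differ by the inner automorphism $\varphi_{A^k,0}=\varphi_{t^k}$.

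Finally, computing the conjugation action of this splitting on the kernel $F_A$ yields $\bar B\cdot v=\varepsilon(B)\,Bv$, which differs from the standard $N_A$-action on $F_A$ by the sign character $\varepsilon$. To identify this twisted semidirect product with $I_A=F_A\rtimes\overline{N}_A$, one uses that $\overline{-I}\in\overline{N}_A$ is non-trivial (because $\tr(A^k)\neq -2$ for an Anosov matrix) and acts on $F_A$ by negation, and then defines an automorphism $\tau$ of $\overline{N}_A$ sending $\bar B$ to $\bar B$ when $\varepsilon(B)=1$ and to $\overline{-I}\cdot\bar B$ when $\varepsilon(B)=-1$. A direct check shows that $\tau$ is a group automorphism that intertwines the twisted and standard actions, producing the desired isomorphism $\Out(G_A)\cong I_A$. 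The main technical obstacle is the sign bookkeeping in the conjugation computation --- pushing $t^{-1}$ through an element of $\Z^2$ introduces a factor of $A$ that only disappears after reducing modulo $(I-A)\Z^2$ --- together with verifying that $\tau$ is a well-defined homomorphism.
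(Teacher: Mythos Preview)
Your proof is correct and follows the same overall strategy as the paper: describe $\Aut(G_A)$ as an extension of $N_A$ by $\Z^2$ using that $\Z^2$ is characteristic, identify $\Inn(G_A)$ with the subgroup generated by $(I-A)\Z^2$ and $\langle A\rangle$, and pass to the quotient. In fact your version is more careful on one point. The paper asserts that $B\ell_vB^{-1}=\ell_{Bv}$ for every $B\in N_A$, but when $\varepsilon(B)=-1$ a direct computation gives $\ell_{-ABv}$, which becomes $\ell_{-Bv}$ only after reducing modulo $(I-A)\Z^2$; so the induced $\overline{N}_A$-action on $F_A$ is the twisted action $\bar B\cdot v=\varepsilon(B)\,Bv$ rather than the standard one used to define $I_A$. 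Your final paragraph, constructing the involution $\tau$ of $\overline{N}_A$ via multiplication by $\overline{-I}$ and checking that it intertwines the two actions, supplies precisely the identification between the twisted and standard semidirect products that the paper's argument leaves implicit.
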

\begin{proof} 
    Let $\Inn(G_A)$ denote the group of inner automorphisms.
    Observe that $\Z^2$ injects into $\Aut(G_A)$ as the subgroup $\langle \ell_v \mid v\in \Z^2\rangle$, where $\ell_v$ acts trivially on $\Z^2$ and sends $t\mapsto vt$. Now let $\varphi\in \Aut(G_A)$ be any automorphism. By Lemma \ref{lem:Characteristic-Abelian}, $\varphi$ induces $\overline{\varphi}\colon\Z\rightarrow \Z$ and $\varphi'\colon \Z^2\rightarrow \Z^2$. Since $\overline{\varphi}(t)=t^{\pm1}$, we may assume after composing with $\ell_v$ that $\varphi(t)=t^{\pm1}$. Represent $\varphi'$ by the matrix $B$ with respect to the generators $x,y$. From the presentation in Equation (\ref{eqn:Torus-bundle}), we see that:
    \begin{enumerate}
        \item if $\varphi(t)=t$ then $B$ satisfies $BAB^{-1}=A$.
        \item If $\varphi(t)=t^{-1}$ then $B$ satisfies $BAB^{-1}=A^{-1}$.
    \end{enumerate} Conversely, any $B$ satisfying $(1)$ or $(2)$ induces an automorphism of $G_A$ ($t$ gets inverted according to whether $B$ satisfies $(1)$ or $(2)$). The collection of $B\in \GL_2(\Z)$ satisfying $(1)$ or $(2)$ is exactly $N_A$. 

    Thus, $N_A$ injects into $\Aut(G_A)$, and $\langle \ell_v\rangle\cap N_A=1$ since $\langle \ell_v\rangle$ acts as the identity on $\Z^2\leq G_A$ but $N_A$ does not.  Moreover, $N_A$ normalises $\langle \ell_v\rangle$ since $B\ell_vB^{-1}=\ell_{Bv}$ for every $B\in N_A$. Hence $\Aut(G_A)= \langle \ell_v\rangle\rtimes N_A$. On the other hand, conjugation by $v\in \Z^2$ acts trivially on $\Z^2$ and sends $t\mapsto vtv^{-1}=v(tv^{-1}t^{-1})t=((I-A)v) t$, while conjugation by $t$ acts trivially on $t$ and by $A$ on $\Z^2.$ Therefore
    $\langle \ell_v\rangle\cap \Inn(G_A)= \langle\ell_{(I-A)v}\mid v\in \Z^2\rangle\cong (I-A)\cdot\Z^2$ while $N_A\cap \Inn(G_A)=\langle A\rangle$.  We conclude that $\Out(G_A)\cong (\Z^2/(I-A)\cdot \Z^2)\rtimes (N_A/\langle A\rangle)=F_A\rtimes \overline{N}_A=I_A$ as desired. 
\end{proof}

Recall that $\Sol$ is a 3-dimensional solvable Lie group which decomposes as a semi-direct product $\bbR^2\rtimes \bbR$, where the $\bbR$ factor acts on $\bbR^2$ via the 1-parameter family of diagonal matrices of the form 
\[\left(\begin{array}{cc}e^t&0\\0&e^{-t}\end{array}\right).\]
Choosing a left-invariant metric at the identity for which the standard basis vectors of $\bbR^3$ are orthonormal,  $\Isom(\Sol)\cong \Sol \rtimes D_8$, where $\Sol$ acts on itself by left translations, and $D_8$ is the dihedral group of order 8 \cite{Scott83}. In coordinates $(x,y,t)$ where $(x,y,0)$ span the $\bbR^2$-factor and $(0,0,t)$ spans the $\bbR$-factor, $D_8$ is the group of 8 elements $R(\epsilon_1,\epsilon_2)$, $S(\epsilon_1,\epsilon_2)$ for $\epsilon_i=\pm1$ acting as
$R(\epsilon_1,\epsilon_2)\colon (x,y,t)\mapsto(\epsilon_1\cdot x,\epsilon_2\cdot y,t)$, and $S(\epsilon_1,\epsilon_2)\colon (x,y,t)\mapsto(\epsilon_1\cdot y,\epsilon_2\cdot x,-t).
$

If $A$ is Anosov, then $M_A$ admits $\Sol$-geometry. Let $\Isom(M_A)$ be the group of isometries of $M_A$ with respect to this geometric structure. 

\begin{thm}\label{thm:Anosov-T2-bundles}
Let $M_A$ be a $T^2$-bundle over $S^1$ with Anosov monodromy $A\in \SL_2(\Z)$. The inclusion $\Isom(M_A)\rightarrow \Diff(M_A)$ is a homotopy equivalence.  In particular, $\BDiff_{D^3}(M_A)$ is homotopy finite. 
\end{thm}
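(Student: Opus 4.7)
My plan is to first establish the homotopy equivalence $\Isom(M_A)\hookrightarrow\Diff(M_A)$; the homotopy finiteness of $\BDiff_{D^3}(M_A)$ then follows immediately from \cref{lem:Riemannian-Finite-Quotient}, since $\Isom(M_A)$ is a compact Lie group (Myers--Steenrod) acting freely on the compact oriented orthonormal frame bundle $\Fr^\perp(M_A)$. I will verify that the inclusion is an equivalence on identity components and on $\pi_0$ separately.

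On identity components: since $M_A$ is geometric and not Seifert-fibered, \cref{cor:Diff0-list} gives $\Diff_0(M_A)\simeq \ast$. The rank of the compact connected Lie group $\Isom_0(M_A)$ equals the rank of the center $Z(G_A)$, and the Anosov hypothesis makes $A^n-I$ invertible over $\bbR$ for every $n\neq 0$, so a short calculation with the presentation of $G_A$ shows $Z(G_A)=1$; hence $\Isom_0(M_A)$ is trivial as well.

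On $\pi_0$: the manifold $M_A$ is closed and Haken (the fiber torus is incompressible), so Waldhausen's theorem combined with \cref{lem:Sol-Outer-Automorphism} identifies $\pi_0\Diff(M_A)\cong\Out(G_A)\cong I_A=F_A\rtimes\overline{N}_A$, a finite group. The inclusion $\Isom(M_A)\hookrightarrow\Diff(M_A)$ induces an injection on $\pi_0$, so it suffices to realise every element of $I_A$ by an isometry of $M_A$ in a fixed Sol metric. I will build this metric by diagonalising $A$: pick real eigenvectors $v_+,v_-$ for the eigenvalues $\lambda,\lambda^{-1}$ (irrational since $A$ is Anosov) and use the basis $\{v_+,v_-\}$ to embed $G_A$ as a lattice in $\Sol=\bbR^2\rtimes\bbR$ with $t$ acting as $\diag(\lambda,\lambda^{-1})$. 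The isometry group of the induced Sol metric on the universal cover is $\Sol\rtimes D_8$. Elements of $F_A\cong (I-A)^{-1}\bbZ^2/\bbZ^2$ are realised by left translations in the $\bbR^2$-factor, where the relation $(I-A)v\in\bbZ^2$ is precisely the normaliser condition needed for descent to $M_A$. For $B\in N_A$, Dirichlet's unit theorem applied to $\bbZ[\lambda]$ shows that the centraliser of $A$ in $\GL_2(\bbZ)$ is $\{\pm A^n\}$, so modulo $\langle A\rangle$ the element $B$ acts on the pair $\{v_+,v_-\}$ by signs, possibly swapping them if $BAB^{-1}=A^{-1}$; this realises $\sigma_B$ by a $D_8$-element of $\Isom(\Sol)$, combined with $t\mapsto t^{-1}$ in the swap case.

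The main obstacle will be verifying carefully that the $D_8$-symmetries constructed in the last step really do normalise the lattice $G_A$ inside $\Sol\rtimes D_8$ and hence descend to isometries of $M_A$. Once this is done, the composite $\pi_0\Isom(M_A)\hookrightarrow\pi_0\Diff(M_A)\cong I_A$ is surjective by construction, hence bijective, which combined with the triviality of the identity components completes the proof of the homotopy equivalence.
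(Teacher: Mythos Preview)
Your overall strategy---reduce to showing $\Isom(M_A)\to\Diff(M_A)$ is a bijection on $\pi_0$ (both identity components being trivial), then realise each element of $I_A$ by an explicit isometry in a fixed $\Sol$ metric---is sound and close in spirit to the paper's, but there are two genuine issues.

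First, the assertion that the inclusion induces an \emph{injection} on $\pi_0$ is not automatic from $\Isom_0=\{1\}$ and $\Diff_0\simeq *$: a non-identity isometry could a priori lie in $\Diff_0$. You must show that an isometry inducing an inner automorphism of $G_A$ is the identity. The paper does this by lifting to $\Sol$ and checking directly that the centraliser of the lattice $G_A$ in $\Isom(\Sol)\cong\Sol\rtimes D_8$ is trivial. Alternatively, once you know $\Isom(M_A)$ is finite and $Z(G_A)=1$, you can invoke the classical result (Borel, Conner--Raymond) that a finite group acting effectively on a closed aspherical manifold with centerless fundamental group injects into $\Out(\pi_1)$.

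Second, your Dirichlet claim is false as stated: the centraliser of $A$ in $\GL_2(\bbZ)$ is $\{\pm C^n\}$ where $C$ corresponds to a \emph{fundamental} unit of the relevant quadratic order, and $A$ is in general only a power of $C$. For instance if $A=\left(\begin{smallmatrix}2&1\\1&1\end{smallmatrix}\right)^2$ then $C=\left(\begin{smallmatrix}2&1\\1&1\end{smallmatrix}\right)$ lies in the centraliser but not in $\{\pm A^n\}$. When $A=C^k$ with $k>1$, the class of $C$ in $\overline{N}_A$ acts on the eigenlines as $\diag(\mu,\mu^{-1})$ with $\mu\notin\{\pm 1\}$, so it is \emph{not} realised by any $D_8$-element. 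The fix is easy once spotted: the left $\Sol$-translation by $(0,0,\log\mu)$ normalises $G_A$ and induces exactly this automorphism, so the isometries realising $\overline{N}_A$ come from all of $\Sol\rtimes D_8$, not from $D_8$ alone. The paper sidesteps this case analysis entirely by first realising $I_A$ as a finite group of diffeomorphisms (via the explicit construction on the cover $T^2\times\bbR$) and then invoking Meeks--Scott to conclude that this finite action preserves some $\Sol$ metric; this is less explicit but more robust.
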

\begin{proof}
    Let $\rho\colon M_A\rightarrow S^1$ be the bundle projection. 
    Write $G_A=\pi_1(M_A)=\Z^2\langle x,y\rangle\rtimes_A \Z\langle t\rangle$. Combining Corollary \ref{cor:Diff0-list} part (1), which states that $\Diff_0(M_A)\simeq *$, with Waldhausen's theorem \cite{Waldhausen68} that $\pi_0\Diff(M_A)\cong \Out(G_A)$, we conclude that $\Diff(M_A)\rightarrow \Out(G_A)$ is a homotopy equivalence.  We will show that $\Isom(M_A)\rightarrow \Out(G_A)$ is an isomorphism.

    By Lemma \ref{lem:Sol-Outer-Automorphism}, $\Out(G_A)\cong I_A = (F_A\rtimes N_A)/\langle A\rangle$. Recall that $I_A$ is finite. We claim $I_A$ can be realised as a finite group of diffeomorphisms of $M_A$, \emph{i.e.}~we construct a splitting of $\Diff(M_A)\rightarrow \Out(G_A)$. Let $\smash{\widetilde{M}_A}\to M_A$ be the regular cover of induced by $\rho_*\colon G_A\to \Z$. Then $\smash{\widetilde{M}_A\cong T^2\times \bbR}$ and projection onto $\bbR$ is the lift $\tilde{\rho}$ of $\rho$. 
    Recall that $F_A$ is the group of fixed points of $A$ acting on $T^2$. For any $f\in F_A$ and $v\in T^2$, we have $A(v+f)=Av+f$.  Hence acting trivially on $\bbR$ and translating of each fiber of $\tilde{\rho}$ by $f$ gives a well-defined diffeomorphism of $\smash{\widetilde{M}_A}$ that commutes with the deck group. Any $B\in N_A$ acts on $T^2$ as a linear map in the standard way, and on $\bbR$ via the action of $N_A$ on the axis $\gamma\subset \bbH^2$. The action of $N_A$ normalises the translations $F_A$ so we obtain an action of $F_A\rtimes N_A$ on $\smash{\widetilde{M}_A}$.  Since $F_A\rtimes N_A$ normalises $A$, this action normalises the deck group $\langle A\rangle$ action hence descends to an action of $I_A$ on the quotient $M_A$. Thus we have constructed our splitting and $\Out(G_A)$ acts on $M_A$ by diffeomorphisms. By a result of Meeks--Scott \cite[Theorem 8.2]{Meeks-Scott}, finite group actions on $\Sol$ manifolds preserve the geometric structure. As $I_A$ is finite, $\Out(G_A)$ can be realised on $M_A$ by isometries in the $\Sol$-metric. This implies $\Isom(M_A)\rightarrow \Out(G_A)$ is onto.

    For injectivity, let $\phi\in \Isom(M_A)$ and suppose that $\phi_*\in \Out(G_A)$ is trivial.  Lift $\phi$ to $\smash{\widetilde{\phi}}\colon \Sol\rightarrow \Sol.$ We can choose $\smash{\widetilde{\phi}}$ so that $g\circ \smash{\widetilde{\phi}}=\smash{\widetilde{\phi}}\circ g$ for all $g\in G_A,$ which implies that $\smash{\widetilde{\phi}}$ centralises $G_A$. Since $G_A\cap \bbR^2$ is a lattice and no element of $D_8$ commutes with an entire lattice in $\bbR^2$, we must have  $\smash{\widetilde{\phi}}\in \Isom_0(\Sol)\cong \Sol$. But the centraliser of any element of $\bbR^2$ is $\bbR^2$, while the centraliser of any element projecting nontrivially to $\bbR$ is the one 1-parameter subgroup containing it.  Since $G_A$ contains elements of both types, its centraliser is trivial.  Thus $\smash{\widetilde{\phi}}$ is the identity, and it follows that $\Isom(M_A)\cong \Out(G_A)$. We conclude that $\BDiff_{D^3}(M_A)$ is  homotopy finite by \cref{lem:Riemannian-Finite-Quotient}.
\end{proof}

\section{Decomposing irreducible \texorpdfstring{$3$}{3}-manifolds}
\label{section:decomposing irreducible}

In \cref{section:strongsmale} we used the strong generalised Smale conjecture and results of a similar nature to prove \cref{thm:sect4main} for certain geometric manifolds, appearing in the geometric or JSJ decomposition of $M$. In this section we complete our proof of homotopy finiteness of $\BDiff_{D^3}(M)$ for $M$ irreducible. One key step of our proof will be to glue $M$ together from its JSJ pieces, and we use a similar gluing argument when dealing with singular Seifert fibered manifolds. 
In order to assemble the proof of homotopy finiteness for these pieces into a proof for $M$, we first introduce a stronger notion of finiteness, where we require that the finiteness is inherited by classifying spaces of certain subgroups.

In general, we wish to start with a fiber sequence of the form
\[  
    \Diff_U(M) \longrightarrow \Diff(M, U) \longrightarrow \Diff(U),
\]
and use \cref{lem:fiberseq/fiberseq} to get a homotopy fiber sequence of classifying spaces.
However, a fundamental problem is that the above sequence need not be a principal short exact sequence (\cref{defn: principal ses}) because the fibration might not be surjective.
Its image is the subgroup $\Diff'(U) \subseteq \Diff(U)$ of those diffeomorphism of $U$ that can be extended to a diffeomorphism of $M$. Restricting to $\Diff'(U)$ in the base, the map from $\Diff(M,U)$ is indeed a $\Diff_U(M)$-principal bundle, and so we can apply \cref{lem:fiberseq/fiberseq}.
The homotopy fiber sequence that we get on classifying spaces is
\[
    \BDiff_U(M) \longrightarrow \BDiff(M, U) \longrightarrow \BDiff'(U).
\]
Therefore to deduce homotopy finiteness of the middle term we will need to know homotopy finiteness of this base, rather than of $\BDiff(U)$.
It can be a little annoying that $\Diff'(U)$ depends on $M$ and not just on $U$, so we instead describe a suitable class of subgroups of $\Diff(U)$ whose finiteness we might need to know.
Whether or not a diffeomorphism $\varphi$ of $U$ can be extended to $M$ only depends on the mapping class $[\varphi_{|\partial}] \in \pi_0\Diff(\partial U)$ on the boundary, so we restrict ourselves to considering subgroups of $\pi_0\Diff(\partial U)$.

\begin{defn}\label{defn:Diff-Gamma}
    For $N$ a $3$-manifold and $\Gamma < \pi_0\Diff(\partial N)$ a subgroup of the mapping class group of its boundary we let $\Diff^\Gamma(N) \subset \Diff(N)$ denote the subgroup of those diffeomorphisms $\varphi\colon N \to N$ for which $[\varphi_{|\partial N}]$  is in $\Gamma$.
\end{defn}

\begin{defn}\label{defn:hereditarily-finite}
    Let $(M, F)$ be a pair consisting of a compact $3$-manifold and a (possibly empty) union of boundary components $F \subset \partial M$.
    We say that $(M, F)$ is \emph{hereditarily finite} if for any subgroup $\Gamma \le \pi_0\Diff(\partial M)$ the space
    $\BDiff_F^\Gamma(M)$ 
    is homotopy finite.
\end{defn}

\begin{rem}\label{remark:suffices to show HF for special cases}
    Once we have checked that $\BDiff_F^\Gamma(M)$ is homotopy finite for some $\Gamma$, it follows that $\smash{\BDiff_F^{\Gamma'}(M)}$ is also homotopy finite for all finite index subgroups $\Gamma' < \Gamma$ because the map
    \[
        \BDiff_F^{\Gamma'}(M) \longrightarrow
        \BDiff_F^{\Gamma}(M)
    \]
    is a finite covering (this is a special case of \cref{lem:principal-bundle-subgroup} where $G/H$ is finite). 
    Moreover, without loss of generality it suffices to check finiteness of $\BDiff_F^\Gamma(M)$ only for those subgroups $\Gamma$ that are contained in the image of
    \[
        \pi_0\Diff_F(M) \longrightarrow \pi_0\Diff(\partial M).
    \]
    Since this map factors through $\pi_0\Diff(M)$, if $\pi_0\Diff_F(M)$ or $\pi_0\Diff(M)$ is finite, to show hereditary finiteness it suffices to check the case where $\Gamma$ is the entire group.
    In particular, if $\partial M$ is a disjoint union of spheres then $\pi_0(\Diff(\partial M))$ is the finite group $(\Z/2) \wr \Sym_n$, so it suffices to check that $\BDiff_F(M)$ is homotopy finite.
\end{rem}

In their paper \cite{HatcherMcCullough} on the homotopy finiteness of $\BDiff_F(M)$ for irreducible $M$, Hatcher and McCullough in fact show hereditary finiteness.
While we believe that their argument can be used to show hereditary finiteness for any irreducible manifold $M$, we will only need it for the cases that appear as part of the JSJ decomposition.
\begin{thm}[Hatcher--McCullough]\label{thm:HM-hereditary}
    Let $M$ be an irreducible $3$-manifold that is either hyperbolic or Seifert-fibered.
    Let $F \subset \partial M$ be a non-empty union of boundary components, including all the compressible ones. 
    Then $(M, F)$ is hereditarily finite.
\end{thm}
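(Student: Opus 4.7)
The plan is to upgrade the Hatcher--McCullough argument from the full mapping class group to an arbitrary peripheral subgroup $\Gamma \le \pi_0\Diff(\partial M)$. By Remark~\ref{remark:suffices to show HF for special cases} we may assume $\Gamma$ lies in the image of $\pi_0\Diff_F(M) \to \pi_0\Diff(\partial M)$, and we set $H \le \pi_0\Diff_F(M)$ equal to its preimage. The inclusion $\Diff_F^\Gamma(M) \subset \Diff_F(M)$ is a union of path components, so both groups share their identity component, which is weakly contractible by Hatcher--Ivanov (this is the key input underlying \cref{thm: HatcherMcCullough finiteness}). It follows that $\BDiff_F^\Gamma(M) \simeq BH$, and it suffices to show that $H$ is a group of type $F$.

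In the hyperbolic case, Mostow rigidity together with the strong Smale conjecture (\cref{thm:Isom-Equivalence}) gives $\pi_0\Diff(M) \cong \Isom(M)$, a finite group. The kernel of the natural map $\pi_0\Diff_F(M) \to \pi_0\Diff(M)$ is the free abelian group generated by Dehn twists along the torus components of $\partial M \setminus F$, of rank $2|\pi_0(\partial M\setminus F)|$. Hence $\pi_0\Diff_F(M)$ is virtually free abelian of finite rank, and any subgroup $H$ is again an extension of a subgroup of $\Isom(M)$ by a free abelian group. Such an $H$ is of type $F$, so $BH$ is homotopy finite.

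In the Seifert-fibered case we use a Seifert fibering $\varphi\colon M \to S$ and invoke \cref{thm:Unique-fibering}. When $M$ is fiber-rigid, $\pi_0\Diff_F(M) \simeq \pi_0\Diff_F^f(M,[\varphi])$ fits into an extension
\[
1 \longrightarrow \Z^{k'} \longrightarrow \pi_0\Diff_F(M) \longrightarrow \pi_0\Diff^{\mathrm{orb}}(S, \varphi(F)) \longrightarrow 1
\]
whose kernel is generated by vertical Dehn twists and whose quotient is the orbifold mapping class group of the base $S$ fixing $\varphi(F)$ pointwise, which is of type $F$ by \cref{lem:Surface-Fix-Disk-finite} and its orbifold analogue. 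Any preimage $H$ of $\Gamma$ inherits the same extension structure and is therefore of type $F$. The only flexible Seifert case with non-empty boundary is $T^2 \times I$ (the twisted $I$-bundle $\Kx$ being fiber-rigid by the proof of \cref{thm:Unique-fibering}), for which we use the explicit description $\Diff(T^2 \times I)\simeq \Z/2 \times \Diff(T^2)$ from \cref{cor:I-bundle-Diff} to directly verify that all subgroups of the mapping class group arising from peripheral conditions are virtually free abelian.

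The main obstacle is to verify, in each case, that the preimage $H$ of an arbitrary $\Gamma$ really inherits a virtually-type-$F$ extension structure; concretely, one must check that restricting to a peripheral condition does not introduce torsion into the Dehn-twist kernel and that the image in the quotient group (the finite group $\Isom(M)$, or the orbifold mapping class group of $S$) is of type $F$. Both are essentially bookkeeping once the geometric structure of $\pi_0\Diff_F(M)$ is known.
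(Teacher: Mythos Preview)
Your overall strategy matches the paper's: reduce to showing that $H := \pi_0\Diff_F^\Gamma(M)$ is of type $F$, then case-split into hyperbolic and Seifert-fibered. However, there is a genuine gap in both branches, stemming from the same mistake: \emph{type $F$ is not inherited by subgroups, and groups with torsion are never of type $F$}.

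In the hyperbolic case you assert that ``$H$ is an extension of a subgroup of $\Isom(M)$ by a free abelian group, hence of type $F$''. This is false as stated: the infinite dihedral group $\bbZ \rtimes \bbZ/2$, or even $\bbZ/2$ itself, is such an extension and is not of type $F$. What actually happens (and what the paper uses, citing \cite[Proposition~3.2]{HatcherMcCullough}) is that $\pi_0\Diff_F(M)$ is \emph{literally} a finitely generated free abelian group, not merely virtually so: since $F \neq \emptyset$ is fixed pointwise, the image in $\pi_0\Diff(M)\cong\Isom(M)$ is trivial. Every subgroup of $\bbZ^k$ is again free abelian of finite rank, and the claim follows.

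In the Seifert-fibered case the same issue recurs. You claim that $H$ ``inherits the same extension structure and is therefore of type $F$'', but the image of $H$ in the orbifold mapping class group is an \emph{arbitrary} subgroup, and arbitrary subgroups of a type-$F$ group need not be of type $F$ (e.g.\ the commutator subgroup of $F_2$ is free of infinite rank). The paper sidesteps this by making a different observation: $\Diff_\partial(M) \subset \Diff_F^\Gamma(M)$ always, so $H$ contains the image of $\pi_0\Diff_\partial(M)$, and Hatcher--McCullough's argument on \cite[p.~108]{HatcherMcCullough} in fact establishes type $F$ for \emph{any} subgroup $G \le \pi_0\Diff_F(M)$ satisfying $\pi_0\Diff_\partial(M) \le G$. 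This containment is what controls the image in the base of your extension, and it is exactly the ingredient your sketch is missing. (Your separate treatment of $T^2 \times I$ is also unnecessary: with $F$ containing at least one torus boundary component, $\Diff_F(T^2\times I)$ is the pseudo-isotopy group of $T^2$, which is contractible by \cref{thm:Pseudo-Isotopy-Surfaces}.)
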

\begin{proof}
    The main theorem of \cite{HatcherMcCullough} states that $\BDiff_F(M)$ is homotopy equivalent to an aspherical finite CW complex.
    In the hyperbolic case $\pi_0 \Diff_F(M)$ is a finitely generated free abelian group \cite[Proposition 3.2]{HatcherMcCullough}, so any $\pi_0 \Diff_F^\Gamma(M)$ must also be a finitely generated free abelian group and hence must have a finite classifying space.

    In the case of a Seifert-fibered manifold $\varphi\colon M \to S$ the authors argue  \cite[p108]{HatcherMcCullough} that $\pi_0\Diff_F(M)$ must have a finite classifying space. During this proof, they construct an intermediate group $G\leq \pi_0\Diff_F(M)$ and prove that $G$ has finite classifying space, using only that $\pi_0 \Diff_\partial(M) \leq G$. Therefore their proof applies to $\pi_0 \Diff_F^\Gamma(M)$ for any $\Gamma\le\pi_0\Diff(\partial M)$.\qedhere
\end{proof}

The argument that the strong generalised Smale conjecture for $M$ implies finiteness of $\BDiff_{D^3}(M)$ can easily be improved to also show hereditary finiteness.
\begin{lem}\label{lem:Riemannian-hereditarily-finite}
    Suppose $(M,g)$ is a Riemannian 3-manifold such that the inclusion $\Isom(M)\rightarrow \Diff(M) $ is a homotopy equivalence.
    Then $(M \setminus \interior{D}^3, S^2)$ is hereditarily finite.
\end{lem}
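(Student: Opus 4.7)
The plan is to upgrade the proof of \cref{lem:Riemannian-Finite-Quotient} to a subgroup-relative statement. Setting $N := M \setminus \interior{D}^3$, so that $\partial N = S^2 \sqcup \partial M$, \cref{remark:suffices to show HF for special cases} allows us to restrict attention to those $\Gamma \le \pi_0\Diff(\partial N)$ which act trivially on the $S^2$ factor. Viewing such $\Gamma$ as a subgroup of $\pi_0\Diff(\partial M)$ and extending over the collar of $S^2$ using \cref{thm:contractible-collars} yields $\Diff^\Gamma_{S^2}(N)\simeq \Diff^\Gamma_{D^3}(M)$, reducing the claim to homotopy finiteness of $\BDiff^\Gamma_{D^3}(M)$.

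Next I would apply the Palais--Cerf fibre sequence
\[
    \Diff^\Gamma_{D^3}(M) \longrightarrow \Diff^\Gamma(M) \longrightarrow \emb(D^3, \interior{M}),
\]
which is still a principal short exact sequence since $\Diff^\Gamma(M)$ is a union of connected components of $\Diff(M)$. Combined with the $\Diff(M)$-equivariant equivalences $\emb(D^3, \interior{M}) \simeq \Fr(M) \simeq \Fr^\perp(M)$, together with \cref{lem:orbit-stabiliser-v2}, this identifies $\BDiff^\Gamma_{D^3}(M)$ with a union of path components of $\Fr^\perp(M) \hq \Diff^\Gamma(M)$.

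Finally I would invoke the strong Smale hypothesis $\Isom(M)\simeq\Diff(M)$. Since $\Diff^\Gamma(M)$ is a union of path components of $\Diff(M)$, the corresponding union of components of $\Isom(M)$ is a closed Lie subgroup $H \le \Isom(M)$ that is weakly equivalent to $\Diff^\Gamma(M)$. Compactness of $M$ makes $\Isom(M)$ a compact Lie group by Myers--Steenrod, so $H$ is compact; moreover, $H$ acts freely on $\Fr^\perp(M)$ because an isometry is determined by its value on any single orthonormal frame. Hence $\Fr^\perp(M)\hq H \simeq \Fr^\perp(M)/H$ is a compact smooth manifold by \cite[Theorem 7.10]{Lee}, and any union of its path components is again homotopy finite. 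The only mildly subtle point is the passage from the equivalence $\Isom(M)\simeq\Diff(M)$ to a subgroup equivalence $H\simeq \Diff^\Gamma(M)$, but this is essentially automatic: a union of path components of a topological group is always closed, and since both groups are locally path connected the weak equivalence restricts to preimages of unions of path components.
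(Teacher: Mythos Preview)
Your proof is correct and follows essentially the same approach as the paper. The paper's own proof is a single sentence---it simply observes that the subgroup $\Isom^\Gamma(M) \le \Isom(M)$ is still a compact Lie group acting freely on the closed manifold $\Fr^\perp(M)$---and your argument spells out exactly this idea in more detail, including the reduction of $\Gamma \le \pi_0\Diff(\partial N)$ to $\Gamma \le \pi_0\Diff(\partial M)$ and the passage from $\Diff^\Gamma(M)$ to the corresponding union of components of $\Isom(M)$.
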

\begin{proof} 
    This works analogously to \cref{thm:contractible-collars} since the subgroup $\Isom^\Gamma(M) \le \Isom(M)$ is still a compact Lie group acting freely on the closed manifold $\Fr^\perp(M)$.
\end{proof}

There are three cases in which we need to show hereditary finiteness, shown as blue leaves in \cref{fig:flowchartirreducible}.
The case of hyperbolic manifolds is dealt with by combining \cref{lem:Riemannian-hereditarily-finite} above with \cref{thm:Isom-Equivalence},
so it remains to consider fiber-rigid Haken Seifert-fibered  manifolds, both with and without singularities.

\begin{ex}\label{ex:not-hereditarily-finite}
    The pair $(T^2 \times I\setminus \interior{D}^3, S^2)$ is not hereditarily finite.
    Let $\Gamma < \GL_2(\bbZ)$ be any subgroup, which we may regard as a  subgroup of the diagonal in $\pi_0 \Diff(\partial (T^2 \times I)) \cong \GL_2(\bbZ) \times \GL_2(\bbZ)$.
    By the argument of \cref{prop: finiteness sphere or torus x I} there is a homotopy fiber sequence
    \[
        \Fr(T^2)\hq \Diff^\Gamma(T^2) \longrightarrow
        \BDiff_{D^3}^\Gamma(T^2 \times I) \longrightarrow \RP^2
    \]
    and furthermore the left term is equivalent to $\GL_2(\bbR) \hq \Gamma$.
    If we let $\Gamma < \GL_2(\bbZ)$ be some infinite rank free group, then one can check using the long exact sequence on homotopy groups that $\pi_0 \Diff_{D^3}^\Gamma(T^2 \times I)$ cannot be finitely generated.
    In particular $\BDiff_{D^3}^\Gamma(T^2 \times I)$ cannot be equivalent to a finite CW complex.
\end{ex}

\subsection{Canonical submanifolds}\label{subsec:canonical}
We now explain the general strategy for deducing finiteness of $\BDiff_{D^3}(M)$ by decomposing $M$ along certain \emph{canonical} submanifolds $N \subset M$.
These are analogous to characteristic subgroups $H < G$, which are preserved by all group automorphisms.

\begin{defn}\label{defn:canonical}
    We say that a submanifold $N \subset M$ is \emph{canonical} if the inclusion
    \[
        \Diff(M, N) \to \Diff(M)
    \]
    is a homotopy equivalence.
\end{defn}

    Let us discuss some examples. If $M$ is a manifold with boundary and $N \subset \partial M$ is the union of all spherical boundary components then $N$ is a canonical submanifold because every diffeomorphism must preserve it, and in fact $\Diff(M, N) = \Diff(M)$.
    However, canonical submanifolds need not always be strictly preserved.
    For example, $\{0\} \subset D^n$ is a canonical submanifold because there is a fiber sequence $\Diff(D^n, \{0\}) \to \Diff(D^n) \to \interior{D}^n$.
    To give a more sophisticated example, let $P_1$ and $P_2$ be two irreducible $3$-manifolds, neither of which is $S^3$.
    Then any essential sphere $S^2 \subset P_1 \# P_2$ is canonical by a theorem of Hatcher \cite{Hatcher1981}, which we reproved in Theorem \ref{thm:two-prime-pieces}.

From now on, we let $M$ be a compact connected oriented $3$-manifold, $F \subset \partial M$ a non-empty union of boundary components and $T \subset \interior{M}$ a closed orientable codimension $1$ submanifold (not necessarily connected). Similar to our notation $2\Sigma$, which we use to denote new boundary created in $M$ after cutting along $\Sigma$, let $2T = \partial (M \ca T) \setminus \partial M$ denote the new boundary components created after cutting along $T$.

\begin{prop}[Cutting along submanifolds]\label{prop:cut-along-sub}
    Let $M$, $F\subset \partial M$, $T\subset \interior{M}$, and $2T\subset \partial (M\ca T)$ be as above. 
    Suppose that 
    \begin{enumerate}
        \item[$(\dagger_0)$]
        there is a component $N_0 \subset M \ca T$ such that $N_0 \cap F \neq \emptyset$ and $(N_0, N_0 \cap F)$ is hereditarily finite.
        \item[$(\dagger)$]
        For every other component $N \subset M \ca T$ and any non-empty union of components $\emptyset \neq F' \subset \partial N$ containing $N \cap F$, $(N, F')$ is hereditarily finite.
    \end{enumerate}
    Then $\BDiff_F^{\Gamma}(M, T)$ is homotopy finite for all $\Gamma \le \pi_0\Diff(\partial M)$.
    In particular, if $T \subset M$ is canonical, then $(M, F)$ is hereditarily finite.
\end{prop}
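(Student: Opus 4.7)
The plan is to cut $M$ along $T$ and relate $\Diff_F^\Gamma(M,T)$ to a fibered product involving the diffeomorphism groups of the components of $M \ca T$, then apply hereditary finiteness.

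The ``in particular'' is immediate: if $T$ is canonical, the weak equivalence $\Diff(M,T) \simeq \Diff(M)$ is compatible with the conditions of fixing $F$ pointwise and having boundary mapping class in $\Gamma$ (neither of which involves $T$), so $\Diff_F^\Gamma(M, T) \simeq \Diff_F^\Gamma(M)$ as unions of path components; homotopy finiteness of $\BDiff_F^\Gamma(M,T)$ for all $\Gamma$ then yields hereditary finiteness of $(M,F)$.

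For the main claim, write $M' := M \ca T = N_0 \sqcup \cdots \sqcup N_k$, $F_i := F \cap N_i$, $T_i := 2T \cap N_i$, so that $\partial N_i = (\partial M \cap N_i) \sqcup T_i$. The permutation action of $\Diff_F^\Gamma(M,T)$ on $\pi_0(M')$ factors through the finite symmetric group $\Sym(\pi_0 M')$, giving a finite normal covering from the subgroup $K \le \Diff_F^\Gamma(M,T)$ that preserves each $N_i$ setwise. As finite-sheeted quotients of finite CW complexes are finite CW complexes, it suffices to show $BK$ is homotopy finite. Cutting $M$ along $T$ then identifies $K$ with the fibered product
\[
\Diff_{F_0}^{\Gamma_0}(N_0) \times_{\Diff(T)} \prod_{i\neq 0}\Diff_{F_i}^{\Gamma_i}(N_i),
\]
where the restrictions to the common quotient $\Diff(T)$ of $2T$ are constrained to agree. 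Here $\Gamma_i \le \pi_0\Diff(\partial N_i)$ is determined by the image of $\Gamma$ on $\partial M \cap N_i$, and the identification is a weak equivalence by \cref{thm:contractible-collars} together with the local retractility of unparameterised embedding spaces (\cref{cor:umb-retractile}).

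Projecting onto the first factor $\Diff_{F_0}^{\Gamma_0}(N_0)$, the fiber over $\id_{N_0}$ becomes $\prod_{i\neq 0}\Diff_{F_i\cup T_i}^{\Gamma_i}(N_i)$ after using \cref{thm:contractible-collars} to upgrade setwise- to pointwise-fixing of $T_i$. The image $H_0 \le \Diff_{F_0}^{\Gamma_0}(N_0)$ consists of those $\phi_0$ whose restriction to $T_0$ extends to a diffeomorphism in $\Diff_{F_i}^{\Gamma_i}(N_i)$ for every $i \neq 0$. Since extendibility of a boundary diffeomorphism to the interior depends only on its mapping class (extend an isotopy along a collar), $H_0 = \Diff_{F_0}^{\Gamma_0''}(N_0)$ for some subgroup $\Gamma_0'' \le \pi_0\Diff(\partial N_0)$ constraining the $T_0$-part of the boundary mapping class. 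By \cref{lem:fiberseq/fiberseq}, applied to the principal fibration structure guaranteed by local retractility, we obtain a homotopy fiber sequence
\[
\prod_{i\neq 0}\BDiff_{F_i\cup T_i}^{\Gamma_i}(N_i) \longrightarrow BK \longrightarrow BH_0.
\]
Applying $(\dagger)$ to each $(N_i, F_i\cup T_i)$ -- the set $F_i\cup T_i$ is a non-empty (since $T_i \neq \emptyset$) union of components of $\partial N_i$ containing $F_i = N_i \cap F$ -- makes the fiber homotopy finite, and applying $(\dagger_0)$ to $(N_0, F_0)$ with $\Gamma_0''$ makes the base $BH_0$ homotopy finite. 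The fiber sequence lemma \cref{lem:fiber-sequence-finite} then concludes the proof.

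The main obstacle is the identification $H_0 = \Diff_{F_0}^{\Gamma_0''}(N_0)$: one must verify that the ``extendibility to each $N_i$'' condition on $\phi_0$ descends to a condition on the mapping class of $\phi_0|_{\partial N_0}$, so that it carves out a subgroup of $\pi_0\Diff(\partial N_0)$ rather than some finer topological subgroup of $\Diff(T)$. This is precisely what lets hereditary finiteness apply, and also explains why the hypotheses treat $N_0$ and the other $N_i$ asymmetrically: we need only hereditary finiteness for $(N_0, F_0)$ (without pointwise-fixing of $T_0$), but we need the stronger form of hereditary finiteness including $T_i$ for the remaining $N_i$ in order to produce homotopy-finite fibers.
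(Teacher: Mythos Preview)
Your approach has two genuine gaps that the paper's proof is specifically designed to avoid.

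First, the fiber you claim is wrong whenever a component of $T$ is not adjacent to $N_0$. If some torus $T_\ell\subset T$ connects $N_i$ to $N_j$ with $i,j\neq 0$, then over $\id_{N_0}$ the elements $\phi_i\in\Diff_{F_i}^{\Gamma_i}(N_i)$ and $\phi_j\in\Diff_{F_j}^{\Gamma_j}(N_j)$ are still constrained to agree on $T_\ell$; the fiber is therefore a fibered product, not the product $\prod_{i\neq 0}\Diff_{F_i\cup T_i}^{\Gamma_i}(N_i)$. This could be repaired by an induction on the number of components, but you have not set one up.

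Second, and more seriously, your identification $H_0 = \Diff_{F_0}^{\Gamma_0''}(N_0)$ fails when some component $T_\ell\subset T$ has \emph{both} sides in $N_0$. In that case the constraint on $\phi_0$ is that its restrictions to the two copies $T_\ell^\pm\subset\partial N_0$ coincide as diffeomorphisms of $T_\ell$. This is a pointwise condition, not a condition on $[\phi_0|_{\partial N_0}]\in\pi_0\Diff(\partial N_0)$, so the image $H_0$ is not of the form $\Diff_{F_0}^{\Gamma_0''}(N_0)$ and hereditary finiteness does not apply to it. Your closing paragraph only addresses extendibility to the \emph{other} $N_i$'s, which is indeed a mapping-class condition, but overlooks this self-matching case.

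The paper handles both issues together. It inducts on $|\pi_0(M\ca T)|$; in the inductive step it decomposes $T=T_0\sqcup T_1\sqcup T_2$ according to whether a component contributes $0$, $1$, or $2$ boundaries to $N_0$, keeps the self-loops $T_2$ inside the piece (projecting to $\Diff_{F_0}^{\Gamma'}(N,T_2)$ with $N$ the image of $N_0$ in $M$), and applies the induction hypothesis to $M\setminus\interior{N}$. In the base case $M\ca T=N_0$, instead of trying to identify the image of a projection, the paper projects to all of $\Diff_F^{\Gamma'}(M\ca T)$ and identifies the homotopy fiber with a component of the matching space $\calM_T=\Diff(T^+\sqcup T^-)/\Diff(T)\cong\coprod\prod_{A\in\pi_0 T}\Diff(A)$, which is homotopy finite because $\Diff_0$ of a closed orientable surface is. This matching-space step is exactly what absorbs the pointwise self-gluing constraint that breaks your argument.
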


Note that in the proposition, if we know $(\dagger)$ for every component $N \subset M \ca T$, then we also know $(\dagger_0)$. 
This is because $(\dagger_0)$ is a weaker condition on $N_0$ than $(\dagger)$ would be.

\begin{proof}
    The boundary $2T$ created when cutting $M$ along $T$ is diffeomorphic to two copies of $T$. In this proof it will be helpful to utilise this and hence we write $2T$ as $T^+ \sqcup T^-$.
    (This decomposition into $T^+$ and $T^-$ is non-canonical and depends on a coorientation of $T$ that we choose arbitrarily.)
    Hence $\partial (M \ca T) = \partial M \sqcup T^+ \sqcup T^-$.

    We proceed by induction over the number of connected components of $M \ca T$, starting with the case where $M \ca T$ is connected, so $N_0 = M \ca T$.
    There is a canonical injective group homomorphism
    \[
        \Diff(T) \hookrightarrow \Diff(T^+ \sqcup T^-), \qquad
        \varphi \mapsto \varphi \sqcup \varphi
    \]
    and we define the space of matchings $\calM_{T}$ as the coset space $\Diff(T^+ \sqcup T^-)/\Diff(T)$. We have  $$\calM_{T} \cong\coprod_{\mu_T} \prod_{A \in \pi_0(T)} \Diff(A)$$ where $\mu_T$ is the finite set of matchings of $\pi_0(T)$ (such that matched components are diffeomorphic) and the product runs over all connected components $A \subset T$. Given a matching in $\mu_T$, the product measures the difference between diffeomorphisms on each matched pair of manifolds: there is one such pair for each $A\in \pi_0(T)$. 
    In particular, the connected components of $\calM_{T}$ are homotopy finite because $\Diff_0(A)$ is either $\SO(3)$, $S^1 \times S^1$, or a point, depending on the genus of $A$.
    The group $\Diff_F(M \ca T)$ acts on $\calM_T$ via the group homomorphism
    \[  
        \Diff_F(M \ca T, 2T) \longrightarrow \Diff(T^+ \sqcup T^-)
    \]
    given by restricting to the boundary.
    Using \cref{thm:boundary-locally-retractile} one can show that $\calM_T$ is $\Diff_F(M \ca T, 2T)$ locally retractile.
    The stabiliser of this action is the subgroup of $\Diff_F(M \ca T, 2T)$ consisting of those diffeomorphism that can be glued to a homeomorphism of $M$.
    Because the space of collars is contractible (\cref{thm:contractible-collars}), this stabiliser is equivalent to the smaller subgroup of those diffeomorphisms that can be glued to a diffeomorphism of $M$ and this group is isomorphic to $\Diff_F(M, T)$.
    By \cref{lem:orbit-stabiliser-transitive} we obtain a homotopy fiber sequence
    \[
        X \longrightarrow \BDiff_F(M, T) \longrightarrow \BDiff_F(M \ca T, 2T)
    \]
    where $X \subset \calM_{T}$ is the orbit of the basepoint of $\calM_{T}$ under the action of $\BDiff_F(M \ca T, 2T)$.
    In fact, we would like to study the smaller group $\Diff_F^\Gamma(M\ca T, 2T)$ for some $\Gamma \le \pi_0\Diff(\partial M)$.
    Let $\Gamma' \le \pi_0\Diff(\partial (M \ca T))$ be the subgroup defined as the image of the composite map
    \[
        \Diff_F^\Gamma(M, T) \to \Diff_F(M \ca T) \to \pi_0 \Diff(\partial (M \ca T)).
    \]
    This in particular ensures that elements of $\Diff_F^{\Gamma'}(M \ca T)$ preserve $2T \subseteq \partial (M \ca T)$ setwise.
    Then $\Diff_F^\Gamma(M, T) \to \Diff_F^{\Gamma'}(M \ca T)$ hits all components.
    So if we let $X' \subset \calM_{T}$ be the orbit of the base point under the $\Diff_F^{\Gamma'}(M \ca T)$-action, then $X'$ is connected and as it is a connected component of $\calM_{T}$, it is homotopy finite.
    Now we apply \cref{lem:orbit-stabiliser-transitive} again to get the homotopy fiber sequence
    \[
        X' \longrightarrow \BDiff_F^{\Gamma}(M, T) \longrightarrow \BDiff_F^{\Gamma'}(M \ca T).
    \]
    We have already argued that $X'$ is homotopy finite, and the base is homotopy finite by assumption $(\dagger_0)$ (with $N_0=M\ca T$).
    Therefore the total space is homotopy finite, which proves the base of the induction.

    Now suppose that $M \ca T$ has $n$ connected components and that the proposition holds for manifolds $T'\subset \interior{M}'$ such that $M'\ca T'$ has $n'$ connected components and $n'<n$.
    From $(\dagger_0)$ there exists a component $N_0 \subset M \ca T$, which by assumption satisfies that $F_0 = \partial N \cap F$ is non-empty. 

    We can decompose $T = T_0 \sqcup T_1 \sqcup T_2$ where $T_i \subset T$ is the union of those connected components that contribute $i$ boundary components to $N_0$.
    Let $N$ be the image of $N_0$ under the map $M\ca T \to M$ that glues $2T$ back together.
    (So $T_2 \subset \interior{N}$, $T_1 \subset \partial N$, and $T_0 \subset M \setminus \interior{N}$.)
    Then there is a fiber sequence
    \[
        \Diff_{(F \setminus F_0) \sqcup N}(M, T_0) \longrightarrow
        \Diff_F(M, T) \xrightarrow{\;r\;}
        \Diff_{F_0}(N, T_2).
    \]
    Given $\Gamma \le \pi_0\Diff(\partial M)$ we can choose $\Gamma' \le \pi_0\Diff(\partial N)$ such that 
    $r(\Diff_F^\Gamma(M, T))$ is precisely $\smash{\Diff_{F_0}^{\Gamma'}(N, T_2)}$, so that we get a homotopy fiber sequence
    \[
        \BDiff_{(F \setminus F_0) \sqcup N}^{\Gamma}(M, T_0) \longrightarrow
        \BDiff_F^\Gamma(M, T) \xrightarrow{\;r\;}
        \BDiff_{F_0}^{\Gamma'}(N, T_2).
    \]
    The base is homotopy finite by the base of the induction, as $N \ca T_2$ is connected.
    The fiber is equivalent to 
    \[
        \BDiff_{(F \setminus F_0) \sqcup T_1}^{\Gamma''}(M \setminus \interior{N}, T_0)
    \]
    where $\Gamma'' \le \pi_0\Diff(\partial (M \setminus \interior{N}))$ is chosen by restricting $\Gamma$.
    The manifold $(M \setminus \interior{N})\ca T_0$ has strictly fewer connected components than $M\ca T$ by construction, so by the induction hypothesis this space is homotopy finite.
    (Note that because we introduce new boundary components coming from $T_1$ that must be fixed, we need the stronger statement $(\dagger)$ to ensure that this new manifold still satisfies $(\dagger_0)$.)
    Consequently, by \cref{lem:fiber-sequence-finite} the above homotopy fiber sequence shows that $\BDiff_F^\Gamma(M, T)$ is homotopy finite, concluding the proof.
\end{proof}

When cutting along spheres the same argument also applies without the assumption of hereditary finiteness.
\begin{cor}[Cutting along spheres]\label{cor:cut-along-spheres}
    Let $M$ be a compact connected $3$-manifold, $F \subset \partial M$ a non-empty union of boundary components and $\Sigma \subset M$ a sphere system. 
    Suppose that 
    \begin{enumerate}
        \item[$(\ddagger)$]
        for every component $N \subseteq M \ca \Sigma$ and every union of boundary components $F' \subseteq \partial N$ satisfying $F' \neq \emptyset$ and $F' \supseteq \partial N \cap F$ we know that  $ \BDiff_{F'}(N) $ is homotopy finite.
    \end{enumerate}
    Then $\BDiff_F(M, \Sigma)$ is homotopy finite.
\end{cor}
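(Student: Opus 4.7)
The plan is to mirror the proof of \cref{prop:cut-along-sub}, inducting on the number of components of $M\ca\Sigma$, but to replace each appeal to hereditary finiteness by a finite-covering argument available because $T=\Sigma$ is a sphere system. The features of spheres I will exploit are: (i) each component of the matching space $\calM_\Sigma = \Diff(2\Sigma)/\Diff(\Sigma)$ is homotopy equivalent to a product of copies of $\Diff(S^2)\simeq \Or(3)$ and is therefore homotopy finite; and (ii) $\pi_0\calM_\Sigma$ is finite, since $\pi_0\Diff(2\Sigma)\cong (\bbZ/2)\wr \Sym_{|\pi_0(2\Sigma)|}$ is finite.

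For the base case, when $M\ca\Sigma$ is connected, I would apply the matching-space construction of \cref{prop:cut-along-sub} verbatim to produce a homotopy fiber sequence
\[
X' \longrightarrow \BDiff_F(M,\Sigma) \longrightarrow \BDiff_F^{\Gamma'}(M\ca\Sigma),
\]
where $X'$ is the component of $\calM_\Sigma$ containing the basepoint (hence homotopy finite by (i)) and $\Gamma'\le \pi_0\Diff(\partial(M\ca\Sigma))$ is the image of the restriction $\pi_0\Diff_F(M,\Sigma)\to \pi_0\Diff(\partial(M\ca\Sigma))$. The orbit-stabiliser lemma applied to the $\pi_0\Diff_F(M\ca\Sigma)$-action on $\pi_0\calM_\Sigma$ identifies the cokernel $\pi_0\Diff_F(M\ca\Sigma)/\Gamma'$ with the orbit of the basepoint inside $\pi_0\calM_\Sigma$, which is a finite set by (ii). Therefore $\BDiff_F^{\Gamma'}(M\ca\Sigma)\to \BDiff_F(M\ca\Sigma)$ is a finite covering, and since $\BDiff_F(M\ca\Sigma)$ is homotopy finite by hypothesis $(\ddagger)$ (with $N = M\ca\Sigma$ and $F' = F$), \cref{lem:fiber-sequence-finite} finishes the base case.

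For the inductive step when $M\ca\Sigma$ has $n>1$ components, I would follow the component-decomposition of \cref{prop:cut-along-sub}: choose $N_0$ with $N_0\cap F\neq\emptyset$ (which exists because $M$ is connected and $F$ is non-empty), write $\Sigma = T_0\sqcup T_1\sqcup T_2$ according to how each sphere meets $N_0$, let $N$ be $N_0$ with the $T_2$-spheres reglued, and form the resulting fiber sequence whose base is $\BDiff_{F_0}^{\Gamma'}(N,T_2)$ and whose fiber involves $(M\setminus \interior{N},T_0)$. The base reduces to the base case of the present corollary applied to $(N,F_0,T_2)$, whose cut $N\ca T_2=N_0$ is connected and whose hypothesis is the restriction of $(\ddagger)$ to $N_0$, followed by the same finite-covering argument (now using $\pi_0\calM_{T_2}$ and the finite set of $T_1$-matchings) to descend to the $\Gamma'$-restricted classifying space. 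The fiber is handled by the inductive hypothesis applied to $(M\setminus \interior{N},(F\setminus F_0)\sqcup T_1,T_0)$, whose cut has $n-1$ components, together with the analogous finite-covering reduction using $\pi_0\calM_{T_0}$.

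The main obstacle is verifying the finite-covering claim at each $\Gamma$-restricted classifying space. In every case the cokernel that needs to be shown finite is the orbit of the basepoint of a product of matching spaces of sphere systems under the action of the relevant mapping class group, and these matching spaces have finite $\pi_0$ by (ii). This finiteness of matching-space components for sphere systems is the precise feature that lets us weaken the hereditary finiteness assumption of \cref{prop:cut-along-sub} to the more modest hypothesis~$(\ddagger)$.
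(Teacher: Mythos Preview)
Your proposal is correct and follows essentially the same approach as the paper: both run through the induction of \cref{prop:cut-along-sub} and replace each appeal to hereditary finiteness by the observation that the relevant $\Gamma'$-constraints involve only spherical boundary, hence are automatically of finite index (the paper phrases this as ``each subsequent $\Gamma'$ can be chosen as a finite index subgroup because it is obtained by imposing a restriction on the spherical part of the boundary''). One minor imprecision: in the base case the action on $\calM_\Sigma$ is by $\Diff_F(M\ca\Sigma,2\Sigma)$ rather than $\Diff_F(M\ca\Sigma)$, so your finite covering should first target $\BDiff_F(M\ca\Sigma,2\Sigma)$---but this in turn is a finite cover of $\BDiff_F(M\ca\Sigma)$ for the same reason, so the argument goes through unchanged.
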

\begin{proof}
    Inspecting the proof of \cref{prop:cut-along-sub} we see that if we start out with $\Gamma = \pi_0 \Diff(\partial M)$, then each subsequent $\Gamma'$ can be chosen as a finite index subgroup because it is obtained by imposing a restriction on the spherical part of the boundary and $\smash{\pi_0 \Diff(\coprod_k S^2) \cong  (\bbZ/2)\wr\Sym_k} $ is a finite group.
    Since hereditary finiteness for finite index subgroups is automatic, the proof indeed works without the assumption that $(N, F')$ is hereditarily finite.
    As remarked below \cref{prop:cut-along-sub} we do not need to require an analogue of $(\dagger_0)$, if we simply require the stronger condition $(\ddagger)$ for all components.
\end{proof}

As we are about to cut along 2-tori we will need a form of hereditary finiteness for them.
Note that the following is in contrast with \cref{ex:not-hereditarily-finite}.
\begin{lem}\label{lem:2torus-finite}
    Let $\Gamma < \GL_2(\bbZ)$ be a finite subgroup.
    Then the induced subgroup $\Diff_{D^2}^\Gamma(T^2) < \Diff_{D^2}(T^2)$ has a finite classifying space.
\end{lem}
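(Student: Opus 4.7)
The plan is to identify $\BDiff^{\Gamma}_{D^2}(T^2)$ with a quotient of a noncompact Lie group by a finite group, and then to deformation-retract that quotient onto a compact piece.

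The first step is to model $\BDiff^{\Gamma}_{D^2}(T^2)$ as a homotopy orbit space. Using the principal fibration from \cref{thm:emb-retractile} together with the standard identification $\emb(D^2, T^2) \simeq \Fr(T^2) \cong T^2 \times \GL_2(\bbR)$ (where a disk embedding is sent to its centre and differential at $0$), I would obtain
\[
\BDiff^{\Gamma}_{D^2}(T^2) \simeq (T^2 \times \GL_2(\bbR)) \hq \Diff^{\Gamma}(T^2),
\]
where $\Diff^{\Gamma}(T^2)$ denotes the preimage of $\Gamma$ under $\Diff(T^2) \to \pi_0\Diff(T^2) \cong \GL_2(\bbZ)$, and $\Diff(T^2)$ acts on the frame bundle by $(p,F) \mapsto (\phi(p),\, d\phi_p \circ F)$.

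The second step is to replace $\Diff^{\Gamma}(T^2)$ by its affine model. Writing $T^2 = \bbR^2/\bbZ^2$, the affine group $T^2 \rtimes \GL_2(\bbZ)$ maps into $\Diff(T^2)$ by $(v,A) \mapsto (x \mapsto Ax + v)$, and this map is a weak equivalence of topological groups by \cref{thm:Gen-Smale} (which gives $\Diff_0(T^2) \simeq T^2$ realised by translations) together with Waldhausen's theorem $\pi_0\Diff(T^2) \cong \GL_2(\bbZ)$. Restricting to $\Gamma$ yields an equivalence $T^2 \rtimes \Gamma \simeq \Diff^{\Gamma}(T^2)$. Under the identification $\Fr(T^2) \cong T^2 \times \GL_2(\bbR)$, an element $(v,A)$ acts by $(p,F) \mapsto (Ap+v,\, AF)$. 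In particular the normal subgroup $T^2 \triangleleft T^2 \rtimes \Gamma$ acts freely on the point coordinate and trivially on the frame coordinate, while the residual action of $\Gamma$ on $\GL_2(\bbR)$ is free left multiplication. Since both actions are free, the homotopy orbit collapses to the strict orbit and
\[
\BDiff^{\Gamma}_{D^2}(T^2) \simeq \GL_2(\bbR)/\Gamma.
\]

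The last step is to see that this quotient is homotopy finite. Because $\Gamma$ is finite, averaging produces a $\Gamma$-invariant inner product on $\bbR^2$, so $\Gamma$ is conjugate inside $\GL_2(\bbR)$ to a subgroup of $\Or(2)$; conjugation preserves the homotopy type of $\GL_2(\bbR)/\Gamma$, so assume $\Gamma \le \Or(2)$. Then the polar decomposition $\GL_2(\bbR) \cong \Or(2) \times \mathcal{Q}_2$, with $\mathcal{Q}_2$ the contractible cone of positive definite symmetric matrices, is equivariant for left multiplication by $\Gamma$: translation on the $\Or(2)$ factor and trivial on $\mathcal{Q}_2$. Therefore $\GL_2(\bbR)/\Gamma \simeq \Or(2)/\Gamma$, which is a compact $1$-manifold and hence homotopy finite. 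No step in the argument is delicate; the crucial observation making everything collapse is that the translation subgroup of $\Diff_0(T^2)$ acts trivially on the frame coordinate, killing the noncompactness coming from $T^2$.
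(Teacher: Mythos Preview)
Your proof is correct and takes essentially the same approach as the paper: the paper finds a $\Gamma$-invariant flat metric via a fixed point in Teichm\"uller space $\bbH^2$ (equivalent to your averaged inner product) and then observes that the compact Lie group $\bbT^2\rtimes\Gamma$ acts freely on the compact orthonormal frame bundle, whose quotient is exactly your $\Or(2)/\Gamma$. One minor point: \cref{thm:Gen-Smale} and Waldhausen's theorem as stated in this paper concern $3$-manifolds; for $T^2$ the facts $\Diff_0(T^2)\simeq T^2$ and $\pi_0\Diff(T^2)\cong\GL_2(\bbZ)$ are the classical surface results (e.g.~\cite{EarleElls69}).
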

\begin{proof}
    The Teichm\"uller space of marked, flat 2-tori can be identified with the upper half plane $\bbH^2$, equipped with its natural hyperbolic metric. $\GL_2(\Z)$ acts properly discontinuously on $\bbH^2$ by isometries, altering the marking by a change of basis. Since $\bbH^2$ is complete, simply connected, and nonpositively curved, any finite subgroup $\Gamma< \GL_2(\Z)$ has a global fixed point. This corresponds to a flat metric on $T^2$ which realises $\Gamma$ by isometries. Since the identity component of $\Diff(T^2)$ is homotopy equivalent to $\bbT^2$, the group of translations, $\Diff^\G(T^2)$ is homotopy equivalent to a subgroup $\bbT^2\rtimes \G$ that acts freely on $\Fr^\perp(T^2)$. Lemma \ref{lem:Riemannian-hereditarily-finite} now completes the proof.
\end{proof}

In the next proof we decompose the frame bundle $\Fr(M)$ as a pushout according to the JSJ decomposition. A version of this idea appeared in Nariman's preprint \cite{Nariman}, though we choose a different decomposition.

\begin{prop}\label{prop:cut-along-tori}
    Let $M$ be a compact connected $3$-manifold and $T \subset \interior{M}$ a union of tori.
    Assume that $T$ is canonical, i.e.~the inclusion $\Diff(M, T) \hookrightarrow \Diff(M)$ is an equivalence.
    Suppose that any connected component $N$ of $M \ca T$ satisfies:
    \begin{enumerate}
        \item 
        $(N \setminus \interior{D}^3, S^2)$ is hereditarily finite.
        \item
        For any non-empty union of boundary components $F \subset (2T \cap \partial N)$,
        $(N, F)$ is hereditarily finite. 
        \item The image of $\Diff(M, T) \to \pi_0(\Diff(T))$ is a finite group.
    \end{enumerate}
    Then $(M \setminus \interior{D}^3, S^2)$ is hereditarily finite.
\end{prop}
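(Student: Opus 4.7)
The plan is to apply Proposition~\ref{prop:cut-along-sub} to the once-punctured manifold $M' := M \setminus \interior{D}^3$ with submanifold $T$ and fixed boundary $F := S^2$. Since $T$ is canonical in $M$, it remains canonical in $M'$ once one isotopes $D^3$ into the interior of a single component $N_0 \subset M \ca T$. The components of $M' \ca T$ are then $N_0 \setminus \interior{D}^3$ together with the remaining components of $M \ca T$.

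Next, I verify the two hypotheses of Proposition~\ref{prop:cut-along-sub}. For $(\dagger_0)$ it suffices to observe that the component $N_0 \setminus \interior{D}^3$ contains $F = S^2$ and that hypothesis (1) of the present proposition supplies hereditary finiteness of $(N_0 \setminus \interior{D}^3, S^2)$. For $(\dagger)$, every other component $N$ of $M' \ca T$ has $N \cap F = \emptyset$, so one needs $(N, F')$ hereditarily finite for every nonempty $F' \subseteq \partial N$. Hypothesis (2) provides this when $F' \subseteq 2T \cap \partial N$; one then extends to general $F'$ by combining two observations: hereditary finiteness of $(N, F_1)$ implies hereditary finiteness of $(N, F_2)$ whenever $F_1 \subseteq F_2$ (since fixing extra boundary components pointwise amounts to restricting $\Gamma$), and in the recursive application of Proposition~\ref{prop:cut-along-sub} one can process components adjacent to the already-cut region, so connectedness of $M$ guarantees $F' \cap 2T \neq \emptyset$ at each step. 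Hypothesis (3) then enters in the base case of Proposition~\ref{prop:cut-along-sub}: finiteness of the image of $\pi_0\Diff(M, T) \to \pi_0\Diff(T)$ bounds the matching orbit $X' \subseteq \calM_T$ to finitely many components, each homotopy finite because $\Diff_0(T^2) \simeq (S^1)^2$.

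The main obstacle is the bookkeeping in $(\dagger)$: a priori, $F'$ could lie entirely in $\partial M \cap \partial N$ and miss $2T$, so hypothesis (2) does not apply directly. The reduction to the case $F' \cap 2T \neq \emptyset$ rests on a careful choice of recursion order and on the superset-closure of hereditary finiteness, and this organizational point, rather than any single geometric input, is the technical core of the argument.
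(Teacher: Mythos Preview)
Your approach has a genuine gap, and the paper takes a substantially different route.

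\textbf{The gap.} Your argument hinges on the claim that $T$ remains canonical in $M' = M \setminus \interior{D}^3$, so that Proposition~\ref{prop:cut-along-sub} upgrades ``$\BDiff_{S^2}^\Gamma(M',T)$ homotopy finite'' to ``$(M',S^2)$ hereditarily finite''. But canonicality of $T$ in $M$ means the $\Diff(M)$-orbit of $T$ in $\umb(T,\interior{M})$ is contractible; what you need is that the $\Diff_{D^3}^\Gamma(M)$-orbit of $T$ in $\umb(T,\interior{M}\setminus D^3)$ is contractible. These are different spaces (compare $\Fr(M)$ with $\Fr(M\setminus T)$), and the implication does not follow formally. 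You assert it without argument.

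\textbf{Why this matters: your use of hypothesis (3) is incorrect.} You claim (3) bounds the matching orbit $X' \subset \calM_T$ in the base case of Proposition~\ref{prop:cut-along-sub}. But in that proof $X'$ is always a \emph{single} connected component of $\calM_T$ (because $\Gamma'$ is chosen so that $\Diff_F^\Gamma(M,T) \to \Diff_F^{\Gamma'}(M\ca T)$ hits every component, and elements coming from $\Diff_F^\Gamma(M,T)$ stabilise $[\id]$), hence automatically homotopy finite since $\Diff_0(T^2)\simeq (S^1)^2$. So your route through Proposition~\ref{prop:cut-along-sub}, even with your re-engineered recursion order, never actually invokes (3). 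If the canonicality claim were valid, you would have proved the proposition from (1) and (2) alone --- a strong indication that something is missing.

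\textbf{What the paper does instead.} Rather than trying to make $T$ canonical in $M'$, the paper works with $\Fr(M)\hq\Diff^\Gamma(M,T) \simeq \Fr(M)\hq\Diff^\Gamma(M)$ directly (this equivalence \emph{does} follow from $T$ canonical in $M$) and decomposes $\Fr(M)$ as a homotopy pushout
\[
\Fr(M) \;=\; \Fr(M\ca T) \;\cup_{\Fr_3(2T)}\; \Fr_3(T).
\]
After taking homotopy orbits, the $\Fr(M\ca T)$ term is handled by Proposition~\ref{prop:cut-along-sub} exactly as you suggest (using (1) and (2)). The $\Fr_3(T)$ and $\Fr_3(2T)$ terms --- frames based \emph{on} the tori --- require a separate two-dimensional analysis culminating in a fiber sequence over $\BDiff_{D^2}^{\Gamma'}(T_0)$, and it is precisely here, via Lemma~\ref{lem:2torus-finite}, that hypothesis (3) guarantees $\Gamma'$ is finite and the base is homotopy finite. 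This ``frames on $T$'' contribution is what your approach misses.
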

\begin{proof}
    Let $\Fr_3(T) \subset \Fr(M)$ denote the subspace of those framed points based in $T$.
    An element of $\Fr_3(T)$ is a tuple of a point $p \in T$ and a frame for $T_p(T) \oplus \bbR$, so in particular $\Fr_3(T)$ is different from $\Fr(T)$ .

    Recall that $2T = \partial (M \ca T) \setminus \partial M$.
    This is the co-orientation double cover of $T$ and
    we can picture this as being obtained from $T$ by translating it infinitesimally in both normal directions.
    Since $T$ is two-sided (\emph{i.e.}~co-oriented in $M$) we know that $2T \cong T \sqcup T$, but this identification is not canonical as it requires choosing a coorientation for each component, which we do not do in this proof as the action of $\Diff(M,T)$ will not necessarily preserve the decomposition.

    We can re-glue $M$ as $(M\ca T) \cup_{2T} T$, and similarly we can write the frame bundle on $M$ as a pushout 
    \[
        \Fr(M) = \Fr(M \ca T) \cup_{\Fr_3(2T)} \Fr_3(T).
    \]
    This is in fact a homotopy pushout because submanifold inclusions are cofibrations.
    It is moreover equivariant for the group $\Diff^\Gamma(M, T)$, so by \cref{lem:homotopy-orbits-functor} we obtain the following homotopy pushout square.
    \[\begin{tikzcd}
        \Fr_3(2T)\hq \Diff^\Gamma(M, T) \ar[r] \ar[d] & 
        \Fr(M \ca T)\hq \Diff^\Gamma(M, T) \ar[d] \\
        \Fr_3(T)\hq \Diff^\Gamma(M, T) \ar[r] & 
        \Fr(M)\hq \Diff^\Gamma(M, T)
    \end{tikzcd}\]
    The bottom right term is $\Fr(M) \hq \Diff^\Gamma(M, T) \simeq \Fr(M) \hq \Diff^\Gamma(M) \simeq \Diff_{D^3}^\Gamma(M)$, so to prove the claim it will suffice to show that the three other terms in the square are homotopy finite.

    We begin with the top right term $\Fr(M \ca T) \hq \Diff^\Gamma(M, T)$.
    We can equivalently replace $\Fr(M \ca T)$ by $\emb(D^3, (M \ca T)^\circ)$.
    The space $\emb(D^3, (M\ca T)^\circ)$ is $\Diff_0((M \ca T)^\circ)$-locally retractile by combining \cref{thm:emb-retractile}, with \cref{lem:locally-retractile}(3) and (4). We conclude it is locally retractile for the action of the larger group $\Diff^\Gamma(M, T)$ by \cref{lem:locally-retractile}(4).
    We may then apply the homotopy orbit stabiliser lemma (\cref{lem:orbit-stabiliser-v2}) to compute
    \[
        \emb(D^3,(M \ca T)^\circ)\hq \Diff^\Gamma(M, T) 
        \simeq \coprod_{[i\colon D^3 \hookrightarrow (M \ca T)^\circ]} B\Diff_{i(D^3)}^\Gamma(M, T).
    \]
    Here the coproduct runs over the set of orbits of the action of $\Diff^\Gamma(M, T)$ on $\pi_0\emb(D^3, (M \ca T)^\circ)$, which is finite because $\pi_0\Fr(M \ca T)$ is finite.
    In each orbit we pick an embedded $i\colon D^3 \hookrightarrow (M \ca T)^\circ$ as a representative and the corresponding term in the coproduct is the classifying space of the stabiliser group  $\Diff_{i(D^3)}^\Gamma(M, T)$.
    This is equivalent to $\BDiff_{\partial D^3}^\Gamma(M \setminus \interior{D}^3, T)$, which is homotopy finite by applying \cref{prop:cut-along-sub} to $T \subset M \setminus \interior{D}^3$ with $F=S^2$.
    (Condition $(\dagger_0)$ is satisfied for the component where we removed the disk by $(1)$ and condition $(\dagger)$ is satisfied by $(2)$.)

    We still have to deal with the two left hand terms in the homotopy pushout square, 
    but the left vertical map in the square is a double-covering, so it suffices to show finiteness of $\Fr_3(T) \hq \Diff^\Gamma(M,T)$.
    Choose at each point $p \in T$ a normal vector up to sign $\pm \xi_p$ smoothly depending on $p$.
    This is a contractible space of choices, so the subgroup $\Diff_{\pm \xi}^\Gamma(M, T) \subset \Diff^\Gamma(M, T)$ of those diffeomorphism $\varphi$ satisfying $D\varphi(\xi_p) = \pm \xi_{\varphi(p)}$ is equivalent to the entire group.
    We then have a fiber sequence
    \[
        \Fr_{\pm\xi}(T) \longrightarrow \Fr_3(T) \longrightarrow (\bbR^3\setminus 0)/\pm
    \]
    where the fibration sends a framed point $(p \in T, \theta\colon \bbR^3 \cong T_pM)$ to $\theta^{-1}(\pm \xi_p)$.
    The fiber $\Fr_{\pm\xi}(T)$ is the subspace of those framed points on $T$ whose first vector is $\xi_p$ or $-\xi_p$.
    There is a homotopy equivalence $\Fr(2T) \simeq \Fr_{\pm\xi}(T)$ defined by extending the $2$-framing to a $3$-framing by adding $\pm \xi_p$ with the sign depending on the coorientation.
    Taking homotopy orbits via \cref{lem:fiberseq/fiberseq} applied to the short exact fiber sequence $\Diff_{\pm\xi}^\Gamma(M, T)\to \Diff_{\pm\xi}^\Gamma(M, T)\to *$, we get a homotopy fiber sequence
    \[
        \Fr(2T)\hq \Diff_{\pm\xi}^\Gamma(M, T) \longrightarrow
        \Fr_3(T)\hq \Diff_{\pm\xi}^\Gamma(M, T) \longrightarrow (\bbR^3\setminus 0)/\pm
    \]
    where the base is equivalent to the finite CW complex $\RP^2$.
    By the homotopy orbit stabiliser lemma (\cref{lem:orbit-stabiliser-v2}), the fiber has finitely many connected components (since $\pi_0(\Fr(2T))$ is finite) each of which is equivalent to
    \[
        \BDiff_{D^2}^{\Gamma, +}(M, T)
    \]
    where we fix some $2$-disk in $T$ as well as its co-orientation in $M$. (Together with the fixed 2-disk, the latter is equivalent to being orientation-preserving.) 
    Now there is a homotopy fiber sequence
    \[
        \BDiff_{T_0}^{\Gamma,+}(M, T) \longrightarrow 
        \BDiff_{D^2}^{\Gamma, +}(M, T)
        \longrightarrow \BDiff_{D^2}^{\Gamma'}(T_0)
    \]
    where $T_0 \subset T$ is the connected component containing the disk and $\Gamma' < \pi_0(\Diff(T_0))$ consists of those isotopy classes that can be extended to a diffeomorphism in $\Diff^{\Gamma,+}(M, T)$.
    The base is homotopy finite by \cref{lem:2torus-finite} because $\Gamma'$ is finite by assumption (3).
    The fiber is equivalent to $\BDiff_{2T_0}^{\Gamma}(M\ca T_0, T \setminus T_0)$
    where we write $2T_0 \subset \partial(M \ca T_0)$ for the two new tori created by cutting along $T_0$.
    This space is homotopy finite by \cref{prop:cut-along-sub} applied to $T \setminus T_0 \subset M \ca T_0$ with $F = T_0$. 
    ($M \ca T_0$ might not be connected, but because the components cannot be permuted, we can apply \cref{prop:cut-along-sub} to each component individually.)
    Applying \cref{lem:fiber-sequence-finite} completes the proof.
\end{proof}

\subsection{Seifert-fibered solid tori}\label{subsection: SF solid torus}
In order prove hereditary finiteness for $(M \setminus \interior{D}^3, S^2)$ when $M$ is Haken Seifert-fibered and not exceptional, in the presence of singular fibers we will require results about fibered solid tori. Of course, there are many Seifert fiberings of $S^1\times D^2$, and we consider all of them.

Any Seifert fibering of a solid torus is determined by a pair $(p,q)$ of coprime integers with $0<q<p$, from which it can be constructed as follows. 
(We additionally allow $(p,q) = (1,0)$ to get the non-singularly fibered torus.)
Equip $S^1$ and $D^2$ with their standard Euclidean metrics and let $\Z/p$ act on the product $S^1\times D^2$ diagonally by $(x,y)\mapsto (\zeta_p\cdot x,\zeta_p^q\cdot y)$, where $\zeta_p=e^{2\pi i/p}$ is a primitive $p^{th}$ root of unity.
This action is by isometries and permutes the fibers of the trivial fibering $S^1\times D^2\rightarrow D^2$.
Thus the quotient, which is still diffeomorphic to $S^1\times D^2$, inherits a Euclidean metric and a Seifert fibering $\varphi=\varphi_{(p,q)}\colon S^1\times D^2\rightarrow D^2/(\Z/p)$.
With respect such a fibering $\varphi$, we have the diffeomorphism groups 
$\Diff^v(S^1 \times D^2; [\varphi]) \le \Diff^f(S^1\times D^2;[\varphi]) \le \Diff(S^1 \times D^2)$.
To simplify notation, we will suppress writing $[\varphi]$ unless it is unclear which fibering we are discussing.

\begin{lem}\label{lem:Diffv-solid-torus}
    Let $\varphi\colon M = S^1 \times D^2 \to D = D^2/(\bbZ/p)$ be any Seifert-fibering of the solid torus.
    If $p\ge 3$ and $0<q<p$, then the inclusion
    \[
        \SO(2) \hookrightarrow \Diff^v(S^1 \times D^2; [\varphi])
    \]
    defined by applying the same rotation to each non-singular fiber is a homotopy equivalence.
    If $p=1$ or $(p,q) = (2,1)$, then we can also reflect fibers and have $\Or(2) \simeq \Diff^v(S^1 \times D^2)$.
\end{lem}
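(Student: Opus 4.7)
The plan is to analyse $\Diff^v(S^1 \times D^2; [\varphi])$ by lifting to the $p$-fold cyclic cover and then reducing to the centraliser of a rotation.

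First, I would parametrise $\Diff^v(M;[\varphi])$ explicitly. Let $\tilde M = S^1 \times D^2$ with the standard fibering $\tilde \varphi\colon \tilde M \to D^2$. The quotient map $\pi\colon \tilde M \to M$ is a $p$-fold regular cover with deck group $\bbZ/p$ acting diagonally by $(x,y)\mapsto (\zeta_p x, \zeta_p^q y)$. Given a vertical diffeomorphism $\psi$ of $M$, $\psi_\ast$ is the identity on $\pi_1(M)\cong \bbZ$, so $\psi$ lifts to a $\bbZ/p$-equivariant diffeomorphism $\tilde \psi$ of $\tilde M$; among the $p$ possible lifts there is a unique one satisfying $\tilde \varphi \circ \tilde \psi = \tilde \varphi$ (the others permute the $p$ fibres covering each regular fibre). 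Writing $\tilde\psi(x,y) = (\alpha(x,y), y)$, the equivariance condition becomes $\alpha(\zeta_p x, \zeta_p^q y) = \zeta_p\alpha(x,y)$. Viewing $y \mapsto \alpha(\cdot, y)$ as a map $D^2 \to \Diff(S^1)$, this identifies
\[
    \Diff^v(M;[\varphi]) \;\cong\; \Map_{\bbZ/p}\bigl(D^2,\, \Diff(S^1)\bigr),
\]
where $\bbZ/p$ acts on $D^2$ by rotation through $\zeta_p^q$ and on $\Diff(S^1)$ by conjugation with $R_{\zeta_p}$.

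Next, because $q$ is coprime to $p$, the $\bbZ/p$-action on $D^2$ is rotation through an angle $2\pi q/p$ and has $0$ as its unique fixed point. The straight-line homotopy $(y, t) \mapsto (1-t)y$ is $\bbZ/p$-equivariant and gives an equivariant deformation retraction $D^2 \simeq \{0\}$. Evaluation at $0$ therefore induces a homotopy equivalence
\[
    \Map_{\bbZ/p}(D^2, \Diff(S^1)) \;\xrightarrow{\ \simeq\ }\; \Diff(S^1)^{\bbZ/p} \;=\; C_p,
\]
where $C_p\leq \Diff(S^1)$ is the centraliser of $R_{\zeta_p}$. So it suffices to determine the homotopy type of $C_p$, together with the subgroup $\SO(2)$ (or $\Or(2)$) of rigid rotations of $S^1$.

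I would then identify $C_p$ up to homotopy. In the universal-cover model $\Diff^+(S^1) = \{\tilde f\colon \bbR \to \bbR : \tilde f' > 0,\ \tilde f(x{+}1) = \tilde f(x)+1\}$, the standard retraction $h_t(\tilde f)(x) = (1-t)\tilde f(x) + t(\tilde f(0)+x)$ to the subgroup of translations $\SO(2)$ is manifestly equivariant under conjugation by $R_{\zeta_p}$, since this amounts to the relation $\tilde f(x+1/p) = \tilde f(x) + 1/p$ being preserved along the homotopy. Restricting $h_t$ to the centraliser yields a deformation retraction $C_p^+ \simeq \SO(2)$. For $p \geq 3$ no orientation-reversing diffeomorphism commutes with $R_{\zeta_p}$, because a reflection conjugates $R_{\zeta_p}$ to $R_{\zeta_p}^{-1} \neq R_{\zeta_p}$, so $C_p = C_p^+ \simeq \SO(2)$. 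In the exceptional cases $p=1$ and $(p,q)=(2,1)$ every reflection of $S^1$ commutes with $R_{\zeta_p}$, so $\Or(2) \leq C_p$ and the same averaging argument shows $C_p \simeq \Or(2)$.

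Finally, I would trace the inclusion $\SO(2) \hookrightarrow \Diff^v(M;[\varphi])$ (rotation of each fibre by a common angle) through the above identifications: on $\tilde M$ it corresponds to the constant map $D^2 \to \Diff(S^1)$ at a rotation, which is automatically $\bbZ/p$-equivariant, and the value at $0$ lies in $\SO(2)\subset C_p^+$. Thus the composite $\SO(2) \to C_p$ is the inclusion, which is the homotopy equivalence produced above, and similarly for $\Or(2)$ in the exceptional cases. The main technical point to get right is the first paragraph — keeping track of which lift is selected and verifying that the resulting identification of $\Diff^v$ with an equivariant mapping space is a homeomorphism in the $C^\infty$-topology — after which the remaining steps are essentially formal.
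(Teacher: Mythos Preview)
Your proof is correct and is essentially the same argument as the paper's, just packaged differently. Where you lift to the cover $\tilde M = S^1\times D^2$ and identify $\Diff^v(M;[\varphi])$ with $\Map_{\bbZ/p}(D^2,\Diff(S^1))$, the paper instead embeds $\Diff(S^1)^{\bbZ/p}$ directly into $\Diff^v(M)$ via $f\mapsto([x,y]\mapsto[f(x),y])$; your straight-line retraction of $D^2$ to $\{0\}$ is exactly the paper's retraction $H(g,t)([x,y])=[g|_{S^1}([x,ty]),y]$ read upstairs, and both conclude with the same affine-homotopy argument showing $\Diff(S^1)^{\bbZ/p}\simeq\Or(2)^{\bbZ/p}$.
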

\begin{proof}
    We may assume that the fibering is of the standard form $\varphi_{(p,q)} \colon M = (S^1 \times D^2)/(\bbZ/p) \to D^2/(\bbZ/p)$ described above.
    We have subgroup inclusions $\bbZ/p \le \SO(2) \le \Diff(S^1)$ such that the generator of $\bbZ/p$ is sent to the diffeomorphism that rotates by $\zeta_p = e^{2\pi i/p}$.
    We let $\Diff(S^1)^{\bbZ/p}$ denote the fixed points of the conjugation action by $\zeta$, or in other words the centraliser of $\bbZ/p$.
    There is a group homomorphism
    \[
        \alpha\colon \Diff(S^1)^{\bbZ/p} \longrightarrow \Diff^v(M), \qquad
        f \longmapsto \left( [x, y] \mapsto [f(x), y]\right)
    \]
    which is well defined because $[\zeta_p x, \zeta_p^q y] \mapsto [f(\zeta_p x), \zeta_p^q y] = [\zeta_p f(x), \zeta_p^q y]$.
    A $\bbZ/p$-equivariant diffeomorphism $f \in \Diff(S^1)^{\bbZ/p}$ induces a diffeomorphism $\bar{f} \in \Diff(S^1/(\bbZ/p))$, 
    and $\alpha(f)$ is the diffeomorphism of $M$ that is $\bar{f}$ on the central fiber and $f$ on the non-singular fibers.
    In particular, we can define a one-sided inverse to $\alpha$ by restricting a vertical diffeomorphism $g \in \Diff^v(M)$ to any non-singular fiber, and therefore $\alpha$ is an embedding.
    The image of $\alpha$ consists of those $g$ that induce the same diffeomorphism on all non-singular fibers.
    We can construct a deformation retraction to this subgroup by setting
    \[
        H\colon \Diff^v(S^1 \times D^2) \times (0,1] \longrightarrow \Diff^v(S^1 \times D^2), \qquad
        g \longmapsto \left( [x,y] \mapsto [g|_{S^1}([x,ty]), y]\right)
    \]
    and then continuously extending this to $[0,1]$.
    Alternatively $H(g, t)([x,y])$ can be obtained by multiplying the second coordinate of $g([x,ty])$ by $\tfrac{1}{t}$.
    This $H$ defines a homotopy such that $H(-,1)$ is the identity and $H(-,0)$ is a retraction onto the subspace that is the image of $\Diff(S^1)^{\bbZ/p}$.
    
    We will construct a $\bbZ/p$-equivariant deformation retraction from $\Diff(S^1)$ to $\Or(2)$.
    Any orientation-preserving diffeomorphism $f \in \Diff^+(S^1)$ can be represented by a $\bbZ$-equivariant $\tilde{f} \in \Diff^+(\bbR)$ and two such lifts represent the same element of $\Diff^+(S^1)$ if they differ by a constant integer.
    We define the average offset of $\tilde{f}$ as $a = \int_0^1 \tilde{f}(x) - x\; \mathrm{dx}$ and then construct the affine isotopy
    \[
        \tilde{f}_t(x) := (1-t) \cdot \tilde{f}(x) + t \cdot (x + a).
    \]
    This is still $\bbZ$-equivariant and the equivalence class $[\tilde{f}_t] \in \Diff^+(S^1)$ is independent of the choice of $\tilde{f}$.
    Therefore $h\colon [0,1] \times \Diff^+(S^1) \to \Diff^+(S^1)$ given by $h(t,f)= [\tilde{f}_t]$ defines a deformation retraction of $\Diff^+(S^1)$ onto $\SO(2)$, which is equivariant for the $\bbZ/p$-action given by conjugation with $\zeta$.
    (Indeed, the deformation retraction is equivariant for the conjugation action by $\SO(2)$ because it comes from an $\bbR$-equivariant deformation retraction on $\Diff(\bbR)$.)
    The orientation-reversing case is treated similarly.

    In summary, $\Diff(S^1)$ deformation retracts $\bbZ/p$-equivariantly to $\Or(2)$ and passing to fixed points we see that $\Diff(S^1)^{\bbZ/p} \simeq \Diff^v(S^1 \times D^2)$ is equivalent to the space of $\bbZ/p$-fixed points on $\Or(2)$.
    For $p=1$ this is $\Or(2)$.
    For $p=2$ it is also $\Or(2)$ because $\zeta_2 = -1$ is central in $\Or(2)$.
    For $p\ge 3$ (and $0<q<p$) it is $\SO(2)$ because this is the centraliser of $\zeta$ in $\Or(2)$.
\end{proof}

The non-trivial vertical mapping class $\alpha \in \Diff^v(S^1 \times D^2; [\varphi_{(p,q)}])$ that exists for $(p,q) = (2,1)$ or $(1,0)$, can be described as $\alpha([x,y]) = [\overline{x}, y]$ using the complex conjugation on $S^1 \subset D^2 \subset \mathbb{C}$.
For general $(p,q)$ we also have the fiberwise diffeomorphism $\beta \in \Diff^f(S^1\times D^2; [\varphi_{(p,q)}])$ defined by $\beta([x,y]) = [\overline{x}, \overline{y}]$.

\begin{cor}\label{lem:solid-torus-facts}
    For $\varphi\colon M = S^1 \times D^2 \to D = D^2/(\bbZ/p)$ any Seifert-fibering of the solid torus, the following statements hold.
    \begin{enumerate}
        \item\label{it:Diffv-contractible} $\Diff_\partial^v(M)$ is contractible.
        \item\label{it:Difff-contractible} $\Diff_\partial^f(M)$ is contractible.
        \item\label{it:MCGf} $\pi_0\Diff^f(M) \cong \{ [\id], [\alpha], [\beta], [\alpha\beta]\}$ for $p=1,2$ and 
        $\pi_0\Diff^f(M) \cong \{ [\id], [\beta]\}$ for $p\ge 3$.
        \item\label{it:extend} 
        If $\varphi \in \Diff^f(\partial M)$ can be extended to a diffeomorphism on $M$, then it can be extended to a fiberwise diffeomorphism.
    \end{enumerate}
\end{cor}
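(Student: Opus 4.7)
For (1) and (2), I work with the explicit model of $\Diff^v(M)$ from the preceding lemma. That proof gives $\Diff^v(M) \simeq \Map(D^2, \Diff(S^1))^{\Z/p}$, and the boundary-fixing analogue is
\[
    \Diff^v_\partial(M) \simeq \Map_\partial(D^2, \Diff(S^1))^{\Z/p} \simeq \bigl(\Omega^2\Or(2)\bigr)^{\Z/p},
\]
which is contractible because $\Omega^2\Or(2) \simeq \Omega^2 S^1 \simeq *$ (as $\Or(2)$ has abelian fundamental group) and the $\Z/p$-action on the identity component restricts trivially on $\SO(2)$. For (2), projection of a fiber-preserving diffeomorphism to the induced diffeomorphism of the base orbifold gives a fiber sequence
\[
    \Diff^v_\partial(M) \longrightarrow \Diff^f_\partial(M) \longrightarrow \Diff^{\mathrm{orb}}_\partial(D).
\]
The fiber is contractible by (1), and the base equals $\Diff_\partial(D^2, \{0\})$ (or $\Diff_\partial(D^2)$ when $p=1$), which is contractible by Smale's theorem.

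For (3), the analogous sequence without the boundary condition is
\[
    \Diff^v(M) \longrightarrow \Diff^f(M) \longrightarrow \Diff^{\mathrm{orb}}(D) \simeq \Or(2),
\]
and we read off $\pi_0\Diff^f(M)$ from the long exact sequence. The base has $\pi_0 = \Z/2$ generated by a reflection, while $\pi_0\Diff^v(M)$ is as computed in the preceding lemma. The connecting homomorphism $\pi_1\Diff^{\mathrm{orb}}(D) \cong \Z \to \pi_0\Diff^v(M)$ vanishes because the generating rotation loop lifts to the closed loop $[x,y]\mapsto[x, e^{i\theta}y]$ in $\Diff^f(M)$. The resulting short exact sequence splits via $[\beta]\in \pi_0\Diff^f(M)$, since $\beta^2=\id$ and $\beta$ projects to the reflection of $D$, giving the stated description.

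For (4), given $\varphi\in\Diff^f(\partial M)$ extending to some $\psi\in\Diff(M)$, the pushforward fibering $\sigma_1 := \sigma\circ\psi^{-1}$ (where $\sigma=\varphi_{(p,q)}$) restricts to the same foliation on $\partial M$ as $\sigma$ since $\varphi$ is fiber-preserving. A relative uniqueness statement for Seifert fiberings of the solid torus with prescribed boundary foliation --- a consequence of the contractibility of the components of $\SF(M;[\sigma])$ combined with relative isotopy extension, in the spirit of \cite{HKMR12} --- provides an isotopy between $\sigma$ and $\sigma_1$ that is constant on $\partial M$. Lifting this isotopy via the fiber bundle $\Diff_\partial(M)\to(\text{space of fiberings rel }\partial)$ yields $\phi\in\Diff_\partial(M)$ with $\sigma\circ\phi^{-1} = \sigma_1$; then $\tilde\psi := \psi\circ\phi^{-1}$ is fiber-preserving and restricts to $\varphi$ on the boundary.

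The main obstacle is (4): formulating the relative uniqueness of Seifert fiberings of the solid torus with a prescribed boundary foliation, which is classical but needs care in how one interacts with the fiber bundle of \cite{HKMR12}. Part (1) is also somewhat delicate due to the $\Z/p$-equivariance, but the explicit deformation retraction from the preceding lemma reduces it to a direct computation.
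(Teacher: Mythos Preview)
Parts (2) and (3) follow the paper's argument, using the same fiber sequences from \cite{HKMR12}; your treatment of (3) is slightly more explicit about the connecting map and the splitting, which is harmless.

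For (1) you take a different route from the paper. The paper uses the fiber sequence $\Diff^v_\partial(M)\to\Diff^v(M)\to\Diff^v(\partial M)$ from \cite{HKMR12} and the preceding lemma's identification $\Diff^v(M)\simeq\SO(2)$ or $\Or(2)$, checking that the second map is an equivalence onto the components it hits. Your model $\Diff^v_\partial(M)\cong\Map_\partial(D^2,\Diff(S^1))^{\bbZ/p}$ is correct (via lifting to the free $\bbZ/p$-cover), though it is not literally what the preceding lemma states. The remaining steps work but are compressed: passing to $\bbZ/p$-fixed points across the equivalence $\Diff(S^1)\simeq\Or(2)$ requires that retraction to be conjugation-equivariant (this is exactly what the preceding lemma constructs), and the parenthetical about abelian $\pi_1$ is a red herring---what you actually use is that conjugation on $\SO(2)$ is trivial and $\pi_{\ge2}(S^1)=0$.

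Part (4) has a genuine gap. The ``relative uniqueness'' of Seifert fiberings of the solid torus with prescribed boundary foliation is not a statement in \cite{HKMR12}, and unpacking it shows it is equivalent to what you are trying to prove: connecting $\sigma$ to $\sigma_1=\sigma\circ\psi^{-1}$ by a path of fiberings rel $\partial$ amounts (via the lifting you describe, together with $\Diff_\partial(M)\simeq *$) to producing $\phi\in\Diff_\partial(M)$ with $\phi_*\sigma=\psi_*\sigma$, i.e.\ to showing $\psi$ differs from a fiber-preserving diffeomorphism by an element of $\Diff_\partial(M)$---which is (4). There is also a composition error: your $\tilde\psi=\psi\circ\phi^{-1}$ is fiber-preserving for $\sigma_1$, not for $\sigma$; the correct choice is $\phi^{-1}\circ\psi$. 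The paper sidesteps all of this with an elementary homological argument: the injection $\pi_0\Diff^f(\partial M)\hookrightarrow\GL_2(\bbZ)$ sends any $\varphi$ that extends over $M$ to a matrix preserving both the fiber class $(p,q)$ and the meridian $(0,1)$ up to sign, and one checks that the resulting two (for $p\ge3$) or four (for $p\le2$) matrices are exactly those realised by restricting $\id,\beta$ (and $\alpha,\alpha\beta$).
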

\begin{proof}
    By \cite[Corollary 3.8(ii)]{HKMR12} there is a locally trivial fiber sequence
    \[
        \Diff_\partial^v(M) \longrightarrow 
        \Diff^v(M) \longrightarrow 
        \Diff^v(\partial M).
    \]
    Since the boundary $\partial M \to \partial D$ of the Seifert fibering is always diffeomorphic to the trivial fibration $S^1 \times S^1 \to S^1$,
    a diffeomorphism in $\Diff^v(\partial M)$ is simply a smooth $S^1$-family in $\Diff(S^1)$.
    Therefore $\Diff^v(\partial M) \simeq \Map(S^1, \Or(2)) \simeq \bbZ \times \Or(2)$.
    Combining this with \cref{lem:Diffv-solid-torus} we see that the map $\Diff^v(M) \to \Diff^v(\partial M)$ is equivalent to either $\SO(2) \to \bbZ \times \Or(2)$ or $\Or(2) \to \bbZ \times \Or(2)$.
    In either case it is a homotopy equivalence onto the components it hits and hence its fiber $\Diff_\partial^v(M)$ is contractible, proving (\ref{it:Diffv-contractible}).

    Claim (\ref{it:Difff-contractible}) is now a consequence of combining (\ref{it:Diffv-contractible}) with the fiber sequence
    \[
        \Diff_\partial^v(M) \to 
        \Diff_\partial^f(M) \to 
        \Diff_\partial(D)
    \]
     from \cite[Theorem 3.9]{HKMR12}. The base and fiber are contractible, and hence so is the total space.

    To show (\ref{it:MCGf}), note that by \cref{lem:Diffv-solid-torus} $\pi_0\Diff^v(M)$ is trivial if $p\ge 3$ and $\{[\id], [\alpha]\}$ if $p \le 2$.
    Inspecting the long exact sequence of the fiber sequence (again from \cite[Theorem 3.9]{HKMR12})
    \[
        \Diff^v(M) \to 
        \Diff^f(M) \to 
        \Diff(D)
    \]
    we see that because $\pi_0\Diff(D) = \bbZ/2$ generated by the (complex) reflection, $\pi_0\Diff^f(M)$ is indeed generated by $\beta$ (and $\alpha$ if $p \le 2$), proving (\ref{it:MCGf}).

    For (\ref{it:extend}), we first note that 
    the map $\pi_0\Diff^f(\partial M) \to \GL_2(\bbZ)$,
    which records the action on first homology, is injective and its image consists of those matrices that preserve the homology class of the fiber up to sign.
    If $\varphi \in \Diff^f(\partial M)$ can be extended to a diffeomorphism of $M$,
    then $\varphi$ it also preserves the homology class of the meridian (which bounds in $M$) up to sign.
    A basis for $H_1(\partial M) \cong \bbZ^2$ is given by the fiber $(p,q)$ and the meridian $(0,1)$.
    Then we have that $\varphi_*(p,q) = \pm (p,q)$ and $\varphi_*(0,1) = \pm(0,1)$.
    If $(p,q) = (1,0)$ or $(2,1)$, then there are four elements of $\GL_2(\bbZ)$ satisfying these equations and if $p \ge 3$, then only $\mathrm{id}$ and $-\mathrm{id}$ satisfy them.
    Therefore $\varphi$ must be isotopic (in $\Diff^f(\partial M)$) to one of $\{\id, \beta_{\partial M}, \alpha_{\partial M}, (\alpha\beta)_{\partial M}\}$, where the latter two are only possible if $p\le 2$.
    In any of these cases, $\varphi$ can be extended to a fiberwise diffeomorphism of $M$.
\end{proof}

In the next subsection, we will prove that when $M$ is fiber-rigid and nonsingular, then $(M \setminus \interior{D}^3, S^2)$ is hereditarily finite. 
When $M$ is singular, the strategy for proving the analogous statement is to cut $M$ along tori that form the boundary of a fibered solid torus neighborhood of the singular fibers and apply \cref{prop:cut-along-tori}.  The complement of these solid tori is fiber-rigid and nonsingular, so in order to verify the hypotheses of \cref{prop:cut-along-tori}, we need the following lemma.

\begin{lem}\label{lem:solid-torus}
    The pair $((S^1 \times D^2 )\setminus \interior{D}^3, S^2)$ is hereditarily finite. 
\end{lem}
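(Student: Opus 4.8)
The plan is to deduce hereditary finiteness of $((S^1 \times D^2)\setminus \interior{D}^3, S^2)$ by using the Seifert fibering of the solid torus and the results of the previous subsection. Write $M = S^1 \times D^2$ and $M' = M \setminus \interior{D}^3$. By \cref{remark:suffices to show HF for special cases}, since $\partial M' = \partial M \sqcup S^2$ and $\pi_0\Diff(\partial M) \cong \pi_0\Diff(T^2) \cong \GL_2(\bbZ)$, it suffices to check that $\BDiff_{S^2}^\Gamma(M')$ is homotopy finite for all $\Gamma \le \GL_2(\bbZ)$ that lie in the image of $\pi_0\Diff_{S^2}(M') \to \pi_0\Diff(\partial M)$. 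First I would identify this image: a mapping class of $M$ that can be realised fixing an interior disk is in particular a mapping class of $M$, and $\pi_0\Diff(S^1 \times D^2)$ is finite (indeed $(\Z/2)^2$, generated by $[\alpha]$ and $[\beta]$ from \cref{lem:solid-torus-facts}). Hence $\Gamma$ is forced to be finite, and in fact we only need the case $\Gamma = \pi_0\Diff(\partial M')$ itself (or rather the relevant finite piece), the finite-index reductions being automatic.

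Next I would use the standard fiber sequence $\Diff_{D^3}(M) \to \Diff(M) \to \emb(D^3, \interior{M})$, together with $\emb(D^3, \interior{M}) \simeq \Fr(M) \simeq M \times \GL_3(\bbR)$, to reduce to showing that $\Fr(M) \hq \Diff^\Gamma(M)$ is homotopy finite for the relevant $\Gamma$. Now I bring in the fibering: fix the nonsingular Seifert fibering $\varphi\colon S^1 \times D^2 \to D^2$. By \cref{rem:most-non-singular-are-rigid} (the solid torus being the exceptional case, but with $(p,q)=(1,0)$ we must instead argue directly) — actually the cleaner route is to use \cref{lem:solid-torus-facts}: we have $\Diff_\partial^f(M) \simeq *$ and $\Diff^v(M) \simeq \Or(2)$, and more to the point the full group $\Diff(M)$ is understood. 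I would instead compute $\Fr(M) \hq \Diff^\Gamma(M)$ by noting that $\Diff^\Gamma(M)$ contains a subgroup equivalent to a compact Lie group: $M = S^1 \times D^2$ carries a flat product metric, $\Isom(M) = \Or(2) \times \Or(2)$ acting by rotation/reflection on the $S^1$ and by the $\Or(2)$-action on $D^2$, and the inclusion $\Isom(M) \hookrightarrow \Diff(M)$ realises all of $\pi_0\Diff(M) = (\Z/2)^2$. Combined with $\Diff_0(M) \simeq \Isom_0(M) \simeq S^1$ (which follows from Hatcher's work, or from \cref{lem:Diffv-solid-torus} together with the contractibility of $\Diff_\partial^f(M)$ and a collar argument), we get $\Isom(M) \hookrightarrow \Diff(M)$ is a homotopy equivalence. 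Restricting, $\Isom^\Gamma(M) \hookrightarrow \Diff^\Gamma(M)$ is an equivalence, and $\Isom^\Gamma(M)$ is a compact Lie group acting on $\Fr^\perp(M)$.

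The remaining point, and the main obstacle, is that $\Isom^\Gamma(M)$ does \emph{not} act freely on $\Fr^\perp(M) \cong M \times \Or(3)$, because $M = S^1 \times D^2$ has boundary and the rotation subgroup $S^1$ fixes the core circle $S^1 \times \{0\}$ setwise though not pointwise — actually it acts freely on frames there too, since a nontrivial rotation moves every nonzero tangent vector in the $D^2$-direction; the genuine issue is rather that $\Fr^\perp(M)$ is a manifold with boundary, so the quotient is a manifold with boundary, which is still homotopy finite. So in fact the action \emph{is} free (a rotation fixing a frame would have to fix a vector with a $D^2$-component, impossible), and $\Fr^\perp(M)/\Isom^\Gamma(M)$ is a compact manifold with boundary, hence homotopy finite, exactly as in \cref{lem:Riemannian-Finite-Quotient} and \cref{lem:Riemannian-hereditarily-finite}. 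I would therefore structure the proof as: (i) reduce to finitely many finite $\Gamma$ via \cref{remark:suffices to show HF for special cases}; (ii) exhibit the flat metric and show $\Isom(M) \simeq \Diff(M)$ using $\Diff_0(M) \simeq S^1$ and surjectivity on $\pi_0$; (iii) verify freeness of the $\Isom^\Gamma(M)$-action on $\Fr^\perp(M)$; (iv) conclude $\Fr^\perp(M) \hq \Isom^\Gamma(M) \simeq \Fr^\perp(M)/\Isom^\Gamma(M)$ is a compact manifold with boundary and pick out the relevant component as a model for $\BDiff_{D^3}^\Gamma(M)$, which by the opening fiber sequence gives $\BDiff_{S^2}^\Gamma(M')$ homotopy finite. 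The most delicate step is establishing $\Diff_0(S^1 \times D^2) \simeq S^1$ cleanly; I would cite Hatcher's computation (or derive it from \cref{lem:solid-torus-facts}\eqref{it:Difff-contractible} via the fibration $\Diff_\partial^f(M) \to \Diff^f(M) \to \Diff^f(\partial M)$ and \cref{rem:most-non-singular-are-rigid}) rather than reprove it.
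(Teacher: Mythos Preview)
Your argument has a genuine gap at the very first step: the mapping class group $\pi_0\Diff(S^1\times D^2)$ is \emph{not} finite. The Dehn twist along a meridian disk $\{pt\}\times D^2$ acts on $H_1(\partial M)\cong\bbZ^2$ as $\bigl(\begin{smallmatrix}1&1\\0&1\end{smallmatrix}\bigr)$, so it has infinite order. In fact the image of $\pi_0\Diff(M)\to\pi_0\Diff(\partial M)\cong\GL_2(\bbZ)$ is the full stabiliser $\GL_2^{\langle e_1\rangle}(\bbZ)=\bigl\{\bigl(\begin{smallmatrix}\pm1&*\\0&\pm1\end{smallmatrix}\bigr)\bigr\}\cong(\bbZ/2)^2\ltimes\bbZ$, and the restriction map is an isomorphism on $\pi_0$. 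You appear to have conflated $\pi_0\Diff(M)$ with $\pi_0\Diff^f(M;[\varphi])$ from \cref{lem:solid-torus-facts}\eqref{it:MCGf}; the latter \emph{is} finite, but the solid torus is flexible (it is explicitly on the exceptional list), so fixing a fibering loses infinitely many mapping classes. A related error: $\Diff_0(S^1\times D^2)\simeq\SO(2)\times\SO(2)$, not $S^1$, since one can rotate both factors independently.

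Consequently your steps (ii)--(iv) cannot succeed: no Riemannian metric on $M$ has $\Isom(M)\simeq\Diff(M)$, because $\Isom(M)$ is a compact Lie group with finite $\pi_0$. The paper instead identifies $\Diff(M)\simeq\GL_2^{\langle e_1\rangle}(\bbZ)\ltimes\SO(2)^2$ as a group of affine diffeomorphisms, and for arbitrary $\Gamma\le\GL_2^{\langle e_1\rangle}(\bbZ)$ analyses $\Fr(M)\hq(\Gamma\ltimes\SO(2)^2)$ via an invariant fibration over $\RP^2$. After quotienting the fiber by the free $\SO(2)^2$-action, what remains is a finite set with a $\Gamma$-action factoring through $(\bbZ/2)^2$; the key point is that the kernel of $\Gamma\to(\bbZ/2)^2$ is cyclic (possibly infinite), so its classifying space is homotopy finite. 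This is precisely where the infinite Dehn-twist subgroup is handled, and it is the step your argument is missing.
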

\begin{proof}
The restriction mapping $\Diff(S^1\times D^2)\rightarrow \Diff(T^2)$ is a locally trivial fibration over the components of $\Diff(T^2)$ in its image with fiber $\Diff_\partial(S^1\times D^2)$. The latter is contractible by \cite{Hatcher76, Ivanov76}, hence $\Diff_0(S^1\times D^2)\simeq\Diff_0(T^2)\simeq \SO(2)\times \SO(2)$. Since the meridian is the unique simple closed curve on $T^2$ which bounds in $S^1\times D^2$, and $\pi_0\Diff(T^2)\cong \GL_2(\Z)$, the image of $\pi_0\Diff(S^1\times D^2)$ lies the in the subgroup $\GL_2^{\langle e_1\rangle}(\Z)$ which preserves the subgroup $\langle e_1\rangle$:
\[\GL_2^{\langle e_1\rangle}(\Z)=\left\{
        \begin{pmatrix}
            \pm 1 & * \\
            0 & \pm 1
        \end{pmatrix}
        \right\}.\] 
Conversely, the group $\Aff(T^2)\cong\GL_2(\Z)\ltimes (\SO(2)\times \SO(2))$ of affine diffeomorphisms acts on $T^2$, and the subgroup $\GL_2^{\langle e_1\rangle}(\Z)\ltimes(\SO(2)\times \SO(2)) $ extends to $S^1\times D^2$ as a group of affine diffeomorphisms, when equipped with its standard Euclidean structure. Thus, the inclusion $\GL_2^{\langle e_1\rangle}(\Z)\ltimes\SO(2)^2 \hookrightarrow \Diff(S^1\times D^2)$ is a homotopy equivalence.

    We need to show that $\Fr(S^1 \times D^2) \hq \Diff^\Gamma(S^1 \times D^2)$ is homotopy finite for all $\Gamma < \GL_2^{\langle e_1\rangle}(\bbZ)$. By the above diescussion, $\Diff^\Gamma(S^1 \times D^2)\simeq \Gamma \ltimes \SO(2)^2$.
    The frame bundle satisfies
    \( \Fr(S^1 \times D^2) \simeq S^1\times D^2 \times \Or(3) \simeq S^1 \times \Or(3) \).
    The map $p\colon \Fr(S^1 \times D^2) \to \RP^2$ that records the line spanned by the first frame vector at the point $S^1 \times \{0\}$ is invariant under the action of $\GL_2^{\langle e_1\rangle}(\Z) \ltimes \SO(2)^2$.
    The fiber of $p$ is equivalent to $S^1 \times \Or(2) \times \bbZ/2$, so we have a homotopy fiber sequence
    \[
        (S^1 \times \Or(2) \times \bbZ/2) \hq (\Gamma \ltimes \SO(2)^2)
        \longrightarrow \Fr(S^1 \times D^2) \hq \Diff^\Gamma(S^1 \times D^2)
        \longrightarrow \RP^2.
    \]
    The base is finite so we concentrate on the fiber.
    To compute the homotopy orbits we first take the quotient by the identity component $\SO(2)^2$, which acts freely on the first two factors, and get 
    \[
        (S^1 \times \Or(2) \times \bbZ/2) \hq \SO(2)^2 \simeq(S^1 \times \Or(2) \times \bbZ/2) / \SO(2)^2
        \simeq \bbZ/2 \times \bbZ/2.
    \]
    The remaining action of the mapping class group $\GL_2^{\langle e_1\rangle}(\Z)$ is given by the projection $\theta\colon \GL_2^{\langle e_1\rangle}(\Z) = (\bbZ/2)^2 \ltimes \bbZ \to (\bbZ/2)^2$.
    In particular, the subgroup $\bbZ$ acts trivially, and $(\bbZ/2)^2$ acts in the canonical way.
    For any $\Gamma < \GL_2^{\langle e_1\rangle}(\Z)$ we have a fiber sequence
    \[
        B(\Gamma \cap \ker(\theta)) \longrightarrow
        (\bbZ/2)^2 \hq \Gamma \longrightarrow
        (\bbZ/2)^2/\theta(\Gamma).
    \]
    The base is a finite set.
    The kernel $\ker(\theta)$ is $\bbZ$, so $\Gamma \cap \ker(\theta)$ is either infinite cyclic or $0$ -- either way its classifying space is homotopy finite. Applying \cref{lem:fiber-sequence-finite} completes the proof.
\end{proof}

\subsection{The non-singular fiber-rigid case}\label{subsec:nonsingSF}
Suppose $M$ has a non-singular Seifert fibering $\varphi\colon M\rightarrow S$ such that $\Diff^f(M;[\varphi])\hookrightarrow \Diff(M)$ is a homotopy equivalence, \emph{i.e.}~$M$ is fiber rigid (\cref{def:fiberrigid-flexible}). To simplify notation, we write $\Diff^f(M)=\Diff^f(M;[\varphi])$.
By \cite[Corollary 3.1]{HKMR12} we have a principal fibration
\begin{equation}\label{eqn:fix fiber}
\Diff^v(M)\longrightarrow\Diff^f(M)\longrightarrow \Diff(S).
\end{equation}
This is not necessarily a principal short exact sequence (\cref{defn: principal ses}) because  the map $p\colon \Diff^f(M) \to \Diff(S)$ might not be surjective. 
However, its image always has finite index. 
\begin{lem}\label{lem:SF-finite-index}
    The image of $\Diff^f(M) \to \Diff(S)$ has finite index.
\end{lem}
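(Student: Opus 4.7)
The plan is to reduce the lemma to a statement about mapping class groups and then invoke the classification of circle bundles over surfaces. First I would observe that the principal fibration \eqref{eqn:fix fiber}, restricted to identity components, gives a surjection $\Diff_0^f(M) \twoheadrightarrow \Diff_0(S)$ (this is \cite[Theorem 3.1(2)]{HKMR12} applied to $\Diff_0^v(M)$). Hence $\Diff_0(S)$ lies in the image of $p$, so the image is a union of path components of $\Diff(S)$. Consequently the image has finite index in $\Diff(S)$ if and only if the induced map $\pi_0 p\colon \pi_0 \Diff^f(M) \to \pi_0 \Diff(S)$ has finite-index image in the mapping class group $\pi_0 \Diff(S)$.

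Next I would identify this image explicitly. A mapping class $[\psi]\in\pi_0\Diff(S)$ lies in the image of $\pi_0 p$ exactly when $\psi$ lifts to a fiber-preserving diffeomorphism of $M$, or equivalently when the pullback bundle $\psi^*\varphi$ is isomorphic to $\varphi$ as a circle bundle with structure group $\Or(2)$ (the $\Or(2)$-structure rather than $\SO(2)$ accounts for fiber-reversing diffeomorphisms). Such bundles over a surface $S$ are classified by the first Stiefel--Whitney class $w_1 \in H^1(S;\bbZ/2)$ together with a twisted Euler class $e \in H^2(S;\bbZ^{w_1})$ (a relative version when $\partial S \neq \emptyset$). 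Hence the image of $\pi_0 p$ equals the $\pi_0\Diff(S)$-stabiliser of the pair $(w_1(\varphi), e(\varphi))$.

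To finish, I would show that this stabiliser has finite index by observing that $\pi_0\Diff(S)$ acts on each of these cohomology groups through a finite quotient. The group $H^1(S;\bbZ/2)$ is a finite set, so its automorphism group is finite; the twisted Euler class group (or its relative version when $\partial S \neq \emptyset$) has rank at most $1$, so its automorphism group is at most $\bbZ/2$. In either case the $\pi_0\Diff(S)$-action factors through a finite group, and the stabiliser of a single element is finite-index. For $S$ with non-empty boundary, one further uses that $\psi$ must permute boundary components compatibly with the preferred fiber direction on each boundary torus of $M$; since the permutations of boundary components form a finite group, this is also a finite-index condition. The main (minor) obstacle is handling the cases $S$ non-orientable or with boundary carefully enough to make the bundle classification and the lifting criterion precise, but the essential point is that all relevant characteristic classes live in cohomology groups of rank at most one and are acted upon through a finite quotient of $\pi_0\Diff(S)$.
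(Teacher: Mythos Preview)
Your reduction to $\pi_0$ and the identification of the image of $\pi_0 p$ with the stabiliser of the bundle isomorphism class of $\varphi$ are correct, and the finiteness of that stabiliser follows as you indicate: $H^1(S;\bbZ/2)$ is finite, and once $w_1$ is fixed the (twisted) Euler class lies in a cyclic group, so the mapping class group can act on it only through $\{\pm1\}$. In fact, since $M$ is orientable one has $w_1(\varphi)=w_1(S)$, which is automatically preserved by every diffeomorphism of $S$; and with the $\Or(2)$ structure group the Euler class is only well-defined up to sign, so the stabiliser condition is $\psi^*e=\pm e$, which is vacuous. Your aside about a ``relative version'' and about permuting boundary tori compatibly with fiber directions is therefore unnecessary: when $\partial S\neq\emptyset$ one has $H^2(S;\bbZ^{w_1})=0$ and only $w_1$ remains, so nothing further is needed.

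This is a genuinely different argument from the paper's. The paper does not analyse characteristic classes at all; instead it quotes \cite[Lemma~2.2]{HatcherMcCullough}, which proves directly that $\pi_0\Diff^f_\partial(M)\to\pi_0\Diff_\partial(S)$ is surjective, and then observes that the image of $\pi_0\Diff_\partial(S)\to\pi_0\Diff(S)$ has finite index. The closed case is reduced to the bounded case by fixing a disk in $S$ and its preimage in $M$. The paper's route is shorter and avoids case analysis by outsourcing the work to an existing reference; your route is self-contained and makes the obstruction to lifting visible, at the cost of having to be careful about orientability of $S$ and of the bundle.
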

\begin{proof}
    Since $\Diff^f(M) \to \Diff(S)$ is a fibration its image is a union of path components, so we need to check that the image of $\pi_0\Diff^f(M) \to \pi_0\Diff(S)$ has finite index.
    When $\partial M \neq \emptyset$ this follows from the proof of \cite[Lemma 2.2]{HatcherMcCullough} as Hatcher--McCullough prove that $\pi_0\Diff^f_\partial(M) \to \pi_0\Diff_\partial(S)$ is surjective, and we know that the image of $\pi_0 \Diff_\partial(S) \to \pi_0\Diff(S)$ has finite index.
    When $\partial M$ is empty we can introduce fixed boundary by fixing a disk in the base and its preimage in $M$.
    Then Hatcher--McCullough's lemma tells us that the image of 
    $\pi_0\Diff^f_{\varphi^{-1}(D^2)}(M) \to \pi_0\Diff_{D^2}(S)$ 
    has finite index and since every orientation preserving mapping class on $S$ can be isotoped to fix a disk, this proves the claim. 
\end{proof}

\begin{lem}\label{lem:Diff_S1^v-v2}
    Suppose that $\varphi\colon M\rightarrow S$ is a nonsingular Seifert-fibering over a connected surface $S$, and let $F \subset M$ be a non-empty union of boundary components.
    Then $\Diff_{F}^v(M) \simeq \Z^{n}$ for some $n$.
\end{lem}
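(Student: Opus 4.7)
The plan is to identify $\Diff^v_F(M)$ with a space of sections of a $K(\bbZ,1)$-bundle over $S$ subject to a boundary condition, compute its homotopy groups by obstruction theory, and then verify that the resulting $\pi_0$ is free abelian of finite rank. Since $\varphi\colon M\to S$ is nonsingular, $M$ is a smooth $S^1$-bundle. The fiberwise affine-isotopy argument employed in the proof of \cref{lem:Diffv-solid-torus} to retract $\Diff(S^1)$ onto $\Or(2)$ globalises over $S$ to produce a deformation retraction of $\Diff^v(M)$ onto the gauge group $\mathcal{G}(M)=\Gamma(\mathrm{Ad}(P))$, where $P\to S$ is the principal $\Or(2)$-bundle underlying $M$. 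Restricting to diffeomorphisms that fix $F$ pointwise yields $\Diff^v_F(M)\simeq \Gamma_{\varphi(F)}(\mathrm{Ad}(P))$, the subspace of sections that equal the identity on $\varphi(F)$. Since $S$ is connected and $\varphi(F)$ is non-empty, continuity forces any such section to land in the identity-component subbundle $\mathrm{Ad}^\circ(P)\to S$, which is an $\SO(2)\simeq K(\bbZ,1)$-bundle whose monodromy on $\pi_1\SO(2)=\bbZ$ acts by inversion along loops that reverse fiber orientation; let $\mathcal{L}$ denote the resulting local system on $S$.

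Next I would invoke standard obstruction theory for section spaces of $K(A,n)$-bundles, which yields
\[
    \pi_k\,\Gamma_{\varphi(F)}(\mathrm{Ad}^\circ(P))\cong H^{1-k}(S,\varphi(F);\mathcal{L}).
\]
These groups vanish for $k\ge 2$ by dimension, and for $k=1$ vanish because $H^0(S,\varphi(F);\mathcal{L})=0$ whenever $\varphi(F)$ is non-empty and $S$ is connected. Hence $\Diff^v_F(M)$ is weakly equivalent to the discrete abelian group $H^1(S,\varphi(F);\mathcal{L})$.

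Finally, it remains to check that this $H^1$ is free abelian of finite rank. Since $M$ is oriented and $\mathcal{O}_M\cong\varphi^*\mathcal{O}_S\otimes\mathcal{L}$, the local system $\mathcal{L}$ coincides with the orientation local system $\mathcal{O}_S$ of the base. Poincar\'e--Lefschetz duality with twisted coefficients then yields
\[
    H^1(S,\varphi(F);\mathcal{O}_S)\cong H_1(S,\partial S\setminus\varphi(F);\bbZ),
\]
the first integral homology of a compact surface pair. Because any compact surface with non-empty boundary deformation retracts onto a $1$-complex, the long exact sequence of the pair expresses $H_1(S,\partial S\setminus\varphi(F);\bbZ)$ as an extension of free abelian groups, hence free abelian of finite rank, proving $\Diff^v_F(M)\simeq\bbZ^n$. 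The main obstacle I anticipate is the first step: carrying out the fiberwise deformation retraction from $\Diff(S^1)$ to $\Or(2)$ coherently over $S$, especially when the structure group is genuinely $\Or(2)$ rather than $\SO(2)$ (e.g.~when the base is non-orientable). A secondary subtlety is the Poincar\'e--Lefschetz identification in the non-orientable case, which requires careful bookkeeping of the twist by $\mathcal{O}_S$.
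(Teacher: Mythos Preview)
Your approach is correct and takes a genuinely different route from the paper's proof. The paper argues by induction on the complexity of the base surface $S$: the base case is $S = D^2$, where $\Diff^v_\partial(S^1 \times D^2) \simeq *$ by \cref{lem:solid-torus-facts}(\ref{it:Diffv-contractible}); for the inductive step one cuts $S$ along an arc $\alpha$ with endpoints on $\varphi(F)$, lets $A = \varphi^{-1}(\alpha)$ be the preimage annulus, and uses the fiber sequence (from \cite[Corollary~3.8(ii)]{HKMR12})
\[
    \Diff^v_{F\cup A}(M) \longrightarrow \Diff^v_F(M) \longrightarrow \Diff^v_{F\cap A}(A)\;\simeq\;\Omega\Diff(S^1)\;\simeq\;\bbZ,
\]
together with the induction hypothesis on the fiber (which is $\Diff^v_{F\cup 2A}(M\ca A)\simeq\bbZ^n$), to conclude that the higher homotopy vanishes and $\pi_0\Diff^v_F(M)$ sits in an exact sequence $1 \to \bbZ^n \to \pi_0\Diff^v_F(M) \to \bbZ$.

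Your global obstruction-theoretic argument has the advantage of producing the explicit formula $n = \operatorname{rk} H_1(S,\partial S\setminus\varphi(F);\bbZ)$, whereas the paper's induction only yields ``$\bbZ^n$ or $\bbZ^{n+1}$'' at each step. Two remarks on the writeup. First, the affine-isotopy retraction of $\Diff(S^1)$ onto $\Or(2)$ constructed in \cref{lem:Diffv-solid-torus} is in fact equivariant for conjugation by all of $\Or(2)$ (one checks directly that conjugation by a reflection negates the average offset and commutes with the affine interpolation), so your anticipated obstacle does not arise. Second, the phrase ``extension of free abelian groups'' is not quite right as stated: the cokernel $H_1(S)/\operatorname{im}H_1(C)$ appearing in the long exact sequence can have torsion in general (e.g.\ for the M\"obius band with $C=\partial S$). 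A cleaner justification is that collapsing each circle of $C=\partial S\setminus\varphi(F)$ to a point amounts to capping it with a disk, so $S/C$ is a compact surface whose boundary is $\varphi(F)\neq\emptyset$; hence $S/C$ is homotopy equivalent to a graph and $H_1(S,C)\cong\tilde H_1(S/C)$ is free abelian.
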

\begin{proof}
    If $S$ is a disk then $F = \partial M=\partial (S^1\times D^2)$ and $\Diff_\partial^v(S^1 \times D^2) \simeq *$ by \cref{lem:solid-torus-facts}(1).
    We can now induct on the complexity of the surface.
    Let $\alpha$ be an arc in $S$ connecting two points of $\varphi(F)$ such that either $S \setminus \alpha$ is connected and has lower complexity, or is disconnected and both pieces have lower complexity.
    Let $A = \varphi^{-1}(\alpha)$ be the annulus that is the preimage of $\alpha$.
    By \cite[Corollary 3.8(ii)]{HKMR12} there is a fiber sequence
    \[
        \Diff^v_{F \cup A}(M) \longrightarrow \Diff_F^v(M) \longrightarrow \Diff_{F \cap A}^v(A)
    \]
    The fibering of the annulus $A \to \alpha$ is necessarily trivial, so the base of the fiber sequence can be identified with
    $\Diff_\partial^v(S^1 \times I)$,
    an element of which can be thought of as a smooth loop in the space $\Diff(S^1)$.
    Hence it is equivalent to the discrete group
    \[
        \Diff_{F \cap A}^v(A) \simeq \Omega \Diff(S^1) \simeq \bbZ.
    \]
    The fiber of the fiber sequence is equivalent to $\Diff_{F \cup 2A}^v(M \ca A)$ where we cut along $A$ and fix the new boundary created by $A$.
    By induction hypothesis this is equivalent to $\bbZ^n$ for some $n$.
    Inspecting the long exact sequence we see that the components of $\Diff_F^v(M)$ are contractible and that there is an exact sequence
    \[
        1 \to \bbZ^n \to \pi_0\Diff_F^v(M) \to \bbZ,
    \]
    so $\Diff_F^v(M)$ is equivalent to either $\bbZ^n$ or $\bbZ^{n+1}$.
\end{proof}
We can use the previous lemma to take care of the nonsingular Haken Seifert-fibered case.  
\begin{prop}\label{lem:Haken-SF-Nonsingular}
    Suppose $M$ admits a fiber-rigid Seifert fibering without singular fibers.
    Let $D^3\subset M$ be an embedded disk.
    Then $(M \setminus \interior{D}^3, S^2)$ is hereditarily finite and in particular
    $\BDiff_{D^3}(M)$ is homotopy finite.
\end{prop}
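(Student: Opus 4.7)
The plan is to use fiber-rigidity to pass to $\Diff^f(M;[\varphi])$ and then analyse it via a principal short exact sequence coming from the base surface. First, I would use that $\Diff^f(M;[\varphi]) \simeq \Diff(M)$ and apply the homotopical orbit-stabiliser lemma \cref{lem:orbit-stabiliser-v2} to $\Fr(M) \hq \Diff^\Gamma(M) \simeq \Fr(M) \hq \Diff^{f,\Gamma}(M)$, giving a component-wise equivalence $\BDiff_{D^3}^\Gamma(M) \simeq B\Diff^{f,\Gamma}_{D^3}(M)$ for each $\Gamma \le \pi_0\Diff(\partial M)$. Thus it suffices to show $B\Diff^{f,\Gamma}_{D^3}(M)$ is homotopy finite for every $\Gamma$. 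I pick a small disk $D^2 \subset \interior S$, set $V = \varphi^{-1}(D^2) \cong S^1 \times D^2$ and $T^2 = \partial V$, and choose $D^3 \subset \interior V$ of product form $D^2 \times I$ under a trivialisation $V \cong D^2 \times S^1$, so that the projection of $D^3$ to $S$ is exactly $D^2$.

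Restricting the principal short exact sequence $\Diff^v(M) \to \Diff^f(M) \to \Diff'(S)$ from \cite[Corollary 3.1]{HKMR12} to $D^3$-fixing subgroups (and further to $\Gamma$) yields a principal short exact sequence
\[
    1 \longrightarrow \Diff^{v,\Gamma}_{D^3}(M) \longrightarrow \Diff^{f,\Gamma}_{D^3}(M) \longrightarrow \Diff'(S) \cap \Diff_{D^2}(S) \cap \Diff^{\Gamma_S}(S) \longrightarrow 1,
\]
where $\Gamma_S \le \pi_0\Diff(\partial S)$ is the image of $\Gamma$ under restriction-to-base. Surjectivity onto the indicated base group follows because any base diffeomorphism fixing $D^2$ pointwise admits a fiber-preserving lift, which can be modified by a vertical diffeomorphism to additionally fix $D^3$. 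By \cref{lem:fiberseq/fiberseq} this yields a homotopy fiber sequence of classifying spaces, and by \cref{lem:fiber-sequence-finite} it suffices to prove both ends have homotopy finite classifying spaces.

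For the base, $\Diff'(S) \le \Diff(S)$ is finite index by \cref{lem:SF-finite-index}, and $\pi_0\Diff(\partial S)$ is finite since $\partial S$ is a disjoint union of circles, so $\Gamma_S$ is finite; consequently the base group has finite index in $\Diff_{D^2}(S)$, whose classifying space is homotopy finite by \cref{lem:Surface-Fix-Disk-finite}. For the fiber, I would first show $\Diff^v_{D^3}(V) \simeq *$ using the identification $\Diff^v(V) \cong \Map(D^2,\Diff(S^1))$: the $D^3$-fixing condition forces the family $f \colon D^2 \to \Diff(S^1)$ to take values pointwise in the contractible subgroup $\Diff_I(S^1) \simeq *$. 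The homotopy pullback decomposition $\Diff^v(M) \simeq \Diff^v(V) \times^h_{\Diff^v(T^2)} \Diff^v(M \setminus \interior V)$ then identifies $\Diff^v_{D^3}(M)$ with the homotopy fiber of the restriction $\Diff^v(M \setminus \interior V) \to \Diff^v(T^2)$, namely $\Diff^v_{T^2}(M \setminus \interior V)$. Finally \cref{lem:Diff_S1^v-v2}, applied to the non-singular Seifert fibering of $M \setminus \interior V$ over the connected surface $S \setminus D^2$ with non-empty boundary $F = T^2$, gives $\Diff^v_{T^2}(M \setminus \interior V) \simeq \bbZ^n$, so $\Diff^{v,\Gamma}_{D^3}(M)$ is a subgroup of $\bbZ^n$ and has homotopy finite classifying space.

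The hardest step will be carefully justifying the homotopy pullback decomposition of $\Diff^v(M)$ and the principal bundle structure of the restricted short exact sequence; both reduce to standard applications of \cref{lem:locally-retractile} and the local triviality of restriction maps between vertical diffeomorphism groups, but require some care to set up correctly. The ``in particular'' statement about $\BDiff_{D^3}(M)$ follows by specialising to $\Gamma = \{1\}$.
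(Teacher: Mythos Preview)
Your overall strategy---pass to $\Diff^{f,\Gamma}(M)$ via fiber-rigidity, then analyse it through the principal short exact sequence over $\Diff(S)$ with vertical kernel, and finally identify the vertical part with $\bbZ^n$ via \cref{lem:Diff_S1^v-v2}---matches the paper's. The later steps (your analysis of the base and of $\Diff^{v}_{D^3}(M)$) are essentially correct and parallel the paper's treatment of $\Fr'(M)\hq\Diff^{f,\Gamma}(M)$.

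However, your first step contains a genuine gap. You invoke \cref{lem:orbit-stabiliser-v2} for the action of $\Diff^{f,\Gamma}(M)$ on $\Fr(M)$ to conclude $\BDiff_{D^3}^\Gamma(M)\simeq B\Diff^{f,\Gamma}_{D^3}(M)$, but the hypothesis of that lemma fails: $\Fr(M)$ is \emph{not} $\Diff^{f}(M)$-locally retractile. A fiber-preserving diffeomorphism must send the line $\ker d_p\varphi\subset T_pM$ to $\ker d_{\varphi(p)}\varphi$, so it cannot move a frame to a nearby frame in which the fiber direction occupies a different position. Concretely, the $\Diff^{f}(M)$-orbit of any frame is contained in a single fiber of the $\Diff^{f}$-invariant map $\Fr(M)\to\RP^2$, $(p,\alpha)\mapsto \alpha^{-1}(\ker d_p\varphi)$, and hence is a proper subspace of the relevant component of $\Fr(M)$. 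In particular the stabiliser $\Diff^{f}_{D^3}(M)$ is genuinely different from $\Diff_{D^3}(M)$: at the level of coset spaces one is comparing something with fiber $\Or(2)\times\Or(1)$ over $M$ to $\Fr^+(M)\simeq M\times\SO(3)$.

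The paper repairs exactly this by \emph{first} fibering $\Fr(M)\hq\Diff^{f,\Gamma}(M)$ over $\RP^2$ via the map above, obtaining a homotopy fiber sequence with fiber $\Fr'(M)\hq\Diff^{f,\Gamma}(M)$, where $\Fr'(M)$ is the subspace of frames whose first vector lies in the fiber direction. On $\Fr'(M)$ the group $\Diff^{f,\Gamma}(M)$ \emph{does} act compatibly, and the paper then runs an argument equivalent to your steps 2--4 (using the equivariant fiber sequence $\Fr''(S^1)\to\Fr'(M)\to\Fr(S)$ and \cref{lem:fiberseq/fiberseq}) to reduce to $\BDiff'_{D^2}(S)$ and $\BDiff^{v,\Gamma}_{S^1}(M)$. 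If you insert this $\RP^2$ step before your short exact sequence analysis, your argument goes through.
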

\begin{proof}
    Let $\varphi \colon M \to S$ be the non-singular Seifert fibering such that $\Diff^f(M) \simeq \Diff(M)$.
    We need to show that for all $\Gamma < \pi_0\Diff(\partial M)$ the space
    \[
        \BDiff_{D^3}^\Gamma(M) \simeq \Fr(M)\hq \Diff^\Gamma(M) \simeq \Fr(M) \hq \Diff^{f, \Gamma}(M)
    \]
    is homotopy finite.
    When acting by $\Diff^f(M)$ on the frame bundle, the fiber-direction of the tangent space is always preserved.
    Therefore the map
    \begin{align*}
        \Fr(M) & \longrightarrow \RP^2 \\ 
        (p, \alpha \colon \bbR^3 \cong T_pM) &\longmapsto \alpha^{-1}(\ker d_p\varphi) \subset \bbR^3
    \end{align*}
    is invariant under the action of $\Diff^f(M)$.
    Let $\Fr'(M) \subset \Fr(M)$ denote the fiber of this map, \emph{i.e.}~those framings where the first vector lies in the tangent space of the fiber. By \cref{lem:fiberseq/fiberseq} we have a homotopy fiber sequence
    \[
        \Fr'(M)\hq \Diff^{f,\Gamma}(M) \longrightarrow
        \Fr(M)\hq \Diff^{f,\Gamma}(M) \longrightarrow
        \RP^2
    \]
    and we show that the total space is homotopy finite.
    Since the base is a finite CW complex, by \cref{lem:fiber-sequence-finite} it will suffice to consider the fiber.
    The map $\Fr'(M) \to \Fr(S)$ defined by forgetting the first vector of the framing and then applying $\varphi$ and $d\varphi$ is a fibration, and its fiber $\Fr''(S^1)$ is equivalent to the frame bundle of $S^1$.
    This fiber sequence 
    \begin{equation}\label{eqn:Fr-fibration}
        \Fr''(S^1) \longrightarrow \Fr'(M) \longrightarrow \Fr(S)
    \end{equation}
    is equivariant for the principal short exact sequence
    \begin{equation}\label{eqn:Diff-fibration}
        \Diff^{v,\Gamma}(M) \longrightarrow
        \Diff^{f,\Gamma}(M) \longrightarrow
        \Diff'(S).
    \end{equation}
    that we obtain by restricting \cref{eqn:fix fiber} to account for $\Gamma$.
    Here $\Diff'(S) \le \Diff(S)$ is defined as the image of $\Diff^{f,\Gamma}(M)$, and this is a finite index subgroup by \cref{lem:SF-finite-index}.
    
    Taking the quotient of (\ref{eqn:Fr-fibration}) by (\ref{eqn:Diff-fibration}), \cref{lem:fiberseq/fiberseq} yields a homotopy fiber sequence
    \[
        \Fr(S^1)\hq \Diff^{v,\Gamma}(M) \longrightarrow
        \Fr'(M)\hq \Diff^{f,\Gamma}(M) \longrightarrow
        \Fr(S)\hq \Diff'(S).
    \]
    The base is equivalent to $\BDiff_{D^2}'(S)$, which is a finite cover of $\BDiff_{D^2}(S)$, which in turn is homotopy finite by Lemma \ref{lem:Surface-Fix-Disk-finite}, so it suffices to consider the fiber.
    
    The action of $\Diff^v(M)$ on $\Fr(S^1)$ has either one or two orbits, depending on whether there is a bundle-preserving diffeomorphism that reverses the orientation of the fiber.
    Either way the map
    \[
        \Diff^v(M) \longrightarrow \Diff(S^1) \xrightarrow{\ \simeq\ } \Fr(S^1),
    \]
    which first restricts the diffeomorphism to a fiber and then evaluates it and its derivative at a point, is a fiber bundle by \cite[Corollary 3.8.(ii)]{HKMR12} (furthermore the second map is a trivial Serre fibration). Hence the homotopy orbit-stabiliser lemma (\cref{lem:orbit-stabiliser-v2}) tells us
    \[
        \Fr(S^1)\hq \Diff^{v, \Gamma}(M) \simeq \Diff(S^1)\hq \Diff^{v, \Gamma}(M) \simeq \BDiff_{S^1}^{v, \Gamma}(M) \times \begin{cases}
            * \\
            * \sqcup * 
        \end{cases}
    \]
    where the case distinction depends on whether there exists fiberwise orientation-reversing diffeomorphism.
    For $U\subset M$ a vertical tubular neighbourhood of $S^1$ we have 
    \[
        \Diff_{S^1}^v(M) \simeq \Diff_U^v(M) \simeq \Diff_{\partial U}^v(M\setminus \interior{U}) \simeq \bbZ^n
    \]
    where the last equivalence comes from \cref{lem:Diff_S1^v-v2}.
    Hence the group $\Diff_{S^1}^{v,\Gamma}(M)$ must be equivalent to some subgroup of $\bbZ^n$, so it also is finite rank free abelian and its classifying space is homotopy finite.
\end{proof}

\subsection{The singular fiber-rigid case}\label{subsec:singSF}
Now we prove the general case with possibly singular fibers. 
The method of proof will be to cut up the manifold along tubular neighbourhoods $U$ of the singular fibers using \cref{prop:cut-along-tori} in order to reduce to the non-singular case and the case of a fibered solid torus, both of which we have already covered.
For this we have to show that the tori $T := \partial U$ obtained as the neighbourhoods of the singular fibers are canonical in the sense of \cref{defn:canonical}, \emph{i.e.}~that $\Diff(M, T) \simeq \Diff(M)$.

\begin{lem}\label{lem:singular-fiber-canonical}
    Let $M$ be a Haken Seifert-fibered 3-manifold with a fiber-rigid fibering $\varphi\colon M \to S$.
    Let $U$ be a vertical tubular neighbourhood of the singular fibers, and let $T := \partial U$ be its boundary. 
    
    Then $M \setminus \interior{U} \to \varphi(M \setminus \interior{U})$ is fiber-rigid,
    $\Diff(M, U) = \Diff(M, T)$, and the inclusions
    \[  
        \Diff(M, T) \hookrightarrow \Diff(M) \text{ and } \Diff^f(M, T)\hookrightarrow \Diff(M,T)
    \]
    are both equivalences.
    It follows from the first equivalence that $T$ is a canonical submanifold in the sense of \cref{defn:canonical}.
\end{lem}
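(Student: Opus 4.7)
The lemma asserts three things: (i) $M\setminus\interior{U} \to \varphi(M\setminus\interior{U})$ is fiber-rigid; (ii) $\Diff(M,U) = \Diff(M,T)$; and (iii) both $\Diff^f(M,T) \hookrightarrow \Diff(M,T)$ and $\Diff(M,T) \hookrightarrow \Diff(M)$ are weak equivalences. I would address these in this order.

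For (i), apply \cref{rem:most-non-singular-are-rigid} to each connected component $N$ of $M\setminus\interior{U}$. Each such $N$ is irreducible with incompressible toroidal boundary and inherits a non-singular Seifert fibering from $\varphi$, so fiber-rigidity holds unless $N\cong T^2\times I$ or $N\cong S^1\times D^2$. In either exceptional case, $M$ would decompose as $N$ glued to one or two adjacent solid tori $U_i\subset U$ along tori, producing a manifold of Heegaard genus at most one: a lens space, $S^3$, or $S^2\times S^1$. None of these is simultaneously Haken, irreducible, and Seifert-fibered with singular fibers, so both exceptional cases are excluded. Assertion (ii) follows immediately: for $\psi\in\Diff(M,T)$ and any component $T_i$ of $T$, the image $\psi(U_i)\cong S^1\times D^2$ is one of the two components of $M\setminus T$ adjacent to $\psi(T_i)$, and by (i) the non-$U$ side is not a solid torus, forcing $\psi(U_i)\subseteq U$.

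For (iii), the technical crux is to show that the $\Diff(M)$-orbit $O$ of $T$ inside $\umb(T,\interior{M})$ is weakly contractible. I would first observe that $O$ coincides with the $\Diff^f(M;[\varphi])$-orbit $O^f$: both are unions of path components of $\umb(T,\interior{M})$, and they have the same image in $\pi_0$ because $\pi_0\Diff^f(M;[\varphi])\to \pi_0\Diff(M)$ is surjective by fiber-rigidity. Next, identify $O^f$ with the space $\mathcal{U}$ of vertical tubular neighborhoods of the singular fibers via $U'\mapsto\partial U'$, and parametrise $\mathcal{U}$ by smooth positive ``radius'' functions on the singular fibers, which form a convex, hence contractible, parameter space. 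Granting contractibility of $O=O^f$, the fiber sequences
\[
  \Diff(M,T) \longrightarrow \Diff(M) \longrightarrow O \quad\text{and}\quad \Diff^f(M,T) \longrightarrow \Diff^f(M;[\varphi]) \longrightarrow O^f,
\]
arising from the corresponding locally retractile actions on $\umb(T,\interior{M})$ (\cref{cor:umb-retractile}), yield $\Diff(M,T)\simeq\Diff(M)$ and $\Diff^f(M,T)\simeq \Diff^f(M;[\varphi])$. The fiber-rigidity hypothesis $\Diff^f(M;[\varphi])\simeq\Diff(M)$ then implies that the composite $\Diff^f(M,T)\hookrightarrow\Diff(M,T)\hookrightarrow\Diff(M)$ is a weak equivalence; two-out-of-three with $\Diff(M,T)\simeq\Diff(M)$ gives the remaining equivalence $\Diff^f(M,T)\hookrightarrow\Diff(M,T)$.

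The principal obstacle will be making the identification $O^f\simeq\mathcal{U}$ and the contractibility of $\mathcal{U}$ rigorous in the fiber-preserving category, including the transitivity of the $\Diff^f(M;[\varphi])$-action on $\mathcal{U}$. I expect to invoke the fiber-preserving tubular neighborhood and isotopy extension machinery of \cite[\S3]{HKMR12} here.
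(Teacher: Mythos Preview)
Your treatment of (i) and (ii) matches the paper's, modulo phrasing: the paper argues directly on the base, noting that since $M$ is Haken and not a solid torus, $S\setminus\varphi(U)^\circ$ cannot be a disk or annulus, so \cref{rem:most-non-singular-are-rigid} applies. Your contradiction via Heegaard genus reaches the same conclusion.

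For (iii) you take a genuinely different route than the paper, and there is a gap. The paper never analyses the orbit space $O$. Instead it first shows $\Diff^f(M,T)\simeq\Diff^f(M)$ by pulling back the (easy) equivalence $\Diff(S,\varphi(U))\simeq\Diff(S)$ along the fibration $\Diff^f(M)\to\Diff(S)$ of \eqref{eqn:fix fiber}; then it proves $\Diff^f(M,T)\simeq\Diff(M,T)$ by comparing the two restriction fiber sequences
\[
\Diff_\partial^{(f)}(U)\longrightarrow\Diff^{(f)}(M,T)\longrightarrow\Diff^{(f)}(M\setminus\interior{U}),
\]
where the outer vertical maps are equivalences by \cref{lem:solid-torus-facts}(\ref{it:Difff-contractible}) and part (i), and $\pi_0$-surjectivity is checked by hand using \cref{lem:solid-torus-facts}(\ref{it:extend}). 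A square chase then gives $\Diff(M,T)\simeq\Diff(M)$.

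Your approach instead tries to show the $\Diff(M)$-orbit $O\subset\umb(T,\interior{M})$ is contractible. The step $O=O^f$ is where it breaks: surjectivity of $\pi_0\Diff^f(M)\to\pi_0\Diff(M)$ only tells you that $O$ and $O^f$ meet the same path-components of $\umb(T,\interior{M})$, not that $O^f$ itself is a union of path-components. A point $T'\in O$ need not be the boundary of a \emph{vertical} tubular neighbourhood, so there is no reason to expect $T'\in O^f$. Without $O=O^f$, contractibility of $O^f\cong\mathcal{U}$ says nothing about $O$, and the first fiber sequence does not give $\Diff(M,T)\simeq\Diff(M)$. (A secondary issue: the fiber sequence $\Diff^f(M,T)\to\Diff^f(M)\to O^f$ needs $\Diff^f$-local retractility of $\umb(T,\interior{M})$, which \cref{cor:umb-retractile} does not provide; this is the obstacle you flag, and it is indeed real.) The paper's route sidesteps both problems entirely.
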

\begin{proof}
    $M$ cannot be a solid torus, because the solid torus only has flexible fiberings. 
    Because $M$ is Haken we know either $S$ is not a sphere or it is a sphere and there are at least three singular fibers.
    Therefore $S \setminus \varphi(U)^\circ$ is neither an annulus, nor a disk. 
    By \cref{rem:most-non-singular-are-rigid} the only flexible Seifert-fibered manifolds with boundary are the solid torus and $T^2 \times I$, which only fiber over the disk and the annulus.
    Hence $M \setminus \interior{U} \to S \setminus \varphi(U)^\circ$ is a non-singular fiber-rigid Haken Seifert-fibering.
    Moreover, $U$ is a disjoint union of solid tori and $M \setminus \interior{U}$ is connected and not a solid torus, so any diffeomorphism preserving $T$ must also preserve $U$, \emph{i.e.}~$\Diff(M,T)\simeq \Diff(M,T)$.

    Since the diffeomorphism group of the base orbifold already preserves the singularities, it is equivalent to the subgroup $\Diff(S, \varphi(U)) < \Diff(S)$ where we additionally preserve small disks around the singularities.
    Pulling this equivalence back along the fibration $\Diff^f(M) \to \Diff(S)$ from \cref{eqn:fix fiber} we get that $\Diff^f(M, T) \simeq \Diff^f(M)$.
    This equivalence fits in a square of subgroup inclusions
    \[\begin{tikzcd}
    	{\Diff^f(M, T)} & {\Diff^f(M)} \\
    	{\Diff(M, T)} & {\Diff(M)}
    	\arrow[from=1-1, to=2-1]
    	\arrow[from=2-1, to=2-2]
    	\arrow["\simeq", from=1-1, to=1-2]
    	\arrow["\simeq", from=1-2, to=2-2]
    \end{tikzcd}\]
    and to show that the bottom map is an equivalence as claimed it will suffice to check our final claim: that the left hand map is an equivalence.
    The left map in turn fits into a map of homotopy fiber sequences:
\[\begin{tikzcd}
	{\Diff_\partial^f(U)} & {\Diff^f(M, T)} & {\Diff^f(M \setminus \interior{U})} \\
	{\Diff_\partial(U)} & {\Diff(M, T)} & {\Diff(M \setminus \interior{U})}
	\arrow["\simeq"', from=1-1, to=2-1]
	\arrow[from=1-2, to=2-2]
	\arrow["\simeq", from=1-3, to=2-3]
	\arrow[from=2-2, to=2-3]
	\arrow[from=1-2, to=1-3]
	\arrow[from=1-1, to=1-2]
	\arrow[from=2-1, to=2-2]
\end{tikzcd}\]
    The right vertical map is an equivalence because $M\setminus \interior{U}$ is fiber-rigid.
    The left vertical map is an equivalence because $\Diff_\partial(S^1 \times D^2)$ is contractible by \cite{Hatcher76, Ivanov76} and $\Diff_\partial^f(U)$ is contractible by \cref{lem:solid-torus-facts}.(2).
    From the long exact sequence of homotopy groups it follows that the middle map is an isomorphism on $\pi_n$ for $n\ge1$ and an injection on $\pi_0$.
    To see that it is also surjective on $\pi_0$ let $\xi\in \Diff(M, T)$.
    Because $M\setminus \interior{U}$ is fiber-rigid we can isotope $\xi_{|M \setminus \interior{U}}$ to a fiber-preserving diffeomorphism, and we may extend this isotopy to all of $M$ while preserving $T$ as a subset. Denote the result of this isotopy by $\varphi\in \Diff(M, T)$.
    Now $\varphi_{|U}$ is fiber-preserving on $\partial U$ and hence, by \cref{lem:solid-torus-facts}.(3), there is a $\psi \in \Diff^f(U)$ with $\varphi_{|\partial U} = \psi_{|\partial U}$.
    We have $\varphi_{|U} \circ \psi^{-1} \in \Diff_\partial(U) \simeq *$, so there must be an isotopy from $\varphi_{|U}$ to $\psi$ relative to $\partial U$.
    This shows that $\xi$ is isotopic to an element of $\Diff^f(M, T)$.
\end{proof}

\begin{prop}\label{prop:Haken-SF-Finiteness}
    Let $M$ be a Haken Seifert-fibered 3-manifold that is fiber-rigid, and let $D^3\subset M$ be an embedded disk. 
    Then $(M \setminus \interior{D}^3, S^2)$ is hereditarily finite and in particular $\BDiff_{D^3}(M)$ is homotopy finite.
\end{prop}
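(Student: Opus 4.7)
The strategy is to cut $M$ along the boundary tori of a tubular neighbourhood of its singular fibers, reducing via \cref{prop:cut-along-tori} to the non-singular case (\cref{lem:Haken-SF-Nonsingular}) and the fibered solid torus case (\cref{lem:solid-torus}). If $M$ has no singular fibers, the conclusion is already \cref{lem:Haken-SF-Nonsingular}, so we may assume that the singular locus is non-empty. Let $\varphi \colon M \to S$ be the fiber-rigid Seifert fibering, let $U \subset M$ be a vertical tubular neighbourhood of the singular fibers, so that $U = U_1 \sqcup \cdots \sqcup U_k$ is a finite disjoint union of fibered solid tori, and set $T = \partial U$. By \cref{lem:singular-fiber-canonical}, $T \subset M$ is canonical and the induced Seifert fibering of $N := M \setminus \interior{U}$ is non-singular and fiber-rigid, placing us in a position to apply \cref{prop:cut-along-tori} to $T$.

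We then verify the three hypotheses of \cref{prop:cut-along-tori}. Hypothesis~(1) holds because $(N \setminus \interior{D}^3, S^2)$ is hereditarily finite by \cref{lem:Haken-SF-Nonsingular} and each $(U_i \setminus \interior{D}^3, S^2)$ is hereditarily finite by \cref{lem:solid-torus}. For hypothesis~(2) at each solid torus $U_i$, the only possible non-empty $F \subset (2T \cap \partial U_i)$ is $F = \partial U_i$ itself, and $\Diff_\partial(S^1 \times D^2)$ is contractible by \cite{Hatcher76, Ivanov76}, so $(U_i, \partial U_i)$ is trivially hereditarily finite. For hypothesis~(2) at $N$, the components of $\partial N$ are all tori---coming from $\partial M$ (incompressible by hypothesis) or from $T$ (vertical tori in a Seifert-fibered manifold other than the solid torus or $T^2\times I$, hence incompressible in $N$). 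Thus any non-empty $F \subset (2T \cap \partial N)$ vacuously contains all compressible components of $\partial N$, and $(N, F)$ is hereditarily finite by Hatcher--McCullough (\cref{thm:HM-hereditary}).

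The step that will require the most care is hypothesis~(3): the image of $\Diff(M, T) \to \pi_0\Diff(T)$ must be finite, even though each component of $T$ individually has the infinite mapping class group $\GL_2(\bbZ)$. The plan is to use the equivalence $\Diff(M, T) \simeq \Diff^f(M, T)$ from \cref{lem:singular-fiber-canonical} to reduce to fiber-preserving mapping classes, and then observe that on each boundary component $\partial U_i$ such a class must in particular extend to the adjacent solid torus $U_i$. The $\GL_2(\bbZ)$-computation in the proof of \cref{lem:solid-torus-facts}(4) then shows that the set of such extendable mapping classes is finite (of order at most $4$, corresponding to independent sign changes of the fiber and the meridian). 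Combined with the finiteness of the number of components of $T$, and hence of realizable permutations among them, the image is finite. Assembling (1), (2), (3) then lets \cref{prop:cut-along-tori} immediately yield the hereditary finiteness of $(M \setminus \interior{D}^3, S^2)$, as desired.
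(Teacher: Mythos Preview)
Your proposal is correct and follows essentially the same approach as the paper's proof: reduce to the non-singular case via \cref{lem:Haken-SF-Nonsingular}, otherwise cut along $T=\partial U$ using \cref{prop:cut-along-tori}, and verify its three hypotheses exactly as you do. Your treatment is in fact slightly more detailed than the paper's (you spell out hypothesis~(2) separately for $N$ and for each $U_i$, and you make explicit the permutation-of-components contribution in hypothesis~(3)), but the logical structure and the cited inputs are identical.
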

\begin{proof}
    If $M$ is non-singular apply \cref{lem:Haken-SF-Nonsingular}. Otherwise, pick a vertical tubular neighbourhood $U$ of the singular fibers as in \cref{lem:singular-fiber-canonical}; then $T := \partial U$ is a canonical submanifold by that lemma.
    We apply \cref{prop:cut-along-tori}.
    The second condition follows from \cref{thm:HM-hereditary}.
    For the third condition, \cref{lem:singular-fiber-canonical} also shows that $\Diff(M, T) \simeq \Diff^f(M, T)$ and 
    so the image of $\Diff(M, T) \to \pi_0 \Diff(T)$ is finite because each of the tori bounds a fibered solid torus and $\pi_0 \Diff^f( U )$ is finite by \cref{lem:solid-torus-facts}(4).
    
    Hence \cref{prop:cut-along-tori} reduces the claim to hereditary finiteness for $(N \setminus \interior{D}^3, S^2)$ where $N$ is any connected component of $M \ca T$.
    Most of the components are solid tori and we checked in \cref{lem:solid-torus} that $((S^1 \times D^2 )\setminus \interior{D}^3, S^2)$ is hereditarily finite.
    The remaining connected component is the non-singular piece $M \setminus \interior{U}$.
    As observed in \cref{lem:singular-fiber-canonical} $M \setminus \interior{U}$ is non-singular Seifert-fibered and still fiber-rigid.
    As such $(M \setminus (U \sqcup D^3)^\circ, S^2)$ satisfies hereditary finiteness by \cref{lem:Haken-SF-Nonsingular}.
\end{proof}

\subsection{Proof of Theorem \ref{thm:sect4main}}

We will now deduce the general theorem of the finiteness of $\BDiff_{D^3}(M)$ for $M$ irreducible, by cutting $M$ along its JSJ decomposition.

That the JSJ tori are canonical in sense of \cref{defn:canonical} will follow from the following theorem of Hatcher~\cite{Hatcherincompressible} that can be found in a rewrite of his 1976 paper \cite{Hatcher76}.

\begin{thm}[Hatcher {\cite[Theorem 1(a)]{Hatcherincompressible}}]\label{thm:Hatcher-Incompressible}
    Let~$T$ be an incompressible torus in~$M$. Then the component of the unparametrised embedding space containing $T$, $\umb_{T}(T^2,\interior{M})$, is contractible unless~$M$ is a $T^2$-bundle over~$S^1$ and~$T$ is a fiber, in which case $\umb_T(T^2,\interior{M})\simeq S^1$.
\end{thm}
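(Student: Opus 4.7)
The plan is to identify $\umb_T(T^2, \interior{M})$ as the orbit space $\Diff_0(M)/K$, where $K$ is the setwise stabiliser of $T$ in $\Diff_0(M)$, and compute its homotopy type using the Hatcher--Ivanov description of diffeomorphism groups of Haken $3$-manifolds.

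By \cref{thm:emb-retractile} and \cref{cor:umb-retractile}, the evaluation map $\Diff_0(M) \to \umb(T^2, \interior{M})$, $\varphi \mapsto \varphi(T)$, is a principal fibration with fibre $K := \Diff_0(M) \cap \Diff(M, T)$; by isotopy extension this map surjects onto $\umb_T(T^2, \interior{M})$, yielding a fibration $K \to \Diff_0(M) \to \umb_T(T^2, \interior{M})$. Hence the homotopy type of $\umb_T(T^2, \interior{M})$ is determined by the pair $(\Diff_0(M), K)$ via the long exact sequence.

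Next I would apply \cref{cor:Diff0-list}: $\Diff_0(M) \simeq (S^1)^b$ with $b = \mathrm{rank}\, Z(\pi_1(M))$ when $M$ is Seifert-fibered, and $\Diff_0(M) \simeq *$ otherwise. Similarly, $K_0$ is a subtorus of $\Diff_0(M)$ of rank $b'$, computed from the boundary-restriction fibration $\Diff(M, T) \to \Diff(T^2)$ together with \cref{thm:contractible-collars} and \cref{cor:Diff0-list} applied to the components of $M \ca T$. The case analysis proceeds as follows: (i) when $M$ is Seifert-fibered, $T$ is forced to be vertical by incompressibility (in all non-exceptional Haken cases), the Seifert $S^1$-action preserves $T$, and $b - b'$ equals $1$ if $M$ also admits a transverse Seifert structure (equivalently a $T^2$-bundle structure over $S^1$) with $T$ a fibre (e.g.~$M = T^3$), and equals $0$ otherwise; (ii) when $M$ is hyperbolic, $\Diff_0(M) \simeq *$ and Mostow rigidity applied to isotopies of incompressible tori forces $K \simeq *$; (iii) when $M$ is an Anosov $T^2$-bundle with $T$ a fibre, $\Diff_0(M) \simeq *$ but the translation $\tau_s \colon (v, t) \mapsto (v, t + s)$ defines a path in $\Diff_0(M)$ whose integer values $\tau_n$ act on $T$ by powers of the monodromy, preserving $T$ setwise; this gives $\pi_0 K \cong \mathbb{Z}$.

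Combining the cases via the long exact sequence yields $\umb_T(T^2, \interior{M}) \simeq K(\mathbb{Z}^{b - b'} \oplus \pi_0 K, 1)$, which reduces to $S^1$ exactly when $M$ is a $T^2$-bundle over $S^1$ with $T$ a fibre (case (i) with $b - b' = 1$, or case (iii) with $\pi_0 K = \mathbb{Z}$) and to a point in all other cases. The main obstacle is case (iii), the Anosov regime, where $\Diff_0(M)$ is contractible and no genuine circle action on $M$ exists, yet the integer subgroup of monodromy translations still produces $\pi_0 K = \mathbb{Z}$ and hence $\pi_1 \umb_T = \mathbb{Z}$; careful verification that this is the exact contribution and that no further higher homotopy arises rests on Waldhausen's isotopy theorem, the algebraic torus theorem, and the classification of self-diffeomorphisms of Haken $T^2$-bundles.
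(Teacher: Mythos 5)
You should first be aware that the paper does not prove \cref{thm:Hatcher-Incompressible} at all: it is quoted verbatim from Hatcher's \emph{Spaces of Incompressible Surfaces} and used as a black box (only in \cref{cor:JSJ-canonical}). So your proposal is really being measured against Hatcher's argument, and Hatcher's logic runs in the opposite direction to yours: he establishes the incompressible-surface statement first and deduces $\Diff_0(M)\simeq (S^1)^b$ from it, whereas you take the latter as input. That reversal is not automatically circular (Ivanov's proof and the Ricci-flow proofs of the weak generalised Smale conjecture are independent sources for $\Diff_0$ of geometric manifolds), but for a closed Haken manifold with non-trivial JSJ decomposition the only available computation of $\Diff_0(M)$ is the Hatcher--Ivanov hierarchy argument, which itself manipulates spaces of incompressible surfaces, so you would need to check carefully which inputs are genuinely independent of the statement you are proving.

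The orbit--stabiliser fibration $K\to\Diff_0(M)\to\umb_T(T^2,\interior{M})$ is a sensible starting point, but the execution has concrete gaps and errors. First, your trichotomy (Seifert-fibered, hyperbolic, Anosov bundle) does not exhaust the irreducible manifolds containing an incompressible torus: it omits every manifold with non-trivial JSJ decomposition that is not a torus bundle --- graph manifolds, two hyperbolic pieces glued along a torus, mixed manifolds --- and these are exactly the manifolds to which the paper applies the theorem in \cref{cor:JSJ-canonical}; it also omits cusped hyperbolic $M$ with $T$ boundary-parallel. Second, in case (i) the claim that incompressibility forces $T$ to be vertical, so that the Seifert circle action preserves $T$, is false: the fiber of the $T^2$-bundle with periodic monodromy of order $4$ is a \emph{horizontal} incompressible torus in a non-exceptional Seifert fibration over $S^2(2,4,4)$ (which has no essential vertical tori), and only the subgroup $\bbZ/4\subset S^1$ of the circle action preserves it. In that example $b=1$, $b'=0$ and $\pi_0K\cong\bbZ/4$, so your closed-form answer $K(\bbZ^{b-b'}\oplus\pi_0K,1)$ would give $K(\bbZ\oplus\bbZ/4,1)$ rather than the correct $S^1$; the long exact sequence does not decouple into such a direct sum, and the boundary maps matter. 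Third, and most seriously, the identification of $K$ --- that $\pi_0K$ contains nothing beyond what you list (i.e.\ that no isotopy class of diffeomorphisms preserving $T$ other than the fiber translations dies in $\pi_0\Diff(M)$) and that each component of $K$ is contractible --- is where essentially all of the content of Hatcher's theorem lives, and you defer it to a list of citations without an argument. As it stands the proposal is an outline with an incorrect case analysis rather than a proof.
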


\begin{cor}\label{cor:JSJ-canonical}
    Suppose $M$ is not a $T^2$-bundle over $S^1$. 
    Let $T$ be a collection of tori in $M$ giving the JSJ decomposition.
    Then $\Diff(M,T)\simeq \Diff(M)$, i.e.~$T$ is a canonical submanifold of $M$.
\end{cor}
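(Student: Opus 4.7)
My plan is to apply the Palais--Cerf fiber sequence recalled below \cref{cor:umb-retractile},
\[
\Diff(M, T) \longrightarrow \Diff(M) \longrightarrow \umb(T, \interior{M}),
\]
whose image is the $\Diff(M)$-orbit of $T$, a union of path components of $\umb(T, \interior{M})$. Showing the inclusion $\Diff(M, T) \hookrightarrow \Diff(M)$ is an equivalence reduces to showing this orbit is weakly contractible. By the uniqueness of the JSJ decomposition up to ambient isotopy, for any $\varphi \in \Diff(M)$ the image $\varphi(T)$ is ambient-isotopic to $T$, so the orbit is contained in the single path component $\umb_T(T, \interior{M})$. Being a non-empty union of components contained in one component, the orbit equals $\umb_T(T, \interior{M})$, so the problem reduces to showing contractibility of this component.

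To analyse $\umb_T(T, \interior{M})$, I would fix an ordering $T = T_1 \sqcup \cdots \sqcup T_n$ of the JSJ tori and pass to the ordered embedding space $\umb^{\mathrm{ord}}(T, \interior{M})$, which covers the unordered space with deck group a subgroup of $\Sym_n$. By the minimality of the JSJ decomposition, distinct JSJ tori are pairwise non-isotopic in $M$: two isotopic incompressible tori would cobound a $T^2 \times I$ region (by Waldhausen), and this region could be absorbed into an adjacent JSJ piece as a collar, contradicting minimality. Consequently, no nontrivial permutation of $(T_1, \ldots, T_n)$ is realised by an ambient isotopy, and the component of $(T_1, \ldots, T_n)$ in the ordered space maps homeomorphically onto $\umb_T(T, \interior{M})$.

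I would then prove contractibility of this ordered component by induction on $n$. The base case $n = 1$ is \cref{thm:Hatcher-Incompressible} applied to $T_1 \subset M$, using the hypothesis that $M$ is not a $T^2$-bundle over $S^1$. For the inductive step, let $T^{(n-1)} = T_1 \sqcup \cdots \sqcup T_{n-1}$ and consider the forgetful map
\[
F \longrightarrow \umb^{\mathrm{ord}}_{(T_1, \ldots, T_n)}(T, \interior{M}) \longrightarrow \umb^{\mathrm{ord}}_{(T_1, \ldots, T_{n-1})}(T^{(n-1)}, \interior{M})
\]
that discards the last torus; this is a Serre fibration by the local retractility of unparametrised embedding spaces (\cref{cor:umb-retractile}) together with \cref{lem:locally-retractile}(1). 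The fiber $F$ over $(T_1, \ldots, T_{n-1})$ is identified with the component of $T_n$ in $\umb(T^2, (M \ca T^{(n-1)})^\circ)$. Since cutting along $T^{(n-1)}$ creates non-empty toroidal boundary, no component of $M \ca T^{(n-1)}$ is a closed $T^2$-bundle over $S^1$, so \cref{thm:Hatcher-Incompressible} yields contractibility of $F$. The base is contractible by the inductive hypothesis, completing the induction.

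The main subtleties to verify are the fibration property of the forgetful map and the correct identification of its fiber; both should follow from applying the local retractility framework of \cref{section: preliminaries} to the action of $\Diff_{T^{(n-1)}}(M)$ on $\umb(T^2, (M \ca T^{(n-1)})^\circ)$, together with the isotopy extension theorem.
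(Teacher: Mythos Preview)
Your proof is correct and follows essentially the same approach as the paper: reduce via the Palais--Cerf fiber sequence to contractibility of the path component $\umb_T(T,\interior{M})$, then prove this by induction on the number of tori using a forgetful fibration whose base is handled by induction and whose fiber is a single-torus component in the cut manifold, contractible by Hatcher's \cref{thm:Hatcher-Incompressible}. The only cosmetic difference is that you pass explicitly to ordered embeddings and use non-isotopy of JSJ tori to identify ordered with unordered, whereas the paper defines the forgetful map directly on the unordered space by ``remove the component isotopic to $A$'', using the same non-isotopy fact to make this well-defined.
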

\begin{proof}
    For any union of components $R \subset T$ we let $\umb_0(R, \interior{M}) \subset \umb(R, \interior{M})$ denote the path-component of the space of those unparametrised embeddings that contain the submanifold $R \subset \interior{M}$.
    We will prove that $\umb_0(R, \interior{M})$ is contractible by induction on the number of connected components of $R \subset T$.
    If there is only one torus, then this is Hatcher's theorem \ref{thm:Hatcher-Incompressible}.
    For the induction step, we suppose that $\umb_0(R, \interior{M})$ is contractible. We will show that $\umb_0(R \sqcup A, \interior{M})$ is contractible for some torus $A \subset T \setminus N$.
    We can define a map
    \[  
        f\colon \umb_0(R \sqcup A, \interior{M}) \to \umb_0(R, \interior{M})
    \]
    by taking a submanifold $V \subset \interior{M}$ to $V \setminus B \subset \interior{M}$ where $B \subset V$ is the unique connected component that is isotopic to $A \subset \interior{M}$.
    Because we are in the path component $\umb_0$, we know that there always is such a $B$, and because no two tori in $T$ are isotopic, it is unique.
    Therefore, $f$ is well-defined, continuous, and $\Diff_0(M)$-equivariant.
    But $\umb_0(R, \interior{M})$ is $\Diff_0(M)$-locally retractile by \cref{cor:umb-retractile}, so $f$ is a fiber bundle by \cref{lem:locally-retractile}(1).
    We can identify the fiber $f^{-1}([R])$ with the identity component of unparametrised embeddings of $A$ into $(M\setminus R)^\circ$.
    (Indeed, the long exact sequence of the fibration implies that the fiber is connected, as the total space is connected and the base contractible by the induction hypothesis.)
    Hence we have the fiber sequence
    \[
        \umb_0(A, (M \setminus R)^\circ) \longrightarrow 
        \umb_0(R \sqcup A, \interior{M}) \longrightarrow
        \umb_0(R, \interior{M}).
    \]
    By induction hypothesis both fiber and total space are contractible and we conclude that the total space must also be contractible, completing the induction.

    To obtain the conclusion about $\Diff(M, T)$ we consider the fiber sequence
    \[
        \Diff(M, T) \longrightarrow \Diff(M) \longrightarrow \umb(T, \interior{M})
    \]
    from the discussion after \cref{cor:umb-retractile}.
    The image of the fibration is exactly the $\Diff(M)$-orbit of the canonical unparametrised embedding $T \subset \interior{M}$.
    By the uniqueness of JSJ decompositions we know that for any $\varphi \in \Diff(M)$ we have that $\varphi(T)$ is (unparametrised) isotopic to $T$,
    so the image of the fibration is the path-component of the base point, \emph{i.e.}~$\umb_0(T, \interior{M})$, which we know to be contractible.
    Hence the inclusion $\Diff(M, T) \to \Diff(M)$ is an equivalence as claimed.
\end{proof}

We will use the fact that the JSJ tori are canonical to reduce the finiteness problem to pieces we have already studied, by applying \cref{prop:cut-along-tori}. However, we need to guarantee that hypothesis (3) of the proposition holds.  This is the purpose of the next lemma.

\begin{lem}\label{lem:JSJ-finite-MCG}
    Let $M$ be an irreducible $3$-manifold with non-trivial JSJ decomposition $T \subset \interior{M}$ such that no component of $M \ca T$ is $T^2 \times I$.
    Then the image of $\Diff(M, T) \to \pi_0 \Diff(T)$ is finite.
\end{lem}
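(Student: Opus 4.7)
The plan is to reduce the claim to showing, for each connected component $T_0$ of $T$, that the image of the stabiliser $\Diff(M, T_0) \subseteq \Diff(M,T)$ under restriction to $\pi_0\Diff(T_0) \cong \GL_2(\bbZ)$ is finite. Since $\pi_0(T)$ is a finite set on which $\Diff(M,T)$ acts by permutations, the full image in $\pi_0\Diff(T)$ will then be finite, as a finite extension of a subgroup of the finite product $\prod_{T_0} (\text{image on } T_0)$.

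To bound the image for a fixed $T_0$, I would use that every $\varphi \in \Diff(M, T_0)$ restricts to diffeomorphisms of the two (not necessarily distinct) JSJ pieces $N_1, N_2 \subseteq M \ca T$ abutting $T_0$ along its two copies $T_0^\pm \subset \partial (M \ca T)$. This sandwiches the image in $\Gamma_1 \cap \Gamma_2 \le \pi_0\Diff(T_0)$, where $\Gamma_i$ denotes the image of $\pi_0\Diff(N_i)$ acting on its copy of $T_0$. Thus it will suffice to show either that $\Gamma_i$ is finite for some $i$, or that $\Gamma_1 \cap \Gamma_2$ is finite.

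If some $N_i$ is hyperbolic, then by Mostow--Prasad rigidity together with the Haken case of the weak generalised Smale conjecture (\cref{thm:Gen-Smale}), $\pi_0\Diff(N_i) \cong \Isom(N_i)$ is finite. If some $N_i$ is a Seifert-fibered piece that is not fiber-rigid, the only possibility --- given the exclusion of $T^2 \times I$ by hypothesis, the incompressibility of $T$ ruling out $S^1 \times D^2$, and the fact that $T^3$, $M_{HW}$, $D(\Kx)$ are closed and hence cannot appear as JSJ pieces of an $M$ with nontrivial decomposition along $T$ --- is $N_i \cong \Kx$; but then $\pi_0\Diff(\Kx) \cong \bbZ/2 \times \bbZ/2$ is finite, as computed in the proof of \cref{thm:Unique-fibering}. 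In both situations $\Gamma_i$ is finite.

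In the remaining case both $N_1$ and $N_2$ are fiber-rigid Seifert-fibered, so by \cref{thm:Unique-fibering} each admits a unique Seifert fibering up to isotopy and the equivalence $\Diff(N_i) \simeq \Diff^f(N_i; [\varphi_i])$ forces $\Gamma_i$ to lie in the stabiliser of the fiber slope $\pm\ell_i \subset H_1(T_0; \bbZ)$. The hardest part of the argument will be verifying that $\ell_1 \neq \ell_2$: were the two slopes to coincide, the Seifert fiberings on $N_1$ and $N_2$ would extend across $T_0$ to a single Seifert fibering on the union $N_1 \cup_{T_0} N_2$, contradicting the minimality of the JSJ decomposition. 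Granted $\ell_1 \neq \ell_2$, the common stabiliser $\mathrm{Stab}(\pm\ell_1) \cap \mathrm{Stab}(\pm\ell_2) \le \GL_2(\bbZ)$ of two distinct lines in $\bbQ^2$ is finite, completing the proof.
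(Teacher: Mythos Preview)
Your argument is essentially the paper's. One point to tighten: passing to the subgroup acting trivially on $\pi_0(T)$ does not yet guarantee that $\varphi$ preserves each $N_i$ individually, since $\varphi$ could still swap the two sides $T_0^\pm$ of $T_0$ (and hence swap $N_1$ with $N_2$ when they happen to be diffeomorphic); so your claim that the image lies in $\Gamma_1 \cap \Gamma_2$ is not quite justified. The paper instead restricts to the finite-index subgroup fixing the dual graph of $T$, which pins down the pieces on each side. With that adjustment the rest of your proof goes through exactly as written. Your separate treatment of $K\tilde{\times}I$ is also fine, though unnecessary: the paper simply notes it is fiber-rigid (\cref{rem:most-non-singular-are-rigid}) and absorbs it into the two-Seifert-fibered case.
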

\begin{proof}
    First, we restrict to the finite index subgroup of $\Diff(M,T)$ consisting of those diffeomorphisms that fix the dual graph of $T$. It is enough to show that for this subgroup the lemma holds. To do this, we claim that for each torus $T_i \subseteq T$ there are a finite number of mapping classes on $T_i$ that can be extended to an element of $\Diff(M, T)$. Let $M_i'$ and $M_i''$ be the JSJ pieces on either side of $T_i$ (note that $M_i'$ may equal $M_i''$).
    First, consider the case where at least one of $M_i'$ or $M_i''$ is a hyperbolic manifold $N$.
    Then, since the dual graph is fixed, the map in question factors through $\Diff(N, T_i)$, but we know that $\pi_0\Diff(N) = \pi_0 \Isom(N)$ is finite, so the image is finite.
    
    If neither of the sides of $T_i$ is hyperbolic, then they must both be Seifert-fibered. Neither side is $D^2\times S^1$  nor $T^2\times I$, hence they are both fiber-rigid by Remark \ref{rem:most-non-singular-are-rigid}. Let $\alpha_i'$ and $\alpha_i'' \in H_1(T_i)$ be the images of the two fiber curves  on $T_i$. Since $M_i'\cup_{T_i}M_i''$ is not Seifert-fibered, $\alpha_i'$ and $\alpha_i''$ represent linearly independent elements of $\pi_1(T_i)$. Any element of $\Diff(M,T)$ that fixes the dual graph of $T$ thus maps both $\alpha_i'$ and $\alpha_i''$ to themselves or their inverses. As $\alpha_i'$ and $\alpha_i''$ are linearly independent, there at most four elements of $\GL_2(\Z)\cong \pi_0(\Diff(T_i))$ that satisfy this condition.
\end{proof}

We can now complete the proof of \cref{thm:sect4main}, which we restate here.
\begin{rerestate}{Theorem}{thm:sect4main}{restating:sect4main}
    Let $M$ be an irreducible 3-manifold with either empty or incompressible toroidal boundary, and let $D^3\subset \interior{M}$ be an embedded disk. 
    Then $\BDiff_{D^3}(M)$ has the homotopy type of a finite CW complex.
\end{rerestate}
\begin{proof}
     We refer the reader to the flow chart in \cref{fig:flowchartirreducible}. Let $M$ be irreducible. If $M$ has trivial JSJ decomposition, then $M$ is Seifert-fibered or hyperbolic.
    If $M$ is  non-Haken Seifert-fibered or hyperbolic, then $\BDiff_{D^3}(M)$ is homotopy finite by Corollary \ref{cor:NonHaken-and-Hyperbolic}. If $M$ is Haken Seifert-fibered then $\BDiff_{D^3}(M)$ is homotopy finite by Proposition \ref{prop:Haken-SF-Finiteness} in the fiber-rigid case, and Propositions \ref{prop: finiteness sphere or torus x I}, \ref{prop:T3-Finiteness}, and \ref{prop:Hantzsche-Wendt} in the flexible case.

    Now assume $M$ has non-trivial JSJ decomposition. If $M$ is a  $T^2$-bundle over $S^1$ then $M$ must be Anosov (all others are Seifert-fibered), in which case $\BDiff_{D^3}(M)$ is finite by Theorem \ref{thm:Anosov-T2-bundles}.  We may therefore suppose that~$M$ has non-trivial JSJ decomposition and is not the total space of a $T^2$-bundle over $S^1$. Let $T \subset M$ be the collection of JSJ tori, which is a canonical submanifold by \cref{cor:JSJ-canonical}. To prove the theorem, we verify that \cref{prop:cut-along-tori} applies,  which (by \cref{lem:JSJ-finite-MCG}) reduces to checking the hereditary finiteness of $(N, F)$ and $(N \setminus \interior{D}^3, S^2)$ for any connected component $N \subset M \ca T$ and any non-empty union of boundary components $F \subset \partial N$. 
    The former was shown by Hatcher--McCullough \cite{HatcherMcCullough}; see \cref{thm:HM-hereditary}.
    For the latter, since each $N$ is either hyperbolic or Haken Seifert-fibered and fiber-rigid, hereditary finiteness of $(N \setminus \interior{D}^3, S^2)$ holds by \cref{thm:Isom-Equivalence} and \cref{lem:Riemannian-hereditarily-finite} in the hyperbolic case or  \cref{prop:Haken-SF-Finiteness} otherwise.
\end{proof}

\section{Kontsevich's finiteness conjecture and consequences} \label{section: finiteness of BDiff}
In this section we prove Kontsevich's finiteness conjecture \cite[Problem 3.48]{KirbyProblems} -- this is the statement of the following theorem. 
As a consequence in \cref{subsection - finite type} we show that $\BDiff(M)$ is always of finite type and in \cref{subsection - sharpness} we discuss the sharpness of our results.

\begin{thm}\label{thm:Kontsevich-finiteness-conjecture}
    Let $M$ be a compact, connected, orientable $3$-manifold with non-empty boundary~$\partial M$.
    Then 
        $\BDiff_\partial(M)$
    is {homotopy finite}.
\end{thm}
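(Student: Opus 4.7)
The plan is to establish the stronger \cref{thm:more general Kontsevich-finiteness-conjecture}, from which \cref{thm:Kontsevich-finiteness-conjecture} follows by setting $F = \partial M$. I would proceed by induction on the number of prime factors of $\scl{M}$.

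For the base case, $\scl{M}$ is irreducible. If $\partial M$ has no spherical boundary components, then $M = \scl{M}$ has empty or incompressible toroidal boundary, the hypothesis that $\partial M \setminus F$ consists of incompressible tori ensures $F$ contains all compressible boundary components (vacuously), and \cref{thm: HatcherMcCullough finiteness} of Hatcher--McCullough applies. If $\partial M$ has spherical boundary, I would iteratively trade each spherical component for an interior embedding constraint by filling with a 3-ball: fixing a boundary sphere pointwise is equivalent, up to contractibility of the space of collars, to choosing and fixing an embedded disk in $\scl{M}$. The resulting fiber sequences, combined with \cref{thm:sect4main} applied to $\scl{M}$, yield homotopy finiteness in this case as well.

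For the inductive step, assume $\scl{M}$ has at least two prime factors. Since $\partial M \neq \emptyset$, $M \not\cong S^1 \times S^2$, and \cref{thm: sepNP contractible} gives $\|\sepNP_\bullet(M)\| \simeq *$. Combined with the fact that homotopy orbits commute with fat geometric realisation (\cref{lem:homotopy-orbits-functor}), this yields
\[
    \BDiff_F(M) \simeq \|\sepNP_\bullet(M)\| \hq \Diff_F(M) \simeq \|\sepNP_\bullet(M) \hq \Diff_F(M)\|.
\]
I would show the right side is homotopy finite by verifying (i) each $\sepNP_p(M) \hq \Diff_F(M)$ is homotopy finite and (ii) only finitely many non-degenerate levels contribute, using that a separating system in $\sepNP(M)$ contains boundedly many spheres. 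For (i), \cref{cor:umb-retractile} supplies local retractility, and the orbit--stabiliser lemma (\cref{lem:orbit-stabiliser-v2}) gives
\[
    \sepNP_p(M) \hq \Diff_F(M) \simeq \coprod_{[\Sigma_\bullet]} B\,\mathrm{Stab}_{\Diff_F(M)}(\Sigma_\bullet).
\]
The orbit set is finite: isotopy classes of separating systems are classified by finitely many decorated dual graphs whose vertices carry one of the finitely many prime factors of $\scl{M}$. For each orbit representative, the stabiliser of the chain is commensurable with $\Diff_F(M, \Sigma_p)$, where $\Sigma_p$ is the top of the chain, up to a finite group of flag-preserving permutations of the spheres. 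Applying \cref{cor:cut-along-spheres} along $\Sigma_p$ then reduces to checking homotopy finiteness of $\BDiff_{F'}(N)$ for components $N \subset M \ca \Sigma_p$ and suitable $F' \supseteq \partial N \cap F$ with $F' \neq \emptyset$. Because $\Sigma_p$ is separating, each $\scl{N}$ has strictly fewer prime factors than $\scl{M}$, and $\partial N \setminus F'$ still consists of spheres and incompressible tori, so the induction hypothesis applies to $(N, F')$.

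The main obstacle will be the combined depth-control argument and careful accounting of chain stabilisers. Specifically, one must bound the stabiliser of a genuine chain $\Sigma_0 \subsetneq \cdots \subsetneq \Sigma_p$ by the stabiliser of its top element modulo a finite group, and simultaneously check that the fat geometric realisation of a simplicial space with bounded non-degenerate depth and levelwise homotopy finite terms is itself homotopy finite. A secondary subtlety arises in the base case with spherical boundary: the iterative reduction must correctly accommodate $F \subsetneq \partial M$, so that some spherical boundary components translate into unconstrained framed configuration data in $\scl{M}$ rather than fixed disks.
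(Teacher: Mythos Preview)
Your proposal is correct and follows essentially the same strategy as the paper. One minor slip: your dichotomy ``$\scl{M}$ irreducible'' versus ``$\scl{M}$ has at least two prime factors'' misses the case $\scl{M}\cong S^1\times S^2$; the paper instead splits on ``$\scl{M}$ irreducible'' versus ``not'', and since cutting along a separating system already yields pieces with irreducible spherical closure, the reduction is a single step (the base case) rather than a genuine induction on the number of prime factors.
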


In fact, we will prove the following more general statement.

\begin{figure}
    \centering
    \resizebox{0.6\textwidth}{!}{
\begin{tikzpicture}[node distance=7mm and 7mm]
	\tikzstyle{box} = [rectangle, draw, text width=14em, text centered, rounded corners, minimum height=3em]
    \tikzstyle{leafbox} = [rectangle, draw, text width=13em, text centered, rounded corners, minimum height=3em, fill=orange!10]
	\tikzstyle{thmbox} = [text width=8em, text centered, minimum height=2em]
	\tikzstyle{decoration}=[draw=none, anchor=west, font=\small]
	\tikzstyle{arrow}=[->]
	 
        \node (main thm) [box] {\cref{thm:more general Kontsevich-finiteness-conjecture}:\\$\BDiff_F(M)$ homotopy finite  } ;
        \node (sep contractible) [thmbox, below= of main thm] {\cref{thm: sepNP contractible} };
        \node (new model) [box, below= of sep contractible] {$|\sepNP_\bullet(M)| \hq\Diff_F(M)$ homotopy finite} ;
        \node (lemmas) [thmbox, below= of new model] {\cref{lem:face-map-is-finite-covering}, \cref{lem:orbit-stabiliser-v2}} ;
        \node (stabilisers) [box, below left = and -10mm of lemmas] {$\BDiff_F(M, \Sigma)$ homotopy finite \\ \cref{prop:finiteness-sphere-system}};
        \node (cut spheres) [thmbox, below = of stabilisers] {\cref{cor:cut-along-spheres}};
        \node (orbits) [leafbox, below right = and -10mm of lemmas] {$\sepNP(M)/\Diff_F(M)$ finite \\ \cref{lem:finitely-many-graphs-HM}};
	\node (m hat irreducible) [box, below= of cut spheres] {$\widehat{M}$ irreducible \\ \cref{thm:prime-finiteness}} ;
	\node (thm inductive) [thmbox,below= of m hat irreducible] {\cref{lem:finiteness-fixing-balls}} ;
	\node (m irreducible) [leafbox,below left=and -10mm of thm inductive] {$M$ irreducible \\ \cref{thm:HM-hereditary}} ;
	\node (m minus ball) [leafbox,below right= and -10mm of thm inductive] {$M=\widehat{M}\setminus D^3$\\ $F=\partial D^3$, $\widehat{M}$ irreducible \\
 \cref{thm:sect4main}} ;
	
	\draw[] (m hat irreducible) -- (thm inductive);
	\draw[arrow]  (thm inductive) -- (m minus ball);
	\draw[arrow]  (thm inductive) -- (m irreducible);
        \draw[] (main thm) -- (sep contractible);
	\draw[arrow]  (sep contractible) -- (new model);
	\draw[]  (new model) -- (lemmas);
        \draw[arrow] (lemmas) -- (stabilisers);
        \draw[arrow] (lemmas) -- (orbits);
	\draw[]  (stabilisers) -- (cut spheres);
        \draw[arrow] (cut spheres) -- (m hat irreducible);
\end{tikzpicture}
}
    \caption{Flowchart of the proof of \cref{thm:more general Kontsevich-finiteness-conjecture}. Note that the bottom central leaf is the root vertex of the flow chart in \cref{fig:flowchartirreducible} so gluing the two flow charts together here gives a full flow chart for the main theorem. }
    \label{fig:flowchartmain}
\end{figure}
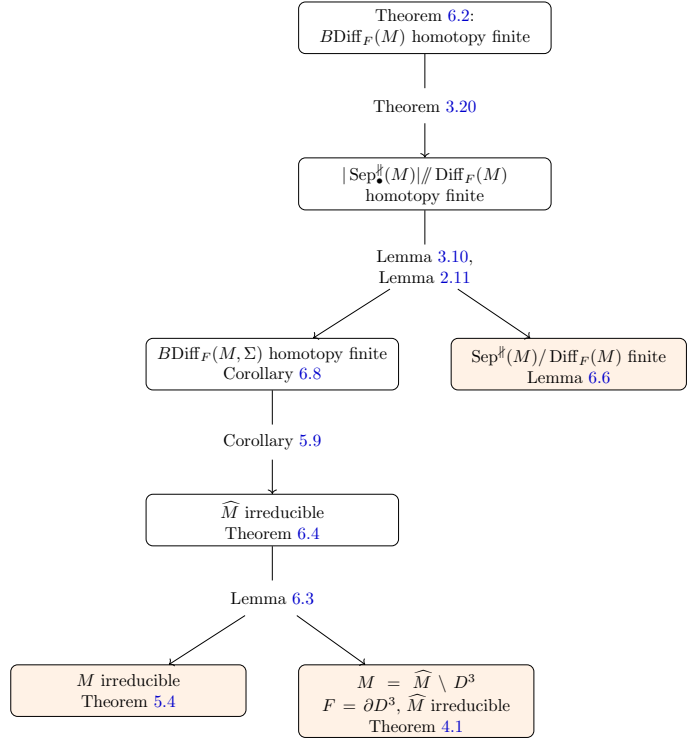
\begin{thm}\label{thm:more general Kontsevich-finiteness-conjecture}
    Let $M$ be a compact, connected, orientable $3$-manifold, and let $\emptyset \neq F \subseteq \partial M$ be a non-empty union of connected components.
    Assume that $\partial M \setminus F$ consists of spheres and incompressible tori.
    Then $\BDiff_F(M)$ is homotopy finite.
\end{thm}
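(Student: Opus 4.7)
The plan is to exploit the contractibility of $\|\sepNP_\bullet(M)\|$ from \cref{thm: sepNP contractible} to present $\BDiff_F(M)$ as a homotopy colimit of classifying spaces associated to the irreducible pieces of $M$, and then invoke the finiteness results of \cref{section:strongsmale} and \cref{section:decomposing irreducible} to conclude. Since $F$ is non-empty we have $M \not\cong S^1 \times S^2$, so \cref{thm: sepNP contractible} gives
\[
    \BDiff_F(M) \simeq \|\sepNP_\bullet(M)\| \hq \Diff_F(M) \simeq \|\sepNP_\bullet(M) \hq \Diff_F(M)\|,
\]
using that fat geometric realisation commutes with homotopy orbits (\cref{lem:homotopy-orbits-functor}). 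A separating system in $\sepNP(M)$ contains no two parallel spheres, so the number of its spheres is bounded by a constant $N(M)$ depending only on the prime decomposition of $M$. Consequently, all simplices of $\sepNP_\bullet(M)$ in degree $> N(M)$ are degenerate, and $\|\sepNP_\bullet(M) \hq \Diff_F(M)\|$ is the fat realisation of a finite-dimensional semi-simplicial space. By \cref{lem:fiber-sequence-finite} applied inductively to the skeletal filtration, it suffices to prove that each space $\sepNP_n(M) \hq \Diff_F(M)$ is homotopy finite.

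Next, I would apply the homotopy orbit-stabiliser lemma (\cref{lem:orbit-stabiliser-v2}), which is legitimate because \cref{lem:face-map-is-finite-covering} together with local retractility of $\sep(M)$ implies $\sepNP_n(M)$ is locally $\Diff_F(M)$-retractile. This gives a decomposition
\[
    \sepNP_n(M) \hq \Diff_F(M) \simeq \coprod_{[\Sigma_0 \subseteq \dots \subseteq \Sigma_n]} B\mathrm{Stab}_{\Diff_F(M)}(\Sigma_0 \subseteq \dots \subseteq \Sigma_n),
\]
where each stabiliser is a finite index subgroup of $\Diff_F(M, \Sigma_n)$. Two things need to be checked: first, that there are only finitely many orbits of $\Diff_F(M)$ on $\sepNP_n(M)$, and second, that the stabiliser groups have homotopy finite classifying spaces. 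Finiteness of the orbit set should follow from a separate combinatorial lemma encoding an isotopy class of non-parallel separating system by its finite dual graph decorated with the diffeomorphism types of the components of $M\ca \Sigma$; since the prime decomposition of $M$ is finite, there are only finitely many such decorated graphs up to equivalence.

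For the finiteness of $\BDiff_F(M, \Sigma)$, I would apply \cref{cor:cut-along-spheres}, which reduces the problem to showing $\BDiff_{F'}(N)$ is homotopy finite for each connected component $N \subseteq M \ca \Sigma$ and any admissible $F' \subseteq \partial N$ containing $N \cap F$. Because $\Sigma$ is separating, each $\scl{N}$ is irreducible. If $N$ has no spherical boundary then $N = \scl{N}$ is itself irreducible and the homotopy finiteness of $\BDiff_{F'}(N)$ is exactly Hatcher--McCullough's \cref{thm: HatcherMcCullough finiteness}. If $N$ has spherical boundary components, a further inductive step (on the number of such spheres, using the fiber sequence $\Diff_{D^3}(N') \to \Diff(N') \to \emb(D^3, \interior{N'})$ to fill in one sphere at a time and combine with \cref{lem:fiber-sequence-finite}) should reduce to the case $N = \scl{N} \setminus \interior{D}^3$ with $F' = \partial D^3$, which is the content of \cref{thm:sect4main}.

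The genuinely hard part is \cref{thm:sect4main}, which we may invoke as a black box here; the present theorem is then a largely formal inductive assembly. The main bookkeeping difficulty will be in the last step above: making sure that at each stage of the induction the boundary components we are required to fix always include the compressible ones (as needed for Hatcher--McCullough) and that $F'$ remains non-empty on each component (as needed for \cref{cor:cut-along-spheres}). Once these constraints are tracked carefully through the cutting process, the remaining arguments are straightforward applications of the framework of topological posets, $G$-locally retractile spaces, and principal short exact sequences developed in \cref{section: preliminaries}.
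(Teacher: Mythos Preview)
Your proposal is correct and follows essentially the same route as the paper: contractibility of $\|\sepNP_\bullet(M)\|$, bounded dimension, homotopy orbit--stabiliser, finiteness of orbits via dual-graph combinatorics, and finiteness of stabilisers via \cref{cor:cut-along-spheres} reduced to Hatcher--McCullough or \cref{thm:sect4main}. The only minor wrinkle is that the fat realisation of the simplicial nerve $\sepNP_\bullet(M)$ is not literally finite-dimensional---you need to pass to the semi-simplicial space of \emph{strict} chains (as the paper does, invoking \cite[Lemma~2.6]{EbertRandalWilliams}) before the skeletal argument applies; once noted, everything goes through exactly as you sketch.
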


As in the proof of \cref{thm:sect4main}, we have outlined the key steps of the proof as a directed tree in \cref{fig:flowchartmain} for the reader to follow. Each node is labeled by a statement or theorem. 
The basic outline for the proof of \cref{thm:more general Kontsevich-finiteness-conjecture} is as follows.  By \cref{thm: sepNP contractible}, $\|\sepNP_\bullet(M)\| \hq\Diff_F(M)$ is a model for $\BDiff_F(M)$ and we show that only simplices below a certain dimension contribute. Combining this with \cref{lem:face-map-is-finite-covering} we reduce to the claim that $\sepNP(M)\hq \Diff_F(M)$ is homotopy finite.
By the homotopy orbit-stabiliser theorem (\cref{lem:orbit-stabiliser-v2}), this amounts to showing that there are finitely many orbits (\cref{lem:finitely-many-graphs-HM}) and that the classifying space of each orbit stabiliser $\Diff_F(M,\Sigma)$ is homotopy finite (\cref{prop:finiteness-sphere-system}). We introduce the notion of a labeled dual graph $G_\Sigma$ to a separating system $\Sigma$ and use this to show that the set of orbits is finite. That the classifying space of the stabiliser $\Diff_F(M,\Sigma)$ is homotopy finite is further reduced (\cref{cor:cut-along-spheres}), by inductively cutting along 2-spheres in $\Sigma$, to homotopy finiteness of $\BDiff_F(M)$ where $\scl{M}$ is irreducible (\cref{thm:prime-finiteness}). The latter in turn is reduced, by repeatedly filling spheres (\cref{lem:finiteness-fixing-balls}), to our two base cases: when $M$ is itself irreducible (Hatcher--McCullough, \cref{thm:HM-hereditary}), or when $\scl{M}$ is obtained from $M$ by filling a single boundary sphere and $F$  consists of only this sphere (\cref{thm:sect4main}).

\subsection{The case of one irreducible factor}
As recalled in the introduction, Hatcher and McCullough prove Theorem \ref{thm:Kontsevich-finiteness-conjecture} when $M$ is irreducible, and in Theorem~\ref{thm:sect4main} we prove Theorem \ref{thm:Kontsevich-finiteness-conjecture} when $\scl{M}$ is irreducible and $M$ has one spherical boundary component.
However, before we use the space of separating systems to deduce the general case from this, we first have to consider the case where $M$ is allowed to have multiple spherical boundary components.

First we note that cutting out additional disks and fixing or not fixing their boundary preserves homotopy finiteness of the classifying space.
\begin{lem}\label{lem:finiteness-fixing-balls}
    Suppose that $M$ is a connected 3-manifold and $F \subseteq \partial M$ is a union of connected components of its boundary.
    Let $B \subset \interior{M}$ be a finite union of disjoint disks.
    Then we have equivalences 
    \[
        \BDiff_{F \sqcup B}(M) \simeq
        \BDiff_{F \sqcup \partial B}( M\setminus \interior{B} ) 
        \quad \text{ and } \quad 
        \BDiff_F(M, B)
        \simeq \BDiff_F(M \setminus \interior{B}, \partial B) .
    \]
    Suppose moreover that $\BDiff_F(M)$ is homotopy finite, then so are the above spaces.
\end{lem}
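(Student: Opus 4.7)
The plan is to first establish the two equivalences using the contractibility of collars and the Smale conjecture for $D^3$, and then deduce homotopy finiteness from a homotopy fiber sequence whose base is $\BDiff_F(M)$.

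For the equivalence $\BDiff_{F \sqcup B}(M) \simeq \BDiff_{F \sqcup \partial B}(M \setminus \interior{B})$, I would apply the second part of \cref{thm:contractible-collars} with $U := B$: this directly yields $\Diff_B(M) \xrightarrow{\simeq} \Diff_{\partial B}(M \setminus \interior{B})$, and the same argument (contractibility of the space of collars realisable by diffeomorphisms supported near $\partial B$) carries through for the subgroups additionally fixing $F$ pointwise, since $F$ is disjoint from a collar neighbourhood of $B$. For the second equivalence $\BDiff_F(M, B) \simeq \BDiff_F(M \setminus \interior{B}, \partial B)$, consider the restriction $r\colon \Diff_F(M, B) \to \Diff_F(M \setminus \interior{B}, \partial B)$. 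Its kernel is $\Diff_\partial(B) \cong \Diff_\partial(D^3)^{k}$, contractible by Hatcher's Smale conjecture \cite{Hatcher}. The map $r$ hits every path component because any diffeomorphism of $\partial B \cong \coprod_k S^2$ extends radially to $\coprod_k D^3$, and local sections exist by the collar-space argument, making $r$ a $\Diff_\partial(B)$-principal bundle. Applying \cref{lem:fiberseq/fiberseq} then yields the claimed equivalence on classifying spaces.

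For the finiteness statement, I would use the action of $\Diff_F(M)$ on the unparametrised embedding space $\umb(B, \interior{M})$. By \cref{cor:umb-retractile} this action is locally retractile, and the stabiliser of the standard embedding is precisely $\Diff_F(M, B)$, so the homotopical orbit-stabiliser lemma (\cref{lem:orbit-stabiliser-v2}) yields the homotopy fiber sequence
\[
    \umb_0(B, \interior{M}) \longrightarrow \BDiff_F(M, B) \longrightarrow \BDiff_F(M),
\]
where $\umb_0$ is the orbit of the standard embedding, a union of path components of $\umb(B, \interior{M})$. Since $\Diff(D^3) \simeq \Or(3)$ by Hatcher, $\umb(B, \interior{M})$ is weakly equivalent to the unordered configuration space $\Conf_k(\interior{M})$ of the compact $3$-manifold $M$, which is homotopy finite (for instance via the Fulton--MacPherson compactification, which presents it as the interior of a compact manifold with corners). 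Thus $\BDiff_F(M, B)$ is homotopy finite by \cref{lem:fiber-sequence-finite}. To conclude homotopy finiteness of $\BDiff_{F \sqcup B}(M)$, I would invoke the principal short exact sequence $1 \to \Diff_{F \sqcup B}(M) \to \Diff_F(M, B) \to \Diff(B) \to 1$ (surjective because $B$ is a disjoint union of disks, and any self-diffeomorphism of $B$ extends to $M$) and the resulting homotopy fiber sequence $\Diff(B) \to \BDiff_{F \sqcup B}(M) \to \BDiff_F(M, B)$, whose base is homotopy finite and whose fiber $\Diff(B) \simeq \Or(3)^{k} \rtimes \Sym_k$ is a compact Lie group; one more application of \cref{lem:fiber-sequence-finite} completes the proof.

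The main subtleties will be verifying the principal-bundle conditions required to invoke \cref{lem:fiberseq/fiberseq} and \cref{lem:orbit-stabiliser-v2}, and recalling the standard fact that configuration spaces of compact manifolds are homotopy finite.
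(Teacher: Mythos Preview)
Your proposal is correct and follows essentially the same route as the paper: the two equivalences are proved exactly as in the paper (collars for the first, the Smale conjecture for the kernel $\Diff_\partial(B)$ for the second), and the finiteness of $\BDiff_F(M,B)$ is obtained via the same homotopy fiber sequence over $\BDiff_F(M)$ with fiber the unordered configuration space, made homotopy finite by Fulton--MacPherson compactification.

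The one place you diverge slightly is the finiteness of $\BDiff_{F\sqcup B}(M)$. The paper does this in parallel with the other case, using the action of $\Diff_F(M)$ on $\emb^+(B,\interior{M})$ to get a fiber sequence $\emb^+(B,\interior{M}) \to \BDiff_{F\sqcup B}(M) \to \BDiff_F(M)$ and then arguing that the framed ordered configuration space $\Conf_k^{\mathrm{fr}}(M)\simeq \emb(B,\interior{M})$ is homotopy finite by an inductive Fadell--Neuwirth argument. You instead bootstrap from the already-established finiteness of $\BDiff_F(M,B)$ via the fiber sequence $\Diff(B) \to \BDiff_{F\sqcup B}(M) \to \BDiff_F(M,B)$. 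Both are fine; your route is marginally more economical since it avoids a separate discussion of framed configurations, while the paper's route has the virtue of treating both cases symmetrically over the common base $\BDiff_F(M)$.
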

\begin{proof}
    The first equivalence holds because the space of collars is contractible, see \cref{thm:contractible-collars}.
    For the second equivalence we use the fiber sequence
    \[
        \Diff_{M\setminus \interior{B}}(M) \longrightarrow \Diff_F(M, B) \longrightarrow \Diff_F(M\setminus \interior{B}, \partial B).
    \]
    Any diffeomorphism of $\partial B$ can be extended over the interior, so the fibration is surjective.
    The group in the fiber is equivalent to $\Diff_\partial(B)$, which is a product of $\Diff_\partial(D^3)$'s, hence  contractible by Hatcher's proof of the Smale conjecture.
    This shows that the fibration is an equivalence, which gives us the second equivalence.

    To prove the homotopy finiteness, we note that $\Diff_F(M)$ acts transitively on the space of orientation preserving embeddings $\emb^+(B, \interior{M})$ and unparametrised embeddings $\umb(B, \interior{M})$ with stabiliser groups $\Diff_{F\sqcup B}(M)$ and $\Diff_F(M, B)$, respectively.
    Both actions are locally retractile, so we can apply \cref{lem:orbit-stabiliser-transitive} to obtain homotopy fiber sequences:
    \begin{align*}
        \umb(B, M) \longrightarrow
        & \BDiff_F(M, B) \longrightarrow \BDiff_F(M)\\
         \emb^+(B, M) \longrightarrow
        &\BDiff_{F\sqcup B}(M) \longrightarrow \BDiff_F(M). 
    \end{align*}
    Since we assumed that the base spaces are homotopy finite in both cases, it will suffice to argue that the fibers a homotopy finite.
    We pass to considering 
    The space $\emb(B, M)$ since $\emb^+(B, M)$ is a union of path components. Then
    $\emb(B, M) = \emb(\amalg_k D^3, M)$ is homotopy equivalent to $\Conf_{k}^{\rm fr}(M)$, the space of ordered framed configurations of $M$. 
    This space is homotopy finite, as can be seen by inductively using the homotopy fiber sequence
    \[
        \Fr(M \setminus \{p_1, \dots, p_{k-1}\}) \longrightarrow
        \Conf_{k}^{\rm fr}(M) \longrightarrow
        \Conf_{k-1}^{\rm fr}(M \setminus \{p\}) 
    \]
    and the fact that the frame bundle of a punctured version of $M$ is still homotopy finite.
    
    The space $\umb(B, M)$ is obtained from $\emb(B, M)$ by taking the quotient with respect to the $\Diff(B)$ action.
    Because $\Diff(D^3) \simeq \SO(3)$, we get that $\umb(B, M)$ is homotopy equivalent to the unordered configuration space of $k$ points in $M$. 
    This is equivalent to the quotient of the free strata-preserving action of $\Sigma_k$ on the Fulton-MacPherson compactification $C_k[M]$ \cite[Theorem 4.10]{Sinha04}.
    As such it is itself a compact manifold with corners and hence homotopy finite.
\end{proof}

\begin{thm}\label{thm:prime-finiteness}
    Suppose that $M$ is such that the manifold $\scl{M}$ obtained by filling all spherical boundary components is irreducible. Let $F \subseteq \partial M$ be a non-empty union of boundary components such that $\partial M \setminus F$ contains only spheres and incompressible tori.
    Then $\BDiff_F(M)$ is homotopy finite.
\end{thm}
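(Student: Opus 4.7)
The plan is to reduce to the two already-established base cases---Hatcher--McCullough's theorem (\cref{thm:HM-hereditary}) for irreducible manifolds with incompressible toroidal boundary, and \cref{thm:sect4main} for an irreducible manifold with one fixed embedded disk---by iteratively filling in 2-sphere boundary components via \cref{lem:finiteness-fixing-balls}.

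First I would reduce to the case where $\partial M \setminus F$ contains no 2-spheres. Let $S_1,\ldots,S_k$ be the spherical components of $\partial M \setminus F$ and let $M_1$ be the manifold obtained by capping each $S_i$ off with a ball $B_i$, so that $M = M_1 \setminus \interior{B}$ for $B = \bigsqcup_{i=1}^k B_i$. A diffeomorphism fixing $F$ pointwise automatically preserves $\partial M \setminus F$ setwise, and since diffeomorphisms preserve the topological type of boundary components, it also preserves $\bigsqcup_i S_i$ setwise. Hence $\Diff_F(M) = \Diff_F(M, \partial B)$, so \cref{lem:finiteness-fixing-balls} yields
\[
    \BDiff_F(M) = \BDiff_F(M, \partial B) \simeq \BDiff_F(M_1, B),
\]
and the same lemma shows that the right-hand side is homotopy finite whenever $\BDiff_F(M_1)$ is. Thus it suffices to prove the theorem for $(M_1, F)$, where now $\partial M_1 \setminus F$ consists only of incompressible tori and $\scl{M_1} = \scl{M}$ is still irreducible.

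Next I would induct on the number of 2-sphere components of $F$. If $F$ contains no 2-spheres, then all boundary components of $M_1$ are incompressible tori, so $M_1 = \scl{M_1}$ is irreducible and $\BDiff_F(M_1)$ is homotopy finite by \cref{thm:HM-hereditary}. For the inductive step, choose a sphere $S \in F$ and cap it off by a ball $B$ to form $M_2 = M_1 \cup_S B$. Then $\scl{M_2} = \scl{M}$ remains irreducible and $\partial M_2 \setminus (F \setminus S) = \partial M_1 \setminus F$ is a disjoint union of incompressible tori. If $F \setminus S \neq \emptyset$, then $(M_2, F\setminus S)$ satisfies the hypotheses of the theorem with one fewer sphere in the pointwise-fixed set, so the induction hypothesis gives that $\BDiff_{F\setminus S}(M_2)$ is homotopy finite, and \cref{lem:finiteness-fixing-balls} applied to $M_2$ with the ball $B$ yields
\[
    \BDiff_F(M_1) = \BDiff_{(F\setminus S)\sqcup \partial B}(M_2 \setminus \interior{B}) \simeq \BDiff_{(F\setminus S)\sqcup B}(M_2),
\]
which is therefore homotopy finite. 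In the remaining case $F = \{S\}$, the manifold $M_2$ is irreducible with empty or incompressible toroidal boundary, so \cref{thm:sect4main} gives the homotopy finiteness of
\[
    \BDiff_F(M_1) = \BDiff_{\partial B}(M_2 \setminus \interior{B}) \simeq \BDiff_B(M_2) \simeq \BDiff_{D^3}(M_2).
\]

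The main technical care is in the bookkeeping between diffeomorphisms fixing boundary components pointwise and those preserving them setwise; \cref{lem:finiteness-fixing-balls} is the tool that lets us trade a pointwise-fixed boundary sphere for a setwise-preserved interior ball, at the cost only of the homotopy finite space of disk embeddings. The induction terminates cleanly because filling the last sphere in $F$ lands us precisely in the irreducible-with-fixed-disk case handled by \cref{thm:sect4main}.
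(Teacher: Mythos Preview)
Your proof is correct and follows the same strategy as the paper: use \cref{lem:finiteness-fixing-balls} to cap off boundary spheres one at a time until landing in one of the two base cases (Hatcher--McCullough for irreducible $M$, or \cref{thm:sect4main} for irreducible $M$ with a single fixed disk); the paper does this in two direct case splits rather than by induction on the number of spheres in $F$, but the content is identical.

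Two small corrections. First, in your base case you should invoke \cref{thm: HatcherMcCullough finiteness} rather than \cref{thm:HM-hereditary}: the latter is only stated for hyperbolic or Seifert-fibered $M$, whereas your $M_1$ may have nontrivial JSJ decomposition. Second, your assertion that ``all boundary components of $M_1$ are incompressible tori'' is not what you mean---$F$ may well contain higher-genus or compressible components. What you actually need (and have) is that $M_1$ has no spherical boundary and that $\partial M_1 \setminus F$ consists of incompressible tori, so that $F$ contains all compressible boundary components and Hatcher--McCullough applies.
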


\begin{proof}
    We can write $M = \widehat{M}\setminus \interior{B}$ for $B$ a disjoint union of disks. 
    Decompose $B = B_1 \sqcup B_2$ such that $F \cap \partial B = \partial B_1$
    and write $F_0 = F \cap \partial \scl{M}= F \setminus \partial B_1$.

    There are two cases. 
    If $F_0 \neq \emptyset$ we will reduce to \cref{thm: HatcherMcCullough finiteness} of Hatcher--McCullough \cite{HatcherMcCullough}, who show that $\Diff_{F_0}(\widehat{M})$ is homotopy finite.
    Applying \cref{lem:finiteness-fixing-balls} once, we see that $\BDiff_{F_0 \sqcup \partial B_1}(\scl{M}\setminus \interior{B}_1)$ is homotopy finite.
    Applying it again we get that
    \[
        \BDiff_{F_0 \sqcup \partial B_1}(\scl{M}\setminus \interior{B}, \partial B_2)
        =
        \BDiff_{F}(M, \partial B_2)
        = \BDiff_F(M)
    \]
    is homotopy finite.
    Here the first equality holds because $F_0 \sqcup \partial B_1 = F$ and $\widehat{M}\setminus \interior{B} = M$.
    The second equality holds because any diffeomorphism of $M$ must fix $\partial B$ as a subset (because these are the spherical boundaries) and so if it fixes $\partial B_1$ pointwise, it must automatically fix $\partial B_2$ as a subset.

    If $F_0 = \emptyset$, then $  \partial B_1=F \neq \emptyset$ must be non-empty.
    In this case we reduce to \cref{thm:sect4main}.
    Pick some preferred component $D \subset B_1$.
    Then \cref{thm:sect4main} tells us that $\BDiff_D(\scl{M})$
    is homotopy finite.
    Applying \cref{lem:finiteness-fixing-balls} twice gives firstly that $\BDiff_{B_1}(\scl{M})$ is homotopy finite and then that
    \[
        \BDiff_{B_1}(\scl{M}\setminus \interior{B}_2, \partial B_2) 
        \simeq \BDiff_{\partial B_1}(\scl{M}\setminus \interior{B}, \partial B_2) 
        = \BDiff_{\partial B_1}(\scl{M}\setminus \interior{B})
        = \BDiff_{F}(M)
    \]
    is also homotopy finite.
\end{proof}

\subsection{Deducing the general case using sphere systems}

In the final step of the proof, we use contractibility of $\|\sepNP_\bullet(M)\|$, and our results for irreducible manifolds, possibly with disks removed, to prove Kontsevich's finiteness conjecture. We require two more ingredients, relating to the action of $\Diff_F(M)$ on $\sepNP(M)$. First, we show that this action has a finite number orbits, and second, that the stabiliser of a separating system $\Sigma \in \sepNP_\bullet(M)$ is homotopy finite.

\begin{lem}\label{lem:bounding-graphs}
    Let $M$ be a $3$-manifold with prime decomposition
    $$M \cong ((S^1 \times S^2)^{\# g} \# P_1 \# \dots \# P_n) \setminus (\amalg_m \interior{D}^3)$$ 
    where the $P_i$ are irreducible, and assume that $M$ is not prime.
    Then every $\Sigma \in \sepNP(M)$ consists of at most $2(n+m) + 3(g-1)$ spheres and $M \ca \Sigma$ has at most $2(n+m+g-1)$ connected components.
\end{lem}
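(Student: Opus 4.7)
The plan is to introduce the dual graph $G_\Sigma$ of a separating system $\Sigma$ whose vertices are the components of $M\ca \Sigma$ and whose edges are the spheres of $\Sigma$ (with a self-loop at $U$ whenever both sides of a $\Sigma$-sphere lie in $U$), and to bound $c = |V(G_\Sigma)|$ and $k = |E(G_\Sigma)|$ using two ingredients: the combinatorics of $G_\Sigma$ and the topology of the components. For each vertex $U$, let $d(U)$ denote its degree (loops counted twice), $b(U)$ the number of sphere boundary components of $U$ coming from $\partial M$, and $h(U) := b(U) + d(U)$ the total spherical boundary of $U$. A direct count gives $\sum_U d(U) = 2k$, $\sum_U b(U) = m$, and thus $\sum_U h(U) = m + 2k$.

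The first key step identifies $b_1(G_\Sigma)$ with $g$. One reconstructs $\scl{M}$ from $\bigsqcup_i \scl{U_i}$ by performing a connected sum along each edge of $G_\Sigma$: tree edges give ordinary connected sums, so after processing a spanning tree the result is $\#_i \scl{U_i}$; each of the remaining $b_1(G_\Sigma)$ edges attaches an orientation-preserving 1-handle to this closed orientable manifold, introducing an $S^1 \times S^2$ summand. Since each $\scl{U_i}$ is irreducible (so either $S^3$ or one of the $P_j$), Kneser--Milnor uniqueness applied to
\[
    \scl{M} \cong (\#_i \scl{U_i}) \# (S^1\times S^2)^{\# b_1(G_\Sigma)} \cong (S^1\times S^2)^{\# g} \# P_1 \# \cdots \# P_n
\]
forces $b_1(G_\Sigma) = g$ and shows that exactly $n$ of the $\scl{U_i}$ are non-spherical (the $P_j$'s). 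In particular $k = c + g - 1$.

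The second key step bounds $h(U)$ from below by a case analysis of the ``small'' components. Since $M$ is not prime, $\Sigma \neq \emptyset$, so every component must meet $\Sigma$ and $d(U)\geq 1$. For a $P_j$-vertex this already gives $h(U)\geq 1$. For an $S^3$-vertex, $h(U) = 0$ is excluded (isolated $S^3$ contradicts connectedness of $G_\Sigma$) and $h(U) = 1$ (so $U = D^3$) would force either $d(U) = 0$ or some $\Sigma$-sphere to bound a ball, contradicting $\Sigma \in \sep(M)$; hence $h(U)\geq 2$. If equality holds then $U\cong S^2 \times I$ and $(b(U), d(U)) = (1,1)$, since $(0,2)$ would make two spheres of $\Sigma$ parallel (by \cref{lem:parallel-spheres-bound-cylinder}, violating $\sepNP$) and $(2,0)$ contradicts $d(U)\geq 1$. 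Such ``boundary-parallel'' components contain distinct boundary spheres of $M$, so their number $c_S^1$ is at most $m$.

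Writing $c_S'$ for the number of $S^3$-vertices with $h(U)\geq 3$, so $c = n + c_S^1 + c_S'$ and $c_S' = k - g + 1 - n - c_S^1$, substituting into
\[
    m + 2k = \sum_U h(U) \geq n + 2c_S^1 + 3c_S'
\]
and simplifying yields $k \leq m + c_S^1 + 2n + 3g - 3 \leq 2(n+m) + 3(g-1)$, and then $c = k - g + 1 \leq 2(n+m+g-1)$. The main obstacle is the identity $b_1(G_\Sigma) = g$: one has to verify carefully that gluing the $\scl{U_i}$ back along $G_\Sigma$ really does produce an $S^1 \times S^2$ summand for each independent cycle of $G_\Sigma$, which relies on the coorientability of $\Sigma$ and the orientability of $M$ so that each re-attaching really is an orientable 1-handle rather than a non-orientable bundle.
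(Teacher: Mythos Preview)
Your proof is correct and follows essentially the same approach as the paper: form the dual graph $G_\Sigma$, establish $b_1(G_\Sigma) = g$, and bound valences from below using the no-parallel and essentiality conditions. The paper's version is slightly more streamlined---it works with degrees alone rather than $h(U) = b(U) + d(U)$, calling a vertex \emph{special} if it contains a prime factor or a boundary sphere of $M$ (so at most $n+m$ special vertices, each at least univalent) and observing that non-special vertices are at least trivalent---but your added justification for $b_1(G_\Sigma) = g$ fills in a detail the paper asserts without proof.
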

\begin{proof}
    Since $M$ is not prime, every $\Sigma \in \sepNP(M)$ is non-empty.
    We can construct a dual graph $G_\Sigma$ that has a vertex for each component of $M \ca \Sigma$ and an edge for each sphere in $\Sigma$.
    Let $v$ be the number of vertices and $e$ the number of edges in this graph.
    This dual graph must have first Betti number $g$ and therefore it has Euler characteristic $v-e = 1-g$.
    Let us say that a vertex is \emph{special} if it corresponds to a component $N \subset M \ca \Sigma$ that contains a prime factor or a boundary sphere of $M$, and let $v_0$ be the number of special vertices.
    Then we have $v_0 \le n+m$, as there are $n$ prime factors and $m$ boundary spheres in $M$.
    A vertex is non-special if the manifold obtained from $N \subset M \ca \Sigma$ by filling in the boundary spheres coming from $2\Sigma$ is a $3$-sphere.
    Every non-special vertex is at least trivalent:
    if it was univalent the sphere in $\Sigma$ that bounds it would not be essential, 
    and if it was bivalent the two spheres in $\Sigma$ that bound it would be parallel.
    (The two spheres at a bivalent vertex also cannot be the same sphere because $M \not\cong S^1 \times S^2$.)
    In summary we have at least three half-edges per non-special vertex and at least one half-edge per special vertex (as $M$ is connected and $\Sigma$ non-empty).
    This yields the inequality $2e \ge 3(v-v_0) + v_0$.
    Rewriting, we get $2e \ge 3v - 2v_0 \ge 3v - 2(n+m)$.
    Using the Euler characterisitc equation $v-e=1-g$ from before we get that
    \[
        v \le 2(n+m+g-1) 
        \qquad\text{and}\qquad
        e \le 2(n+m) + 3(g-1)
    \]
    as claimed.
\end{proof}

Recall that in the proof of \cref{thm:two-prime-pieces} we showed that for $M\cong P_1 \# P_2$ any $\Sigma \in \sepNP(M)$ consists of a single sphere. This is an instance of the above lemma with $n=2$ and $g=m=0$.

\begin{lem}\label{lem:finitely-many-graphs-HM}
    Let $M$ be a 3-manifold with possibly empty boundary and $F \subseteq \partial M$ a union of boundary components. 
    Then the set $\sepNP(M)/\Diff_F(M)$ is finite.
\end{lem}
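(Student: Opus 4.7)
The plan is to associate to each $\Sigma \in \sepNP(M)$ a combinatorial invariant---the labeled dual graph $G_\Sigma$---and argue that (i) this invariant takes only finitely many values, and (ii) it is a complete invariant of the $\Diff_F(M)$-orbit. Concretely, let $G_\Sigma$ have one vertex per connected component of $M \ca \Sigma$ and one edge per sphere in $\Sigma$, with each edge joining the two vertices corresponding to the components on either side. A vertex $v$ corresponding to a component $N_v \subseteq M \ca \Sigma$ is labeled by the oriented diffeomorphism type of $N_v$ together with a specified bijection between its non-$2\Sigma$ boundary components and the boundary components of $M$ lying in $N_v$, distinguishing those in $F$ from those in $\partial M \setminus F$.

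For step (i), \cref{lem:bounding-graphs} bounds the number of vertices and edges of $G_\Sigma$ purely in terms of the prime decomposition of $M$. Moreover, by Kneser--Milnor uniqueness each $N_v$ decomposes as a connected sum of irreducible prime factors (necessarily appearing among the prime factors of $\scl{M}$) and of $S^1 \times S^2$ factors, with the spherical boundary components coming from $2\Sigma$ or $\partial M$. Combined with $\partial M$ having finitely many components, this gives a finite list of possible vertex labels, and therefore only finitely many isomorphism types of labeled dual graphs arise across $\Sigma \in \sepNP(M)$.

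Step (ii) is the main work. Given $\Sigma, \Sigma' \in \sepNP(M)$ together with a label-preserving isomorphism $\psi \colon G_\Sigma \to G_{\Sigma'}$, I would construct $\varphi \in \Diff_F(M)$ with $\varphi(\Sigma) = \Sigma'$ as follows. For each vertex $v$, choose an orientation-preserving diffeomorphism $\varphi_v \colon N_v \to N'_{\psi(v)}$ realising the label isomorphism. By composing with an isotopy supported in a collar of the $F$-boundary, we may arrange that $\varphi_v$ restricts to the identity on $F \cap \partial N_v$. Next, for each sphere $S \in \Sigma$, the restrictions to $S$ of the two $\varphi_v$'s on the adjacent vertices differ by an element of $\Diff^+(S^2) \simeq \SO(3)$; since $\SO(3)$ is path-connected, this element extends over a collar of $S$ inside $N_v$ (supported away from $\partial M$) as a diffeomorphism that is the identity on the far side of the collar. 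Absorbing this modification into $\varphi_v$ makes the restrictions on both sides of $S$ agree, so the $\varphi_v$ assemble to a global diffeomorphism $\varphi \in \Diff_F(M)$ with $\varphi(\Sigma) = \Sigma'$.

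The main obstacle is the simultaneous implementation of the two adjustments in step (ii): arranging $\varphi_v$ to be the identity on $F$ while retaining the flexibility to modify it in collars of $2\Sigma$-spheres. Both adjustments are supported away from each other (the $F$-boundary components are disjoint from the $2\Sigma$-spheres in the interior), so they can be performed independently, and the construction succeeds. The conclusion $\sepNP(M)/\Diff_F(M)$ finite then follows by combining (i) and (ii).
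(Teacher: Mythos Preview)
Your approach via labeled dual graphs is genuinely different from the paper's proof and is in fact the route alluded to (but not carried out) in \cref{rem:dual-graphs}. The paper instead quotes a result of Hatcher--McCullough \cite[Lemma~2.1]{HatcherMcCullough1990} (building on Scharlemann) to move any separating system by an element of $\Diff_F(M)$ into a fixed punctured $3$-sphere $S^3\setminus\interior{B}\subset M$ coming from a chosen prime decomposition; there, sphere systems without parallel spheres are classified up to isotopy by how they partition the finite set $\pi_0(\partial B)$, and finiteness is immediate. Your approach trades this geometric input for a gluing argument, which is appealing, but it carries a hidden cost.

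The gap is in step~(ii), specifically the sentence ``By composing with an isotopy supported in a collar of the $F$-boundary, we may arrange that $\varphi_v$ restricts to the identity on $F\cap\partial N_v$.'' This is only valid if $\varphi_v|_C$ is already isotopic to the identity in $\Diff(C)$ for each component $C\subset F\cap\partial N_v$. When $C\cong S^2$ this is automatic because $\Diff^+(S^2)$ is connected, but when $C$ has positive genus the mapping class group $\pi_0\Diff^+(C)$ is nontrivial, and your labeling---diffeomorphism type of $N_v$ together with a bijection on boundary components---does not guarantee that any abstract diffeomorphism $N_v\to N'_{\psi(v)}$ realising the label can be chosen with $\varphi_v|_C$ isotopic to $\id_C$. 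Concretely, the possible restrictions $\varphi_v|_C$ form a coset of the image of $\Diff(N_v,\text{components of }\partial N_v)\to\Diff(C)$, and you have not shown that this coset contains the identity.

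The assertion is in fact true, but proving it amounts to a relative (rel~$F$) uniqueness statement for the irreducible piece containing $C$. One clean way to see it is exactly the paper's move: once $\Sigma$ and $\Sigma'$ are both pushed into the standard region $S^3\setminus\interior{B}$, the piece containing $C$ always contains the \emph{same} fixed copy of $P_i\setminus\interior{D}^3\subset M$, and the diffeomorphism can be taken to be the identity there. Alternatively you could enrich the vertex labels to record the diffeomorphism type rel~$F\cap\partial N_v$, making step~(ii) tautological, but then finiteness of labels in step~(i) needs its own argument. Either way, the missing ingredient is nontrivial and essentially equivalent to what the paper's proof supplies.
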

\begin{proof}
    First, note that $\sepNP(M)/\Diff_F(M)$ is a discrete topological space and so any two isotopic separating systems lie in the same orbit.
    This follows because $\sepNP(M)$ is locally $\Diff_F(M)$-retractile or equivalently, by isotopy extension.

    We now essentially follow the proof of \cite[Lemma 2.1]{HatcherMcCullough1990}.
    Recall $M$ decomposes as
    \[
        M\cong \left(P_1\#\cdots \#P_n \# (S^1\times S^2)^{\# g}\right)\setminus \{\amalg_m \interior{D}^3\}
        \cong P_1\#\cdots \#P_n \# (S^1\times S^2)^{\# g} \# (D^3)^{\# m}.
    \]
    We can hence construct $M$ by attaching $g$ $1$-handles to $S^3$, and taking the connected sum with the $P_i$ and with $m$ copies of $D^3$.
    Let $B = \amalg_{n+m+2g} D^3 \subset S^3$ be the collection of $3$-disks along which we performed these attachments.

    Now suppose we have some $\Sigma \in \sepNP(M)$. 
    Then \cite[Lemma 2.1]{HatcherMcCullough1990} (based on work of Scharlemann~\cite[Appendix A]{Bonahon-Cobordism}) tells us that there is a diffeomorphism $\varphi \in \Diff_F(M)$ (in fact fixing a neighbourhood of the boundary) such that $\varphi(\Sigma) \subset M$ lies entirely in $S^3\setminus \interior{B}$.
    Therefore to show the set $\sepNP(M)/\Diff_F(M)$ is finite, it will suffice to show that up to isotopy there are finitely many sphere systems (without parallel spheres) in $S^3\setminus \interior{B}$.
    This is indeed the case because such isotopy classes are uniquely determined by how they partition the finite set of boundary components. Indeed, since $S^3\setminus \interior{B}$ is simply connected, $\pi_2(S^3\setminus B)\cong H_2(S^3\setminus \interior{B})$, so homotopy classes of embedded spheres agree with homology classes, which in turn correspond to how they partition $\partial B$.  By Laudenbach \cite{Laudenbach73}, homotopic spheres are isotopic. The claim for sphere systems is now straightforward.
\end{proof}

\begin{rem}\label{rem:dual-graphs}
    One can show more concretely that $\sepNP(M)/\Diff_F(M)$ is in bijection with a certain set of labelled dual graphs up to label-preserving graph isomorphism, but we will not need this here.
\end{rem}

We now turn our attention towards the stabilisers of the action of $\Diff_F(M)$ on $\sepNP(M)$.

\begin{cor}\label{prop:finiteness-sphere-system}
    Let $M$ be a compact connected $3$-manifold, $\emptyset \neq F \subseteq \partial M$ a non-empty union of boundary components such that $\partial M \setminus F$ contains only spheres and incompressible tori, and let $\Sigma \in \sep(M)$ be a separating system.
    Then $\BDiff_F(M , \Sigma)$
    is homotopy finite.
\end{cor}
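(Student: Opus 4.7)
The plan is to apply \cref{cor:cut-along-spheres} directly, which reduces finiteness of $\BDiff_F(M, \Sigma)$ to finiteness of $\BDiff_{F'}(N)$ for each connected component $N \subseteq M \ca \Sigma$ and each appropriate $F' \subseteq \partial N$ with $F' \supseteq \partial N \cap F$ and $F' \neq \emptyset$. So I just need to verify condition $(\ddagger)$ of that corollary.

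First, note that because $\Sigma \in \sep(M)$ is a separating system, each connected component $N \subseteq M \ca \Sigma$ has the property that $\scl{N}$ is irreducible (this is the defining property of $\sep$). So for each such $N$ the spherical closure is irreducible, which is exactly the hypothesis on the manifold in \cref{thm:prime-finiteness}.

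Next, I check the hypothesis on $F'$. The boundary $\partial N$ decomposes as the union of (a) the new spherical components in $2\Sigma \cap \partial N$, and (b) $\partial M \cap \partial N$. Any component of type (b) is either part of $F$, in which case it is contained in $F'$ by assumption, or it is part of $\partial M \setminus F$, which by our standing hypothesis on $(M, F)$ consists only of spheres and incompressible tori. Components of type (a) are all spheres. Therefore $\partial N \setminus F'$ consists only of spheres and incompressible tori. Combined with $F' \neq \emptyset$, the hypotheses of \cref{thm:prime-finiteness} are satisfied for the pair $(N, F')$, which gives that $\BDiff_{F'}(N)$ is homotopy finite.

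Having verified $(\ddagger)$ for every component $N$ of $M \ca \Sigma$, \cref{cor:cut-along-spheres} yields that $\BDiff_F(M, \Sigma)$ is homotopy finite. There is no real obstacle in this argument; the work has all been done in assembling \cref{cor:cut-along-spheres} and \cref{thm:prime-finiteness}, and this corollary is merely the bookkeeping that fits them together.
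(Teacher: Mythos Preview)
Your proof is correct and essentially identical to the paper's own argument: both apply \cref{cor:cut-along-spheres} and verify condition $(\ddagger)$ by invoking \cref{thm:prime-finiteness}, using that $\scl{N}$ is irreducible (since $\Sigma$ is separating) and that $\partial N \setminus F'$ inherits only spheres and incompressible tori from $\partial M \setminus F$ together with the new spheres from $2\Sigma$. Your version even spells out the boundary decomposition a bit more explicitly than the paper does.
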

\begin{proof}
    We apply \cref{cor:cut-along-spheres}.
    To check assumption ($\ddagger$), let $N \subseteq M \ca \Sigma$ be a component and $\emptyset \neq F_0 \subseteq \partial N$ a non-empty union of boundary components that contains $F \cap N$.
    Then the spherical closure $\scl{N}$ is irreducible because $\Sigma$ was a separating system. Since $\partial N \setminus F_0 \subseteq M \setminus F$ contains only spheres and incompressible tori, the finiteness of $\BDiff_{F_0}(N)$ follows from \cref{thm:prime-finiteness} and we are done.
\end{proof}

We now have all the pieces required to prove our stronger version of Kontsevich's conjecture. We refer the reader to \cref{fig:flowchartmain} for the overall structure and logical dependencies between ingredients in the proof.

\begin{proof}[Proof of \cref{thm:more general Kontsevich-finiteness-conjecture}]
Since $\partial M\neq \emptyset$, $M\neq S^1\times S^2$.  We know by Theorem~\ref{thm: sepNP contractible} that the fat geometric realisation $\|\sepNP_\bullet(M)\|$ is contractible.
Let $\sepNPsemi_\bullet(M)$ denote the semi-simplicial space where $n$-simplices are strict inclusions of separating systems with no parallel spheres.
The simplicial space $\sepNP_\bullet(M)$ is obtained from $\sepNPsemi_\bullet(M)$ by freely adding degeneracies, 
so by \cite[Lemma 2.6]{EbertRandalWilliams} we have an equivalence
$\|\sepNP_\bullet(M)\| \simeq \|\sepNPsemi_\bullet(M)\|$.
(Note that $\|\sepNPsemi_\bullet(M)\|$ is in fact homeomorphic to the \emph{thin} geometric realisation $|\sepNP_\bullet(M)|$.)
The semi-simplicial space $\sepNPsemi_\bullet(M)$ has the advantage that there is a bound on the dimension of the simplices that can appear:
if $M$ has $m$ spherical boundary components and $\scl{M}$ is the connected sum of $n$ irreducible prime factors and $g$ copies of $(S^1 \times S^2)$, then \cref{lem:bounding-graphs} says that each separating system in $\sepNP(M)$ contains at most $2(n+m) + 3(g-1)$ spheres. 
Therefore any chain of more than $2(n+m)+3(g-1)$ inclusions must include an identity,
and so $\sepNPsemi_i(M) = \emptyset$ for $i > 2(n+m)+3(g-1)$.

$\Diff_F(M)$ acts on $\sepNPsemi_\bullet(M)$ levelwise by postcomposition -- non-parallel spheres are taken to non-parallel spheres under diffeomorphisms of~$M$.
We therefore get an action of $\Diff_F(M)$ on $\|\sepNPsemi_\bullet(M)\|$ and since $\|\sepNPsemi_\bullet(M)\|$ is contractible, the homotopy orbit space of this action is a model for the classifying space:
\[
    \BDiff_F(M) \simeq \|\sepNPsemi_\bullet(M)\|\hq \Diff_F(M) \cong \|\sepNPsemi_\bullet \hq \Diff_F(M) \|.
\]
Here we use that $(-)\hq G$ commutes with fat geometric realisation by \cref{lem:homotopy-orbits-functor}.
Since there are only simplices up to a certain dimension, it will suffice to prove that for all $n$ the space
\[
    \sepNPsemi_n(M)/\!\!/\Diff_F(M)
\]
is homotopy finite. 
To reduce this to the case of $n=0$ recall from \cref{lem:face-map-is-finite-covering} that the last vertex map $\sepNPsemi_n(M) \to \sepNP(M)$ is a finite covering.
Therefore the map from $\sepNPsemi_n(M) \hq \Diff_F(M)$ to $\sepNP(M)\hq \Diff_F(M)$ is also a finite covering and it will suffice to show that the latter is homotopy finite.

By the homotopical orbit stabiliser theorem (\cref{lem:orbit-stabiliser-v2}) there is a decomposition 
    \[
        \sepNP(M) /\!\!/ \Diff_F(M)
        \simeq \coprod_{[x]} \BDiff_F(M,\Sigma)
    \]
    where the coproduct runs over a set of representatives of $\sepNP(M)/\Diff_F(M)$. 
    Each of the terms in the coproduct is homotopy finite by  \cref{prop:finiteness-sphere-system}.
    Moreover, we showed $\sepNP(M)/\Diff_F(M)$ is finite in  \cref{lem:finitely-many-graphs-HM}.
    This completes the proof as we have written $\sepNP(M)/\!\!/ \Diff_F(M)$ as a finite coproduct of homotopy finite spaces.
\end{proof}

\subsection{Finite type}\label{subsection - finite type}
So far it has been very important that we always fix part of the boundary of the $3$-manifold, as otherwise we cannot expect $\BDiff(M)$ to be homotopy finite.
For example, if $M$ is hyperbolic, then $\BDiff(M)$ is a $K(G,1)$ for the finite group $G$ of isometries of $M$.
But while classifying spaces of non-trivial finite groups never have a CW model with finitely many cells, they do always have a CW model that has finitely many cells in each dimension.
Our goal in this section is to show that the same is true for $\BDiff(M)$.

We say that a space $X$ is \emph{of finite type}, if there is a weak equivalence $C \simeq X$ where $C$ is a CW complex with finitely many cells of each dimension.
Finite type spaces are closed under more operations than homotopy finite spaces.
Crucially, we have the following stronger analogue of \cref{lem:fiber-sequence-finite}, which for instance implies that if $E \to B$ is a finite covering and $E$ is of finite type, then so is $B$.

\begin{lem}[{\cite[Proposition 2.5]{DrorDwyerKan81}}]\label{lem:finite-type-fiberseq}
    Suppose we have a homotopy fiber sequence
    \[
        F \longrightarrow E \longrightarrow B
    \]
    where $F$ is of finite type and $E \to B$ is surjective on path components.
    Then $E$ is of finite type if \emph{and only if} $B$ is of finite type.
\end{lem}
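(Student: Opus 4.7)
The plan is to compare the homotopy groups of $E$ and $B$ using the long exact sequence of the fibration, together with the fact that $F$ has finitely generated homotopy data in each degree. I will use the standard characterisation (a consequence of Wall's finiteness obstruction and obstruction theory): a connected space $X$ is of finite type if and only if $\pi_1(X)$ is finitely presented and each $\pi_n(X)$ for $n \ge 2$ is finitely generated as a module over $\mathbb{Z}[\pi_1(X)]$.

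First I would reduce to the connected case. From the exact sequence $\pi_0(F) \to \pi_0(E) \to \pi_0(B) \to *$, surjectivity on $\pi_0$ combined with finiteness of $\pi_0(F)$ (since $F$ is of finite type) shows that $\pi_0(E)$ is finite iff $\pi_0(B)$ is finite. Each component of $E$ fibers over a component of $B$ with fiber a union of components of $F$, each still of finite type, so it suffices to treat one component at a time. At the $\pi_1$ level the exact sequence $\pi_1(F) \to \pi_1(E) \to \pi_1(B) \to 1$ and finite presentation of $\pi_1(F)$ give, by a standard group-theoretic argument, that $\pi_1(E)$ is finitely presented iff $\pi_1(B)$ is finitely presented.

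For $n \ge 2$, the three-term exact sequence $\pi_n(F) \to \pi_n(E) \to \pi_n(B) \to \pi_{n-1}(F)$ is bounded on either side by finitely generated pieces coming from $F$. This lets one transfer finite generation of $\pi_n$ back and forth between $E$ and $B$, provided one keeps track of the $\pi_1$-module structures under the change-of-rings along $\mathbb{Z}[\pi_1(F)] \to \mathbb{Z}[\pi_1(E)] \to \mathbb{Z}[\pi_1(B)]$. Concretely, if $B$ is of finite type, then a finite generating set of $\pi_n(B)$ over $\mathbb{Z}[\pi_1(B)]$ lifts to elements of $\pi_n(E)$, whose span together with the (finitely generated) image of $\pi_n(F)$ exhausts $\pi_n(E)$; conversely, if $E$ is of finite type, the image of $\pi_n(E)$ in $\pi_n(B)$ is finitely generated over $\mathbb{Z}[\pi_1(B)]$, and one recovers all of $\pi_n(B)$ modulo the finitely generated image of $\pi_{n-1}(F)$.

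The main obstacle will be the careful bookkeeping of module structures under the ring maps induced by $\pi_1(F) \to \pi_1(E) \to \pi_1(B)$, especially in the direction \emph{$E$ of finite type $\Rightarrow$ $B$ of finite type}, where finite generation has to be extracted from a subquotient of $\pi_n(E)$ rather than constructed from a hypothesis about $B$. The surjectivity of $E \to B$ on $\pi_0$ is essential here: without it a component of $B$ not hit by $E$ would be completely invisible from $E$, so no hypothesis on $E$ could control it. Once these homotopy-theoretic finiteness properties are in hand on both sides, the usual inductive CW approximation (adding cells to kill homotopy classes in each degree) produces the required finite-type CW models.
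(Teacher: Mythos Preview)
The paper does not prove this lemma; it is quoted directly from \cite{DrorDwyerKan81} and used as a black box. So there is no ``paper's proof'' to compare against, and the question is simply whether your sketch stands on its own.

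It does not, because the characterisation you invoke is false. You claim that a connected space $X$ is of finite type if and only if $\pi_1(X)$ is finitely presented and each $\pi_n(X)$, $n\ge 2$, is finitely generated over $\bbZ[\pi_1(X)]$. Take $X=K(G,1)$ for $G$ a finitely presented group that is not of type $F_\infty$ (for example Stallings' group, which is $F_2$ but not $F_3$). Then $\pi_1(X)=G$ is finitely presented and $\pi_n(X)=0$ for $n\ge 2$, yet $X$ is not of finite type. So the ``only if'' direction of your criterion fails, and more importantly the ``if'' direction---the one you actually need to conclude finite type of $E$ or $B$---fails as well. The difficulty is that over a non-Noetherian group ring $\bbZ[\pi_1]$, finite generation of homotopy (or even homology) groups does not propagate the way it does in the simply connected case; this is exactly the circle of ideas around Wall's finiteness conditions, but the correct statement involves the cellular chain complex of the universal cover, not the homotopy groups themselves.

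A second, related gap is in the $\pi_1$ step: from an exact sequence $\pi_1(F)\to\pi_1(E)\to\pi_1(B)\to 1$ with $\pi_1(F)$ finitely presented, it does \emph{not} follow that $\pi_1(E)$ is finitely presented whenever $\pi_1(B)$ is. The image of $\pi_1(F)$ is only a quotient of a finitely presented group, which need not be finitely presented. What saves you in the fibration setting is the extra information coming from $\pi_2(B)$, but you have not used it. The actual argument in Dror--Dwyer--Kan builds CW models more directly (working skeleton by skeleton with the fibration structure) rather than going through a homotopy-group criterion; if you want to reprove the lemma, that is the route to take.
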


The next result follows as a direct consequence of \cref{thm:more general Kontsevich-finiteness-conjecture}.

\begin{cor}\label{cor:finite-type-tori-and-spheres}
    Let $M$ be a compact orientable $3$-manifold such that $\partial M$ is either empty or consists of spheres and incompressible tori.
    Then $\BDiff(M)$ is of finite type.
\begin{proof}
    The locally retractile action of $\Diff(M)$ on $\emb(D^3, \interior{M}) \simeq \Fr(M)$ yields via \cref{lem:orbit-stabiliser-transitive} the homotopy fiber sequence
    \[
        \Fr(M)' \longrightarrow \BDiff_{D^3}(M) \longrightarrow \BDiff(M)
    \]
    where $\Fr(M)' \subset \Fr(M)$ is an orbit of the action.
    Since $\Fr(M)' \subset \Fr(M)$ is a union of components it is homotopy finite.
    The total space $\BDiff_{D^3}(M)$ is homotopy finite by \cref{thm:more general Kontsevich-finiteness-conjecture}.
    It follows by \cref{lem:finite-type-fiberseq} that $\BDiff(M)$ is of finite type.
\end{proof}
\end{cor}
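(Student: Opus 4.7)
The plan is to bootstrap from the relative homotopy finiteness statement \cref{thm:more general Kontsevich-finiteness-conjecture} by fixing an interior disk and then unfixing it via a fiber sequence. The key observation is that $\BDiff_{D^3}(M)$ is accessible via our main theorem, since fixing a small embedded $D^3 \subset \interior{M}$ is the same (up to homotopy) as cutting out its interior and fixing the newly created $S^2$ boundary. More precisely, set $M' := M \setminus \interior{D}^3$ with new boundary component $S^2$; then by the collar argument (\cref{thm:contractible-collars}) we have $\BDiff_{D^3}(M) \simeq \BDiff_{S^2}(M')$. The hypothesis on $M$ ensures that $\partial M' \setminus S^2 = \partial M$ consists only of spheres and incompressible tori, so \cref{thm:more general Kontsevich-finiteness-conjecture} applies with $F = S^2$ to conclude that $\BDiff_{D^3}(M)$ is homotopy finite.

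Next I would compare $\BDiff_{D^3}(M)$ and $\BDiff(M)$ via the action of $\Diff(M)$ on the space of embedded disks. By \cref{thm:emb-retractile}, $\emb(D^3, \interior{M})$ is $\Diff(M)$-locally retractile, and its stabilizer at the preferred embedding is $\Diff_{D^3}(M)$. Applying the transitive homotopical orbit-stabilizer lemma (\cref{lem:orbit-stabiliser-transitive}) to the orbit of this embedding, one obtains a homotopy fiber sequence
\[
    \emb(D^3, \interior{M})' \longrightarrow \BDiff_{D^3}(M) \longrightarrow \BDiff(M),
\]
where $\emb(D^3, \interior{M})'$ denotes the relevant $\Diff(M)$-orbit. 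Since $\emb(D^3, \interior{M}) \simeq \Fr(M)$ and $M$ is a compact manifold, the total space $\Fr(M)$ is itself homotopy finite; in particular the orbit, being a union of path components, is homotopy finite.

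Having both the fiber and the total space homotopy finite, I would then apply \cref{lem:finite-type-fiberseq} (the ``only if'' direction): if $E \to B$ is surjective on $\pi_0$, then finite type of $E$ together with homotopy finiteness of the fiber $F$ forces $B$ to be of finite type. Surjectivity on $\pi_0$ is automatic here because the map arises from taking homotopy orbits of a transitive action. This gives the conclusion that $\BDiff(M)$ is of finite type. No obstacle is anticipated beyond verifying these routine inputs; the real content has been absorbed into \cref{thm:more general Kontsevich-finiteness-conjecture}, and the present corollary is essentially the formal statement that homotopy finiteness of the boundary-fixing version, paired with finite-type fibers, delivers finite type of the free version.
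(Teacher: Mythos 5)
Your proposal is correct and follows essentially the same route as the paper: identify $\BDiff_{D^3}(M)$ as a homotopy finite space via \cref{thm:more general Kontsevich-finiteness-conjecture}, produce the homotopy fiber sequence $\Fr(M)' \to \BDiff_{D^3}(M) \to \BDiff(M)$ from the locally retractile action on $\emb(D^3,\interior{M}) \simeq \Fr(M)$ via \cref{lem:orbit-stabiliser-transitive}, and conclude with \cref{lem:finite-type-fiberseq}. The only difference is that you spell out the translation $\BDiff_{D^3}(M)\simeq \BDiff_{S^2}(M\setminus\interior{D}^3)$ needed to invoke the main theorem, which the paper leaves implicit.
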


In order to generalise this to arbitrary boundary we use a result of McCullough \cite{McCullough1991}, who studied the finiteness properties of mapping class groups of irreducible $3$-manifolds.
\begin{thm}\label{thm:finite-type-irreducible}
    If $M$ is irreducible and $\partial M \neq \emptyset$, then $\BDiff(M)$ is of finite type.
\end{thm}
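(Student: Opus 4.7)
The plan is to reduce to two known inputs by using the principal short exact sequence of topological groups
\[
    1 \longrightarrow \Diff_0(M) \longrightarrow \Diff(M) \longrightarrow \pi_0\Diff(M) \longrightarrow 1,
\]
and applying the classifying space functor to obtain a homotopy fiber sequence
\[
    B\Diff_0(M) \longrightarrow \BDiff(M) \longrightarrow B\pi_0\Diff(M).
\]
Since $\Diff(M) \to \pi_0\Diff(M)$ is a quotient by a union of path components, it is a principal bundle (with discrete fiber), so \cref{lem:fiberseq/fiberseq} applies. The goal is then to show that both fiber and base are of finite type and invoke \cref{lem:finite-type-fiberseq}.

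For the fiber, since $M$ is irreducible and has non-empty boundary, it is Haken, so work of Hatcher \cite{Hatcher76} and Ivanov \cite{Ivanov76} shows that the components of $\Diff(M)$ are aspherical and that $\Diff_0(M) \simeq (S^1)^{\times b}$ for some $b \in \{0,1,2\}$ depending on whether $M$ is Seifert-fibered and the rank of $Z(\pi_1(M))$. Thus $B\Diff_0(M) \simeq (\mathbb{CP}^\infty)^{\times b}$, which is manifestly of finite type.

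For the base, McCullough's result \cite{McCullough1991} (on which the reference in the introduction relies) shows that for $M$ Haken the mapping class group $\pi_0\Diff(M)$ admits a classifying space of finite type; in fact he proves that $\pi_0\Diff(M)$ is of type $F_\infty$, possibly after passing to a finite-index subgroup to account for torsion. If one only has a finite-index subgroup $\Gamma \le \pi_0\Diff(M)$ such that $B\Gamma$ is of finite type, then the covering $B\Gamma \to B\pi_0\Diff(M)$ together with \cref{lem:finite-type-fiberseq} applied to this finite covering still yields that $B\pi_0\Diff(M)$ is of finite type.

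Combining these, \cref{lem:finite-type-fiberseq} applied to the fiber sequence above (noting $\Diff(M) \to \pi_0\Diff(M)$ hits every path component by definition) immediately gives that $\BDiff(M)$ is of finite type. The only potentially subtle point is the extraction of McCullough's finiteness statement in precisely this form; aside from this bookkeeping the argument is a direct assembly of known pieces.
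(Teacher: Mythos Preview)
Your proposal is correct and matches the paper's proof essentially line for line: the same fiber sequence $B\Diff_0(M) \to \BDiff(M) \to B\pi_0\Diff(M)$, the same Hatcher--Ivanov input giving $\Diff_0(M)\simeq (S^1)^{\times b}$, the same appeal to McCullough \cite{McCullough1991} (specifically his Theorems 1.1 and 7.1, which give type VFL and hence a finite-index subgroup with finite classifying space), and the same use of \cref{lem:finite-type-fiberseq} at both steps.
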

\begin{proof}
    By work of Hatcher and Ivanov (see \cref{cor:Diff0-list}) we know that $\Diff_0(M) \simeq (S^1)^{\times b}$ for $b\in \{0,1,2\}$ and therefore $\BDiff_0(M) \simeq K(\bbZ^b, 2)$ is of finite type.
    Applying \cref{lem:finite-type-fiberseq} to the homotopy fiber sequence
    \[
        \BDiff_0(M) \longrightarrow \BDiff(M) \longrightarrow B(\pi_0\Diff(M))
    \]
    we see that it suffices to check that $B(\pi_0\Diff(M))$ is of finite type.
    By \cite[Theorem 7.1]{McCullough1991} the group $\pi_0\mathrm{Homeo}(M) \cong \pi_0\Diff(M)$ is finitely presented and of type VFL.
    As pointed out in \cite[Theorem 1.1]{McCullough1991} this means that there is a finite index subgroup $\Gamma \le \pi_0\Diff(M)$ such that $B\Gamma$ is homotopy finite 
    and therefore (e.g.~by \cref{lem:finite-type-fiberseq}) $B(\pi_0\Diff(M))$ is of finite type.
\end{proof}

We can now assemble this in the same way that we showed the homotopy finiteness of $\BDiff_F(M)$ in \cref{thm:more general Kontsevich-finiteness-conjecture}.
\begin{thm}\label{thm:finite-type}
    Let $M$ be a compact, orientable $3$-manifold.
    Then $\BDiff(M)$ is of finite type.
\end{thm}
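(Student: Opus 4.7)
The strategy mirrors the proof of \cref{thm:more general Kontsevich-finiteness-conjecture}, replacing ``homotopy finite'' by ``of finite type'' throughout and relying on \cref{lem:finite-type-fiberseq} to carry finiteness through fiber sequences. We may assume $M$ is connected, since for a disjoint union $M = \bigsqcup M_i$ the classifying space $\BDiff(M)$ fits into a fiber sequence over $BG$ (with $G$ the finite permutation group of diffeomorphic components) with fiber $\prod_i \BDiff(M_i)$.

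If $\partial M$ is empty or consists only of spheres and incompressible tori, the conclusion is \cref{cor:finite-type-tori-and-spheres}; this covers $M \cong S^1 \times S^2$ in particular. Otherwise $\partial M$ contains a component that is neither a sphere nor an incompressible torus, so $M \not\cong S^1 \times S^2$, and \cref{thm: sepNP contractible} yields the model
\[
\BDiff(M) \simeq \|\sepNPsemi_\bullet(M)\| \hq \Diff(M),
\]
which is a finite-dimensional semi-simplicial space by \cref{lem:bounding-graphs}. Combining \cref{lem:face-map-is-finite-covering}, the homotopy orbit-stabiliser lemma (\cref{lem:orbit-stabiliser-v2}), and the finiteness of the orbit set (\cref{lem:finitely-many-graphs-HM}, applied with $F = \emptyset$) reduces the task to showing that $\BDiff(M, \Sigma)$ is of finite type for each $\Sigma \in \sepNP(M)$.

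To handle $\BDiff(M, \Sigma)$, I would upgrade \cref{cor:cut-along-spheres} to a finite-type statement by re-running its inductive argument verbatim, invoking \cref{lem:finite-type-fiberseq} in place of \cref{lem:fiber-sequence-finite}. This reduces the problem to showing that $\BDiff_{F'}(N)$ is of finite type for each connected component $N \subseteq M \ca \Sigma$ and each non-empty $F' \subseteq \partial N$ arising in the induction; each such $N$ has spherical closure $\scl{N}$ irreducible. Writing $N = \scl{N} \setminus \interior{B}$ for a (possibly empty) disjoint union of disks $B \subset \interior{\scl{N}}$, \cref{lem:finiteness-fixing-balls} together with the locally retractile action of $\Diff(\scl{N})$ on the unordered framed configuration space $\umb(B, \interior{\scl{N}})$ reduces the claim to finite type of $\BDiff_{F'_1}(\scl{N})$ for suitable $F'_1 \subseteq \partial \scl{N}$. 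The latter follows by combining the finite type of $\BDiff(\scl{N})$---given by \cref{cor:finite-type-tori-and-spheres} when $\scl{N}$ is closed or has torus-only boundary and by \cref{thm:finite-type-irreducible} when $\scl{N}$ has higher-genus boundary---with the finite type of $\BDiff$ of the boundary surfaces and an analysis of the long exact sequence for the boundary-restriction fiber sequence $\BDiff_{F'_1}(\scl{N}) \to \BDiff(\scl{N}) \to \BDiff(F'_1)_{\mathrm{image}}$, using finite presentability of the relevant mapping class groups (McCullough \cite{McCullough1991}).

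The main obstacle I foresee is the finite-type version of the cutting-along-spheres lemma. The inductive proof of \cref{prop:cut-along-sub} uses fiber sequences coming from locally retractile actions (automatic fiber bundles) and from boundary-restriction maps, the latter of which may fail to be surjective on $\pi_0$. In our setting this is under control because the image of each restriction map always has finite index in its target (since $\pi_0\Diff(\sqcup_k S^2)$ is finite), so \cref{lem:finite-type-fiberseq} applies cleanly to the image-restricted fibration at each inductive step.
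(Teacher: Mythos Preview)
Your overall reduction to $\BDiff(M,\Sigma)$ is correct, and the special cases ($M$ disconnected, $M\cong S^1\times S^2$, $\partial M$ only spheres/incompressible tori) are handled as in the paper. The divergence is in how you treat $\BDiff(M,\Sigma)$ when $F=\emptyset$, and here there is both a gap in your argument and a much simpler route.

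\textbf{The gap.} You propose to re-run \cref{cor:cut-along-spheres}, but that corollary---and the induction of \cref{prop:cut-along-sub} underlying it---is stated for \emph{nonempty} $F$. With $F=\emptyset$ two things break. First, the inductive step chooses a component $N_0$ with $N_0\cap F\neq\emptyset$ precisely to prevent $\Diff(M,\Sigma)$ from permuting the pieces of $M\ca\Sigma$; without this you must pass to a finite-index subgroup, which you do not mention. Second, and more seriously, the base case of that induction (connected $M\ca\Sigma$) lands you at $\BDiff^{\Gamma'}(N)$ with \emph{no} fixed boundary, i.e.\ the case $F'=\emptyset$, which your hypothesis does not cover. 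Separately, your final reduction from $\BDiff(\scl{N})$ to $\BDiff_{F'_1}(\scl{N})$ via the boundary-restriction fiber sequence $\BDiff_{F'_1}(\scl{N})\to\BDiff(\scl{N})\to\BDiff'(F'_1)$ tries to deduce that the \emph{fiber} is of finite type from the total space and base; \cref{lem:finite-type-fiberseq} does not give this direction (and it is false in general).

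\textbf{The paper's shortcut.} The paper avoids the cut-along-spheres induction entirely by exploiting the ``only if'' direction of \cref{lem:finite-type-fiberseq}, which has no analogue for homotopy finiteness. Restricting to $\Sigma$ yields a homotopy fiber sequence
\[
\Diff'(\Sigma)\longrightarrow \BDiff_\Sigma(M)\longrightarrow \BDiff(M,\Sigma),
\]
where $\Diff'(\Sigma)\subset\Diff(\Sigma)$ is a union of components of a product of copies of $\Or(3)$, hence homotopy finite. Since the base is connected, \cref{lem:finite-type-fiberseq} reduces us to showing that $\BDiff_\Sigma(M)$ is of finite type. But fixing $\Sigma$ pointwise forbids permuting components, so
\[
\BDiff_\Sigma(M)\simeq\BDiff_{2\Sigma}(M\ca\Sigma)\simeq\prod_{K\subseteq M\ca\Sigma}\BDiff_{K\cap 2\Sigma}(K),
\]
and each factor reduces via the fiber sequences in \cref{lem:finiteness-fixing-balls} (whose fibers are homotopy finite framed configuration spaces) to $\BDiff(\scl{K})$ for an irreducible $\scl{K}$. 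The latter is of finite type by \cref{thm:finite-type-irreducible} when $\partial\scl{K}\neq\emptyset$ and by \cref{cor:finite-type-tori-and-spheres} when $\scl{K}$ is closed. This is both shorter and sidesteps the issues above.
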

\begin{proof}
    If $M$ is disconnected, then we consider the finite index subgroup of those diffeomorphisms that do not permute components.
    This subgroup is a product of diffeomorphism groups of connected manifolds, so it will suffice to consider the case that $M$ is connected.
    Moreover, we may assume $M \not\cong S^1 \times S^2$, as this case is covered by \cref{cor:finite-type-tori-and-spheres}.
    
    Since finite type spaces are also closed under homotopy pushouts and passing to finite covers, we can proceed as in the proof of \cref{thm:more general Kontsevich-finiteness-conjecture} to reduce to the claim that for every separating system $\Sigma \in \sepNP(M)$ the space $\BDiff(M, \Sigma)$ is of finite type.
    Here, instead of using \cref{cor:cut-along-spheres} to cut along spheres, we use the homotopy fiber sequence
    \[
        \Diff'(\Sigma) \longrightarrow
        \BDiff_\Sigma(M) \longrightarrow 
        \BDiff(M, \Sigma)
    \]
    where $\Diff'(\Sigma) \subset \Diff(\Sigma)$ is the subgroup of those diffeomorphism of $\Sigma$ that extend to a diffeomorphism of $M$.
    The space $\Diff'(\Sigma)$ is homotopy finite because $\Diff(S^2) \simeq \Or(3)$ is, so by \cref{lem:finite-type-fiberseq} it will suffice to check that that $\BDiff_\Sigma(M)$ is of finite type.
    This space is equivalent to
    \[
        \BDiff_\Sigma(M) \simeq \BDiff_{2\Sigma}(M \ca \Sigma)
        \simeq \prod_{K \subseteq M \,\ca \,\Sigma} \BDiff_{K \cap 2\Sigma}(K).
    \]
    By \cref{lem:finiteness-fixing-balls} $\BDiff_{K \cap 2\Sigma}(K)\simeq \BDiff(\scl{K})$, and by \cref{thm:finite-type-irreducible}, $\BDiff(\scl{K})$ is of finite type for each $K\subseteq M\ca \Sigma$, so we are done.
\end{proof}

\subsection{Strengthening of results and sharpness}
\label{subsection - sharpness}
The restrictions on $F$ in \cref{thm:more general Kontsevich-finiteness-conjecture} do not come from the separating system machinery of \cref{section: systems of spheres} but rather from the base cases of the induction, namely \cref{thm:HM-hereditary} and \cref{thm:sect4main}. Thus, any strengthening of \cref{thm:more general Kontsevich-finiteness-conjecture} will arise from analogous statements when $M$ is irreducible, and the restrictions on $F$ and $\partial M$ are relaxed. 

If $M$ is irreducible and has higher genus incompressible boundary, $M$ still has a JSJ decomposition that is unique up to isotopy, but $T_{\JSJ}$ may include incompressible annuli as well as tori, and some of the components of $M\ca T_{\JSJ}$ may be an $I$-bundle over a a compact surface. There is also an analogous geometric decomposition of $M$ into pieces with finite-volume Riemannian metric with totally geodesic boundary modeled on one of the 8 Thurston geometries. Thus it seems likely that a version of \cref{thm:sect4main} will extend to this setting. If $M$ is irreducible but has some compressible boundary, Bonahon \cite{Bonahon1983} proved the existence and uniqueness for a decomposition of $M$ into a submanifold with incompressible boundary and a so-called \emph{compression body}. The latter can be obtained from $S\times I$ by attaching 2-handles to one boundary component, then taking the spherical closure. 
We believe that, if one knew hereditary finiteness for compression bodies, one could adapt our tools to this setting to prove the following strengthening of Kontsevich's conjecture.

\begin{conj}\label{conj:Strong-Kontsevich} Let $M$ be a compact, connected, orientable 3-manifold and let $F\subset \partial M$ be a non-empty union of boundary components. Then $\BDiff_F(M)$ is homotopy finite.
\end{conj}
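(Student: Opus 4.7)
The plan is to adapt the inductive scheme of \cref{thm:more general Kontsevich-finiteness-conjecture} to this more general setting. The sphere system machinery of \cref{section: systems of spheres} is entirely insensitive to any constraint on $\partial M \setminus F$: \cref{thm: sepNP contractible} and \cref{lem:finitely-many-graphs-HM} carry over verbatim, and an analogue of \cref{prop:finiteness-sphere-system} reduces the conjecture to hereditary finiteness of $(N, F_N)$ for each irreducible component $N$ of $\scl{M}\ca \Sigma$ with its induced boundary collection $F_N$. Because each new spherical boundary component created by cutting is automatically included in $F_N$, the remaining content is concentrated in the following statement: for an irreducible $3$-manifold $N$ and any non-empty union $F_N \subseteq \partial N$, the pair $(N, F_N)$ is hereditarily finite. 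This goes beyond both \cref{thm: HatcherMcCullough finiteness}, which requires $F_N$ to contain every compressible boundary component, and \cref{thm:sect4main}, which requires $\partial N \setminus F_N$ to be empty or toroidal.

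For the first new case, when $N$ is irreducible with incompressible boundary of arbitrary genus and $F_N$ arbitrary, I would run the strategy of \cref{section:decomposing irreducible}. In this generality the JSJ decomposition of $N$ involves both incompressible tori and incompressible annuli, cutting $N$ into Seifert-fibered pieces, $I$-bundles over surfaces, and pieces with finite-volume hyperbolic structure with totally geodesic boundary of arbitrary genus. Extending \cref{cor:JSJ-canonical} to this setting requires a higher-genus analogue of Hatcher's \cref{thm:Hatcher-Incompressible}, which should follow from the general results of \cite{Hatcher76,Ivanov76} on spaces of incompressible surfaces. Once canonicality is established, \cref{prop:cut-along-tori} reduces the problem piecewise: hyperbolic pieces with totally geodesic boundary satisfy a Mostow-type rigidity and hence hereditary finiteness by an enhancement of \cref{lem:Riemannian-hereditarily-finite}; Seifert-fibered pieces over higher-genus base and $I$-bundle pieces are Haken, and the arguments of \cref{prop:Haken-SF-Finiteness} and \cref{lem:Haken-SF-Nonsingular} should extend using \cref{lem:Diff_S1^v-v2} and \cref{lem:Surface-Fix-Disk-finite} essentially unchanged, since fiber-rigidity of Seifert-fibered manifolds with boundary (\cref{thm:Unique-fibering}, \cref{rem:most-non-singular-are-rigid}) already covers higher-genus base orbifolds.

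The main obstacle is handling compressible components of $\partial N \setminus F_N$. Here one would invoke Bonahon's characteristic compression body decomposition \cite{Bonahon1983}: each compressible boundary component of $N$ admits a canonical compression body neighbourhood $C \subseteq N$ whose interior boundary $\partial C \cap \interior{N}$ is a union of incompressible surfaces. These cutting surfaces are canonical in the sense of \cref{defn:canonical}, and a version of \cref{prop:cut-along-sub} reduces the question to (i) hereditary finiteness of the incompressible-boundary piece $N \setminus \interior{C}$, which is handled by the previous paragraph since all new boundary is included in $F_N$, and (ii) hereditary finiteness of each compression body $C$ with any union of boundary components containing the inner boundary $\partial_- C$. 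The critical sub-case is the handlebody $H_g$ (where $\partial_- C = \emptyset$) with $F$ a proper union of components of $\partial H_g = \Sigma_g$; the handlebody mapping class group $\pi_0\Diff_F(H_g)$ is finitely presented and of finite virtual cohomological dimension, but establishing \emph{homotopy} finiteness of $\BDiff_F(H_g)$ requires substantially more. A plausible approach is to build an analogue of $\|\sepNP_\bullet(M)\|$ out of the space of non-parallel essential compressing discs in $H_g$, whose contractibility is known from work of McCullough and Hatcher, and to run an induction cutting along these discs to reduce to $\BDiff_{F'}(D^2 \times I)$ and to the $I$-bundle case $\BDiff_{F'}(\Sigma_h \times I)$ for lower genus. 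Organising this into a Galatius--Randal-Williams style discretisation as in \cref{prop - poset discretization}, so as to control the stabiliser groups inductively, is the key technical step that remains to be carried out.
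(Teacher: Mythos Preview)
The statement is a \emph{conjecture} in the paper, not a theorem: the paper does not prove it. The surrounding discussion in the paper (immediately preceding the conjecture) says exactly what you say --- that the sphere system machinery is insensitive to the constraint on $\partial M \setminus F$, that the JSJ decomposition in the higher-genus incompressible case involves annuli and $I$-bundles, and that Bonahon's compression body decomposition is the natural tool for the compressible case --- and then explicitly identifies hereditary finiteness for compression bodies as the missing ingredient. So your outline is not an alternative proof but essentially a more detailed paraphrase of the paper's own discussion of what would be needed, and you yourself flag the last step as ``remains to be carried out.''

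One small correction to your final paragraph: you describe the critical sub-case as the handlebody $H_g$ with $F$ a \emph{proper} union of boundary components. But $\partial H_g = \Sigma_g$ is connected, so the only non-empty union of boundary components is $\partial H_g$ itself, and that case is already covered by Hatcher--McCullough (\cref{thm: HatcherMcCullough finiteness}). The genuinely new case is a compression body $C$ with $\partial_- C \neq \emptyset$, where one must establish hereditary finiteness of $(C, F')$ for $F'$ containing $\partial_- C$ but possibly not the compressible outer boundary $\partial_+ C$. Your proposed disc-complex induction is a reasonable line of attack on this, but it is not the handlebody case that is the obstacle.
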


In a sense, the above conjecture is the best possible, and (homotopy) finiteness of $\BDiff(M)$ can definitely fail to hold if we do not fix some non-empty union of boundary components.  Indeed, the rational cohomology of $\BDiff(M)$ fails to be bounded for many irreducible $M$. 
We now show that finiteness also fails in a closed, reducible example. 
In fact, in this example we will also see that the rational cohomology of $\BDiff(M)$ need not be bounded.
(In contrast, if $M$ is irreducible, Haken, and not Seifert-fibered,  then by \cref{cor:Diff0-list} and \cite{McCullough1991}, $\Diff(M) \simeq \pi_0\Diff(M)$ has a finite virtual cohomological dimension and hence $\BDiff(M)$ has bounded rational cohomology.)

Concretely, consider $U_g := (S^1 \times S^2)^{\#g}$, and fix a point $x_0 \in U_g$. We know that $\BDiff(U_g)$ has unbounded cohomology for $g=0,1$ by work of Hatcher \cite{Hatcher, Hatcher1981} and for $g=2$ by the computation in our upcoming work, discussed in \cref{subsection - upcoming work}.
We expect this to be true for all $g\ge0$. On the other hand, if we fix a disk in $U_g$,  \cref{thm:Kontsevich-finiteness-conjecture} shows that $\BDiff_{D^3}(U_g)$ is homotopy finite. However, the following example shows that finiteness can fail even if we fix a subset of $U_g$ of codimension greater than one. We will construct a circle action on $U_g$ that fixes $x_0$ (and a circle containing it) and use it to show that $\BDiff(U_g , x_0)$ has unbounded rational cohomology for all $g \ge 0$. Since fixing a disk is equivalent to fixing a point and a frame, the next result may be interpreted as saying we lose finiteness if we fix a point, or even a point and a tangent vector.  

\begin{figure}[ht!]
    \centering
    \resizebox{0.5\linewidth}{!}{
\begingroup%
  \makeatletter%
  \providecommand\color[2][]{%
    \errmessage{(Inkscape) Color is used for the text in Inkscape, but the package 'color.sty' is not loaded}%
    \renewcommand\color[2][]{}%
  }%
  \providecommand\transparent[1]{%
    \errmessage{(Inkscape) Transparency is used (non-zero) for the text in Inkscape, but the package 'transparent.sty' is not loaded}%
    \renewcommand\transparent[1]{}%
  }%
  \providecommand\rotatebox[2]{#2}%
  \newcommand*\fsize{\dimexpr\f@size pt\relax}%
  \newcommand*\lineheight[1]{\fontsize{\fsize}{#1\fsize}\selectfont}%
  \ifx\svgwidth\undefined%
    \setlength{\unitlength}{314.88485982bp}%
    \ifx\svgscale\undefined%
      \relax%
    \else%
      \setlength{\unitlength}{\unitlength * \real{\svgscale}}%
    \fi%
  \else%
    \setlength{\unitlength}{\svgwidth}%
  \fi%
  \global\let\svgwidth\undefined%
  \global\let\svgscale\undefined%
  \makeatother%
  \begin{picture}(1,0.4914523)%
    \lineheight{1}%
    \setlength\tabcolsep{0pt}%
    \put(0,0){\includegraphics[width=\unitlength,page=1]{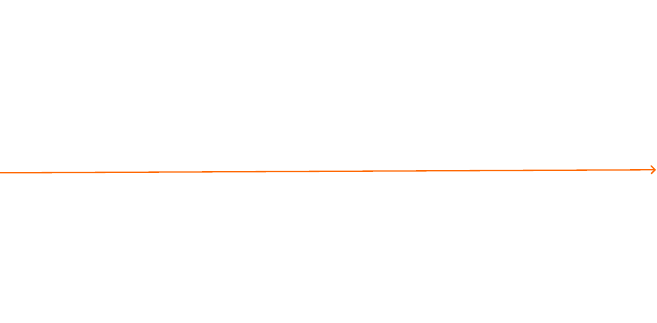}}%
    \put(0.88257567,0.27057097){\color[rgb]{0,0,0}\makebox(0,0)[lt]{\lineheight{1.25}\smash{\begin{tabular}[t]{l}$\operatorname{SO}(2)$\end{tabular}}}}%
    \put(0,0){\includegraphics[width=\unitlength,page=2]{double.pdf}}%
  \end{picture}%
\endgroup%
}
    \caption{The construction of the $\SO(2)$-action on $U_g$ for $g=3$.}
    \label{fig:rotation}
\end{figure}

\begin{prop}\label{prop:circle-action-on-Ug}
    There is a curve $S^1 \subset U_g$ and maps
    \[
        B\!\SO(2) \longrightarrow \BDiff_{S^1}(U_g) \longrightarrow \BDiff_*(U_g) \longrightarrow B\!\SO(3)
    \]
    such that the composite is homotopic to the canonical inclusion.
    As a consequence, neither $\BDiff_{S^1}(U_g)$ nor $\BDiff_*(U_g)$ is homotopy finite (even virtually), as there exists an $\alpha \in H^4(-;\bbQ)$ such that $\alpha^n \neq 0$ for all $n\ge1$.
\end{prop}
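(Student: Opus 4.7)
The plan proceeds in three steps: construct the $\SO(2)$-action on $U_g$ fixing a circle, identify the composite $B\!\SO(2) \to B\!\SO(3)$, and deduce the (virtual) non-finiteness from characteristic class considerations.

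For the first step, I would build the $\SO(2)$-action by iterated equivariant $1$-surgery, starting from $S^3 \subset \bbC^2$ with the standard action $e^{i\theta} \cdot (z_1, z_2) = (e^{i\theta}z_1, z_2)$, which fixes the circle $C := \{z_1 = 0\} \cap S^3$ pointwise and is free elsewhere. At each point of $C$ the tangent representation is $\bbR \oplus \bbC$, so there are arbitrarily small $\SO(2)$-invariant round $3$-balls around any fixed point. For each $k = 1, \dots, g$, pick a pair of distinct fixed points $p_k, q_k \in C$, remove small invariant balls around them, and glue the resulting pair of boundary $2$-spheres by an $\SO(2)$-equivariant diffeomorphism (such a diffeomorphism exists because both spheres carry the same rotation action, and there are two isotopy classes of equivariant diffeomorphisms). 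Each surgery is a standard $1$-surgery on a $0$-sphere, so the underlying manifold at the end is $\#_g(S^1 \times S^2) = U_g$, and it inherits a smooth $\SO(2)$-action. After each surgery the arcs of $C$ exterior to the removed balls are reglued at their endpoints (which lie on the boundary spheres and are precisely the fixed points of the $\SO(2)$-action on these spheres), producing a non-empty disjoint union of fixed circles. Pick any such fixed circle as $S^1 \subset U_g$ and any basepoint $x_0 \in S^1$. (The figure in the paper presumably realises this construction by doubling a suitable rotationally symmetric shape; either description suffices.)

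For the second step, the map $B\!\SO(2) \to \BDiff_{S^1}(U_g)$ is the classifying map of the group homomorphism $\SO(2) \hookrightarrow \Diff_{S^1}(U_g)$ defined by the action; the map $\BDiff_{S^1}(U_g) \to \BDiff_*(U_g)$ comes from the subgroup inclusion $\Diff_{S^1}(U_g) \le \Diff_*(U_g)$; and the map $\BDiff_*(U_g) \to B\!\SO(3)$ is induced by the derivative at $x_0$, after choosing an orientation and inner product on $T_{x_0}U_g$ to get $\Diff_*^+(U_g) \to \GL_3^+(\bbR) \simeq \SO(3)$. Since $\SO(2)$ is connected, its image in $\GL_3(\bbR)$ lies in the identity component; by the local form of the action at $x_0$ (trivial on the tangent line $T_{x_0}S^1$, rotation on the normal $\bbC$-plane), the composite $\SO(2) \to \SO(3)$ is the standard inclusion. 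Hence the composite $B\!\SO(2) \to B\!\SO(3)$ is homotopic to the canonical inclusion.

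For the third step, recall $H^*(B\!\SO(3); \bbQ) = \bbQ[p_1]$ with $|p_1| = 4$ and $H^*(B\!\SO(2); \bbQ) = \bbQ[e]$ with $|e| = 2$; the standard inclusion pulls $p_1$ back to $e^2$. Let $\alpha \in H^4(\BDiff_*(U_g); \bbQ)$ (respectively in $H^4(\BDiff_{S^1}(U_g); \bbQ)$) be the pullback of $p_1$. Since the further pullback to $B\!\SO(2)$ is the ring homomorphism $\bbQ[p_1] \to \bbQ[e]$ sending $p_1 \mapsto e^2$, we have $\alpha^n \mapsto e^{2n} \neq 0$ for every $n \ge 1$, so $\alpha^n \neq 0$ in $H^{4n}$ of both classifying spaces. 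Thus both spaces have unbounded rational cohomology, so neither is homotopy finite. To rule out virtual homotopy finiteness: if $Y \to X$ is a finite-sheeted cover of one of these spaces with $Y$ homotopy finite, then pullback $H^*(X; \bbQ) \to H^*(Y; \bbQ)$ is injective (the composition with the transfer is multiplication by the degree of the cover, which is invertible over $\bbQ$), so $H^*(X; \bbQ)$ would have to be bounded, contradicting $\alpha^n \neq 0$.

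The main obstacle is the first step, constructing the action. The equivariant surgery itself is formally straightforward once one works in the local model $\bbR \oplus \bbC$ at a fixed point, but some bookkeeping is needed to check that the fixed set after $g$ surgeries is non-empty and to describe it precisely. Once the action is in hand, the rest of the argument is routine characteristic-class manipulation.
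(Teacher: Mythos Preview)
Your proof is correct and follows the same three-step structure as the paper: construct an $\SO(2)$-action fixing a circle, identify the derivative composite as the standard inclusion, and pull back $p_1$. The only difference is in the construction of the action: the paper takes $M = D^3 \setminus (\amalg_g \interior{D}^3)$ with the $g$ removed disks arranged along an axis, rotates about that axis, and doubles along $\partial M$ to obtain $U_g$ with $g+1$ fixed circles. Your equivariant $1$-surgery construction is equally valid (and you already anticipated the paper might use a doubling picture); the doubling description is shorter and avoids the bookkeeping about how the fixed arcs reglue. Your explicit transfer argument for the ``even virtually'' clause is a small addition the paper leaves implicit.
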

\begin{proof}
    Let $M = D^3 \setminus (\amalg_g \interior{D}^3)$ be obtained from the $3$-disk by removing $g$ smaller $3$-disks on the first coordinate axis.
    Then there is a smooth $\SO(2)$-action on $M$ by rotating around that axis as indicated in \cref{fig:rotation}.
    The double $M \cup_{\partial M} M$ of $M$ is diffeomorphic to $U_g$, which therefore also has a smooth $\SO(2)$-action, or equivalently a map $\SO(2) \to \Diff(U_g)$.
    The fixed points of this action are $g+1$ circles in $U_g$; we let $S^1 \subset U_g$ be one of them, and pick $x_0 \in S^1$.
    Then we have group homomorphisms
    \[
        \SO(2) \to \Diff_{S^1}(U_g) \to \Diff_{x_0}(U_g) \to \mathrm{GL}_3^+
    \]
    where the last map sends $\varphi \in \Diff_{x_0}(U_g)$ to its derivative $d_{x_0}\varphi$.
    The composite map is the standard inclusion $\SO(2) \to \mathrm{GL}_3^+$ and since the latter is equivalent to $\SO(3)$, the first claim follows after passing to classifying spaces.
    The second claim then follows since the map $B\!\SO(2) \to B\!\SO(3)$ induces the ring homomorphism on cohomology
    $\bbQ[p_1] \longrightarrow \bbQ[c_1]$ 
    mapping the first Pontrjagin class $p_1$ to the square $c_1^2$ of the first Chern class. The required classes $\alpha$ are therefore the non-trivial image of $\bbQ[p_1]$ under the induced maps on cohomology. 
\end{proof}

\bibliography{mybib}{}
\bibliographystyle{alpha}

\end{document}